\newcommand{\mc}{\mathcal}
\newcommand{\mbb}{\mathbb}
\newcommand{\mr}{\mathrm}
\newcommand{\argmin}{\mathop{\rm argmin}\limits}
\newcommand{\vecg}{\mathbf{g}}
\newcommand{\vecu}{\mathbf{u}}
\newcommand{\vecv}{\mathbf{v}}
\newcommand{\vecx}{\mathbf{x}}
\newcommand{\vecX}{\mathbf{X}}
\newcommand{\vecy}{\mathbf{y}}
\newcommand{\vecz}{\mathbf{z}}
\newcommand{\vecbeta}{\boldsymbol \beta}
\newcommand{\vectheta}{\boldsymbol \theta}
\newcommand{\veclambda}{\boldsymbol \lambda}
\newcommand{\vecgamma}{\boldsymbol \gamma}
\newcommand{\veciota}{\boldsymbol \iota}
\newcommand{\vecmu}{\boldsymbol \mu}
\renewcommand{\algorithmicrequire}{\textbf{Input:}}
\renewcommand{\algorithmicensure}{\textbf{Output:}}
\numberwithin{equation}{section}
\newtheorem{theorem}{Theorem}[section]
\newtheorem{proposition}{Proposition}[section]
\newtheorem{remark}{Remark}[section]
\newtheorem{lemma}{Lemma}[section]
\newtheorem{corollary}{Corollary}[section]
\newtheorem{definition}{Definition}[section]
\newtheorem{assumption}{Assumption}[section]
\begin{document}

\title{An Easily Tunable Approach to Robust and Sparse High-Dimensional Linear Regression}
\author{Takeyuki Sasai
\thanks{Department of Statistical Science, The Graduate University for Advanced Studies, SOKENDAI, Tokyo, Japan. Email: sasai@ism.ac.jp}
\and Hironori Fujisawa
\thanks{The Institute of Statistical Mathematics, Tokyo, Japan, \\
Department of Statistical Science, The Graduate University for Advanced Studies, SOKENDAI, Tokyo, Japan, and\\ 
Center for Advanced Integrated Intelligence Research, RIKEN, Tokyo, Japan.  Email:fujisawa@ism.ac.jp}
}
\maketitle
\begin{abstract}
Sparse linear regression methods such as Lasso require a tuning parameter that depends on the noise variance, which is typically unknown and difficult to estimate in practice. 
In the presence of heavy-tailed noise or adversarial outliers, this problem becomes more challenging. 
In this paper, we propose an estimator for robust and sparse linear regression that eliminates the need for explicit prior knowledge of the noise scale. 
Our method builds on the Huber loss and incorporates an iterative scheme that alternates between coefficient estimation and adaptive noise calibration via median-of-means. 
The approach is theoretically grounded and achieves sharp non-asymptotic error bounds under both sub-Gaussian and heavy-tailed noise assumptions.
Moreover, the proposed method accommodates arbitrary outlier contamination in the response without requiring prior knowledge of the number of outliers or the sparsity level. 
While previous robust estimators avoid tuning parameters related to the noise scale or sparsity, our procedure achieves comparable error bounds when the number of outliers is unknown, and improved bounds when it is known. 
In particular, the improved bounds match the known minimax lower bounds up to constant factors.
\end{abstract}

\section{Introduction}
The Lasso method has proven effective for sparse linear regression problems and has been widely adopted across various fields since its introduction \cite{Tib1996Regression}. Especially in the era of big data, where handling high-dimensional datasets is critical, Lasso continues to play a pivotal role in data analysis. Consider the sparse linear regression model defined as
\begin{align}
	\label{model:normal}
	y_i = \vecx_i^\top\vecbeta^*+\mu^*+\xi_i,\quad  i=1,\cdots,n,
\end{align}
where  $\{y_i\}_{i=1}^n$ denotes the output sequence, $\{\vecx_i\}_{i=1}^n$  is an i.i.d. sequence of $d$-dimensional covariates, $\vecbeta^* \in \mbb{R}^d$ is the true coefficient vector,  $\mu^* \in \mbb{R}$ is the intercept, and $\{\xi_i\}_{i=1}^n$ is an  i.i.d. sequence of random noise. To model sparsity, we assume  $\|\vecbeta^*\|_0=s (\leq d)$, where  $\|\vecv\|_0$ denotes the number of nonzero entries in the vector $\vecv$. Then, the Lasso estimator is given by
\begin{align}
	\label{model:normal2}
	(\hat{\vecbeta}^{\mr{Lasso}},\hat{\mu}^{\mr{Lasso}})= \argmin_{\vecbeta \in \mbb{R}^d, \mu \in \mbb{R}}\frac{1}{2n}\sum_{i=1}^n (y_i-\vecx_i^\top \vecbeta-\mu)^2 +\lambda_{\mr{Lasso}}\|\vecbeta\|_1,
\end{align} 
where for a vector $\vecv=(v_1,\cdots,v_d) \in \mbb{R}^d$, $\|\vecv\|_1=\sum_{i=1}^d |v_i|$, and $\lambda_{\mr{Lasso}}$ is a tuning parameter.

For simplicity, assume that $\{\vecx_i\}_{i=1}^n$ are  i.i.d. standard Gaussian vectors, and the noise $\{\xi_i\}_{i=1}^n$ consists of i.i.d. Gaussian random variables with standard deviation $\sqrt{\mbb{E}\xi^2_i}=\sigma^*$. The optimal tuning parameter is 
\begin{align}
	\lambda_{\mr{Lasso}} = \mr{C}\sigma^* \sqrt{\frac{\log (d/s)}{n}},
\end{align}
where $\mr{C}$ is a sufficiently large numerical constant. Define  $\|\vecv\|_2 = \sqrt{\sum_{i=1}^d v_i^2 }$ for a vector $\vecv =(v_1,\cdots,v_d)$, and $\lesssim$ denotes an inequality up to a numerical factor. Then, from \cite{BelLecTsy2018Slope}, we have 
\begin{align}
	\label{ine:besttune}
	\mbb{P}\left(\|\vecbeta^*-\hat{\vecbeta}^{\mr{Lasso}}\|_2+|\mu^*-\hat{\mu}^{\mr{Lasso}}| \lesssim \sqrt{s} \lambda_{\mr{Lasso}}+\sigma^*\sqrt{\frac{\log(1/\delta)}{n}}\right)\geq 1-\delta,
\end{align}
provided that $\sqrt{s\frac{\log(d/s)}{n}}+\sqrt{\frac{\log(1/\delta)}{n}}$ is sufficiently small.
Notably, employing the SLOPE method \cite{SuCan2016Slope} achieves a similar error bound (up to a constant factor) without requiring knowledge of $s$ for the tuning parameter \cite{BelLecTsy2018Slope}; see Remark \ref{r:s} for further discussion.

A key challenge in applying Lasso is choosing $\sigma^*$ in the tuning parameter $\lambda_{\text{Lasso}}$. Since the noise variables ${\xi_i}_{i=1}^n$ are unobservable, estimating $\sigma^*$ accurately is difficult. 
Analysts often substitute an upper bound of $\sigma^*$; however, using a value that is too small can cause the estimation to fail, whereas using an overly large value leads to loose error bounds.

To mitigate this issue, methods such as square-root Lasso \cite{BelCheWan2011Square} and scaled Lasso \cite{SunZha2012Scaled} have been proposed. 
These approaches modify the loss function to eliminate the dependence on noise variance. 
Additionally, cross-validation techniques have been suggested \cite{CheLiaChe2021Cross} to select an appropriate tuning parameter from a candidate set.

In this paper, we propose an alternative method tailored for robust settings, including cases with outliers or heavy-tailed noise, which are scenarios that have attracted considerable attention in recent studies \cite{LugMen2019Mean,DiaKan2023Algorithmic}.
Our method allows for reliable analysis without requiring precise knowledge of the standard deviation of the noise.

We now consider a robust version of the model defined in \eqref{model:normal}:
\begin{align}
	\label{model:normalad}
	y_i = \vecx_i^\top\vecbeta^*+\mu^*+\sqrt{n}\theta_i+\xi_i,\quad  i=1,\cdots,n,
\end{align}
where  $\vectheta = (\theta_1,\cdots,\theta_n)$ represents outliers, with  $\theta_i = 0$ for $i \in \{1,\cdots,n\}\setminus \mc{O}$ and $|\mc{O}|=o$, and the noise $\{\xi_i\}_{i=1}^n$ follows a heavy-tailed distribution. 
We assume that the set of outlier indices $\mc{O}$ is chosen arbitrarily by an adversary. Furthermore, for each $i\in \mc{O}$, the corresponding outlier value $\theta_i$ can also be freely selected by the adversary.

This model has been studied in a number of recent works. Notably, \cite{MinNdaWan2024Robust} combines square-root Lasso with the Huber loss to construct an estimator that does not require prior knowledge of the noise variance while achieving sharp error bounds.
Specifically, when $\{\xi_i\}_{i=1}^n$ are sub-Gaussian, their estimator $(\tilde{\vecbeta}^{\text{sg}}, \tilde{\mu}^{\text{sg}})$ satisfies
\begin{align}
	\label{ine:intro-pr1}
	\mbb{P}\left(\|\vecbeta^*-\tilde{\vecbeta}^{\mr{sg}}\|_2 +|\mu^*-\tilde{\mu}^{\mr{sg}} |\lesssim \sigma^* \left(\sqrt{\frac{s\log(d/s)}{n}}+\sqrt{\frac{\log(1/\delta)}{n}}+\frac{o}{n}\log \frac{n}{o}\right)\right)\geq 1-\delta,
\end{align}
provided that $\sqrt{s\frac{\log(d/s)}{n}}+\sqrt{\frac{\log(1/\delta)}{n}}$ and $\frac{o}{n}$ are sufficiently small.
On the other hand, if $\{\xi_i\}_{i=1}^n$ are drawn from a heavy-tailed distribution such that for $\text{M} \geq 2$, we have $\left( \mathbb{E}|\xi_i|^\text{M} \right)^{1/\text{M}} \leq c_{\mr{M}} \sigma^*$ for a constant $c_{\mr{M}}$, then with probability at least $1 - \delta$, the estimator $(\tilde{\vecbeta}^{\text{h}}, \tilde{\mu}^{\text{h}})$ satisfies
\begin{align}
	\label{ine:intro-pr2}
	\|\vecbeta^*-\tilde{\vecbeta}^{\mr{h}}\|_2+|\mu^*-\tilde{\mu}^{\mr{h}} | \lesssim \sigma^*\left(\sqrt{\frac{s\log(d/s)}{n}}+\left(\frac{\log(1/\delta)}{n}\right)^{\min\{\frac{1}{2},1-\frac{2}{\mr{M}}\}}+\left(\frac{o}{n}\right)^{1-\frac{2}{\mr{M}}}\right),
\end{align}
provided that $\sqrt{s\frac{\log(d/s)}{n}}+\sqrt{\frac{\log(1/\delta)}{n}}$ and $\frac{o}{n}$ are sufficiently small.
While our objective is aligned with that of \cite{MinNdaWan2024Robust}, our approach differs in method. Although our procedure is slightly more complex, it yields sharper error bounds under certain settings.

Section \ref{sec:RWOA} presents related work and a high-level overview of our approach. Section \ref{sec:R} provides a precise statement of our method and main results. Details of the procedure are explained in Sections \ref{sec:AESC} and \ref{sec:EI}.

\section{Related Work and Our Approach}
\label{sec:RWOA}
To simplify the exposition in this section, we assume that $\{\vecx_i\}_{i=1}^n$ is an i.i.d. sequence of Gaussian random vectors such that $\mathbb{E}\vecx_i = 0$ and $\mathbb{E}\vecx_i\vecx_i^\top = I$ for each $i = 1, \ldots, n$.

In practice, cross-validation is commonly used to select the tuning parameter for Lasso.
However, the theoretical understanding of non-asymptotic properties for estimators based on cross-validated parameters remains limited. 
A notable exception is \cite{CheLiaChe2021Cross}, which shows that under suitable conditions—such as the inclusion of near-optimal candidates in the search grid—the resulting estimators can achieve sharp error bounds, even without access to the true value of $\sigma^*$.

Square-root Lasso \cite{BelCheWan2011Square} and scaled Lasso \cite{SunZha2012Scaled} offer alternative approaches. 
These methods construct estimators with theoretical guarantees comparable to those of standard Lasso (cf. \eqref{ine:besttune}) but without requiring prior knowledge of $\sigma^*$. The estimator is defined as
\begin{align}
	\label{ine:SQLasso}
	(\hat{\vecbeta}^{\mr{SLasso}},\hat{\mu}^{\mr{SLasso}}) = \argmin_{\vecbeta \in \mbb{R}^d,\mu \in \mbb{R}} \sqrt{\frac{1}{2n}\sum_{i=1}^n (y_i-\vecx_i^\top \vecbeta-\mu)^2 }+\lambda_{\mr{SLasso}}\|\vecbeta\|_1.
\end{align} 
This estimator can equivalently be written as (see Chapter 3 of \cite{Gee2016Estimation}):
\begin{align}
		\label{ine:SQLasso2}
	(\hat{\vecbeta}^{\mr{SLasso}},\hat{\sigma}^{\mr{SLasso}},\hat{\mu}^{\mr{SLasso}}) = \argmin_{\vecv \in \mbb{R}^d, \sigma \in \mbb{R},\mu \in \mbb{R}} \frac{1}{2n\sigma}\sum_{i=1}^n (y_i-\vecx_i^\top \vecbeta-\mu)^2+\sigma+\lambda_{\mr{SLasso}}\|\vecbeta\|_1.
\end{align} 
This formulation highlights that $\lambda_{\text{SLasso}}$ does not depend on prior knowledge of $\sigma^*$. For non-asymptotic analysis of this method, see \cite{Der2018Improved}.

Robust estimation has recently drawn considerable interest \cite{LugMen2019Mean,DiaKan2023Algorithmic}, particularly for models like \eqref{model:normalad}. Several studies—e.g., \cite{NguTra2012Robust,DalTho2019Outlier,Chi2020Erm,Tho2020Outlier,Tho2023Outlier,MinNdaWan2024Robust}—investigate such settings.

Before discussing a key insight from \cite{MinNdaWan2024Robust}, we briefly review the connection between $\ell_1$-penalization and the Huber loss, which is known to be robust to outliers and heavy-tailed noise \cite{NguTra2012Robust,DalTho2019Outlier,AlqCotLec2019Estimation,Chi2020Erm,Tho2020Outlier,Tho2023Outlier}.
The Huber loss function is defined as
\begin{align}
	H(t) = \begin{cases}
	|t| -1/2 & (|t| > 1) \\
	t^2/2  & (|t| \leq 1)
	\end{cases},
	\end{align}
	and let
	\begin{align}
		h(t) =	\frac{d}{dt} H(t) =   \begin{cases}
		t\quad &(|t|> 1)\\
		\mr{sgn}(t)\quad &(|t| \leq 1)
	\end{cases}.
\end{align}
Using this loss, we define the following $\ell_1$-penalized estimator:
\begin{align}
	\label{ine:l1-penalized-Huber-re}
	(\hat{\vecbeta}^{\ell_1\text{-pH}},\hat{\mu}^{\ell_1\text{-pH}})=\argmin_{\vecbeta  \in \mbb{R}^d,\mu \in \mbb{R}} \sum_{i=1}^n (\sigma^*\iota\sqrt{n})^2 H\left(\frac{y_i-\vecx_i^\top\vecbeta-\mu}{\sigma^*\iota\sqrt{n}}\right)+\sigma^*\lambda\|\vecbeta\|_1.
\end{align} 
As shown in \cite{SheOwe2011Outlier}, this estimator can be equivalently expressed as
\begin{align}
	\label{ine:l1-penalized-Huber-re2}
	(\hat{\vecbeta}^{\ell_1\text{-pH}},\hat{\mu}^{\ell_1\text{-pH}},\hat{\vectheta}^{\ell_1\text{-pH}})=\argmin_{\vecbeta  \in \mbb{R}^d,\mu \in \mbb{R},\vectheta \in \mbb{R}^n} \frac{1}{2n}\sum_{i=1}^n(y_i-\vecx_i^\top\vecbeta-\mu-\sqrt{n}\theta_i)^2+\sigma^*\iota \|\vectheta\|_1+\sigma^*\lambda\|\vecbeta\|_1.
\end{align}

Extending this idea, \cite{MinNdaWan2024Robust} combines square-root Lasso with the Huber loss and propose the following estimator:
\begin{align}
	\label{ine:l1-penalized-Huber-re3}
	(\hat{\vecbeta}^{\ell_1\text{-spH}},\hat{\mu}^{\ell_1\text{-spH}},\hat{\vectheta}^{\ell_1\text{-spH}})=\argmin_{\vecbeta  \in \mbb{R}^d,\mu \in \mbb{R},\vectheta \in \mbb{R}^n} \sqrt{\frac{1}{2n}\sum_{i=1}^n(y_i-\vecx_i^\top\vecbeta-\mu-\sqrt{n}\theta_i)^2}+\iota\|\vectheta\|_1+\lambda\|\vecbeta\|_1.
\end{align}

\begin{remark}
	\label{r:s}
	In fact, \cite{MinNdaWan2024Robust} analyzes a variant of the above formulation where the $\ell_1$ norms are replaced by the SLOPE norms \cite{SuCan2016Slope}. The SLOPE norm for a vector $\vecv =(v_1,\cdots,v_d)\in \mathbb{R}^d$ with a weight vector $\vecgamma = (\gamma_1,\cdots,\gamma_d)$ is defined as $\|\vecv\|_{\vecgamma} = \sum_{i=1}^d |v_i^\sharp| \gamma_i$, where $|v_i^\sharp|$ is the $i$-th largest absolute entry in $\vecv$.
	This modification enables estimation without prior knowledge of the sparsity level $s$, achieving error rates same to those of Lasso up to an absolute constant factor. Furthermore, using the SLOPE norm for both the regression and outlier components allows estimation without requiring knowledge of the number of outliers $o$.
\end{remark}

\begin{remark}
	\label{r:rel}
\cite{Tho2020Outlier,Tho2023Outlier} also propose a form of tuning-free estimation. Their estimators can be viewed as a non-root version of the formulation in \eqref{ine:l1-penalized-Huber-re3}, and therefore still require the noise standard deviation as a tuning parameter. On the other hand, their approach enjoys certain advantages. For example, it avoids the need for the small-ball condition on the noise distribution—required in both \cite{MinNdaWan2024Robust} and our analysis (see Section~\ref{sec:R1})—and allows for relaxed assumptions on the dependence structure between covariates and noise. Additionally, their analysis relies on multiplier process techniques and only requires $\mathbb{E}\xi_i \vecx_i = \mathbf{0}$, without assuming independence between $\vecx_i$ and $\xi_i$.
\end{remark}
We now present an overview of our proposed method. Detailed analysis is provided in Sections \ref{sec:R} through \ref{sec:EI}.
Before introducing the full procedure, we highlight a property of the estimator defined in \eqref{ine:l1-penalized-Huber-re}. 
When the intercept $\mu^* = 0$, we consider the following simplified estimator:
\begin{align}
	\label{ine:ourssec2-0}
	\hat{\vecbeta}^{\ell_1\text{-pH}}=\argmin_{\vecbeta  \in \mbb{R}^d} \sum_{i=1}^n (\varsigma^*\iota\sqrt{n})^2 H\left(\frac{y_i-\vecx_i^\top\vecbeta}{\varsigma^*\iota\sqrt{n}}\right)+\varsigma^*\|\vecbeta\|_{\veclambda},
\end{align} 
where $\varsigma^* = \mathbb{E}|\xi_i|$ is the absolute first moment of the noise. Under this setup, the estimator satisfies the following error bound:
\begin{align}
	\label{ine:ourssec2-1}
	\mbb{P}\left(\|\vecbeta^*-\hat{\vecbeta}^{\ell_1\text{-pH}}\|_2 \lesssim  \varsigma^* \left(\sqrt{\frac{s\log(ed/s)}{n}}+\sqrt{\frac{\log(1/\delta)}{n}}+\frac{o}{n}\sqrt{\log \frac{n}{o}}\right)\right)\geq 1-\delta,
\end{align} 
where $\iota$ is a sufficiently large constant. A similar result appears in \cite{Chi2020Erm}; see Remark \ref{r:co} for a detailed comparison.

Notably, this error bound depends on $\varsigma^*$ (the absolute moment) rather than $\sigma^*$ (the standard deviation), and it exhibits faster convergence than the bound in \eqref{ine:intro-pr1}—regardless of the tail behavior of the noise. This suggests that Huber loss-based methods offer stronger robustness, particularly when there is no intercept bias. For related insights, see \cite{SunZhoFan2020Adaptive, WanZheZhoZho2021New} for related insights.

Building on this observation, we propose the following iterative procedure:
\begin{enumerate}
  \item Initialize with a constant $\mathrm{C}_{\text{ini}} > \varsigma^*$.
  \item Normalize the dataset $\{y_i, \vecx_i\}_{i=1}^n$ to eliminate the intercept $\mu^*$.
  \item Estimate $\vecbeta^*$ from the normalized data using the estimator in \eqref{ine:ourssec2-0}, with the current estimate of $\varsigma^*$.
  \item Estimate $\varsigma^*$ from the normalized data using the current estimate of $\vecbeta^*$.
  \item Repeat 3. and 4. sufficiently.
  \item Using the final estimate of $\vecbeta^*$, estimate $\mu^*$ from the original (non-normalized) data.
\end{enumerate}
Several observations regarding the above procedure are as follows:
\begin{itemize}
	\item \textbf{Relation to scaled Lasso:} This procedure is inspired by $(3)$ in \cite{SunZha2012Scaled}, and can be regarded as a robust extension of their method to robust settings with random design.
  \item \textbf{Normalization:} Step 2 eliminates the intercept by differencing paired samples. This technique is also used in \cite{MenZhi2020Robust,WanZheZhoZho2021New,PenJogLoh2024Robust} ; see Algorithm~\ref{alg:N}  for more details.
 	\item \textbf{Estimating $\varsigma^*$ via median-of-means:} In Step 4, the absolute noise level $\varsigma^* = \mathbb{E}|\xi_i|$ is estimated using the median-of-means estimator \cite{AloMatSze1996Space}. We apply it to the sequence $\{ |\vecx_i^\top (\vecbeta^* - \hat{\vecbeta}_{\mathrm{cur}}) + \xi_i + \sqrt{n}\theta_i^*| \}_{i=1}^n$. If both $\|\vecbeta^* - \hat{\vecbeta}_{\mathrm{cur}}\|_2$ and $o/n$ are small, this quantity is close to $\mathbb{E}|\xi_i|$. While the estimator requires a hyperparameter depending on the moment condition of $\xi_i$, overestimating it does not degrade the final error bound (see Remark~\ref{r:mm}).
	 \item  \textbf{Convergence through iteration:} As the coefficient estimate improves over iterations, the estimate of $\varsigma^*$ decreases and approaches its true value. For convergence guarantees, it is sufficient that $\sqrt{\frac{s\log(ed/s)}{n}} + \sqrt{\frac{\log(1/\delta)}{n}} + \frac{o}{n} \sqrt{\log(n/o)}$ is small. This parallels the assumptions required for Lasso estimation in standard settings.
	 \item \textbf{Final coefficient accuracy:} After sufficient iterations, the estimator $\hat{\vecbeta}$ satisfies the same error bound as \eqref{ine:ourssec2-1}, independently of the initial value $\mathrm{C}_{\mathrm{ini}}$ (see Theorem~\ref{t:Huberabs}).
	 \item \textbf{Intercept estimation:} In Step 6, we solve the optimization
	 \begin{align}
		 \label{opt:mu}
		 (\hat{\mu}^{\ell_1\text{-slpH}}, \hat{\vectheta}^{\ell_1\text{-slpH}}) = \argmin_{\mu \in \mathbb{R},\, \vectheta \in \mathbb{R}^n} \left\{ \sqrt{ \frac{1}{2n} \sum_{i=1}^n (y_i - \vecx_i^\top \hat{\vecbeta}^{\mathrm{output}} - \mu - \sqrt{n}\theta_i)^2 } + \|\vectheta\|_{\veciota} \right\},
	 \end{align}
	 where $\hat{\vecbeta}^{\mathrm{output}}$ is the final estimator of the coefficient. Unlike the analysis of the SLOPE version of \eqref{ine:l1-penalized-Huber-re3}, this approach avoids estimating $\vecbeta^*$, simplifying theoretical analysis. 
	 As a result, the estimator of $\mu^*$ achieves an error bound matching or improving upon those in \cite{MinNdaWan2024Robust}, especially when an accurate estimate of $o$ is available (see Theorem~\ref{t:intercept} and Corollary~\ref{c:main}).
	\item \textbf{No sample splitting:} Unlike iterative procedures in \cite{BalWaiYun2017Statistical,PraSugBalRav2020Robust}, which often rely on sample splitting for analysis, our method avoids sample splitting by applying empirical process theory. This yields theoretical guarantees comparable to tuning-dependent methods, while preserving data efficiency.
\end{itemize}

\section{Method and Result}
\label{sec:R}
In Section \ref{sec:R1}, we introduce the assumptions on the covariates and noise, and in Section \ref{sec:R2}, we present our results.
\subsection{Method}
\label{sec:R1}
Throughout the remainder of this paper, we assume for notational simplicity that $n/o\geq e$, $\delta\leq 1/9$.
Before presenting our method and the main theoretical results, we introduce several definitions and assumptions used throughout the analysis.
\begin{definition}[$\psi_\alpha$-norm]
\label{d:orlicz}
	Let $f$ be a real-valued random variable. The $\psi_\alpha$-norm of $f$ is defined as
	\begin{align}
			\|f\|_{\psi_\alpha}:=	\inf\left\{ \eta>0\,:\, \mbb{E}\exp(|f/\eta|^\alpha)\leq 2\right\} < \infty.
	\end{align}	
\end{definition}
\begin{definition}[$L$-subGaussian random vector]
\label{d:l}
	A random vector $\vecx \in \mbb{R}^d$ with mean zero is said to be an $L$-subGaussian random vector if,  for any fixed $\vecv \in \mbb{R}^d$,
	\begin{align}
	\label{ine:d:l-01}
		\|\langle \vecx,\vecv\rangle\|_{\psi_2}\leq L\left(\mbb{E}|\langle \vecx,\vecv\rangle|^2\right)^\frac{1}{2} = L\|\vecv\|_\Sigma,
	\end{align}
	where $\|\cdot\|_{\psi_2}$ is $\psi_\alpha$-norm from Definition \ref{d:orlicz},  and $\|\vecv\|_\Sigma^2:= \vecv^\top \Sigma \vecv $.
	This condition is equivalent to several tail and moment bounds (see  (2.14) - (2.16) of \cite{Ver2018High}), such as:
	for any $\vecv \in \mbb{R}^d$, $t\geq 0$ and $p\in \mbb{N}$ such that $p\geq 2$,   redefining $L$ with a larger value if necessary,
	\begin{align}
	\label{ine:d:l-02}
		&\left(\mbb{E}|\vecv^\top \vecx|^p \right)^\frac{1}{p}\leq L \sqrt{p} \|\vecv\|_\Sigma,\, \mbb{E}\exp(\vecv^\top \vecx) \leq \exp(L^2 \|\vecv\|_\Sigma^2 ),\nonumber \\
		&\mbb{E}\exp\left(\frac{(\vecv^\top \vecx)^2}{L^2\|\vecv\|_\Sigma^2}\right) \leq 2,\, 	\mbb{P}\left(|\vecv^\top \vecx|>t\right) \leq 2 \exp \left(-\frac{t^2}{L^2 \|\vecv\|_\Sigma^2}\right).
	\end{align}
\end{definition}
Then, we introduce an assumption for covariates:
\begin{assumption}
	\label{a:cov}
	The covariates $\{\vecx_i\}_{i=1}^n$ is an i.i.d. sequence of random vectors drawn from an $L$-subGaussian distribution with $\mbb{E}\vecx_i=\bf{0}$ for $i=1,\cdots,n$.
\end{assumption}

Next, we introduce a condition on the covariance matrix of the covariates. This assumption is commonly imposed in the context of SLOPE estimation \cite{BelLecTsy2018Slope}. It enables accurate estimation even when the covariance matrix is singular, which frequently occurs in high-dimensional settings.
\begin{assumption}
	\label{a:cov-RE}
	The covariance matrix of the covariates $\Sigma$ satisfies 
	\begin{align}
		\label{a:RE-1}
		\|\vecv\|_\Sigma^2 \geq \kappa \|\vecv\|_2^2,
	\end{align}
	for any $\vecv \in \mbb{R}^d$ and for a constant $c_{\mr{RE}}>1$, such that
	\begin{align}
		\label{a:RE-2}
		\|\vecv\|_{\veclambda}\leq c_{\mr{RE}} \sqrt{\sum_{i=1}^s\lambda_i^2} \|\vecv\|_2^2.
	\end{align}
\end{assumption}

Note that if the minimum eigenvalue of $\Sigma$ exists, then $\kappa \geq \lambda_{\min}(\Sigma)$, and Assumption~\ref{a:cov-RE} can hold even if the minimum eigenvalue is zero.

Before presenting the main results, we introduce several assumptions regarding the noise distribution. We denote by $\mathbb{I}_E$ the indicator function of the event $E$.
\begin{assumption}
	\label{a:noise}
	The noise sequence $\{\xi_i\}_{i=1}^n$ is an i.i.d. sequence of random variables drawn from a distribution such that $\mbb{E}|\xi_i|= \varsigma^*$ and $\mbb{E}|\xi_i|^2= \sigma^2$ with $\mbb{E}\xi_i=0$ for $i=1,\cdots,n$.
	Additionally, assume that there exists  $\mr{c}_\xi(\geq 1)$ such that $\sigma \leq \mr{c}_\xi \varsigma^*$.
\end{assumption}
\begin{assumption}[heavy-tailed noise]
	\label{a:noise:EI}
	The noise $\{\xi_i\}_{i=1}^n$ satisfies Assumption \ref{a:noise}. Additionally, $\{\xi_i\}_{i=1}^n$ satisfies $\mbb{E}|\xi_i|^{\mr{M}} \leq (\sigma^* \times c_{\mr{M}})^{\mr{M}}:=\sigma_{\mr{M}}^{\mr{M}}$ for $i=1,\cdots,n$, where $\mr{M}>2$ and $c_{\mr{M}}$ is a numerical constant. Additionally, assume $\mbb{E}\xi_i^2\mathbb{I}_{|\xi_i|\leq \frac{1}{2}}\geq 1/4$, and $\max\{o/n,\log(1/\delta)/n\}\leq 1/1000$.
\end{assumption}
\begin{assumption}[subGaussian noise]
	\label{a:noise:EI2}
	The noise $\{\xi_i\}_{i=1}^n$ satisfies Assumption \ref{a:noise}. Additionally, $\{\xi_i\}_{i=1}^n$ satisfies $\mbb{P}(|\xi_i|\geq t)\leq 2\exp(-(t/(\sigma^* c_{\mr{M}}))^2)$ for $i=1,\cdots,n$, where $c_{\mr{M}}>1$ is a numerical constant. Additionally, assume $\mbb{E}\xi_i^2\mathbb{I}_{|\xi_i|\leq \frac{1}{2}}\geq 1/4$, and $\max\{o/n,\log(1/\delta)/n\}\leq 1/1000$.
\end{assumption}
The assumption of $\mbb{E}\xi_i^2\mathbb{I}_{|\xi_i|\leq \frac{1}{2}}\geq 1/4$ is called a small-ball-type assumption, and \cite{MinNdaWan2024Robust} also use this assumption. The selection of the explicit constants $1/2$ and $1/4$ is not important.
Assumption~\ref{a:noise} is primarily used for the estimation of $\vecbeta^*$, while Assumptions~\ref{a:noise:EI} and~\ref{a:noise:EI2} are used for the estimation of $\mu^*$.

We now present our method and main result.
\begin{algorithm}[H]
	\caption{ADAPTIVE-ROBUST-AND-SPARSE-ESTIMATION}
	\label{alg:RSEI}
	\begin{algorithmic}[1]
	\REQUIRE Data $\{y_i, \vecx_i\}_{i=1}^{2n}$, initial value of the noise absolute moment $\mr{C}_{\mathrm{ini}}$, tuning parameter for Huber loss $\mr{C}_{\mathrm{H}}$, SLOPE penalty parameters $\mr{C}_{\veclambda}, \{\lambda_i\}_{i=1}^d$ for estimating $\vecbeta^*$, number of the iteration $ \mr{N}_{\mathrm{iter}}$, number of the blocks $\mr{B}$, SLOPE penalty $\{\iota_i\}_{i=1}^n$ for estimating $\mu^*$
	\ENSURE Estimators $\hat{\vecbeta}$ and $\hat{\mu}$
	\STATE $\{y_i', \vecx_i'\}_{i=1}^{n} \leftarrow \text{NORMALIZING}(\{y_i, \vecx_i\}_{i=1}^{2n})$
	\STATE $\hat{\vecbeta} \leftarrow \text{COEFFICIENTS-ESTIMATION}(\{y_i', \vecx_i'\}_{i=1}^n, \mr{C}_{\mathrm{ini}}, \mr{C}_{\mathrm{H}}, \mr{C}_{\veclambda}, \{\lambda_i\}_{i=1}^d, \mr{N}_{\mathrm{iter}}, \mr{B})$
	\STATE $\hat{\mu} \leftarrow \text{INTERCEPT-ESTIMATION}(\{y_i, \vecx_i\}_{i=1}^{2n}, \hat{\vecbeta}, \{\iota_i\}_{i=1}^n)$
	\end{algorithmic}
\end{algorithm}

We briefly describe each step of Algorithm \ref{alg:RSEI}. COEFFICIENTS-ESTIMATION and INTERCEPT-ESTIMATION are discussed in detail in Sections \ref{sec:AESC} and \ref{sec:EI}, respectively. NORMALIZING is performed as follows:
\begin{algorithm}[H]
	\caption{NORMALIZING}
	\begin{algorithmic}[1]
			\label{alg:N}
		\REQUIRE Data $\left\{y_i,\vecx_i\right\}_{i=1}^{2n}$
		\ENSURE Normalized data $\{y'_i,\vecx'_i\}_{i=1}^n$
		\STATE  for $i$ in $1,\cdots, n$
		\STATE 	\quad \quad $(y'_i, \vecx'_i) \leftarrow \left(\frac{y_i-y_{n+i}}{\sqrt{2}},\frac{\vecx_i-\vecx_{n+i}}{\sqrt{2}}\right)$
	\end{algorithmic} 
\end{algorithm}
The NORMALIZING step corresponds to Step 2 in Section~\ref{sec:RWOA}, and produces a version of the data with zero intercept. This transformation preserves the covariance and noise properties. Lemma~\ref{l:L1+} in the appendix guarantees that the absolute moment of the noise is preserved up to a constant factor.
The procedure COEFFICIENTS-ESTIMATION is responsible for estimating $\vecbeta^*$. It iteratively updates both the coefficient and the noise scale estimate until convergence.
INTERCEPT-ESTIMATION uses the final coefficient estimate in a one-dimensional version of the robust penalized objective to estimate $\mu^*$ (Step 6 of Section~\ref{sec:RWOA}).

\subsection{Result}
\label{sec:R2}
Using ADAPTIVE-ROBUST-AND-SPARSE-ESTIMATION, we obtain the following results.
\begin{theorem}
	\label{t:main}
	Suppose Assumptions \ref{a:cov}-\ref{a:noise:EI} hold. 
	Let the tuning parameters be set as  $\lambda_i= \sqrt{\log(ed/i)/n}$ and $\iota_i=\frac{1}{\sqrt{n}}\left(\frac{n}{i}\right)^{\frac{1}{\mr{M}}}$.
	Assume that  $\mr{C}_{\veclambda},\mr{C}_{\mr{H}} $ and $\mr{B} $ to be sufficiently large and that  $\sqrt{\frac{s\log(ed/s)}{n}}+\sqrt{\frac{\log(1/\delta)}{n}}+\sqrt{\frac{o}{n}}$ is sufficiently small. Then, we have
	\begin{align}
		\mbb{P}\left(\|\vecbeta^*-\hat{\vecbeta}\|_\Sigma +|\mu^*-\hat{\mu}|\lesssim  \sigma^* L^5\left(\frac{\rho}{\kappa}\sqrt{\frac{\log(ed/s)}{n}}+\left(\frac{o}{n}\right)^{1-\frac{2}{\mr{M}}}+\sqrt{\frac{\log(1/\delta)}{n}}\right)\right)\geq 1-14\delta.
	\end{align} 
If Assumption \ref{a:noise:EI} is replaced by Assumption~\ref{a:noise:EI2}, and we set $\iota_i=\sqrt{\log(en/i)/n}$, we have
	\begin{align}
		\mbb{P}\left(\|\vecbeta^*-\hat{\vecbeta}\|_\Sigma +|\mu^*-\hat{\mu}|\lesssim  \sigma^* L^5\left(\frac{\rho}{\kappa}\sqrt{\frac{\log(ed/s)}{n}}+\frac{o}{n}\log \frac{n}{o}+\sqrt{\frac{\log(1/\delta)}{n}}\right)\right)\geq 1-14\delta.
	\end{align} 
\end{theorem}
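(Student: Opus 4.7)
}
The plan is to decompose the left-hand side as $\|\vecbeta^*-\hat{\vecbeta}\|_\Sigma+|\mu^*-\hat{\mu}|$ and bound each term by invoking the two component results that ADAPTIVE-ROBUST-AND-SPARSE-ESTIMATION is built from: the coefficient-estimation guarantee of Theorem~\ref{t:Huberabs} (for the output of COEFFICIENTS-ESTIMATION) and the intercept-estimation guarantee of Theorem~\ref{t:intercept} (for the output of INTERCEPT-ESTIMATION). A union bound combining the failure probabilities of the two steps (together with the median-of-means calibration step inside the iteration) will produce the $1-14\delta$ confidence level.

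First I would handle the preprocessing. After NORMALIZING, the data $\{y'_i,\vecx'_i\}_{i=1}^n$ satisfies the linear model $y'_i=\vecx_i'^\top\vecbeta^*+\sqrt{n}\theta'_i+\xi'_i$ with no intercept. The normalized covariates $\vecx'_i=(\vecx_i-\vecx_{n+i})/\sqrt{2}$ are again i.i.d.\ mean-zero $L$-subGaussian with the same $\Sigma$ (Assumption~\ref{a:cov}), Assumption~\ref{a:cov-RE} is unaffected since it is a statement about $\Sigma$, and the differenced noise $\xi'_i=(\xi_i-\xi_{n+i})/\sqrt{2}$ keeps $\mathbb{E}\xi'_i=0$ with its absolute first moment preserved up to a numerical constant by Lemma~\ref{l:L1+}; its $\mr{M}$-th moment is preserved by Jensen/Minkowski. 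The outlier count at most doubles to $2o$, which only affects constants. These checks let me apply Theorem~\ref{t:Huberabs} to the normalized data, giving
\begin{align*}
\|\vecbeta^*-\hat{\vecbeta}\|_\Sigma\lesssim \sigma^* L^5\left(\tfrac{\rho}{\kappa}\sqrt{\tfrac{\log(ed/s)}{n}}+\tfrac{o}{n}\sqrt{\log\tfrac{n}{o}}+\sqrt{\tfrac{\log(1/\delta)}{n}}\right)
\end{align*}
on an event of probability at least $1-c\delta$, in particular independently of the initial value $\mr{C}_{\mathrm{ini}}$ once the iteration count $\mr{N}_{\mathrm{iter}}$ and block count $\mr{B}$ are large enough.

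Next I would condition on this good event and feed $\hat{\vecbeta}$ into INTERCEPT-ESTIMATION on the \emph{original} (non-normalized) data. The residual of the one-dimensional SLOPE--square-root objective \eqref{opt:mu} is $y_i-\vecx_i^\top\hat{\vecbeta}-\mu-\sqrt{n}\theta_i=(\mu^*-\mu)+\vecx_i^\top(\vecbeta^*-\hat{\vecbeta})+\xi_i+\sqrt{n}(\theta_i^*-\theta_i)$, so the effective noise is $\xi_i+\vecx_i^\top(\vecbeta^*-\hat{\vecbeta})$. Under Assumption~\ref{a:noise:EI}, the added term is sub-Gaussian with variance proxy of order $L^2\|\vecbeta^*-\hat{\vecbeta}\|_\Sigma^2$, which by the previous step is at most a constant multiple of $\sigma^*$ times the quantity multiplying $\sigma^*$ in the theorem's conclusion; in particular its $\mr{M}$-th moment is subsumed into the heavy-tailed assumption on $\xi_i$. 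Then Theorem~\ref{t:intercept}, together with Corollary~\ref{c:main}, applied with SLOPE weights $\iota_i=\frac{1}{\sqrt{n}}(n/i)^{1/\mr{M}}$ gives exactly the $(o/n)^{1-2/\mr{M}}$ outlier rate plus the $\sqrt{\log(1/\delta)/n}$ confidence term. The sub-Gaussian case is handled identically with $\iota_i=\sqrt{\log(en/i)/n}$, which swaps the outlier term for $(o/n)\log(n/o)$ in the standard SLOPE fashion.

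The last step is a union bound over the event from COEFFICIENTS-ESTIMATION (which itself aggregates the successive iteration events and the median-of-means calibration event, collectively contributing several $\delta$'s) and the event from INTERCEPT-ESTIMATION; the bookkeeping gives the $1-14\delta$ figure. The main obstacle, in my view, is not invoking the two theorems but verifying that $\vecx_i^\top(\vecbeta^*-\hat{\vecbeta})$ can be absorbed into the effective noise without degrading the error rate of the intercept step. Because $\hat{\vecbeta}$ depends on the full dataset and the bound is \emph{not} obtained via sample splitting, one must argue via empirical-process / concentration arguments already encoded in Theorem~\ref{t:intercept} (specifically, that its noise-moment hypotheses are stable under additive mean-zero $L$-subGaussian perturbations whose scale is controlled on the good event of Theorem~\ref{t:Huberabs}); the small-ball lower bound $\mathbb{E}\xi_i^2\mathbb{I}_{|\xi_i|\leq 1/2}\geq 1/4$ in Assumptions~\ref{a:noise:EI} and \ref{a:noise:EI2} is what prevents this perturbation from dominating the lower curvature of the square-root loss.
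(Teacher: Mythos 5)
Your proposal matches the paper's own (very terse) proof: Theorem \ref{t:main} is obtained exactly by combining the coefficient bound of Theorem \ref{t:Huberabs} (applied to the normalized data) with the intercept bounds of Theorems \ref{t:intercept} and \ref{t:intercept'} under the stated choices of $\veciota$, plus a union bound over the two events. One small note: you do not need to absorb $\vecx_i^\top(\vecbeta^*-\hat{\vecbeta})$ into the effective noise as you suggest mid-proof — Theorem \ref{t:intercept} already takes the deterministic hypothesis $\|\hat{\vecbeta}-\vecbeta^*\|_\Sigma\leq\sigma^* r_\Sigma$, $\|\hat{\vecbeta}-\vecbeta^*\|_{\veclambda}\leq\sigma^*\mr{R}_{\veclambda}r_\Sigma$ as input and controls that cross term uniformly via Lemmas \ref{l:gw2} and \ref{l:Che}, which is exactly the resolution you arrive at in your final paragraph.
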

\begin{remark}
	The error bound in Theorem~\ref{t:main} matches that of Theorem 2.2 in \cite{MinNdaWan2024Robust} up to constants. 
	In particular, our coefficient estimation achieves sharper bounds (for details, see  Theorems~\ref{t:Huber} and Remark \ref{r:co}). 
	The conditions stated as `sufficiently small' or `sufficiently large' are clarified in Theorems~\ref{t:Huber}, \ref{t:abs}, and \ref{t:Huberabs}, as well as the accompanying remarks.
\end{remark}
If the value of  $o$ is known, we have the following corollary:
\begin{corollary}
	\label{c:main}
	Suppose Assumptions~\ref{a:cov}-\ref{a:noise:EI} hold. 
	Let the tuning parameters be set as $\lambda_i= \sqrt{\log(ed/i)/n}$ and  $\iota_i=\frac{1}{\sqrt{n}}\left(\frac{n}{o}\right)^{\frac{1}{\mr{M}}}$, using $o$.
	Assume that  $\mr{C}_{\veclambda},\mr{C}_{\mr{H}} $ and $\mr{B} $ to be sufficiently large and that $\sqrt{\frac{s\log(ed/s)}{n}}+\sqrt{\frac{\log(1/\delta)}{n}}+\sqrt{\frac{o}{n}}$ is sufficiently small.
	Then, we have
	\begin{align}
	\label{ine:c:main-1}
			\mbb{P}\left(\|\vecbeta^*-\hat{\vecbeta}\|_\Sigma +|\mu^*-\hat{\mu}|\lesssim  \sigma^* L^5\left(\frac{\rho}{\kappa}\sqrt{\frac{\log(ed/s)}{n}}+\left(\frac{o}{n}\right)^{1-\frac{1}{\mr{M}}}+\sqrt{\frac{\log(1/\delta)}{n}}\right)\right)\geq 1-14\delta.
	\end{align} 
	If Assumption \ref{a:noise:EI} is replaced by Assumption \ref{a:noise:EI2}, and we set  $\iota_i=\sqrt{\log(en/o)/n}$ using $o$,  we have
	\begin{align}
		\label{ine:c:main-2}
		\mbb{P}\left(\|\vecbeta^*-\hat{\vecbeta}\|_\Sigma +|\mu^*-\hat{\mu}|\lesssim  \sigma^* L^5\left(\frac{\rho}{\kappa}\sqrt{\frac{\log(ed/s)}{n}}+\frac{o}{n}\sqrt{\log \frac{n}{o}}+\sqrt{\frac{\log(1/\delta)}{n}}\right)\right)\geq 1-14\delta.
	\end{align} 
\end{corollary}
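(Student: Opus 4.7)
The plan is to follow the two-stage decomposition used in Theorem \ref{t:main} essentially verbatim, retuning only the intercept stage. Because COEFFICIENTS-ESTIMATION is invoked with the same inputs as in Theorem \ref{t:main} (in particular, it does not read $\{\iota_i\}_{i=1}^n$), the bound on $\hat{\vecbeta}$ furnished by Theorem \ref{t:Huberabs} carries over unchanged. The outlier contribution to $\|\vecbeta^* - \hat{\vecbeta}\|_\Sigma$ is of order $\tfrac{o}{n}\sqrt{\log(n/o)}$, which under the smallness hypothesis is dominated by the improved outlier terms on the right-hand sides of \eqref{ine:c:main-1} and \eqref{ine:c:main-2}; this handles Step~1.

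The substantive step is the intercept stage. I would invoke the general guarantee for INTERCEPT-ESTIMATION (Theorem \ref{t:intercept}): it bounds $|\mu^* - \hat{\mu}|$ by the sum of (i) an error term controlled by $\|\vecbeta^* - \hat{\vecbeta}\|_\Sigma$, (ii) a noise-concentration term of order $\sigma^* \sqrt{\log(1/\delta)/n}$, and (iii) an outlier-adaptive term determined by the weight profile $\{\iota_i\}$ applied to an $o$-sparse target. For constant weights the SLOPE norm collapses to $\iota_1 \|\vectheta\|_1$ and the analysis reduces to the standard $\ell_1$-penalized Huber robust-regression case, whose outlier-bias term is proportional to $\tfrac{o}{\sqrt{n}} \cdot \iota_1$. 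Plugging in the two weight profiles gives
\begin{align*}
\iota_i \equiv \tfrac{1}{\sqrt{n}}\left(\tfrac{n}{o}\right)^{1/\mr{M}} &\;\Longrightarrow\; \tfrac{o}{\sqrt{n}} \cdot \iota_1 \;=\; \tfrac{o}{n}\left(\tfrac{n}{o}\right)^{1/\mr{M}} \;=\; \left(\tfrac{o}{n}\right)^{1-1/\mr{M}}, \\
\iota_i \equiv \sqrt{\tfrac{\log(en/o)}{n}} &\;\Longrightarrow\; \tfrac{o}{\sqrt{n}} \cdot \iota_1 \;=\; \tfrac{o}{n}\sqrt{\log \tfrac{en}{o}},
\end{align*}
which match the improved intercept rates in \eqref{ine:c:main-1} and \eqref{ine:c:main-2}. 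Summing Steps 1 and 2 and taking a union bound over the underlying high-probability events (with total failure probability $14\delta$) completes the argument.

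The main obstacle is verifying that Theorem \ref{t:intercept} specializes cleanly to this $\ell_1$-type control under constant weights. The nonincreasing ordering $\iota_1 \geq \cdots \geq \iota_n$ holds trivially, but the compatibility/cone-type condition underlying the intercept analysis must be re-examined in the degenerate constant case; since $\vectheta^*$ is $o$-sparse and the penalty is uniform in $i$, this should reduce to a standard $\ell_1$-compatibility argument at penalty strength $\iota_1$ and require no new probabilistic tools. A routine bookkeeping check is then that the smallness hypothesis $\sqrt{s\log(ed/s)/n} + \sqrt{\log(1/\delta)/n} + \sqrt{o/n} \ll 1$ is strong enough to absorb the residual coefficient-stage outlier term $\tfrac{o}{n}\sqrt{\log(n/o)}$ into the improved intercept term on each right-hand side.
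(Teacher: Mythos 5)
Your proposal matches the paper's own route: the coefficient stage is reused unchanged from Theorem \ref{t:Huberabs} (with the residual $\tfrac{o}{n}\sqrt{\log(n/o)}$ term absorbed into the improved outlier rate), and the intercept stage is Theorem \ref{t:intercept} (resp.\ Theorem \ref{t:intercept'}) specialized to constant weights, which is exactly the paper's Corollary \ref{c:intercept}; your heuristic $\tfrac{o}{\sqrt{n}}\iota_1$ is precisely the dominant term $r_\iota^2/(\sqrt{n}\iota_n)$ there, and the union bound $5\delta+9\delta=14\delta$ is the same. The compatibility worry you flag is moot, since Theorem \ref{t:intercept} already admits constant (nonincreasing) weight profiles without any new argument.
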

\begin{remark}
	Corollary~\ref{c:main} requires prior knowledge of $o$, but in exchange, yields sharper error bounds.  
	The rates match the minimax lower bounds established in \cite{MinNdaWan2024Robust} up to constant factors.	
\end{remark}

Finally, if we assume only Assumption~\ref{a:noise} for the noise distribution, and if the number of outliers $o$ is known, we can still obtain the following result by applying an alternative version of \textsc{INTERCEPT-ESTIMATION}. 
This variant differs from the versions used in the preceding theorem and corollary, and is based on the median-of-means principle. It is similar in spirit to the estimation procedure used for $\varsigma^*$ in \textsc{COEFFICIENTS-ESTIMATION}. 
The alternative method is detailed in Algorithm~\ref{alg:IEII} in Section~\ref{sec:EI}. Unlike the formulation in \eqref{opt:mu}, it relies on a block-wise median-of-means strategy and closely resembles the \textsc{ROBUST-ABSOLUTE-MOMENT} algorithm (Algorithm~\ref{alg:ram}). It uses the number of blocks $B'$ as the tuning parameter, in place of the sequence $\veciota = \{\iota_i\}_{i=1}^n$.

\begin{theorem}
	\label{t:main:two}
	Suppose Assumptions \ref{a:cov} - \ref{a:noise} hold. 
	Let the tuning parameters be set as $\lambda_i= \sqrt{\log(ed/i)/n}$, and set $\mr{B}'=\max\{128\log(1/\delta), 8o\}$, using the number of the outlier $o$.
	Assume that  $\mr{C}_{\veclambda},\mr{C}_{\mr{H}} $ and $\mr{B} $ sufficiently large and that $\sqrt{\frac{s\log(ed/s)}{n}}+\sqrt{\frac{\log(1/\delta)}{n}}+\sqrt{\frac{o}{n}}$ is sufficiently small.  Then, we have
			\begin{align}
				\mbb{P}\left(\|\vecbeta^*-\hat{\vecbeta}\|_\Sigma +|\mu^*-\hat{\mu}|\lesssim  \sigma^* L^5\left(\frac{\rho}{\kappa}\sqrt{\frac{\log(ed/s)}{n}}+\sqrt{\frac{o}{n}}+\sqrt{\frac{\log(1/\delta)}{n}}\right)\right)\geq 1-6\delta.
			\end{align} 
\end{theorem}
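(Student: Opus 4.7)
The plan is to decompose the total error as $\|\vecbeta^*-\hat\vecbeta\|_\Sigma + |\mu^*-\hat\mu|$ and bound the two parts separately, combining via the union bound. The coefficient estimate $\hat\vecbeta$ is produced by the same NORMALIZING + COEFFICIENTS-ESTIMATION pipeline as in Theorem~\ref{t:main}, and the guarantee for COEFFICIENTS-ESTIMATION (Theorem~\ref{t:Huberabs}) relies only on Assumptions~\ref{a:cov}, \ref{a:cov-RE} together with the first/second moment information of Assumption~\ref{a:noise}; the heavy-tailed or sub-Gaussian tail conditions are never used to control $\vecbeta^*$. Hence on an event $\mc{E}_1$ with $\mbb{P}(\mc{E}_1)\ge 1-5\delta$ we already have
\begin{align*}
\|\vecbeta^*-\hat\vecbeta\|_\Sigma \lesssim \sigma^* L^5\Bigl(\tfrac{\rho}{\kappa}\sqrt{\tfrac{\log(ed/s)}{n}}+\tfrac{o}{n}\sqrt{\log\tfrac{n}{o}}+\sqrt{\tfrac{\log(1/\delta)}{n}}\Bigr),
\end{align*}
which is dominated by the target rate because $\tfrac{o}{n}\sqrt{\log(n/o)}\le\sqrt{o/n}$ (the function $x\mapsto x\log(1/x)$ is bounded by $1/e$ on $(0,1)$).

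The new content is the analysis of the MoM-based intercept estimator from Algorithm~\ref{alg:IEII}. Writing the residuals $W_i := y_i - \vecx_i^\top\hat\vecbeta = \mu^* + \vecx_i^\top(\vecbeta^*-\hat\vecbeta) + \xi_i + \sqrt{n}\theta_i$, partition the $2n$ indices into $B' = \max\{128\log(1/\delta),8o\}$ blocks of size $m = 2n/B'$, form the block means $\bar W^{(b)}$, and output $\hat\mu = \mathrm{median}_b\,\bar W^{(b)}$. If $\hat\vecbeta$ were an arbitrary fixed $\vecbeta$, each of the at least $7B'/8$ blocks that do not touch $\mc{O}$ would satisfy $\mbb{E}\bar W^{(b)} = \mu^*$ and $\mathrm{Var}(\bar W^{(b)}) \le (\|\vecbeta^*-\vecbeta\|_\Sigma^2+\sigma^2)/m$ by independence of $\vecx_i$ and $\xi_i$. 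Chebyshev would place each clean block mean within $C(\|\vecbeta^*-\vecbeta\|_\Sigma+\sigma)/\sqrt{m}$ of $\mu^*$ with probability $\ge 3/4$, after which Hoeffding applied to the indicators of these events over the clean blocks—together with $B'\ge 128\log(1/\delta)$—would yield that with probability $\ge 1-\delta$ a strict majority of all $B'$ block means falls in the good interval, so that the median inherits the same deviation:
\begin{align*}
|\hat\mu-\mu^*| \lesssim (\|\vecbeta^*-\vecbeta\|_\Sigma+\sigma)\sqrt{B'/n} \lesssim \sigma^*\Bigl(\sqrt{\tfrac{o}{n}}+\sqrt{\tfrac{\log(1/\delta)}{n}}\Bigr),
\end{align*}
using $\sqrt{B'/n}\asymp\sqrt{o/n}+\sqrt{\log(1/\delta)/n}$, $\sigma\le\mr{c}_\xi\varsigma^*\lesssim\sigma^*$, and the Step~1 bound to absorb the coefficient-error cross-term.

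The principal obstacle is that $\hat\vecbeta$ depends on the same data $\{y_i,\vecx_i\}_{i=1}^{2n}$ feeding the MoM computation, so the pointwise Chebyshev/Hoeffding argument cannot simply be plugged in at $\vecbeta=\hat\vecbeta$. Consistent with the paper's no-sample-splitting strategy, I would upgrade it to a uniform statement: on an event $\mc{E}_2$ with $\mbb{P}(\mc{E}_2)\ge 1-\delta$, simultaneously for every $\vecbeta$ in a deterministic ball $\mc{B}$ whose radius matches the Step~1 bound, the median of the $B'$ block means of $y_i-\vecx_i^\top\vecbeta$ lies within $C(\|\vecbeta^*-\vecbeta\|_\Sigma+\sigma)\sqrt{B'/n}$ of $\mu^*$. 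This would be established by combining a chaining / Gaussian-complexity bound for the suprema of the centred block-mean processes $\vecbeta\mapsto\frac{1}{m}\sum_{i\in I_b}\vecx_i^\top(\vecbeta^*-\vecbeta)$ over $\mc{B}$ (whose increments are $L$-sub-Gaussian by Assumption~\ref{a:cov} and Definition~\ref{d:l}) with the standard majority-vote argument for MoM, keeping track of how many block means stay in the good interval once one accounts for both the outlier contamination and the uniform concentration slippage. Since $\hat\vecbeta\in\mc{B}$ on $\mc{E}_1$, the uniform deviation applies at $\vecbeta=\hat\vecbeta$, and $\mbb{P}(\mc{E}_1\cap\mc{E}_2)\ge 1-6\delta$ yields the claim.
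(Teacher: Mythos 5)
Your proposal is correct and follows essentially the same route as the paper: combine the $1-5\delta$ guarantee of Theorem~\ref{t:Huberabs} for $\hat{\vecbeta}$ with a \emph{uniform} median-of-means majority argument for $\hat{\mu}$ over a deterministic ball containing $\hat{\vecbeta}-\vecbeta^*$, which is exactly what the paper does via Corollary~\ref{c:main-mean} (symmetrization, Bousquet's inequality, contraction, and Lemma~\ref{l:gw2} playing the role of your chaining/Gaussian-complexity step). The probability accounting ($1-5\delta-\delta=1-6\delta$) and the absorption of $\frac{o}{n}\sqrt{\log(n/o)}$ into $\sqrt{o/n}$ also match the paper's argument.
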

\begin{remark}
	Although Theorem~\ref{t:main:two} assumes only finite variance noise (Assumption~\ref{a:noise}), it achieves an error bound that matches the lower bound from \cite{MinNdaWan2024Robust} up to constants, assuming the number of outliers $o$ is known.
\end{remark}

\section{Estimation of the Sparse Coefficients}
\label{sec:AESC}
In this section, we analyze the procedure \textsc{COEFFICIENTS-ESTIMATION}. For notational simplicity, we continue to write the normalized data as $\{y_i,\vecx_i\}_{i=1}^n$ instead of $\{y'_i,\vecx'_i\}_{i=1}^n$
\begin{algorithm}[H]
	\caption{COEFFICIENTS-ESTIMATION}
	\begin{algorithmic}[1]
			\label{alg:ARSCE}
		\renewcommand{\algorithmicrequire}{\textbf{Input:}}
		\renewcommand{\algorithmicensure}{\textbf{Output:}}
		\REQUIRE Normalized data $\left\{y_i,\vecx_i\right\}_{i=1}^{n}$, initial value of the noise absolute moment $\mr{C}_{\mr{ini}}$,  tuning parameter for Huber loss $\mr{C}_{\mr{H}}$, SLOPE penalty parameters $\mr{C}_{\veclambda} ,\{\lambda_i\}_{i=1}^d$, number of the iteration $\mr{N}_\mr{Iter}$, number of the blocks $\mr{B}$
		\ENSURE  $\hat{\vecbeta}_{\mr{N}_{\mr{Iter}}}$ 
		\STATE  $\hat{\varsigma}_0 \leftarrow \mr{C}_{\mr{ini}}$
		\STATE  for $\mr{l}$ in $1,\cdots, \mr{N}_\mr{Iter}$
		\STATE 	\quad \quad $\hat{\vecbeta}_l \leftarrow \text{PENALIED-HUBER-REGRESSION}(\left\{y_i,\vecx_i\right\}_{i=1}^{n},\mr{C}_{\mr{H}},\hat{\varsigma}_{l-1},\mr{C}_{\veclambda}, \{\lambda_i\}_{i=1}^d)$
		\STATE \quad \quad $\hat{\varsigma}_l \leftarrow \text{ROBUST-ABSOLUTE-MOMENT}(\left\{y_i,\vecx_i\right\}_{i=1}^{n}, \hat{\vecbeta}_l,\mr{B})$
	\end{algorithmic} 
\end{algorithm}
The subroutines \textsc{PENALIZED-HUBER-REGRESSION} and \textsc{ROBUST-ABSOLUTE-MOMENT} are explained in Sections \ref{sec:ESC} and \ref{sec:RAM}, respectively. Section \ref{sec:CE} presents the final result of the iterative procedure.

	\subsection{PENALIZED-HUBER-REGRESSION}
	\label{sec:ESC}

	This step solves a SLOPE-penalized Huber regression problem:
		\begin{algorithm}[H]
			\caption{PENALIZED-HUBER-REGRESSION}
			\label{alg:PHR}
			\begin{algorithmic}
			\REQUIRE{Normalized data $\left\{y_i,\vecx_i\right\}_{i=1}^{n}$, current estimator of the noise absolute moment $\hat{\varsigma}_{\mr{cur}}$,  tuning parameter for Huber loss $\mr{C}_{\mr{H}}$, SLOPE penalty parameters $\mr{C}_{\veclambda} ,\{\lambda_i\}_{i=1}^d$}
			\ENSURE{estimator of the coefficient $\hat{\vecbeta}$}\\
			Let $\hat{\vecbeta}$ be the solution to 
			\begin{align}
				\label{ine:l1-penalized-Huber}
				\argmin_{\vecbeta  \in \mbb{R}^d} \sum_{i=1}^n (\mr{C}_{\mr{H}} \hat{\varsigma}_{\mr{cur}})^2 H\left(\frac{y_i-\vecx_i^\top\vecbeta}{\mr{C}_{\mr{H}} \hat{\varsigma}_{\mr{cur}}}\right)+\mr{C}_{\veclambda}\mr{C}_{\mr{H}} \hat{\varsigma}_{\mr{cur}}\|\vecbeta\|_{\veclambda},
				\end{align}
			 {\bf return}	$\hat{\vecbeta}$
		\end{algorithmic}
		\end{algorithm}
		Define 
		\begin{align}
			\mr{R}_\Sigma &= \frac{\mr{C}_{\veclambda}}{\kappa}\sqrt{\frac{\log(ed/s)}{n}}+L\sqrt{\frac{\log(1/\delta)}{n}}+L\frac{o}{n}\sqrt{\log \frac{n}{o}},\nonumber\\
			\mr{R}_{\veclambda} &= \text{numerical constant}\times \frac{1}{\rho}\times \mr{R}_\Sigma.
		\end{align}
		Then, for the output of the Algorithm \ref{alg:PHR}, we have the following theorem:
		\begin{theorem}
			\label{t:Huber}
			Suppose that Assumptions \ref{a:cov} and \ref{a:cov-RE} hold,  the noise sequence $\{\xi_i\}_{i=1}^n$ is an i.i.d. sequence of random variables drawn from a distribution such that $\mbb{E}|\xi_i|= \varsigma^*$ with $\mbb{E}\xi_i=0$ for $i=1,\cdots,n$, and that 
			for $i=1,\cdots,n$, $\mbb{E}h(\xi_i/A)\vecx_i=\bf{0}$ holds for any fixed variable $A$. Let the tuning parameters be set as $\lambda_i=\sqrt{\frac{\log(ed/i)}{n}}$, and $\mr{C}_{\veclambda}$ and $ \mr{C}_{\mr{H}}$ as
			\begin{align}
				\mr{C}_{\veclambda} = \text{numerical constant}\times L\rho\text{ and } \mr{C}_{\mr{H}}\geq 72L^4.
			\end{align}
			Assume that
			\begin{align}
				\label{ine:t:Huber:abs}
				\varsigma^*\leq \hat{\varsigma}_{\mr{cur}}.
			\end{align}
			Then, for any $\hat{\varsigma}_{\mr{cur}}$, with probability at least $1-3\delta$, the optimal solution of \eqref{ine:l1-penalized-Huber} satisfies
			\begin{align}
				\|\vecbeta^*-\hat{\vecbeta}\|_\Sigma \leq  \mr{C}_{\mr{PHR}}\mr{C}_{\mr{H}} \hat{\varsigma}_{\mr{cur}}\mr{R}_\Sigma \text{ and }\|\vecbeta^*-\hat{\vecbeta}\|_{\veclambda} \leq  \mr{R}_{\veclambda} \times\|\vecbeta^*-\hat{\vecbeta}\|_\Sigma,
			\end{align} 
			where $\mr{C}_{\mr{PHR}}$ is a sufficiently large constant, provided $\mr{R}_\Sigma$ is sufficiently small so that $(\mr{C}_{\mr{PHR}} \mr{R}_\Sigma)^2 \leq 1/48 L^4$.
		\end{theorem}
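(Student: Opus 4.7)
The plan is the standard Huber--SLOPE pipeline: (i) a basic inequality from optimality combined with SLOPE-cone confinement; (ii) an upper bound on a stochastic linear score split into inlier and outlier parts; (iii) a restricted strong convexity lower bound on a truncated quadratic form; (iv) assembly. Write $\mc{L}(\vecbeta)=\sum_i A^2 H((y_i-\vecx_i^\top\vecbeta)/A)$ with $A=\mr{C}_{\mr{H}}\hat{\varsigma}_{\mr{cur}}$ and $\vecdelta=\hat{\vecbeta}-\vecbeta^*$. Optimality of $\hat{\vecbeta}$ gives $\mc{L}(\hat{\vecbeta})-\mc{L}(\vecbeta^*)\leq \mr{C}_{\veclambda} A(\|\vecbeta^*\|_{\veclambda}-\|\hat{\vecbeta}\|_{\veclambda})$, and the SLOPE-norm decomposability lemma of \cite{BelLecTsy2018Slope} together with Assumption~\ref{a:cov-RE} confines $\vecdelta$ to a cone on which $\|\vecdelta\|_{\veclambda} \lesssim (c_{\mr{RE}}/\sqrt{\kappa})\sqrt{\sum_{i\leq s}\lambda_i^2}\,\|\vecdelta\|_\Sigma$.

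For the loss side I use the Bregman identity
\[
\mc{L}(\vecbeta^*+\vecdelta)-\mc{L}(\vecbeta^*) = -A\sum_i h(\xi_i/A)\,\vecx_i^\top\vecdelta + A^2\sum_i D_H\!\left(\tfrac{\xi_i-\vecx_i^\top\vecdelta}{A},\tfrac{\xi_i}{A}\right),
\]
where $D_H$ is the Bregman divergence of $H$. The linear score splits over inliers and outliers. For inliers, $h(\xi_i/A)\vecx_i$ is mean-zero by the hypothesis $\mbb{E}h(\xi_i/A)\vecx_i=\mathbf{0}$ and $L$-subGaussian since $|h|\leq 1$, so a weighted dual-SLOPE concentration in the spirit of \cite{BelLecTsy2018Slope} yields contributions matching the first two terms of $\mr{R}_\Sigma$ (one scaling with $\|\vecdelta\|_{\veclambda}$ and then converted by the cone, one with $\|\vecdelta\|_\Sigma$). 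For outliers, the bound $|h|\leq 1$ together with an $L$-subGaussian operator-norm estimate on $\sum_{i\in\mc{O}}\vecx_i\vecx_i^\top$ produces the third, $L(o/n)\sqrt{\log(n/o)}$, term.

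The hard part is the quadratic lower bound on the Bregman sum. Using the pointwise inequality $D_H(u,v)\geq \tfrac{1}{2}(u-v)^2\mathbb{I}_{|v|\leq 1/2,\,|u-v|\leq 1/2}$ this reduces to showing $\sum_i \mathbb{I}_{|\xi_i|\leq A/2,\,|\vecx_i^\top\vecdelta|\leq A/2}(\vecx_i^\top\vecdelta)^2 \gtrsim (n/L^{O(1)})\|\vecdelta\|_\Sigma^2$ uniformly over $\vecdelta$ in the cone. The factor $\mathbb{I}_{|\xi_i|\leq A/2}$ is handled by Markov---$\mbb{P}(|\xi_i|>A/2)\leq 2\varsigma^*/A\leq 2/\mr{C}_{\mr{H}}$, which is small because $\mr{C}_{\mr{H}}\geq 72L^4$---followed by a Bernstein bound on the count of surviving indices. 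The factor $\mathbb{I}_{|\vecx_i^\top\vecdelta|\leq A/2}$ is handled by Mendelson's small-ball method: the hypothesis $(\mr{C}_{\mr{PHR}}\mr{R}_\Sigma)^2\leq 1/(48L^4)$ keeps $\|\vecdelta\|_\Sigma$ small relative to $A$, so for each fixed $\vecdelta$ a constant fraction of $\vecx_i^\top\vecdelta$ lies in $[-A/2,A/2]$ with constant probability, and uniformity over the SLOPE cone is then obtained by a weighted multiplier/chaining argument adapted from the SLOPE literature. The delicate point is that the truncation level $A$ is itself random through $\hat{\varsigma}_{\mr{cur}}$; the one-sided hypothesis $\varsigma^*\leq\hat{\varsigma}_{\mr{cur}}$ allows sandwiching $A$ between two deterministic values so that a single high-probability event covers every realization. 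Assembling the basic inequality, the two linear-score bounds, and the restricted quadratic lower bound yields a quadratic inequality of the form $(c\kappa n/L^{O(1)})\|\vecdelta\|_\Sigma^2 \lesssim A n\,\mr{R}_\Sigma\,\|\vecdelta\|_\Sigma$, which solves to $\|\vecdelta\|_\Sigma\lesssim \mr{C}_{\mr{PHR}} A \mr{R}_\Sigma$, and the companion bound $\|\vecdelta\|_{\veclambda}\leq \mr{R}_{\veclambda}\|\vecdelta\|_\Sigma$ then follows from the cone inclusion of the first paragraph.
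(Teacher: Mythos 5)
Your architecture matches the paper's: a basic inequality plus SLOPE-cone confinement, a score bound split over inliers and outliers, and a small-ball lower bound on the truncated quadratic form (the paper's Propositions~\ref{p:main}, \ref{p:ub}, \ref{p:oq} and \ref{p:sc}). Your Markov step $\mbb{P}(|\xi_i|>A/2)\leq 2\varsigma^*/A\leq 2/\mr{C}_{\mr{H}}$ is exactly how the paper exploits $\mr{C}_{\mr{H}}\geq 72L^4$. However, there are two genuine gaps.

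First, localization. Your restricted lower bound on the Bregman sum requires $|\vecx_i^\top\vecdelta|\leq A/2$ on a constant fraction of indices, which you justify by saying that $(\mr{C}_{\mr{PHR}}\mr{R}_\Sigma)^2\leq 1/(48L^4)$ ``keeps $\|\vecdelta\|_\Sigma$ small relative to $A$.'' That hypothesis constrains $\mr{R}_\Sigma$, not $\|\hat{\vecbeta}-\vecbeta^*\|_\Sigma$; a priori the minimizer could sit far outside the ball where the small-ball bound is valid, so applying the restricted strong convexity directly to $\vecdelta$ assumes the conclusion. The paper closes this by contradiction: it rescales to $\Delta_\eta=\eta\Delta$ with $\|\Delta_\eta\|_\Sigma$ exactly equal to the target radius $r_\Sigma(\hat{\varsigma}_{\mr{cur}})$ (intermediate value theorem), transfers the optimality condition to the rescaled point via the convexity inequality $\eta Q'(\eta)\leq\eta Q'(1)$ (Lemma F.2 of \cite{FanLiuSunZha2018Lamm}), and derives a contradiction on that sphere. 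Your write-up needs this step or an equivalent.

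Second, uniformity in the data-dependent scale. You propose to handle the randomness of $A=\mr{C}_{\mr{H}}\hat{\varsigma}_{\mr{cur}}$ by ``sandwiching $A$ between two deterministic values.'' There is no deterministic upper bracket available: the theorem is claimed for \emph{any} $\hat{\varsigma}_{\mr{cur}}\geq\varsigma^*$ (in the algorithm it is an arbitrary initialization $\mr{C}_{\mr{ini}}$, later a data-dependent output of \textsc{ROBUST-ABSOLUTE-MOMENT}), and the quantities $h(\xi_i/A)\vecx_i^\top\vecv$ are not monotone in $A$, so a two-point bracket does not transfer. The paper instead proves Propositions~\ref{p:ub} and~\ref{p:sc} \emph{uniformly over} $\tau$: it chains jointly in $(\vecv,\tau)$ using the Lipschitz estimate $|h(a/b)-h(a/b')|\leq|b-b'|/b$ (Lemma~\ref{l:huber_smooth}) to control increments in the scale direction, and then peels in both $\tau$ and $1/\tau$ to remove the compactness constraints. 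This uniformity is the technical point that lets the iterative scheme avoid union bounds over iterations, and it is missing from your argument. A smaller imprecision: your ``operator-norm estimate on $\sum_{i\in\mc{O}}\vecx_i\vecx_i^\top$'' must be a cone-restricted bound (in $\|\cdot\|_{\veclambda}$ and $\|\cdot\|_\Sigma$) combined with a union over all $\binom{n}{o}$ adversarial outlier patterns — that union is where the $\sqrt{\log(n/o)}$ factor comes from — since an unrestricted operator norm is vacuous when $d\gg n$.
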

		To clarify the implications of the theorem, we provide several remarks below.
		\begin{remark}
			Theorem \ref{t:Huber}  holds for any $\hat{\varsigma}_{\mr{cur}}(\geq \varsigma^*)$, so we do not need to apply a union bound over iterations.
		\end{remark}
		\begin{remark}
			\label{r:co}
			If we set $\hat{\varsigma}_{\mr{cur}}=\varsigma^*$ and $\mr{C}_{\mr{H}}=72L^4$, we have
			\begin{align}
				\|\vecbeta^*-\hat{\vecbeta}\|_\Sigma \lesssim   \varsigma^* L^5\left(\frac{\rho}{\kappa}\sqrt{\frac{\log(ed/s)}{n}}+\sqrt{\frac{\log(1/\delta)}{n}}+\frac{o}{n}\sqrt{\log \frac{n}{o}}\right).
			\end{align}
			A similar bound is obtained in \cite{Chi2020Erm}, which also studies $\ell_1$-penalized Huber regression under the assumption $\mu^*=0$, as in our setting. In \cite{Chi2020Erm}, the derived error bound is:
			\begin{align}
			\label{ine:prehuber}
				\|\vecbeta^*-\hat{\vecbeta}\|_\Sigma \lesssim   \varsigma^* L^5\left(\frac{\rho}{\kappa}\sqrt{\frac{\log(ed/s)}{n}}+\sqrt{\frac{\log(1/\delta)}{n}}+\frac{o}{n}\right).
			\end{align}
			At first glance, this may appear sharper than our result due to the absence of the $\sqrt{\log(n/o)}$ factor.
			However, if we modify our setting to align with the assumptions of \cite{Chi2020Erm}, in particular by assuming that outliers are randomly located instead of being adversarially placed, then our bound also eliminates the $\sqrt{\log(n/o)}$ factor. This is because the additional logarithmic term in our analysis arises from the need to control all possible placements of $o$ outliers among $n$ observations. If such enumeration is not required, as in the case of randomly placed outliers, the logarithmic term does not appear.
			Moreover, the tuning parameter for the $\ell_1$-penalty in \cite{Chi2020Erm} is set as
			\begin{align}
				\label{ine:tuningpro}
				\lambda = O\left(\varsigma^* L\left(\rho\sqrt{\frac{\log(d/s)}{n}}+\kappa \frac{o}{ n\sqrt{s}}+\kappa \sqrt{\frac{\log(1/\delta)}{sn}}\right)\right),
			\end{align}
			which explicitly depends on the unknown values of $o$, $s$, and $\delta$.  In contrast, the tuning parameter required in Theorem~\ref{t:Huber} is independent of these quantities, making it easier to set in practice. This simplification is made possible by our refined analysis of the Huber loss.
		\end{remark}

		\begin{remark}
			\label{r:const}
			Theorem~\ref{t:Huber} involves three tuning parameters: $\mr{C}_{\mr{PHR}}$, $L$, and $\rho$. 
			The constant $\mr{C}_{\mr{PHR}}$ primarily reflects bounds derived from generic chaining techniques \cite{Dir2015Tail} and the majorizing measure \cite{Tal2014Upper}. Obtaining tight estimates for such constants remains a significant challenge both theoretically and practically.
			The parameters $L$ and $\rho$ are determined by the distribution of the observed data $\{\vecx_i\}_{i=1}^n$. mong these, estimating $\rho$ from data is relatively straightforward. 
			In contrast, estimating  $L$ tends to be more difficult. 
			These quantities are common to related works, so studying their estimation may provide interesting insights.
			These quantities appear frequently in related literature, and a better understanding of how to estimate them from finite samples could provide valuable directions for future research.
		\end{remark}

		\begin{remark}
			\label{r:const2}
			If an estimate $\hat{\varsigma}_{\mr{cur}}$ sufficiently close to the true noise level $\varsigma^*$ is available, Theorem~\ref{t:Huber} yields a sharp error bound. However, if $\hat{\varsigma}_{\mr{cur}}$ is set too large, the resulting bound becomes less informative. Even in such cases, the iterative structure of Algorithm~\ref{alg:ARSCE} mitigates this issue: the \textsc{ROBUST-ABSOLUTE-MOMENT} procedure enables refinement of the noise estimate $\hat{\varsigma}_{\mr{cur}}$ at each iteration, gradually bringing it closer to the true value $\varsigma^*$.
		\end{remark}

	\subsection{ROBUST-ABSOLUTE-MOMENT}
	\label{sec:RAM}
	In this step, we use a variant of the median-of-means estimator \cite{AloMatSze1996Space} to robustly estimate the absolute first moment of the noise. The procedure is defined as follows:
		\begin{algorithm}[H]
			\caption{ROBUST-ABSOLUTE-MOMENT}
			\label{alg:ram}
			\begin{algorithmic}
			\REQUIRE{Normalized data $\left\{y_i,\vecx_i\right\}_{i=1}^n$, current estimator of the coefficient $\hat{\vecbeta}$, and the number of the blocks $\mr{B}$}
			\ENSURE{estimator of the absolute noise $\hat{\varsigma}$}\\
			Let $\tilde{\varsigma}$ be the median of 
			\begin{align}
					\frac{1}{N}\sum_{i=(k-1)N+1}^{kN} |y_i-\ \vecx_i^\top \hat{\vecbeta}|,\quad k=1,\cdots,\mr{B},
				\end{align}
				where $k=1,\cdots,\mr{B}$ and $\mr{B}\mr{N}=n$\\
			 {\bf return}	$\hat{\varsigma}= 8\tilde{\varsigma}$

		\end{algorithmic}
		\end{algorithm}

		The following theorem guarantees the performance of Algorithm \ref{alg:ram}.
		\begin{theorem}
			\label{t:abs}
			Suppose that Assumption \ref{a:cov} holds, and that Assumption \ref{a:noise} holds for the normalized data. Furthermore, assume that $\{\vecx_i\}_{i=1}^n$ is independent of $\{\xi_i\}_{i=1}^n$. 
			Let $\hat{\varsigma}_{\mr{cur}}>0$ be any current estimate of the absolute noise moment. Define the following set:
			\begin{align}
				\label{vabs}
				\mr{V}_{\mr{abs}}:=\left\{\vecv \in \mbb{R}^d\,\mid\, \|\vecv\|_\Sigma\leq  \mr{C}_{\mr{PHR}}\mr{C}_{\mr{H}} \hat{\varsigma}_{\mr{cur}}\mr{R}_\Sigma,\,\|\vecv\|_{\veclambda}\leq \mr{R}_{\veclambda} \mr{C}_{\mr{PHR}}\mr{C}_{\mr{H}} \hat{\varsigma}_{\mr{cur}}\mr{R}_\Sigma\right\}.
			\end{align}
			Assume  that the current estimator satisfies  $\hat{\vecbeta}-\vecbeta^* \in \mr{V}_{\mr{abs}}$, and that the constant $\mr{C}_{\mr{B}}$  is set large enough to satisfy  
			\begin{align}
				\max\left\{2\times 24^2, \left(\frac{24\times 160}{\mr{c}_{\xi}}\left(\frac{\mr{c}_\xi}{20}+L^4 \right)\right)^2, \left(\frac{8\times 24}{\mr{c}_{\xi}} (\mr{c}_\xi+20L^4)\right)^2\right\}\leq \mr{C}_{\mr{B}}.
			\end{align}
			Let the number of blocks be set as $\mr{B} = \frac{n}{\mr{c}_\xi^2 \mr{C}_{\mr{B}}}$, and assume $\sqrt{\mr{B}}\geq 48$. We also assume that $\mr{B}$ and $(\mr{N}: =)\mr{c}_\xi^2 \mr{C}_{\mr{B}}$ are integers.
			Additionally, assume the following conditions hold:
			\begin{align}
			\rho \mr{R}_{\veclambda}\leq 1,\, L\left(\mr{R}_{\veclambda}+\sqrt{s\log(ed/s)/n}\right)&\leq 1/(64\sqrt{e} \mr{C}_{\mr{B}}\mr{c}_\xi),\nonumber\\
			L\left(\rho \mr{R}_{\veclambda}+\sqrt{\log(1/\delta)/n}\right)\text{ and } (L^5+\mr{c}_\xi )/\sqrt{n}&\text{ are sufficiently small},\nonumber\\
			\max\{o/n,\log(1/\delta)/n\}&\leq 1/ \mr{c}_\xi^2\mr{C}^2_{\mr{B}}.\nonumber
			\end{align}
			Then, with probability at least $1-2\delta$, the output of Algorithm \ref{alg:ram} ($\hat{\varsigma}$) satisfies the following properties: 
			\begin{align}
				\label{ine:abs-0}
				\varsigma^* \leq \hat{\varsigma} \leq 8\max\{ 3\mr{C}_{\mr{PHR}}\mr{C}_{\mr{H}}\hat{\varsigma}_{\mr{cur}}\mr{R}_\Sigma , 25 L^4 \varsigma\}.
			\end{align}
		\end{theorem}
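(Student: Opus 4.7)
The plan is to run a median-of-means (MoM) analysis on the block averages $\bar{z}_k := \frac{1}{\mr{N}}\sum_{i \in I_k}|y_i - \vecx_i^\top \hat{\vecbeta}|$, where $I_k$ indexes block $k$. Setting $\vecv := \hat{\vecbeta}-\vecbeta^*$, for any non-outlier index $i$ one has $|y_i - \vecx_i^\top \hat{\vecbeta}| = |\xi_i - \vecx_i^\top \vecv|$, so for any clean block (containing no outliers) $\bar{z}_k$ is an empirical mean of $\mr{N}$ i.i.d. copies of $|\xi_i - \vecx_i^\top \vecv|$ with common expectation $\mu_{\vecv} := \mbb{E}|\xi_i - \vecx_i^\top \vecv|$. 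At most $o$ of the $\mr{B}$ blocks are outlier-contaminated and are absorbed by the median.

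First I would sandwich $\mu_{\vecv}$ between multiples of $\varsigma^*$ and $\|\vecv\|_\Sigma$. The triangle inequality gives $\mu_{\vecv} \leq \varsigma^* + \mbb{E}|\vecx_i^\top \vecv| \leq \varsigma^* + \|\vecv\|_\Sigma$. For the lower bound, conditioning on $\vecx_i$ and applying Jensen to $t \mapsto |t - \vecx_i^\top \vecv|$ yields $\mu_{\vecv} \geq \mbb{E}|\vecx_i^\top \vecv|$, while the reverse triangle inequality yields $\mu_{\vecv} \geq \varsigma^* - \mbb{E}|\vecx_i^\top \vecv|$. The maximum of these two lower bounds is at least $\varsigma^*/2$ unconditionally, and a Paley--Zygmund / H\"older estimate (using $(\mbb{E}X^2)^3 \leq (\mbb{E}|X|)^2 \mbb{E}X^4$ together with the $L$-subGaussian control $\mbb{E}(\vecx_i^\top \vecv)^4 \lesssim L^4\|\vecv\|_\Sigma^4$) gives $\mbb{E}|\vecx_i^\top \vecv| \gtrsim \|\vecv\|_\Sigma/L^2$. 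Hence $\mu_{\vecv}$ is comparable to $\max(\varsigma^*,\, \|\vecv\|_\Sigma/L^2)$ up to absolute constants.

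Next, Chebyshev on a clean block gives $\mbb{P}(|\bar{z}_k - \mu_{\vecv}| \geq \mu_{\vecv}/4) \leq 16(\sigma^2 + \|\vecv\|_\Sigma^2)/(\mr{N}\mu_{\vecv}^2)$. Using $\sigma \leq \mr{c}_\xi \varsigma^*$ together with the two-sided bounds on $\mu_{\vecv}$, this ratio is bounded by a constant multiple of $(\mr{c}_\xi^2 + L^4)/\mr{N}$, which is pushed below $1/16$ once $\mr{N}=\mr{c}_\xi^2\mr{C}_{\mr{B}}$ with $\mr{C}_{\mr{B}}$ large enough--this is precisely what the explicit lower bound on $\mr{C}_{\mr{B}}$ (of order $L^8/\mr{c}_\xi^2$) enforces. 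Let $G$ denote the set of clean blocks, so $|G| \geq \mr{B}-o$, and let $Z_k := \mathbb{I}[|\bar{z}_k - \mu_{\vecv}| > \mu_{\vecv}/4]$; Hoeffding applied to $\{Z_k\}_{k \in G}$ gives $\sum_{k \in G} Z_k \leq |G|/8$ with probability $\geq 1-\delta$, using the implied $\mr{B} \gtrsim \log(1/\delta)$ from the hypothesis $\log(1/\delta)/n \leq 1/(\mr{c}_\xi^2 \mr{C}_{\mr{B}}^2)$. Combined with the at most $o$ outlier blocks, the count of blocks with $\bar{z}_k \in [3\mu_{\vecv}/4,\, 5\mu_{\vecv}/4]$ exceeds $\mr{B}/2$, so $\tilde{\varsigma} \in [3\mu_{\vecv}/4,\, 5\mu_{\vecv}/4]$ and $\hat{\varsigma} = 8\tilde{\varsigma} \in [6\mu_{\vecv},\, 10\mu_{\vecv}]$. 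The lower bound then reads $\hat{\varsigma} \geq 6\mu_{\vecv} \geq 3\varsigma^* \geq \varsigma^*$, while the upper bound reads $\hat{\varsigma} \leq 10\varsigma^* + 10\|\vecv\|_\Sigma \leq 10\varsigma^* + 10\mr{C}_{\mr{PHR}}\mr{C}_{\mr{H}}\hat{\varsigma}_{\mr{cur}}\mr{R}_\Sigma$ via $\vecv \in \mr{V}_{\mr{abs}}$, and both are absorbed into the target $8\max\{3\mr{C}_{\mr{PHR}}\mr{C}_{\mr{H}}\hat{\varsigma}_{\mr{cur}}\mr{R}_\Sigma,\, 25L^4\varsigma^*\}$ using $L \geq 1$.

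The main obstacle is that $\vecv$ is data-dependent, so per-block Chebyshev / Hoeffding cannot be literally applied to a single $\vecv$. The remedy is to replace pointwise concentration by uniform concentration over $\vecv \in \mr{V}_{\mr{abs}}$, controlling $\sup_{\vecv \in \mr{V}_{\mr{abs}}}|\tfrac{1}{\mr{N}}\sum_{i \in I_k}|\xi_i - \vecx_i^\top \vecv| - \mu_{\vecv}|$ by empirical process techniques tailored to the SLOPE ball geometry (akin to those used in the proof of Theorem \ref{t:Huber}). This is exactly where the stated conditions $L(\mr{R}_{\veclambda} + \sqrt{s\log(ed/s)/n}) \leq 1/(64\sqrt{e}\mr{C}_{\mr{B}}\mr{c}_\xi)$ and the smallness of $L(\rho \mr{R}_{\veclambda} + \sqrt{\log(1/\delta)/n})$ enter: they guarantee that the uniform deviation fits inside the per-block $\mu_{\vecv}/4$ budget on \emph{every} clean block simultaneously. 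The remaining bookkeeping item is tracking powers of $L$ through the Paley--Zygmund step so as to land precisely at $25L^4\varsigma^*$ in the final statement, which is routine subGaussian calculus.
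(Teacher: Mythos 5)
Your skeleton is the same as the paper's: write the block averages as means of $|\xi_i-\vecx_i^\top\vecv|$ with $\vecv=\hat{\vecbeta}-\vecbeta^*$, sandwich $\mu_{\vecv}=\mbb{E}|\xi-\vecx^\top\vecv|$ between multiples of $\varsigma^*$ and of $\|\vecv\|_\Sigma$, absorb the at most $o$ contaminated blocks into the median, and finish by case analysis on whether $\|\vecv\|_\Sigma$ dominates $\varsigma^*$. Your moment estimates are fine and in places cleaner than the paper's: the conditional Jensen bound $\mu_{\vecv}\geq\mbb{E}|\vecx^\top\vecv|$ combined with the reverse triangle inequality gives $\mu_{\vecv}\geq\varsigma^*/2$ more directly than the symmetrization/Khintchine route of Lemma~\ref{l:L1+}, and your Paley--Zygmund step yields $\mbb{E}|\vecx^\top\vecv|\gtrsim\|\vecv\|_\Sigma/L^2$, which is sharper than the $1/(72L^4)$ of Lemma~\ref{l:L1L2} and only improves the constants.

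The genuine gap is in your remedy for the data-dependence of $\vecv$. You propose to control $\sup_{\vecv\in\mr{V}_{\mr{abs}}}\bigl|\tfrac{1}{\mr{N}}\sum_{i\in I_k}|\xi_i-\vecx_i^\top\vecv|-\mu_{\vecv}\bigr|$ and to have this uniform deviation ``fit inside the per-block $\mu_{\vecv}/4$ budget on every clean block simultaneously.'' This cannot work: each block contains only $\mr{N}=\mr{c}_\xi^2\mr{C}_{\mr{B}}$ samples, a constant independent of $n$. Even for a single \emph{fixed} $\vecv$, a block average of $\mr{N}=O(1)$ i.i.d. variables with standard deviation of order $\sigma$ deviates from its mean by more than $\mu_{\vecv}/4\sim\varsigma^*$ with constant probability, so among $\mr{B}\sim n$ clean blocks a nonvanishing fraction will violate the budget with probability tending to one. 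Your own Chebyshev step already concedes a per-block failure probability of $1/16$; the median tolerates this precisely because it only needs \emph{most} blocks to be good, not all of them. The uniformity over $\vecv$ therefore has to be applied to the bad-block \emph{count} $\sum_k\mathbb{I}[|\bar z_k(\vecv)-\mu_{\vecv}|>\text{threshold}]$, not to each block's deviation. This is what the paper's Proposition~\ref{p:main-abs} does: it replaces the indicator by a Lipschitz surrogate $\chi$, bounds its expectation by Markov/Chebyshev as you do, and then controls the supremum over $\vecv$ of the centered count via symmetrization, the contraction principle, Lemma~\ref{l:gw2}, and Bousquet's inequality, showing the count stays below $\tfrac{11}{24}\mr{B}$ uniformly. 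Without this (or an equivalent device), the step from fixed-$\vecv$ Chebyshev--Hoeffding to the data-dependent $\hat{\vecbeta}$ does not go through, and this is exactly the step Remark~\ref{r:mmre} identifies as the main analytical difficulty of the theorem.
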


		To clarify the implications of the theorem, we provide several remarks below.
		\begin{remark}
			Theorem~\ref{t:abs} holds for any choice of $\hat{\varsigma}_{\mr{cur}}$ and any $\hat{\vecbeta}$ such that $\hat{\vecbeta}-\vecbeta^*\in \mr{V}_{\mr{abs}}$. Therefore, it is not necessary to apply a union bound across iterations.
		\end{remark}
		\begin{remark}
			\label{r:mmre}
			The use of the median-of-means estimator for estimating noise moments, as in Algorithm~\ref{alg:ram}, is also studied in \cite{ComColNdaTsy2021Adaptive}. However, our setting introduces additional analytical challenges because $\hat{\varsigma}_{\mr{cur}}$ depends on the data. This dependency necessitates the use of empirical process theory. Related techniques have also been developed for robust mean estimation \cite{LugMen2019Sub} and covariance matrix estimation \cite{MenZhi2020Robust}.
		\end{remark}
		\begin{remark}
			\label{r:mmin}
			With high probability, the output $\hat{\varsigma}$ satisfies $\hat{\varsigma}\geq \varsigma^*$, which is crucial for valid use as a tuning parameter in subsequent Huber regression steps.
			Moreover, if $\hat{\varsigma}\leq 8 \times 25 L^4 \varsigma^*$, then the estimate remains within a constant factor of the true value. 
			Additionally, if  $\hat{\varsigma}\leq 8 \times 3\mr{C}_{\mr{PHR}}\mr{C}_{\mr{H}}\hat{\varsigma}_{\mr{cur}}\mr{R}_\Sigma$  and $24 \mr{C}_{\mr{PHR}}\mr{C}_{\mr{H}}\mr{R}_\Sigma < 1$, then  $\hat{\varsigma}$ is strictly smaller than $\hat{\varsigma}_{\mr{cur}}$, implying convergence of the estimate across iterations.
		\end{remark}
		\begin{remark}
			\label{r:mm}
			The constant $\mr{c}_\xi$, which serves as an upper bound on the ratio between the standard deviation and the absolute moment of the noise, is typically unknown. 
			However, even if $\mr{c}_\xi$ is overestimated when setting the block count, the final result remains valid, provided that conditions such as
			\begin{align}
				L\left(\mr{R}_{\veclambda}+\sqrt{s\log(ed/s)/n}\right)\leq 1/(64\sqrt{e} \mr{C}_{\mr{B}}\mr{c}_\xi),\quad \max\{o/n,\log(1/\delta)/n\}\leq 1/ \mr{c}_\xi^2\mr{C}^2_{\mr{B}}
			\end{align}
			are satisfied, and $ (L^5+\mr{c}_\xi )/\sqrt{n}$ being sufficiently small. The required bounds for being `sufficiently small' depend on tools such as generic chaining \cite{Dir2015Tail} and majorizing measures \cite{Tal2014Upper}, as also discussed in Remark~\ref{r:const}.
		\end{remark}

		\subsection{COEFFICIENTS-ESTIMATION}
		\label{sec:CE}
		We now analyze the final output of the iterative procedure \textsc{COEFFICIENTS-ESTIMATION}. The following theorem provides an upper bound on the estimation error of the final coefficient estimator  $\hat{\vecbeta}_{\mr{N}_{\mr{Iter}}}$.
		\begin{theorem}
			\label{t:Huberabs}
			Suppose the assumptions in Theorems \ref{t:Huber} and \ref{t:abs} hold. 
			Additionally, assume  the following conditions are satisfies: 
			\begin{align}
			\mr{C}_{\mr{ini}}&\geq \varsigma^*,\quad 24  \mr{C}_{\mr{PHR}}\mr{C}_{\mr{H}} \mr{R}_\Sigma<1,\nonumber\\
			 \mr{N}_{\mr{Iter}}&\text{ is sufficiently large so that }\mr{C}_{\mr{ini}} (24  \mr{C}_{\mr{PHR}}\mr{C}_{\mr{H}} \mr{R}_\Sigma)^{\mr{N}_{\mr{Iter}}-1}\leq  200L^4 \varsigma^*.\nonumber
		\end{align}
			Then, with probability at least $1-5\delta$, the output of \textsc{COEFFICIENTS-ESTIMATION} satisfies the following bound:
			\begin{align}
				\label{ine:t:Huberabs}
				\|\vecbeta^*-\hat{\vecbeta}_{\mr{N}_{\mr{Iter}}} \|_\Sigma \leq 200L^4 \mr{C}_{\mr{PHR}}\mr{C}_{\mr{H}}\varsigma^* \mr{R}_\Sigma\,\text{ and }\,\|\vecbeta^*-\hat{\vecbeta}_{\mr{N}_{\mr{Iter}}} \|_{\veclambda} \leq   \mr{R}_{\veclambda}\times \|\vecbeta^*-\hat{\vecbeta}_{\mr{N}_{\mr{Iter}}} \|_\Sigma.
			\end{align}
		\end{theorem}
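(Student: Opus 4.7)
The plan is to combine Theorems~\ref{t:Huber} and~\ref{t:abs} inductively, exploiting the uniformity emphasized in the remarks following each theorem: a single event of probability at least $1-3\delta$ realizes the conclusion of Theorem~\ref{t:Huber} simultaneously for every admissible $\hat{\varsigma}_{\mr{cur}}\ge \varsigma^*$, and a single event of probability at least $1-2\delta$ realizes the conclusion of Theorem~\ref{t:abs} simultaneously for every $\hat{\vecbeta}$ with $\hat{\vecbeta}-\vecbeta^*\in \mr{V}_{\mr{abs}}$. Union-bounding these two events yields a ``good event'' $\mc{E}$ of probability at least $1-5\delta$ on which all iterates, regardless of $\mr{N}_{\mr{Iter}}$, satisfy the two deterministic recursions. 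No union bound over iterations is needed, which is the crucial structural point.

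On $\mc{E}$ I will prove by induction on $l\ge 1$ the compound statement: (a) $\hat{\varsigma}_{l-1}\ge \varsigma^*$; (b) $\|\vecbeta^*-\hat{\vecbeta}_l\|_\Sigma \le \mr{C}_{\mr{PHR}}\mr{C}_{\mr{H}}\hat{\varsigma}_{l-1}\mr{R}_\Sigma$ together with the companion SLOPE bound, so that $\hat{\vecbeta}_l-\vecbeta^*$ lies in $\mr{V}_{\mr{abs}}$ with current value $\hat{\varsigma}_{l-1}$; and (c) $\varsigma^* \le \hat{\varsigma}_l \le \max\{24\,\mr{C}_{\mr{PHR}}\mr{C}_{\mr{H}}\mr{R}_\Sigma \cdot \hat{\varsigma}_{l-1},\ 200L^4\varsigma^*\}$. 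The base case $l=1$ follows from $\hat{\varsigma}_0=\mr{C}_{\mr{ini}}\ge \varsigma^*$; item (b) is Theorem~\ref{t:Huber} applied with $\hat{\varsigma}_{\mr{cur}}=\hat{\varsigma}_{l-1}$, and item (c) is Theorem~\ref{t:abs} applied with the $\hat{\vecbeta}_l$ just produced (using the $8\max$-form of its conclusion). The lower bound $\hat{\varsigma}_l\ge \varsigma^*$ in (c) furnishes (a) for the next iteration, closing the induction.

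It remains to track the recursion in (c). Set $\alpha:=24\,\mr{C}_{\mr{PHR}}\mr{C}_{\mr{H}}\mr{R}_\Sigma$, which is strictly less than $1$ by assumption, and $F:=200L^4\varsigma^*$. Unwinding (c) gives $\hat{\varsigma}_l\le \max\{\alpha^l \mr{C}_{\mr{ini}},\, F\}$: as long as the first branch of the max dominates, $\hat{\varsigma}_l$ contracts geometrically at rate $\alpha$; once the second branch activates, the floor is absorbing because $\alpha F<F$, so $\hat{\varsigma}_m\le F$ for all subsequent $m$. The assumption $\mr{C}_{\mr{ini}}\alpha^{\mr{N}_{\mr{Iter}}-1}\le 200L^4\varsigma^*$ is precisely what guarantees the floor has been reached by iteration $\mr{N}_{\mr{Iter}}-1$, yielding $\hat{\varsigma}_{\mr{N}_{\mr{Iter}}-1}\le 200L^4\varsigma^*$.

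A final application of Theorem~\ref{t:Huber} with $\hat{\varsigma}_{\mr{cur}}=\hat{\varsigma}_{\mr{N}_{\mr{Iter}}-1}$ then gives
\begin{align*}
\|\vecbeta^*-\hat{\vecbeta}_{\mr{N}_{\mr{Iter}}}\|_\Sigma \le \mr{C}_{\mr{PHR}}\mr{C}_{\mr{H}}\hat{\varsigma}_{\mr{N}_{\mr{Iter}}-1}\mr{R}_\Sigma \le 200L^4\mr{C}_{\mr{PHR}}\mr{C}_{\mr{H}}\varsigma^*\mr{R}_\Sigma,
\end{align*}
and the companion SLOPE-norm bound is immediate from the second conclusion of the same theorem. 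The main difficulty in this argument is conceptual rather than computational: one must verify that neither Theorem~\ref{t:Huber} nor Theorem~\ref{t:abs} requires a fresh random event at each iteration, since otherwise the probability budget would deteriorate with $\mr{N}_{\mr{Iter}}$; this is exactly what the uniformity of those theorems provides, and everything else reduces to the deterministic contraction-and-floor analysis above.
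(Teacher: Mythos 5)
Your proposal is correct and follows essentially the same route as the paper: a single good event (uniform over $\hat{\varsigma}_{\mr{cur}}$ and over $\hat{\vecbeta}$ with $\hat{\vecbeta}-\vecbeta^*\in\mr{V}_{\mr{abs}}$, so no union bound over iterations), followed by an induction alternating Theorems~\ref{t:Huber} and~\ref{t:abs} and a contraction-with-absorbing-floor analysis of the recursion $\hat{\varsigma}_l\leq\max\{24\,\mr{C}_{\mr{PHR}}\mr{C}_{\mr{H}}\mr{R}_\Sigma\,\hat{\varsigma}_{l-1},\,200L^4\varsigma^*\}$. The paper spells out the first two iterations before stating the induction as Lemma~\ref{l:induction}, but the content is identical to your streamlined version, and your $3\delta+2\delta=5\delta$ probability accounting matches the theorem's statement.
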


		\begin{remark}
			\label{r:coefinalresult}
			The error bound established in Theorem~\ref{t:Huberabs} aligns with the result in Theorem~\ref{t:Huber}, up to multiplicative constants and the dependence on $L$. 
			This indicates that our iterative estimator attains the same statistical performance as the one that requires prior knowledge of $\varsigma^*$.
			Furthermore, our analysis leverages tools from empirical process theory, allowing us to avoid both sample splitting and repeated union bounds across iterations. As a result, it is possible to initialize the procedure with a conservatively large value $\mr{C}_{\mr{ini}}$ and simply choose the number of iterations $\mr{N}_{\mr{Iter}}$ large enough to ensure convergence.
		\end{remark}

\section{Estimation of the Intercept}
\label{sec:EI}
In this section, we present a method for estimating the intercept $\mu^*$ in the regression model, based on the data  $\{\vecx_i,y_i\}_{i=1}^{2n}$ and a previously obtained estimator of the coefficient vector $\hat{\vecbeta}_{\mr{N}_{\mr{Iter}}}$.
Our goal is to construct a procedure that remains robust in the presence of outliers and heavy-tailed noise.
For simplicity of the notation, we describe the procedure using only the first $n$ observations, $\{\vecx_i,y_i\}_{i=1}^{n}$, and assume that the number of outliers in this subset is $o$. Let $\hat{\vecbeta}$ denote the final output of the \textsc{COEFFICIENTS-ESTIMATION} algorithm from Section~\ref{sec:AESC}, $\hat{\vecbeta}_{\mr{N}_{\mr{Iter}}}$.

We estimate the intercept using a one-dimensional version of the robust penalized regression problem introduced earlier \eqref{ine:l1-penalized-Huber-re2}. Specifically, we consider the following algorithm:
\begin{algorithm}[H]
	\caption{INTERCEPT-ESTIMATION}
	\label{alg:I}
	\begin{algorithmic}
	\REQUIRE{Input data $\left\{y_i,\vecx_i\right\}_{i=1}^n$, estimator of the coefficient $\hat{\vecbeta}$, tuning parameter for penalization loss $\{\iota_i\}_{i=1}^n$}
	\ENSURE{estimator of the intercept $\hat{\mu}$}\\
	Let $\hat{\mu}$ be the solution to 
	\begin{align}
		\label{ine:SQ}
		\argmin_{\mu \in \mbb{R},\,\vectheta \in \mbb{R}^n} \sqrt{\sum_{i=1}^n (y_i-\vecx_i^\top\hat{\vecbeta}-\mu-\sqrt{n} \theta)^2}+80\|\vectheta\|_{\veciota}
		\end{align}
	 {\bf return}	$\hat{\mu}$   

\end{algorithmic}
\end{algorithm}
To state the estimation guarantee, define the following quantities:
\begin{align}
	r_\iota:=\sqrt{\sum_{i=1}^{o+\max\{\log(1/\delta),o\}} \iota_i^2},\quad
	r_\delta: = \sqrt{\frac{\log (1/\delta)}{n}},\quad  r_\Sigma := 200L^4 \mr{C}_{\mr{PHR}}\mr{C}_{\mr{H}} \mr{R}_\Sigma.
\end{align}
Also, under the results from Section~\ref{sec:CE}, we can assume that, with probability at least $1-5\delta$, the estimation error of $\hat{\vecbeta}$ satisfies:
\begin{align}
	\|\vecbeta^*-\hat{\vecbeta} \|_\Sigma \leq \sigma^* r_\Sigma\,\text{ and }\,\|\vecbeta^*-\hat{\vecbeta} \|_{\veclambda} \leq   \sigma^*  \mr{R}_{\veclambda}r_\Sigma .
\end{align}

\begin{theorem}[Heavy-tailed case]
	\label{t:intercept}
	Define $r_{\mr{M}}$ as
	\begin{align}
		r_{\mr{M}}=\left(\frac{\max\{\log(1/\delta),o\}}{n}\right)^{1-\frac{1}{\mr{M}}}.
	\end{align}
	Assume that $\|\hat{\vecbeta}-\vecbeta^*\|_\Sigma\leq \sigma^* r_\Sigma$ and $\|\hat{\vecbeta}-\vecbeta^*\|_{\veclambda}\leq \sigma^* \mr{R}_{\veclambda}r_{\Sigma}$ hold, and that Assumptions \ref{a:cov} and \ref{a:noise:EI} hold.  Let the sequence  $\veciota=(\iota_1,\cdots,\iota_n) $ satisfy  $\iota_1\geq  \cdots \geq\iota_n>0$, and suppose 
	\begin{align}
		\label{ine:noiseslope}
		\iota_{ \max\{\log(1/\delta),o\}}= \frac{1}{\sqrt{n}}\left(\frac{n}{ \max\{\log(1/\delta),o\}}\right)^{1/\mr{M}}.
	\end{align}
	Then, if  $r_{\mr{M}}, r_\iota, r_\Sigma$  and $\rho \mr{R}_{\veclambda}$ are sufficiently small, with probability at least $1-9\delta$, we have  
	\begin{align}
		\label{ine:noiseslope1-2}
		&|\mu^*-\hat{\mu}| \lesssim c_{\mr{M}}\sigma^*  \left(r_{\mr{M}}\left(r_\iota+r_\delta+r_{\mr{M}}+Lr_\Sigma \right) +\frac{   \left(r_\iota+r_\delta+r_{\mr{M}}+L(r_\Sigma+\rho \mr{R}_{\veclambda}r_\Sigma)\right)^2}{\sqrt{n}\iota_{n}}\right).
	\end{align}	
\end{theorem}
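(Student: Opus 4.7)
The plan is to reduce the intercept estimation to an analysis of a one-dimensional analogue of a square-root SLOPE problem, in which the unknown scalar $\mu$ plays the role of an unpenalized regression parameter while the outlier vector $\vectheta$ plays the role of a SLOPE-penalized sparse object supported on $\mc{O}$ with $|\mc{O}|=o$. Setting $w_i = \xi_i + \vecx_i^\top(\vecbeta^*-\hat{\vecbeta})$, the objective in \eqref{ine:SQ} can be rewritten, after substituting $u = \mu-\mu^*$ and $\vecv = \vectheta-\vectheta^*$, as $\sqrt{\sum_i(w_i-u-\sqrt{n}v_i)^2}+80\|\vectheta^*+\vecv\|_{\veciota}$. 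Evaluating at the optimum $(\hat{u},\hat{\vecv})$ and at the reference $(0,\mathbf{0})$ produces the basic inequality
\begin{align*}
\sqrt{\textstyle\sum_i(w_i-\hat{u}-\sqrt{n}\hat{v}_i)^2}+80\|\vectheta^*+\hat{\vecv}\|_{\veciota} \leq \sqrt{\textstyle\sum_i w_i^2}+80\|\vectheta^*\|_{\veciota}.
\end{align*}

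Because $\mu$ is unpenalized, the first-order condition with respect to $\mu$ yields the exact identity
\begin{align*}
\hat{u} \;=\; \frac{1}{n}\sum_{i=1}^n w_i \;-\; \frac{1}{\sqrt{n}}\sum_{i=1}^n \hat{v}_i,
\end{align*}
which reduces the intercept problem to controlling (i) the sample mean $\tfrac{1}{n}\sum w_i$ and (ii) the outlier adjustment $\tfrac{1}{\sqrt{n}}\sum \hat{v}_i$, which by the SLOPE--$\ell_1$ inequality is at most $\|\hat{\vecv}\|_{\veciota}/(\sqrt{n}\iota_n)$. For (i), $\tfrac{1}{n}\sum\xi_i$ is handled by a heavy-tailed tail bound, contributing the $r_\delta$ term, while $\tfrac{1}{n}\sum\vecx_i^\top(\vecbeta^*-\hat{\vecbeta})$ is controlled uniformly over the admissible cone $\{\vecbeta:\|\vecbeta-\vecbeta^*\|_\Sigma\leq\sigma^* r_\Sigma,\ \|\vecbeta-\vecbeta^*\|_{\veclambda}\leq\sigma^*\mr{R}_{\veclambda}r_\Sigma\}$ certified by Theorem~\ref{t:Huberabs}, contributing the $Lr_\Sigma$ and $\rho\mr{R}_{\veclambda}r_\Sigma$ pieces via a sub-Gaussian empirical-process argument.

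For the bound on $\|\hat{\vecv}\|_{\veciota}$, the standard SLOPE basic-inequality analysis applies. Linearizing the square root via $\sqrt{a}-\sqrt{b}\geq (a-b)/(2\sqrt{a})$, and invoking the restricted SLOPE triangle inequality $\|\vectheta^*+\hat{\vecv}\|_{\veciota}-\|\vectheta^*\|_{\veciota} \geq \|\hat{\vecv}_{\mc{O}^c}\|_{\veciota_{|\mc{O}|+1:n}} - \|\hat{\vecv}_{\mc{O}}\|_{\veciota_{1:|\mc{O}|}}$, produces a cone condition on $\hat{\vecv}$. Its stochastic driver is the SLOPE-dual quantity $\sup_{\vecv\neq\mathbf{0}}|\langle\vecw,\vecv\rangle|/\|\vecv\|_{\veciota}$, which is bounded by matching the ordered residual magnitudes $|w|_i^\sharp$ against $\iota_i$; the prescribed weight $\iota_{\max\{\log(1/\delta),o\}} = n^{-1/2}(n/\max\{\log(1/\delta),o\})^{1/\mr{M}}$ together with the $\mr{M}$-th moment assumption delivers the combination $r_\iota+r_\delta+r_{\mr{M}}$. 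The empirical scale $\sqrt{\sum w_i^2}/\sqrt{n}$ is shown to be $\lesssim c_{\mr{M}}\sigma^*$ with high probability via moment concentration and the small-ball lower bound of Assumption~\ref{a:noise:EI}. Combining these pieces yields $\|\hat{\vecv}\|_{\veciota}\lesssim c_{\mr{M}}\sigma^*(r_\iota+r_\delta+r_{\mr{M}}+Lr_\Sigma)$.

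The main obstacle is the extraction of the quadratic contribution $(\,\cdots\,)^2/(\sqrt{n}\iota_n)$ appearing in \eqref{ine:noiseslope1-2}. Linearizing the square root leaves a residual quadratic $\|\hat{u}\mathbf{1}+\sqrt{n}\hat{\vecv}\|_2^2/(2\sqrt{\sum w_i^2})$ which is not eliminated by the FOC for $\mu$; absorbing it into the left-hand side demands a restricted-strong-convexity step on the SLOPE cone, and it is precisely this step that produces the $1/(\sqrt{n}\iota_n)$ factor multiplying the squared stochastic terms, together with the additional $\rho\mr{R}_{\veclambda}r_\Sigma$ piece arising from the dual SLOPE seminorm applied to $\vecbeta^*-\hat{\vecbeta}$. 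The linear contribution $r_{\mr{M}}(r_\iota+r_\delta+r_{\mr{M}}+Lr_\Sigma)$ arises as the product of the bound on $\|\hat{\vecv}\|_{\veciota}$ with the effective magnitude $r_{\mr{M}}$ of $|\hat{\vecv}|^\sharp$ at index $\max\{\log(1/\delta),o\}$, which reflects the truncation level of the SLOPE analysis at the heavy-tail scale. Plugging both contributions into the intercept identity yields \eqref{ine:noiseslope1-2}. Throughout, the dependence of $\hat{\vecbeta}$ on the same sample is handled by uniform empirical-process control over the cone of Theorem~\ref{t:Huberabs}, so that no sample splitting is required, in keeping with the overall strategy of the paper.
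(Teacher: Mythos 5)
Your high-level reduction --- substituting $w_i=\xi_i+\vecx_i^\top(\vecbeta^*-\hat{\vecbeta})$, invoking the basic inequality, and exploiting the exact first-order identity
$\hat u = \tfrac1n\sum_i w_i - \tfrac{1}{\sqrt n}\sum_i\hat v_i$
from the unpenalized $\mu$ direction --- is the right starting point and matches the paper's Steps~1--2. You also correctly identify that the heavy-tail behaviour must enter through the prescribed scaling of $\iota_{\max\{\log(1/\delta),o\}}$ and, implicitly, that the largest $\max\{\log(1/\delta),o\}$ noise values have to be reclassified as outliers before the small-ball and moment bounds can kick in.

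The proposal breaks down, however, in how the two leading terms of \eqref{ine:noiseslope1-2} are supposed to emerge. You bound $\tfrac{1}{\sqrt n}\sum_i\hat v_i$ wholesale by $\|\hat\vecv\|_{\veciota}/(\sqrt n\,\iota_n)$. That is too coarse: this gives only the quadratic contribution and discards the linear contribution $r_{\mr M}(r_\iota+r_\delta+r_{\mr M}+Lr_\Sigma)$. In the paper this sum is split, with Cauchy--Schwarz applied to the top $\max\{\log(1/\delta),o\}$ coordinates of $\hat\vecv$ to produce $\sqrt{\max\{\log(1/\delta),o\}/n}\,\|\hat\vecv\|_2$, and only the tail is handled by dividing the $\veciota$-norm by $\iota_n$. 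This forces you to obtain, separately, a bound on $\|\hat\vecv\|_2$, and that is where your argument has no working mechanism: you claim the quadratic term and the $1/(\sqrt n\,\iota_n)$ factor come from absorbing a residual quadratic $\|\hat u\mathbf 1+\sqrt n\,\hat\vecv\|_2^2/(2\sqrt{\sum w_i^2})$ via restricted strong convexity on the SLOPE cone, but that leftover quadratic appears on the \emph{helping} side of the linearization and cannot by itself produce a multiplicative $1/(\sqrt n\,\iota_n)$ factor times a \emph{square} of the stochastic rates. What actually produces the squared term in the paper is a bootstrap: (i) optimality in $\vectheta$ plus the dual SLOPE bound gives a quadratic inequality in $\|\hat\vecv\|_2$ yielding $\|\hat\vecv\|_2\lesssim c_{\mr M}\sigma^*(r_\iota+r_\delta+r_{\mr M}+r_{\Sigma,\lambda})$; (ii) the SLOPE cone inequality from the basic inequality converts that to $\|\hat\vecv\|_{\veciota}\lesssim c_{\mr M}\sigma^*(r_\iota+r_\delta+r_{\mr M}+r_{\Sigma,\lambda})^2$; (iii) only then does $\|\hat\vecv\|_{\veciota}/(\sqrt n\,\iota_n)$ give the squared term, while $\sqrt{\max\{\log(1/\delta),o\}/n}\,\|\hat\vecv\|_2$ gives the linear one. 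Your description collapses steps~(i)--(iii) into a single RSC absorption that does not yield the bound in this form, and your claim that the linear term is ``the product of the bound on $\|\hat\vecv\|_{\veciota}$ with the effective magnitude of $|\hat\vecv|^\sharp$ at index $\max\{\log(1/\delta),o\}$'' is not a valid inference: it couples two quantities that are each controlled only after the $\ell_2$ bound is established. Without explicitly extracting $\|\hat\vecv\|_2\lesssim c_{\mr M}\sigma^*(r_\iota+r_\delta+r_{\mr M}+r_{\Sigma,\lambda})$ from the first-order condition in $\vectheta$, the proposal does not close.
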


\begin{theorem}[SubGaussian case]
	\label{t:intercept'}
	Define $r_{\mr{M}}$ as
	\begin{align}
		r_{\mr{M}}=\frac{o}{n}\sqrt{\log \frac{n}{o}}.
	\end{align}
	Assume that $\|\hat{\vecbeta}-\vecbeta^*\|_\Sigma\leq \sigma^* r_\Sigma$ and $\|\hat{\vecbeta}-\vecbeta^*\|_{\veclambda}\leq \sigma^* \mr{R}_{\veclambda}r_{\Sigma}$ hold, and that Assumptions \ref{a:cov} and \ref{a:noise:EI2} hold.  Let the sequence  $\veciota=(\iota_1,\cdots,\iota_n) $ satisfy  $\iota_1\geq  \cdots \geq\iota_n>0$, and suppose 
	\begin{align}
		\label{ine:noiseslope'}
		\iota_{\max\{o,\log(1/\delta)\}}=\sqrt{\frac{\log(en/(\max\{o,\log(1/\delta)\}))}{n}}.
	\end{align}
	Then, if  $r_{\mr{M}}, r_\iota, r_\Sigma$  and $\rho \mr{R}_{\veclambda}$ are sufficiently small, with probability at least $1-9\delta$, we have  
	\begin{align}
		\label{ine:noiseslope'1-2}
		&|\mu^*-\hat{\mu}| \lesssim c_{\mr{M}}\sigma^*  \left(r_{\mr{M}}\left(r_\iota+r_\delta+r_{\mr{M}}+Lr_\Sigma \right) +\frac{   \left(r_\iota+r_\delta+r_{\mr{M}}+L(r_\Sigma+\rho \mr{R}_{\veclambda}r_\Sigma)\right)^2}{\sqrt{n}\iota_{n}}\right).
	\end{align}	
\end{theorem}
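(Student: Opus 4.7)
The plan is to follow the same architecture as the proof of Theorem \ref{t:intercept} and only swap in sub-Gaussian noise bounds to match the new weights $\iota_i=\sqrt{\log(en/i)/n}$. First I would substitute $\hat{\vecbeta}$ into the objective of Algorithm \ref{alg:I}, reparametrize by $\tilde\mu:=\mu-\mu^*$ and $\vecdelta:=\vectheta-\vectheta^*$, and set $\eta_i:=\xi_i+\vecx_i^\top(\vecbeta^*-\hat\vecbeta)$ so that residuals take the form $\eta_i+\sqrt n\,\theta_i^*$. Optimality of $(\hat\mu,\hat\vectheta)$ tested against $(\mu^*,\vectheta^*)$ then gives the square-root basic inequality
\begin{align*}
\sqrt{\textstyle\sum_i(\eta_i-\tilde\mu-\sqrt n\,\delta_i)^2}+80\|\hat\vectheta\|_{\veciota}\le\sqrt{\textstyle\sum_i\eta_i^2}+80\|\vectheta^*\|_{\veciota},
\end{align*}
and the standard SLOPE decomposition on the support of $\vectheta^*$ (of size $o$) yields a cone-type condition on the largest $o$ entries of $\vecdelta$ versus the rest, as in \cite{BelCheWan2011Square,Der2018Improved} adapted to the SLOPE and outlier setting.

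Next I would extract the first-order optimality condition in $\mu$, namely $\sum_i(\eta_i-\tilde\mu-\sqrt n\,\hat\delta_i)=0$, which rearranges to
\begin{align*}
\tilde\mu=\frac1n\sum_i\xi_i+\frac1n\sum_i\vecx_i^\top(\vecbeta^*-\hat\vecbeta)-\frac1{\sqrt n}\sum_i(\hat\theta_i-\theta_i^*).
\end{align*}
The first term is controlled by sub-Gaussian Hoeffding giving $\sigma^* c_{\mr{M}} r_\delta$; the second is controlled by Assumption \ref{a:cov} together with $\|\vecbeta^*-\hat\vecbeta\|_\Sigma\le\sigma^* r_\Sigma$, yielding a $L\sigma^* r_\Sigma/\sqrt n$ piece absorbed into the residual; the third is bounded via $\|\vecdelta\|_1\le\|\vecdelta\|_{\veciota}/\iota_n$. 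Therefore the main remaining task is to bound $\|\vecdelta\|_{\veciota}$ through the basic inequality.

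The crux is controlling the dual SLOPE process $\sup_{\vecv\neq 0}\sum_iv_i\eta_i/\|\vecv\|_{\veciota}$ under Assumption \ref{a:noise:EI2}. With $\iota_i=\sqrt{\log(en/i)/n}$, a union bound over levels $k=1,\dots,n$ using the sub-Gaussian tail of $\xi_i$ gives that simultaneously $|\xi^\sharp_{(k)}|\lesssim\sigma^* c_{\mr{M}}\sqrt{\log(en/k)}$ with probability at least $1-\delta$, matching $\sqrt n\,\iota_k$ exactly. The rearrangement inequality then yields the sub-Gaussian analogue of the heavy-tailed dual bound used in Theorem~\ref{t:intercept}, producing the rate $r_{\mr{M}}=(o/n)\sqrt{\log(n/o)}$ for the outlier-dependent contribution; the covariate piece $\sum_iv_i\vecx_i^\top(\vecbeta^*-\hat\vecbeta)$ is handled by $L$-subGaussian concentration together with $\|\vecbeta^*-\hat\vecbeta\|_{\veclambda}\le\sigma^*\mr{R}_{\veclambda}r_\Sigma$, producing the $L(r_\Sigma+\rho\mr{R}_{\veclambda}r_\Sigma)$ terms. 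Combining these with a square-root lower-curvature argument on $\sqrt{\sum_i(\eta_i-\tilde\mu-\sqrt n\,\delta_i)^2}$ around the reference $\sqrt{\sum_i\eta_i^2}$ yields $\|\vecdelta\|_{\veciota}\lesssim\sigma^* c_{\mr{M}}(r_\iota+r_\delta+r_{\mr{M}}+L(r_\Sigma+\rho\mr{R}_{\veclambda}r_\Sigma))$, and plugging this into the first-order identity for $\tilde\mu$ produces the advertised bound.

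The main obstacle is establishing the order-statistic matching $|\xi^\sharp_{(k)}|\lesssim\sigma^* c_{\mr{M}}\sqrt{\log(en/k)}$ uniformly over $k$ and a quantitative restricted lower isometry for the square-root loss with SLOPE weights, both simultaneously valid for the data-dependent estimator $\hat\vecbeta$ through its assumed norm bounds. Once these empirical-process pieces are in place, the remaining calculations track the proof of Theorem~\ref{t:intercept} with the only substantive change being the replacement of the $(o/n)^{1-1/\mr{M}}$ factor by $(o/n)\sqrt{\log(n/o)}$ in $r_{\mr{M}}$.
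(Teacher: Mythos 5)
Your outline follows the same architecture as the paper's proof (which is obtained from the proof of Theorem \ref{t:intercept} by swapping Lemma \ref{l:noiseconcentration} for Lemma \ref{l:noiseconcentration2}): basic inequality from optimality plus the SLOPE decomposition of Lemma A.1 of \cite{BelLecTsy2018Slope}, the first-order condition in $\mu$ to express $\Delta_\mu$ through $\frac{1}{\sqrt n}\sum_i(\hat\theta_i-\theta_i^*)$, the first-order condition in $\vectheta$ to close the loop on $\|\hat\vectheta-\vectheta^*\|_2$, and empirical-process control of the noise and covariate cross terms via the assumed $\|\cdot\|_\Sigma$ and $\|\cdot\|_{\veclambda}$ bounds on $\hat\vecbeta-\vecbeta^*$.

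There is one concrete gap in the step you yourself flag as the main obstacle. The uniform order-statistic bound $|\xi^\sharp_{(k)}|\lesssim \sigma^* c_{\mr{M}}\sqrt{\log(en/k)}$ simultaneously for \emph{all} $k=1,\dots,n$ with probability $1-\delta$ is not available: for small $k$ (say $k=1$) the maximum of $n$ sub-Gaussians exceeds $\sqrt{\log(en)}$ by an additive $\sqrt{\log(1/\delta)}$ with probability of order $\delta$, so the matching with $\sqrt n\,\iota_k$ fails exactly where the dual-norm bound $\bigl|\frac{1}{\sqrt n}\sum_i\xi_i(\hat\theta_i-\theta_i^*)\bigr|\le \max_k\frac{|\xi^\sharp_k|}{\mr{C}_{\veciota}\iota_k}\,\mr{C}_{\veciota}\|\hat\vectheta-\vectheta^*\|_{\veciota}$ needs it. The paper circumvents this by reclassifying the $\max\{o,\log(1/\delta)\}$ largest noise realizations as additional outliers, so that the ratio bound \eqref{ine:noiseconcentration1} is only required for indices $k\geq\max\{o,\log(1/\delta)\}$ (Lemma A.1 of \cite{MinNdaWan2024Robust}); the price is that the effective support of $\vectheta^*$ grows to $o+\max\{o,\log(1/\delta)\}$, which is why $r_\iota$ sums $\iota_i^2$ up to that index and why the centered sum $\frac1n\sum_i\xi_i$ acquires a truncation-bias term of order $r_{\mr M}$ in addition to $r_\delta$ (Lemma \ref{l:noiseconcentration2}), rather than the pure Hoeffding rate you state. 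Your outline keeps $\vectheta^*$ only $o$-sparse, so as written the dual control of the noise term and the value of $r_\iota$ do not come out consistent with the theorem; incorporating the reclassification device repairs this and the rest of your argument then tracks the paper.
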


\begin{remark}
	In the heavy-tailed case, if we choose $\iota_i= \frac{1}{\sqrt{n}}\left(\frac{n}{i}\right)^{1/\mr{M}}$ for $i=1,\cdots,n$, we have $\sqrt{n}\iota_n=1$ and
	\begin{align}
		\label{ine:rem:interceptorder1}
		r_\iota \lesssim  \left(\frac{o}{n}\right)^{\frac{1}{2}-\frac{1}{\mr{M}}}+ \left(\frac{\log(1/\delta)}{n}\right)^{\frac{1}{2}-\frac{1}{\mr{M}}}.
	\end{align}
	This implies that the dominant term about $o$ and $\log (1/\delta)$ in the error bound scales as $r_\iota^2$, consistent with the rate in \eqref{ine:intro-pr2} for heavy-tailed noise.
	
	In the sub-Gaussian case, if we choose $\iota_i= \sqrt{\frac{\log(en/i)}{n}}$ for $i=1,\cdots,n$, we have $\sqrt{n} \iota_n=1$ and 
	\begin{align}
		r_\iota \lesssim \sqrt{\frac{o}{n}\log \frac{n}{o}}+\left(\frac{\log(1/\delta)}{n}\right)^\frac{1}{4}.
	\end{align}
	Thus, the error bound matches \eqref{ine:intro-pr1} up to constants.
\end{remark}

Combining Theorems \ref{t:intercept}, \ref{t:intercept'} and \ref{t:Huber}, we have Theorem \ref{t:Huberabs}.
If we know  $o$, then Theorems~\ref{t:intercept} and~\ref{t:intercept2} immediately yield the following corollary, which implies Corollary \ref{c:main}.
\begin{corollary}
	\label{c:intercept}
	Assume that  $\|\hat{\vecbeta}-\vecbeta^*\|_\Sigma\leq \sigma^* r_\Sigma$ and $\|\hat{\vecbeta}-\vecbeta^*\|_{\veclambda}\leq \sigma^* \mr{R}_{\veclambda} r_{\Sigma}$. Let the sequence $\veciota=(\iota_1,\cdots,\iota_n) $ satisfy $\iota_i= \frac{1}{\sqrt{n}}\left(\frac{n}{o}\right)^{1/\mr{M}}$ for $i=1,\cdots,n$. 	
	Then, if $\left(\frac{\max\{\log(1/\delta),o\}}{n}\right)^{1-\frac{1}{\mr{M}}}, r_\iota, r_\Sigma$  and $\rho \mr{R}_{\veclambda}$ are sufficiently small, with probability at least $1-9\delta$, we have  
	\begin{align}
		|\mu^*-\hat{\mu}|&\lesssim c_{\mr{M}}\sigma^* \times\left(L r_\Sigma +		\sqrt{\frac{\max\{\log(1/\delta),o\}}{n}} \sqrt{\frac{o}{n}}\left(\frac{n}{o}\right)^{\frac{1}{\mr{M}}}+ \frac{o}{n}\left(\frac{n}{o}\right)^{\frac{1}{\mr{M}}}\right).
	\end{align}	
	If we assume Assumption \ref{a:noise:EI2} instead of Assumption \ref{a:noise:EI},  let the sequence  $\veciota=(\iota_1,\cdots,\iota_n) $ satisfy  $\iota_i= \sqrt{\frac{\log(en/o)}{n}}$ for $i=1,\cdots,n$. Then, if $\frac{o}{n}\sqrt{\log \frac{n}{o}}, r_\iota, r_\Sigma$  and $\rho \mr{R}_{\veclambda}$ are sufficiently small, with probability at least $1-9\delta$,  we have
	\begin{align}
		|\mu^*-\hat{\mu}|&\lesssim c_{\mr{M}}\sigma^* \times\left(Lr_\Sigma +		\sqrt{\frac{\max\{\log(1/\delta),o\}}{n}} \sqrt{\frac{o}{n}}\sqrt{\log \frac{en}{o}}+ \frac{o}{n}\sqrt{\log \frac{en}{o}}\right).
	\end{align}
\end{corollary}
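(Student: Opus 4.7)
The plan is to derive the corollary by direct substitution of the constant sequence $\veciota$ --- available when $o$ is known --- into Theorems~\ref{t:intercept} and~\ref{t:intercept'}, and then to simplify. Monotonicity $\iota_1 \geq \cdots \geq \iota_n > 0$ is trivial since $\veciota$ is constant. Condition \eqref{ine:noiseslope} (respectively \eqref{ine:noiseslope'}) holds as an equality at index $\max\{\log(1/\delta),o\}$ when $o \geq \log(1/\delta)$ and over-satisfies it otherwise; since the proof of Theorem~\ref{t:intercept} only uses a lower bound on $\iota$ at that index, over-penalization does not break it, and the hypothesis can be read in its $\geq$ form.

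For the heavy-tailed case with $\iota_i \equiv \frac{1}{\sqrt{n}}(n/o)^{1/\mr{M}}$, I would compute
\begin{align*}
\sqrt{n}\iota_n = \left(\frac{n}{o}\right)^{1/\mr{M}}, \qquad r_\iota \lesssim \sqrt{\frac{\max\{\log(1/\delta),o\}}{n}}\left(\frac{n}{o}\right)^{1/\mr{M}},
\end{align*}
and plug these, together with $r_\mr{M} = (\max\{\log(1/\delta),o\}/n)^{1-1/\mr{M}}$, into Theorem~\ref{t:intercept}. The linear part immediately delivers the $Lr_\Sigma$ summand. Expanding the square $(r_\iota + r_\delta + r_\mr{M} + L r_\Sigma + L\rho\mr{R}_\veclambda r_\Sigma)^2/(\sqrt{n}\iota_n)$, the dominant contribution $r_\iota^2/(\sqrt{n}\iota_n)$ rewrites exactly as the two remaining summands in \eqref{ine:c:main-1} after splitting $\max\{\log(1/\delta),o\}/n = \sqrt{\max\{\log(1/\delta),o\}/n}\cdot\sqrt{\max\{\log(1/\delta),o\}/n}$ and bounding the second factor by $\sqrt{o/n}$ or absorbing it into the $(o/n)(n/o)^{1/\mr{M}}$ term. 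Cross-terms ($r_\mr{M}r_\iota$, $r_\mr{M}^2$, $r_\mr{M}r_\delta$, and the $\rho\mr{R}_\veclambda r_\Sigma$ pieces) are of lower order under the smallness hypotheses on $r_\mr{M}, r_\iota, r_\Sigma, \rho\mr{R}_\veclambda$ and can be absorbed. The sub-Gaussian case follows identically through Theorem~\ref{t:intercept'} with $(n/o)^{1/\mr{M}}$ replaced by $\sqrt{\log(en/o)}$ throughout.

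The main obstacle is the bookkeeping in the regime $o \leq \log(1/\delta)$, where the scalings of $r_\iota^2/(\sqrt{n}\iota_n)$ and $r_\delta^2/(\sqrt{n}\iota_n)$ simultaneously switch from $o/n$ to $\log(1/\delta)/n$. The stray $r_\delta^2/(\sqrt{n}\iota_n)$ contribution is absorbed by exploiting that $r_\Sigma$ already carries an $L\sqrt{\log(1/\delta)/n}$ summand (see the definition of $\mr{R}_\Sigma$ in Section~\ref{sec:ESC}), so that $Lr_\Sigma$ dominates the residual once the smallness assumptions are applied. Careful tracking of this regime, together with the union of the $1-5\delta$ event from \textsc{COEFFICIENTS-ESTIMATION} and the $1-9\delta$ event from the intercept theorems, is where the quantitative content of the proof concentrates.
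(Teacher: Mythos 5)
Your proposal matches the paper's treatment: the corollary is stated there as an immediate consequence of Theorems~\ref{t:intercept} and~\ref{t:intercept'}, obtained exactly by substituting the constant sequence $\veciota$, computing $r_\iota$ and $\sqrt{n}\iota_n$, and absorbing the lower-order and $o\leq\log(1/\delta)$-regime terms as you describe. The only nit is that the corollary conditions deterministically on the stated bounds for $\hat{\vecbeta}$, so no union with the $1-5\delta$ event of \textsc{COEFFICIENTS-ESTIMATION} is needed here (that union occurs only when deriving Corollary~\ref{c:main}).
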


When only Assumption~\ref{a:noise} is available and an upper bound $O$ of $o$ is known, we use a median-of-means version for intercept estimation:
\begin{algorithm}[H]
	\caption{INTERCEPT-ESTIMATION (for only finite variance noise)}
	\label{alg:IEII}
	\begin{algorithmic}
	\REQUIRE{Data $\left\{y_i,\vecx_i\right\}_{i=1}^n$, estimator of the coefficient $\hat{\vecbeta}$, and  the number of the blocks $\mr{B}$}
	\ENSURE{estimator of the intercept $\hat{\mu}$}\\
	Let $\hat{\mu}$ be the median of 
	\begin{align}
			\frac{1}{N}\sum_{i=(k-1)N+1}^{kN} (y_i-\ \vecx_i^\top \hat{\vecbeta}),\quad k=1,\cdots,\mr{B},
		\end{align}
		where $k=1,\cdots,\mr{B}$ and $\mr{B}\mr{N}=n$\\
	 {\bf return}	$\hat{\mu}$   

\end{algorithmic}
\end{algorithm}
\begin{theorem}
	\label{t:intercept2}
	Suppose that Assumptions \ref{a:cov} and  \ref{a:noise} hold, and that $\mr{B}=\max\{128\log(1/\delta), 8O\}$ and $\rho\mr{R}_{\veclambda}\leq 1$ hold.
	Let $\hat{\vecbeta}$ satisfy 
	\begin{align}
		\hat{\vecbeta}-\vecbeta^* \in  \mr{V}_{\mr{mean}}:=\left\{\vecv \in \mbb{R}^d\,\mid\, \|\vecv\|_\Sigma\leq  \sigma^* r_\Sigma ,\,\|\vecv\|_{\veclambda}\leq \sigma^* \mr{R}_{\veclambda} r_\Sigma\right\}.
	\end{align}
	Then, we have
	\begin{align}
		\mbb{P}\left(\left|\hat{\mu}-\mu^*\right| \lesssim \sigma^*(1+r_\Sigma )\left(\sqrt{\frac{O}{n}}+\sqrt{\frac{\log(1/\delta)}{n}}\right)+L\sigma^* r_\Sigma\right)\geq 1-\delta.
	\end{align}
\end{theorem}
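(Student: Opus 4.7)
The plan is to exploit robustness of the median. Writing $\hat\mu = \mr{med}_{1\le k\le \mr{B}}\bar Z_k$ with $\bar Z_k = \frac{1}{N}\sum_{i\in B_k}(y_i - \vecx_i^\top \hat\vecbeta)$, the model yields
\begin{align*}
\bar Z_k - \mu^* = \bar\xi_k + \bar{\vecx}_k^\top(\vecbeta^* - \hat\vecbeta) + \frac{\sqrt{n}}{N}\sum_{i\in B_k}\theta_i.
\end{align*}
If at least $\lceil \mr{B}/2\rceil + 1$ blocks satisfy $|\bar Z_k - \mu^*| \le t$, then $|\hat\mu - \mu^*| \le t$, because the shift of the sample median is controlled by the shift of a majority of its inputs. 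I will declare a block \emph{bad} if it (i)~contains an outlier, (ii)~has $|\bar\xi_k|$ above a noise threshold, or (iii)~has $|\bar{\vecx}_k^\top(\vecbeta^*-\hat\vecbeta)|$ above a design threshold, and show that each category supplies at most $\mr{B}/8$ bad blocks, leaving a strict majority of good ones.

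For (i), outlier blocks number at most $O \le \mr{B}/8$ by the choice $\mr{B} \ge 8 O$. For (ii), $\mr{Var}(\bar\xi_k) = \sigma^2/N$, so Chebyshev yields $\mbb{P}(|\bar\xi_k| > 4\sigma/\sqrt{N}) \le 1/16$; the block noise terms being independent, a Chernoff bound on Bernoulli indicators together with $\mr{B} \ge 128\log(1/\delta)$ produces at most $\mr{B}/8$ noise-bad blocks with probability $\ge 1 - \delta/2$. Merging this step with the design term via the pooled per-block variance $(\sigma^2 + \|\vecv\|_\Sigma^2)/N$ for $\vecv = \vecbeta^*-\hat\vecbeta$ also explains the $(1+r_\Sigma)$ prefactor on the MoM term in the target bound, using $\|\vecv\|_\Sigma \le \sigma^* r_\Sigma$.

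The main obstacle is (iii), since $\hat\vecbeta$ is data-dependent and so a fixed-$\vecv$ subGaussian argument does not suffice. Using the hypothesis $\hat\vecbeta - \vecbeta^* \in \mr{V}_{\mr{mean}}$, I would establish a per-block uniform bound $\sup_{\vecv \in \mr{V}_{\mr{mean}}} |\bar{\vecx}_k^\top \vecv| \lesssim L\sigma^* r_\Sigma$ via subGaussian process concentration: for fixed $\vecv$, $\bar{\vecx}_k^\top \vecv$ is $L\|\vecv\|_\Sigma/\sqrt{N}$-subGaussian, and $\mr{V}_{\mr{mean}}$ has controlled Gaussian width thanks to the constraints $\|\vecv\|_\Sigma \le \sigma^* r_\Sigma$, $\|\vecv\|_{\veclambda}\le \sigma^*\mr{R}_{\veclambda} r_\Sigma$, and $\rho \mr{R}_{\veclambda}\le 1$. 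A Chernoff bound on blocks violating this uniform estimate, again combined with $\mr{B} \ge 128\log(1/\delta)$, bounds the number of design-bad blocks by $\mr{B}/8$ with probability $\ge 1-\delta/2$. Collecting the three estimates, at least $5\mr{B}/8 > \mr{B}/2$ blocks satisfy $|\bar Z_k-\mu^*| \lesssim \sigma/\sqrt{N} + L\sigma^* r_\Sigma$; substituting $\sigma \le \mr{c}_\xi \varsigma^* \lesssim \sigma^*$ together with $1/\sqrt{N} = \sqrt{\mr{B}/n}\lesssim \sqrt{O/n}+\sqrt{\log(1/\delta)/n}$ produces the claimed bound. The hardest step will be the uniform design estimate with the right dependence on $L$, $r_\Sigma$, and the SLOPE radius, analogous in spirit to the empirical-process arguments underlying Theorems~\ref{t:Huber} and~\ref{t:abs}.
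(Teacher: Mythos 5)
Your high-level architecture — the median is controlled once a strict majority of blocks is good; bad blocks come from outliers, noise, and design; outlier blocks are trivially at most $O\le\mr{B}/8$; noise-bad blocks are controlled by Chebyshev plus a Bernoulli concentration inequality — matches the paper in spirit. But the proposed fix for the data-dependent design term has a genuine, quantitative gap.

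You claim a per-block uniform bound
$\sup_{\vecv\in\mr{V}_{\mr{mean}}}|\bar{\vecx}_k^\top\vecv|\lesssim L\sigma^* r_\Sigma$.
This cannot hold. For a fixed $\vecv$, $\bar{\vecx}_k^\top\vecv$ is $L\|\vecv\|_\Sigma/\sqrt{N}$-subGaussian, and chaining over $\mr{V}_{\mr{mean}}$ gives
\begin{align}
\mbb{E}\sup_{\vecv\in\mr{V}_{\mr{mean}}}\bigl|\bar{\vecx}_k^\top\vecv\bigr|
\lesssim \frac{L}{\sqrt{N}}\,\gamma_2\bigl(\mr{V}_{\mr{mean}},\|\cdot\|_\Sigma\bigr)
\lesssim \frac{L}{\sqrt{N}}\,\bigl(\rho\sqrt{n}\,\sigma^*\mr{R}_{\veclambda}r_\Sigma+\sigma^* r_\Sigma\bigr)
\lesssim L\sigma^* r_\Sigma\sqrt{\mr{B}},
\end{align}
using Corollary~\ref{c:gw} and $\rho\mr{R}_{\veclambda}\le 1$: the SLOPE radius contributes a factor $\sqrt{n}$, while the block average only brings a $1/\sqrt{N}$ gain, so the supremum blows up by $\sqrt{n/N}=\sqrt{\mr{B}}$. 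Since $\mr{B}\gg 1$, the per-block supremum is of order $L\sigma^* r_\Sigma\sqrt{\mr{B}}$, not $L\sigma^* r_\Sigma$, and the typical block already violates your threshold — steps~(a) and then the Chernoff step in part~(iii) therefore fail outright. The paper avoids this by never taking a per-block supremum: Corollary~\ref{c:main-mean} bounds $\sup_{(\vecv,\eta)}\sum_{k}\chi(\cdot)$ (the \emph{count} of bad blocks) as a single empirical process over all $n$ samples, which after symmetrization and contraction is compared against $\mr{B}$ — the block structure is dissolved before the supremum is taken, and the $\sqrt{n}/\sqrt{N}$ mismatch is absorbed against the trivial bound $\mr{B}$ on the count, not against a per-block threshold. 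A second, smaller mislocalization: in the paper the $L\sigma^* r_\Sigma$ term in the conclusion does not come from any per-block quantity at all, but from the full-sample design average $\bigl|\tfrac{1}{n}\sum_{i=1}^n\vecx_i^\top\Delta\bigr|\lesssim \tfrac{L}{\sqrt{n}}\gamma_2(\mr{V}_{\mr{mean}},\|\cdot\|_\Sigma)\lesssim L\sigma^* r_\Sigma$ (Lemma~\ref{l:gw2}), which is a separate additive shift of the median-of-means centering. To repair your argument you would need to replace the per-block uniform supremum with a uniform bound on the count $\sum_k\mathbb{I}\{|\bar{\vecx}_k^\top\vecv|>t\}$ over $\vecv\in\mr{V}_{\mr{mean}}$, i.e., essentially reproduce the contraction/symmetrization calculation of Proposition~\ref{p:main-abs} / Corollary~\ref{c:main-mean}.
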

\begin{remark}
	In Theorem  \ref{t:intercept2}, if we can set $O=o$, then,  we have
	\begin{align}
		\mbb{P}\left(\left|\hat{\mu}-\mu^*\right| \lesssim \sigma^*(1+r_\Sigma )\left(\sqrt{\frac{o}{n}}+\sqrt{\frac{\log(1/\delta)}{n}}\right)+L\sigma^* r_\Sigma\right)\geq 1-\delta,
	\end{align}
	and we derive Theorem \ref{t:main:two}. 
\end{remark}

\bibliographystyle{plain}
\bibliography{ETARSHDLR}

\begin{thebibliography}{10}

\bibitem{AloMatSze1996Space}
Noga Alon, Yossi Matias, and Mario Szegedy.
\newblock The space complexity of approximating the frequency moments.
\newblock In {\em Proceedings of the twenty-eighth annual ACM symposium on Theory of computing}, pages 20--29, 1996.

\bibitem{AlqCotLec2019Estimation}
Pierre Alquier, Vincent Cottet, and Guillaume Lecu{\'e}.
\newblock Estimation bounds and sharp oracle inequalities of regularized procedures with lipschitz loss functions.
\newblock {\em The Annals of Statistics}, 47(4):2117--2144, 2019.

\bibitem{BalWaiYun2017Statistical}
Sivaraman Balakrishnan, Martin~J. Wainwright, and Bin Yu.
\newblock {Statistical guarantees for the EM algorithm: From population to sample-based analysis}.
\newblock {\em The Annals of Statistics}, 45(1):77 -- 120, 2017.

\bibitem{BelLecTsy2018Slope}
Pierre~C Bellec, Guillaume Lecu{\'e}, and Alexandre~B Tsybakov.
\newblock Slope meets lasso: improved oracle bounds and optimality.
\newblock {\em The Annals of Statistics}, 46(6B):3603--3642, 2018.

\bibitem{BelCheWan2011Square}
Alexandre Belloni, Victor Chernozhukov, and Lie Wang.
\newblock Square-root lasso: pivotal recovery of sparse signals via conic programming.
\newblock {\em Biometrika}, 98(4):791--806, 2011.

\bibitem{BouLugMas2013concentration}
St{\'e}phane Boucheron, G{\'a}bor Lugosi, and Pascal Massart.
\newblock {\em Concentration inequalities: A nonasymptotic theory of independence}.
\newblock Oxford university press, 2013.

\bibitem{Bou2002Bennett}
Olivier Bousquet.
\newblock A bennett concentration inequality and its application to suprema of empirical processes.
\newblock {\em Comptes Rendus Mathematique}, 334(6):495--500, 2002.

\bibitem{CheLiaChe2021Cross}
Denis Chetverikov, Zhipeng Liao, and Victor Chernozhukov.
\newblock On cross-validated lasso in high dimensions.
\newblock {\em The Annals of Statistics}, 49(3):1300--1317, 2021.

\bibitem{Chi2020Erm}
Geoffrey Chinot.
\newblock Erm and rerm are optimal estimators for regression problems when malicious outliers corrupt the labels.
\newblock {\em Electronic Journal of Statistics}, 14(2):3563--3605, 2020.

\bibitem{ComColNdaTsy2021Adaptive}
La{\"e}titia Comminges, Olivier Collier, Mohamed Ndaoud, and Alexandre~B Tsybakov.
\newblock Adaptive robust estimation in sparse vector model.
\newblock {\em The Annals of Statistics}, 49(3):1347--1377, 2021.

\bibitem{DalTho2019Outlier}
Arnak Dalalyan and Philip Thompson.
\newblock Outlier-robust estimation of a sparse linear model using $\ell_1$-penalized huber's $m$-estimator.
\newblock In {\em Advances in Neural Information Processing Systems 32}, pages 13188--13198. Curran Associates, Inc., 2019.

\bibitem{PenGin2012Decoupling}
Victor De~la Pena and Evarist Gin{\'e}.
\newblock {\em Decoupling: from dependence to independence}.
\newblock Springer Science \& Business Media, 2012.

\bibitem{Der2018Improved}
Alexis Derumigny.
\newblock Improved bounds for square-root lasso and square-root slope.
\newblock {\em Electronic Journal of Statistics}, 12(1):741--766, 2018.

\bibitem{DiaKan2023Algorithmic}
Ilias Diakonikolas and Daniel~M Kane.
\newblock {\em Algorithmic high-dimensional robust statistics}.
\newblock Cambridge university press, 2023.

\bibitem{Dir2015Tail}
Sjoerd Dirksen.
\newblock Tail bounds via generic chaining.
\newblock {\em Electronic Journal of Probability}, 20:1--29, 2015.

\bibitem{FanLiuSunZha2018Lamm}
Jianqing Fan, Han Liu, Qiang Sun, and Tong Zhang.
\newblock I-lamm for sparse learning: Simultaneous control of algorithmic complexity and statistical error.
\newblock {\em Annals of statistics}, 46(2):814, 2018.

\bibitem{Haa1982Best}
Uffe Haagerup.
\newblock The best constants in the khintchine inequality.
\newblock {\em STUDIA MATHEMATICA}, 70, 1982.

\bibitem{LecLer2017Robust}
Guillaume Lecu{\'e} and Matthieu Lerasle.
\newblock Robust machine learning by median-of-means: theory and practice.
\newblock {\em arXiv preprint arXiv:1711.10306}, 2017.

\bibitem{LugMen2019Mean}
G{\'a}bor Lugosi and Shahar Mendelson.
\newblock Mean estimation and regression under heavy-tailed distributions: A survey.
\newblock {\em Foundations of Computational Mathematics}, 19(5):1145--1190, 2019.

\bibitem{LugMen2019Sub}
G{\'a}bor Lugosi and Shahar Mendelson.
\newblock Sub-gaussian estimators of the mean of a random vector.
\newblock {\em The annals of statistics}, 47(2):783--794, 2019.

\bibitem{Men2015lLearning}
Shahar Mendelson.
\newblock Learning without concentration.
\newblock {\em Journal of the ACM (JACM)}, 62(3):1--25, 2015.

\bibitem{MenZhi2020Robust}
Shahar Mendelson and Nikita Zhivotovskiy.
\newblock {Robust covariance estimation under $L_{4}-L_{2}$ norm equivalence}.
\newblock {\em The Annals of Statistics}, 48(3):1648 -- 1664, 2020.

\bibitem{MinNdaWan2024Robust}
Stanislav Minsker, Mohamed Ndaoud, and Lang Wang.
\newblock Robust and tuning-free sparse linear regression via square-root slope.
\newblock {\em SIAM Journal on Mathematics of Data Science}, 6(2):428--453, 2024.

\bibitem{NguTra2012Robust}
Nam~H Nguyen and Trac~D Tran.
\newblock Robust lasso with missing and grossly corrupted observations.
\newblock {\em IEEE transactions on information theory}, 59(4):2036--2058, 2012.

\bibitem{PenJogLoh2024Robust}
Ankit Pensia, Varun Jog, and Po-Ling Loh.
\newblock Robust regression with covariate filtering: Heavy tails and adversarial contamination.
\newblock {\em Journal of the American Statistical Association}, pages 1--12, 2024.

\bibitem{PraSugBalRav2020Robust}
Adarsh Prasad, Arun~Sai Suggala, Sivaraman Balakrishnan, and Pradeep Ravikumar.
\newblock Robust estimation via robust gradient estimation.
\newblock {\em Journal of the Royal Statistical Society Series B: Statistical Methodology}, 82(3):601--627, 2020.

\bibitem{SheOwe2011Outlier}
Yiyuan She and Art~B Owen.
\newblock Outlier detection using nonconvex penalized regression.
\newblock {\em Journal of the American Statistical Association}, 106(494):626--639, 2011.

\bibitem{SuCan2016Slope}
Weijie Su and Emmanuel Candes.
\newblock Slope is adaptive to unknown sparsity and asymptotically minimax.
\newblock {\em The Annals of Statistics}, 44(3):1038--1068, 2016.

\bibitem{SunZhoFan2020Adaptive}
Qiang Sun, Wen-Xin Zhou, and Jianqing Fan.
\newblock Adaptive {H}uber regression.
\newblock {\em Journal of the American Statistical Association}, 115(529):254--265, 2020.

\bibitem{SunZha2012Scaled}
Tingni Sun and Cun-Hui Zhang.
\newblock Scaled sparse linear regression.
\newblock {\em Biometrika}, 99(4):879--898, 2012.

\bibitem{Tal2014Upper}
Michel Talagrand.
\newblock {\em Upper and lower bounds for stochastic processes}, volume~60.
\newblock Springer, 2014.

\bibitem{Tho2020Outlier}
Philip Thompson.
\newblock Outlier-robust sparse/low-rank least-squares regression and robust matrix completion.
\newblock {\em arXiv preprint arXiv:2012.06750}, 2020.

\bibitem{Tho2023Outlier}
Philip Thompson.
\newblock Outlier-robust additive matrix decomposition and robust matrix completion.
\newblock {\em arXiv preprint arXiv:2310.19136}, 2023.

\bibitem{Tib1996Regression}
Robert Tibshirani.
\newblock Regression shrinkage and selection via the lasso.
\newblock {\em Journal of the Royal Statistical Society: Series B}, 58(1):267--288, 1996.

\bibitem{Gee2016Estimation}
Sara~A Van~de Geer et~al.
\newblock {\em Estimation and testing under sparsity}.
\newblock Springer, 2016.

\bibitem{Ver2018High}
Roman Vershynin.
\newblock {\em High-dimensional probability: An introduction with applications in data science}, volume~47.
\newblock Cambridge university press, 2018.

\bibitem{WanZheZhoZho2021New}
Lili Wang, Chao Zheng, Wen Zhou, and Wen-Xin Zhou.
\newblock A new principle for tuning-free huber regression.
\newblock {\em Statistica Sinica}, 31(4):2153--2177, 2021.

\end{thebibliography}

\appendix

\begin{center} \textbf{Appendix} \end{center}
Throughout the appendix, the value of the numerical constant $C$ is allowed to vary from line to line. Additionally, $\vecg^d = (g_1, \cdots, g_d)$ and $\vecg^n = (g_1, \cdots, g_n)$ denote $d$- and $n$-dimensional standard Gaussian vectors, respectively, and $g$ denotes a standard Gaussian random variable.

\section{Key Tools in Sections \ref{sec:AESC} and \ref{sec:EI}}
\label{sec:KP}

	In Section~\ref{sec:KP}, we present five propositions and one corollary. First, Proposition~\ref{p:main} provides error bounds for the optimal solution of \eqref{ine:l1-penalized-Huber}, without assuming any randomness in $\{\vecx_i, \xi_i\}_{i=1}^n$ or in $\hat{\varsigma}$. Second, Proposition~\ref{p:ub} verifies inequality \eqref{ine:p:main-01} stated in Proposition~\ref{p:main}. Third, Proposition~\ref{p:sc} verifies inequality \eqref{ine:p:main-02} from Proposition~\ref{p:main}. Fourth, Proposition~\ref{p:oq} supports the control of outlier effects in the verification of \eqref{ine:p:main-01}. Fifth, Proposition~\ref{p:main-abs} ensures that the output of \textsc{ROBUST-ABSOLUTE-MOMENT} behaves as expected. Lastly, Corollary~\ref{c:main-mean} is presented, which is used in the proof of Theorem~\ref{t:intercept2}. The proofs of these results are given in Section~\ref{sec:keyproposition}.

	\begin{proposition}
	\label{p:main}
	Define
	\begin{align}
		r_\Sigma(\hat{\varsigma}_{\mr{cur}}) := 6\mr{C}_{\mr{H}} \hat{\varsigma}_{\mr{cur}} \left(r_{\mr{l},\Sigma } + r_{\mr{u},\Sigma}+\left(\mr{C}_{\veclambda}+\frac{r_{\mr{l},\lambda} +r_{\mr{u},\lambda}}{\mr{C}_{\veclambda}/2-r_{\mr{l},\lambda } -r_{\mr{u},\lambda}}\right)\sqrt{\frac{\sum_{i=1}^s \lambda_i^2}{\kappa}} \right)
	\end{align}
	
	Suppose that, for any $\vecv \in \mathbb{R}^d$, the following holds:
	\begin{align}
	\label{ine:p:main-01}
		&\left| \frac{1}{n}\sum_{i=1}^n \mr{C}_{\mr{H}} \hat{\varsigma}_{\mr{cur}}h\left(\frac{y_i- \vecx_i^\top \vecbeta^* }{\mr{C}_{\mr{H}} \hat{\varsigma}_{\mr{cur}}}\right)\vecx_i^\top \vecv\right|\leq \mr{C}_{\mr{H}} \hat{\varsigma}_{\mr{cur}}r_{\mr{u},\lambda}\|\vecv\|_{\veclambda}+\mr{C}_{\mr{H}} \hat{\varsigma}_{\mr{cur}}r_{\mr{u},\Sigma} \|\vecv\|_\Sigma,
	\end{align}
	and for any $\vecv$ such that $\|\vecv\|_\Sigma = r_\Sigma(\hat{\varsigma}_{\mathrm{cur}})$,
	\begin{align}
	\label{ine:p:main-02}
		&\frac{1}{3}\|\vecv\|_\Sigma^2- \mr{C}_{\mr{H}} \hat{\varsigma}_{\mr{cur}}r_{\mr{l},\Sigma }\|\vecv\|_\Sigma- \mr{C}_{\mr{H}} \hat{\varsigma}_{\mr{cur}}r_{\mr{l},\lambda} \|\vecv\|_{\veclambda}\nonumber\\
		&\quad \quad \leq \mr{C}_{\mr{H}} \hat{\varsigma}_{\mr{cur}} \left(-\frac{1}{n}\sum_{i=1}^n h\left(\frac{y_i- \vecx_i^\top(\vecbeta^*+\vecv )}{\mr{C}_{\mr{H}} \hat{\varsigma}_{\mr{cur}}}\right) \vecx_i^\top \vecv+\frac{1}{n}\sum_{i=1}^n h\left(\frac{y_i- \vecx_i^\top \vecbeta^* }{\mr{C}_{\mr{H}} \hat{\varsigma}_{\mr{cur}}}\right) \vecx_i^\top \vecv \right),
	\end{align}
	where $r_{\mathrm{u},\Sigma}, r_{\mathrm{u},\lambda}, r_{\mathrm{l},\Sigma} > 0$, $r_{\mathrm{l},\lambda} \geq 0$, $\mr{C}_{\mathrm{H}}$ is a numerical constant, and $\hat{\varsigma}_{\mathrm{cur}}$ is the current estimate of the absolute moment of the noise. Suppose that Assumption~\ref{a:cov-RE} holds, and that $\mr{C}_{\veclambda}$ is sufficiently large so that
	\begin{align}
	\label{ine:p:main-03}
		\mr{C}_{\veclambda}/2-r_{\mr{l},\lambda } -r_{\mr{u},\lambda} >0.
	\end{align}
	Then, the optimal solution $\hat{\vecbeta}$ of \eqref{ine:l1-penalized-Huber} satisfies
	\begin{align}
		\|\vecbeta^*-\hat{\vecbeta}\|_\Sigma &\leq  r_\Sigma(\hat{\varsigma}_{\mr{cur}}),\nonumber\\
		\|\vecbeta^*-\hat{\vecbeta}\|_{\veclambda} &\leq  \frac{r_{\mr{u},\Sigma}+ \mr{C}_{\veclambda}\sqrt{\frac{\sum_{i=1}^s \lambda_i^2}{\kappa}}}{\mr{C}_{\veclambda}/2-r_{\mr{u},\lambda}}\times \|\vecbeta^*-\hat{\vecbeta}\|_\Sigma\leq  \frac{r_{\mr{u},\Sigma}+ \mr{C}_{\veclambda}\sqrt{\frac{\sum_{i=1}^s \lambda_i^2}{\kappa}}}{\mr{C}_{\veclambda}/2-r_{\mr{u},\lambda}}\times r_\Sigma(\hat{\varsigma}_{\mr{cur}}). 
	\end{align}
	\end{proposition}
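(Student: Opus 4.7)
The plan is to prove this as a purely deterministic statement by combining the first-order optimality condition of the convex program \eqref{ine:l1-penalized-Huber} with a localization/peeling argument on the sphere $\{\vecv\in\mathbb{R}^d:\|\vecv\|_\Sigma = r_\Sigma(\hat{\varsigma}_{\mr{cur}})\}$ on which \eqref{ine:p:main-02} is in force. Write $\vecv=\hat{\vecbeta}-\vecbeta^*$, $A=\mr{C}_{\mr{H}}\hat{\varsigma}_{\mr{cur}}$, and
\begin{equation*}
g(\vecbeta) \;=\; \frac{1}{n}\sum_{i=1}^n A^2\,H\!\left(\frac{y_i-\vecx_i^\top\vecbeta}{A}\right),\qquad F(\vecbeta)\;=\;g(\vecbeta)+\mr{C}_{\veclambda} A\,\|\vecbeta\|_{\veclambda},
\end{equation*}
so that $F$ is convex and $\hat{\vecbeta}$ minimizes it. The subgradient optimality condition for $F$ at $\hat{\vecbeta}$, combined with the subgradient inequality for the SLOPE norm, produces the basic inequality $\langle\nabla g(\hat{\vecbeta}),\vecv\rangle \leq \mr{C}_{\veclambda} A(\|\vecbeta^*\|_{\veclambda}-\|\hat{\vecbeta}\|_{\veclambda})$.

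For the $\|\cdot\|_\Sigma$ bound I argue by contradiction: suppose $\|\vecv\|_\Sigma>r_\Sigma(\hat{\varsigma}_{\mr{cur}})$, and pick $t\in(0,1)$ so that $\tilde{\vecv}:=t\vecv$ has $\Sigma$-norm exactly $r_\Sigma(\hat{\varsigma}_{\mr{cur}})$. Convexity of $g$ makes the scalar map $s\mapsto\langle\nabla g(\vecbeta^*+s\vecv),\vecv\rangle$ non-decreasing, which after multiplying by $t$ yields
\begin{equation*}
\langle\nabla g(\vecbeta^*+\tilde{\vecv})-\nabla g(\vecbeta^*),\tilde{\vecv}\rangle \;\leq\; t\,\langle\nabla g(\hat{\vecbeta})-\nabla g(\vecbeta^*),\vecv\rangle.
\end{equation*}
Inserting the basic inequality on the right, using the convexity-of-norm estimate $t(\|\vecbeta^*\|_{\veclambda}-\|\hat{\vecbeta}\|_{\veclambda})\leq\|\vecbeta^*\|_{\veclambda}-\|\vecbeta^*+\tilde{\vecv}\|_{\veclambda}$, and invoking \eqref{ine:p:main-01} at $\tilde{\vecv}$ produces an upper bound on $\langle\nabla g(\vecbeta^*+\tilde{\vecv})-\nabla g(\vecbeta^*),\tilde{\vecv}\rangle$ linear in $\|\tilde{\vecv}\|_\Sigma$, $\|\tilde{\vecv}\|_{\veclambda}$ and $\|\vecbeta^*\|_{\veclambda}-\|\vecbeta^*+\tilde{\vecv}\|_{\veclambda}$. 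Pairing this upper bound with the lower bound \eqref{ine:p:main-02} at $\tilde{\vecv}$, and invoking the SLOPE-norm cone decomposition of Lemma A.1 in \cite{BelLecTsy2018Slope} in the half-factor form $\|\vecbeta^*\|_{\veclambda}-\|\vecbeta^*+\tilde{\vecv}\|_{\veclambda}\leq\|\tilde{\vecv}_S\|_{\veclambda}-\tfrac{1}{2}\|\tilde{\vecv}\|_{\veclambda}$ for a size-$s$ index set $S$, isolates the strictly positive coefficient $\mr{C}_{\veclambda}/2-r_{\mr{l},\lambda}-r_{\mr{u},\lambda}$ on $\|\tilde{\vecv}\|_{\veclambda}$ via \eqref{ine:p:main-03}. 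Applying Assumption \ref{a:cov-RE} to replace $\|\tilde{\vecv}_S\|_{\veclambda}$ by a multiple of $\sqrt{\sum_{i=1}^s\lambda_i^2/\kappa}\,\|\tilde{\vecv}\|_\Sigma$ collapses the chain to a scalar quadratic $\tfrac{1}{3}\|\tilde{\vecv}\|_\Sigma^2\leq X\|\tilde{\vecv}\|_\Sigma$; matching $3X$ against the definition of $r_\Sigma(\hat{\varsigma}_{\mr{cur}})$, whose leading factor $6$ is tuned precisely to supply a factor-of-two buffer, gives $\|\tilde{\vecv}\|_\Sigma\leq\tfrac{1}{2}r_\Sigma(\hat{\varsigma}_{\mr{cur}})$, contradicting $\|\tilde{\vecv}\|_\Sigma=r_\Sigma(\hat{\varsigma}_{\mr{cur}})$.

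The second bound on $\|\vecv\|_{\veclambda}$ is obtained more cheaply: working directly on $\vecv$ without rescaling, I combine the basic inequality with \eqref{ine:p:main-01} and the \emph{same} SLOPE cone decomposition to get $(\mr{C}_{\veclambda}/2-r_{\mr{u},\lambda})\|\vecv\|_{\veclambda}\leq\mr{C}_{\veclambda}\|\vecv_S\|_{\veclambda}+r_{\mr{u},\Sigma}\|\vecv\|_\Sigma$ (no $r_{\mr{l}}$ slack enters here since \eqref{ine:p:main-02} is not used), and then apply Assumption \ref{a:cov-RE} to $\|\vecv_S\|_{\veclambda}$ to read off the claimed ratio. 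The hard part will be the SLOPE cone step: unlike the $\ell_1$ case, the set $S$ must be chosen as the top-$s$ coordinates under the weighted sorting induced by $\veclambda$, and the crucial $\tfrac{1}{2}$ factor arises from a sorted-rearrangement argument that exploits both the monotone weights $\lambda_1\geq\cdots\geq\lambda_d$ and the sparsity $\|\vecbeta^*\|_0=s$; once it is in place, everything else is careful bookkeeping of slack coefficients to confirm the specific form $\mr{C}_{\veclambda}+\frac{r_{\mr{l},\lambda}+r_{\mr{u},\lambda}}{\mr{C}_{\veclambda}/2-r_{\mr{l},\lambda}-r_{\mr{u},\lambda}}$ emerges. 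No randomness or probabilistic tool enters the argument, since the hypotheses \eqref{ine:p:main-01} and \eqref{ine:p:main-02} fully encode the data-dependent aspects.
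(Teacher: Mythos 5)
Your proposal follows essentially the same route as the paper's proof: the contradiction argument with rescaling to the sphere $\|\tilde{\vecv}\|_\Sigma=r_\Sigma(\hat{\varsigma}_{\mr{cur}})$, the monotonicity of the directional derivative of the convex Huber objective (the paper invokes Lemma F.2 of \cite{FanLiuSunZha2018Lamm} for exactly your inequality $\langle\nabla g(\vecbeta^*+\tilde{\vecv})-\nabla g(\vecbeta^*),\tilde{\vecv}\rangle\leq t\,\langle\nabla g(\hat{\vecbeta})-\nabla g(\vecbeta^*),\vecv\rangle$), the basic inequality from subgradient optimality, Lemma A.1 of \cite{BelLecTsy2018Slope} for the SLOPE cone step, and the unrescaled argument for the $\|\cdot\|_{\veclambda}$ bound. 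The only cosmetic difference is that in the paper the $\tfrac{1}{2}$ slack on $\|\tilde{\vecv}\|_{\veclambda}$ comes not from a rearrangement argument but from a two-case split using the constant $c_{\mr{RE}}>1$ in Assumption~\ref{a:cov-RE}; otherwise the arguments coincide.
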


	\begin{proposition}
		\label{p:ub}
		Suppose that Assumption~\ref{a:cov} holds, and that $\{\xi_i\}_{i=1}^n$ is an i.i.d.\ sequence of random variables such that $\mathbb{E}h(\xi_i / A) \vecx_i = \mathbf{0}$ for all $i = 1, \ldots, n$ and for any fixed constant $A$. Then, for any $\tau > 0$ and any $\vecv \in \mathbb{R}^d$, we have

		\begin{align}
			\label{p:ub-01}
			\mbb{P}\left(\left|\frac{1}{n}\sum_{i=1}^{n} \mr{C}_{\mr{H}} \tau  h\left(\frac{\xi_i}{\mr{C}_{\mr{H}} \tau}\right)  \vecx_i^\top  \vecv \right| \leq \mr{c}_1L\mr{C}_{\mr{H}}\tau\left(\rho\|\vecv\|_{\veclambda}+ \sqrt{\frac{\log(1/\delta)}{n}}\|\vecv\|_\Sigma\right)\right)\geq 1-\delta.
		\end{align} 
	\end{proposition}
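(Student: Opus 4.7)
The plan is to recognize the random sum as an average of i.i.d.\ mean-zero sub-Gaussian random variables with a quantifiable $\psi_2$-proxy, and then invoke a standard Hoeffding-type tail inequality. The key structural fact that makes the proof work directly is the boundedness of the Huber derivative, $|h(t)|\leq 1$ for all $t\in\mbb{R}$; this lets me treat the multiplicative factor $\mr{C}_{\mr{H}}\tau\, h(\xi_i/(\mr{C}_{\mr{H}}\tau))$ as a deterministic-scale bounded multiplier of the sub-Gaussian linear functional $\vecx_i^\top\vecv$. Since the statement is pointwise in $\vecv$ and $\tau$, no chaining or union-bound machinery is needed at this stage.

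Concretely, I would first set $W_i := \mr{C}_{\mr{H}}\tau\, h(\xi_i/(\mr{C}_{\mr{H}}\tau))\,\vecx_i^\top\vecv$. The centering hypothesis $\mbb{E}[h(\xi_i/A)\vecx_i]=\mathbf{0}$, applied with the deterministic choice $A=\mr{C}_{\mr{H}}\tau>0$, yields $\mbb{E} W_i = 0$, and i.i.d.-ness across $i$ follows from Assumption~\ref{a:cov} combined with the i.i.d.\ hypothesis on $\{\xi_i\}_{i=1}^n$. Second, since $|\mr{C}_{\mr{H}}\tau\, h(\xi_i/(\mr{C}_{\mr{H}}\tau))|\leq \mr{C}_{\mr{H}}\tau$ pointwise, we have $|W_i|\leq \mr{C}_{\mr{H}}\tau\,|\vecx_i^\top\vecv|$, and so by monotonicity of the $\psi_2$-norm together with Assumption~\ref{a:cov},
\begin{align}
\|W_i\|_{\psi_2}\ \leq\ \mr{C}_{\mr{H}}\tau\,\|\vecx_i^\top\vecv\|_{\psi_2}\ \leq\ \mr{C}_{\mr{H}}\tau\,L\,\|\vecv\|_\Sigma.
\end{align}
Third, applying the sub-Gaussian Hoeffding inequality for independent mean-zero sub-Gaussian summands to $\tfrac{1}{n}\sum_i W_i$ with this proxy gives, for some numerical constant $c>0$,
\begin{align}
\mbb{P}\left(\left|\tfrac{1}{n}\sum_{i=1}^n W_i\right|\ >\ c\,L\,\mr{C}_{\mr{H}}\tau\,\|\vecv\|_\Sigma\sqrt{\tfrac{\log(1/\delta)}{n}}\right)\ \leq\ \delta.
\end{align}

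To match the stated bound, I would then observe that since $\rho\|\vecv\|_{\veclambda}\geq 0$, the quantity $c\,L\,\mr{C}_{\mr{H}}\tau\,\|\vecv\|_\Sigma\sqrt{\log(1/\delta)/n}$ is dominated by $\mr{c}_1 L\mr{C}_{\mr{H}}\tau\bigl(\rho\|\vecv\|_{\veclambda}+\sqrt{\log(1/\delta)/n}\,\|\vecv\|_\Sigma\bigr)$ as soon as $\mr{c}_1\geq c$, completing the argument. There is no serious analytical obstacle here: the only point requiring care is that the centering hypothesis is applied with the deterministic constant $A=\mr{C}_{\mr{H}}\tau$ (legitimate because $\tau$ is an input parameter of the proposition, not a data-dependent quantity), and that the $\psi_2$-proxy used in Hoeffding's inequality genuinely exploits the boundedness of the Huber derivative so that $W_i$ inherits its sub-Gaussian tail directly from $\vecx_i^\top\vecv$. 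The extra additive $\rho\|\vecv\|_{\veclambda}$ term in the stated bound is slack that only weakens the inequality, so no refinement of the concentration step is needed.
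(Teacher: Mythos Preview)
Your argument is valid for the pointwise reading of the statement --- fixed $\tau$, fixed $\vecv$ --- and in that reading the $\rho\|\vecv\|_{\veclambda}$ term is indeed pure slack. But this is not what the paper proves, nor what its applications require. The paper establishes the \emph{uniform} version: on a single event of probability at least $1-\delta$, the displayed inequality holds simultaneously for every $\tau>0$ and every $\vecv\in\mbb{R}^d$. The distinction is essential because in the proof of Theorem~\ref{t:Huber} the scale $\hat\varsigma_{\mr{cur}}$ playing the role of $\tau$ is data-dependent and changes across iterations, and the vector $\vecv=\hat\vecbeta-\vecbeta^*$ is likewise random; the remark immediately following Theorem~\ref{t:Huber} explicitly states that ``we do not need to apply a union bound over iterations,'' which is only true if Proposition~\ref{p:ub} already holds uniformly. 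Your Hoeffding bound, whose exceptional set depends on the particular $(\tau,\vecv)$, cannot be applied at a random $(\tau,\vecv)$.

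The paper's proof handles uniformity by generic chaining: it bounds the $\psi_2$ increments of the process $(\vecv,\tau)\mapsto \tfrac{1}{n}\sum_i h(\xi_i/(\mr{C}_{\mr{H}}\tau))\,\vecx_i^\top\vecv$ (using Lemma~\ref{l:huber_smooth} for the increment in $\tau$), applies Dirksen's chaining tail bound to control the supremum over a constrained index set, bounds the resulting $\gamma_2$ functional by the Gaussian width via the majorizing measure theorem and Corollary~\ref{c:gw}, and finally removes the constraints through the peeling device of Lemma~\ref{l:peeling1}. In that uniform analysis the term $\rho\|\vecv\|_{\veclambda}$ is \emph{not} slack: it is exactly the contribution of the Gaussian width of the index set in the $\vecv$ direction, and without it the uniform bound would not close. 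This chaining-plus-peeling machinery is the missing idea in your proposal.
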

	\begin{proposition}
		\label{p:sc}
		Suppose that Assumption~\ref{a:cov} holds, and that $\{\xi_i\}_{i=1}^n$ is an i.i.d.\ sequence of random variables such that $\mathbb{E}|\xi_i| = \varsigma$ for $i = 1, \ldots, n$, and that $\{\xi_i\}_{i=1}^n$ is independent of $\{\vecx_i\}_{i=1}^n$. Assume that $\mr{C}_{\mathrm{H}} \geq 72 L^4$. For any $\mr{r}^2 \leq \frac{1}{48 L^4}$, define the set
		\begin{align}
			\mr{V}_{\mr{sc}}:=\left\{\vecv \in \mbb{R}^{d}, \tau \in \mbb{R}_+\,\,\mid\,\,\|\vecv\|_\Sigma = \mr{C}_{\mr{H}}\tau \mr{r},\,\varsigma \leq \tau\right\}.
		\end{align}
		Assume further that $\mr{r} \geq \sqrt{ \frac{\log(1/\delta)}{n} }$. Then, for any $(\vecv, \tau) \in \mathcal{V}_{\mathrm{sc}}$, with probability at least $1 - \delta$, the following inequality holds:
		\begin{align}
			\label{p:sc-01}
			\frac{1}{n} \sum_{i=1}^{n} \mr{C}_{\mr{H}} \tau\left(-h\left(\frac{\xi_i}{\mr{C}_{\mr{H}} \tau}-\frac{\vecv^\top \vecx_i}{\mr{C}_{\mr{H}} \tau}\right)+h\left(\frac{\xi_i}{\mr{C}_{\mr{H}} \tau}\right)  \right)\vecv^\top \vecx_i\geq \frac{\|\vecv\|_\Sigma^2}{3 }- \mr{c}_2L\mr{C}_{\mr{H}}\tau \left(\rho\|\vecv\|_{\veclambda}-\sqrt{\frac{\log(1/\delta)}{n}}\|\vecv\|_\Sigma\right).
		\end{align}
	\end{proposition}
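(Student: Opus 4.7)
My plan is to reduce the left-hand side of \eqref{p:sc-01} to a truncated quadratic form via a pointwise algebraic bound on the Huber score, lower bound the expectation of that form, and then control its uniform deviation over $\mr{V}_{\mr{sc}}$ by an empirical process argument that injects the SLOPE-norm structure. For the pointwise step, $h$ is monotone nondecreasing, $1$-Lipschitz, and linear on $[-1,1]$, so for all $a,b\in\mbb{R}$ one has $(h(a)-h(a-b))b \geq b^2\, \mathbb{I}_{|a|\leq 1/2}\,\mathbb{I}_{|b|\leq 1/2}$: when both indicators fire, $|a-b|\leq 1$ and both evaluations are linear, so the product equals $b^2$; otherwise monotonicity of $h$ keeps the product nonnegative. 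Substituting $a=\xi_i/(\mr{C}_{\mr{H}}\tau)$, $b=\vecv^\top\vecx_i/(\mr{C}_{\mr{H}}\tau)$ and multiplying by $(\mr{C}_{\mr{H}}\tau)^2$ shows that the left-hand side of \eqref{p:sc-01} dominates
\begin{align*}
Z(\vecv,\tau) := \frac{1}{n}\sum_{i=1}^{n} (\vecv^\top\vecx_i)^2\, \mathbb{I}_{|\xi_i|\leq \mr{C}_{\mr{H}}\tau/2}\, \mathbb{I}_{|\vecv^\top\vecx_i|\leq \mr{C}_{\mr{H}}\tau/2},
\end{align*}
so it suffices to lower bound $Z(\vecv,\tau)$ uniformly on $\mr{V}_{\mr{sc}}$.

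Next, I would bound $\mbb{E}Z$ from below. By independence of $\xi$ and $\vecx$, $\mbb{E}Z(\vecv,\tau) = \mbb{P}(|\xi|\leq \mr{C}_{\mr{H}}\tau/2)\cdot\mbb{E}[(\vecv^\top\vecx)^2 \mathbb{I}_{|\vecv^\top\vecx|\leq \mr{C}_{\mr{H}}\tau/2}]$. Markov together with $\varsigma \leq \tau$ bounds the first factor below by $1-2/\mr{C}_{\mr{H}}$, which with $\mr{C}_{\mr{H}}\geq 72L^4$ is extremely close to $1$. For the second factor, Cauchy--Schwarz combined with the subGaussian moment bound $(\mbb{E}(\vecv^\top\vecx)^4)^{1/2}\lesssim L^2\|\vecv\|_\Sigma^2$ and the tail $\mbb{P}(|\vecv^\top\vecx|>\mr{C}_{\mr{H}}\tau/2)\leq 2\exp(-1/(4L^2\mr{r}^2))$, using $\|\vecv\|_\Sigma=\mr{C}_{\mr{H}}\tau\mr{r}$ and $\mr{r}^2\leq 1/(48L^4)$ (so $1/(4L^2\mr{r}^2)\geq 12L^2$), yields $\mbb{E}[(\vecv^\top\vecx)^2 \mathbb{I}_{|\vecv^\top\vecx|\leq \mr{C}_{\mr{H}}\tau/2}] \geq (1-o_L(1))\|\vecv\|_\Sigma^2$. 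Multiplying both factors gives $\mbb{E}Z(\vecv,\tau)\geq \tfrac12\|\vecv\|_\Sigma^2$, leaving comfortable slack over the target $\tfrac13\|\vecv\|_\Sigma^2$ to absorb the deviation term.

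For the uniform deviation I would bound $\sup_{(\vecv,\tau)\in \mr{V}_{\mr{sc}}}(\mbb{E}Z(\vecv,\tau)-Z(\vecv,\tau))$. Each summand of $Z$ is bounded by $(\mr{C}_{\mr{H}}\tau/2)^2$, so Talagrand's inequality for bounded empirical processes furnishes a tail of order $\mr{C}_{\mr{H}}\tau\sqrt{\log(1/\delta)/n}\|\vecv\|_\Sigma$, matching the $\sqrt{\log(1/\delta)/n}\|\vecv\|_\Sigma$ factor in \eqref{p:sc-01}. The expected supremum is handled by symmetrization followed by Lipschitz contraction to peel off the $(\vecv,\tau)$-dependent indicators, leaving a linear Rademacher process controlled by duality $\frac{1}{n}\sum\epsilon_i\vecv^\top\vecx_i \leq \|\vecv\|_{\veclambda}\cdot\bigl\|\frac{1}{n}\sum\epsilon_i\vecx_i\bigr\|_{\veclambda^{\ast}}$, where $\mbb{E}\bigl\|\frac{1}{n}\sum\epsilon_i\vecx_i\bigr\|_{\veclambda^{\ast}}\lesssim L\rho$ is the standard sorted-$\ell_1$ Rademacher bound used in SLOPE analyses \cite{BelLecTsy2018Slope}. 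This produces the $\rho\|\vecv\|_{\veclambda}$ contribution in the deviation term.

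The main obstacle is the joint supremum over $(\vecv,\tau)$, since $\tau$ couples to the data both through the normalization $\|\vecv\|_\Sigma=\mr{C}_{\mr{H}}\tau\mr{r}$ and through the random thresholds $|\xi_i|\leq \mr{C}_{\mr{H}}\tau/2$ and $|\vecv^\top\vecx_i|\leq \mr{C}_{\mr{H}}\tau/2$. I would exploit two structural simplifications to tame this: monotonicity of $\mathbb{I}_{|\xi_i|\leq \mr{C}_{\mr{H}}\tau/2}$ in $\tau$, which allows either a dyadic peeling over $\tau$ or a reduction to the boundary case $\tau=\varsigma$; and the rescaling $\vecu=\vecv/\tau$, under which $\|\vecu\|_\Sigma=\mr{C}_{\mr{H}}\mr{r}$ is fixed while $\|\vecv\|_{\veclambda}=\tau\|\vecu\|_{\veclambda}$ scales cleanly. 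With these reductions the supremum effectively lives on a single ellipsoid intersected with a SLOPE ball, so the Rademacher/chaining bound above delivers \eqref{p:sc-01} with $c_2$ absorbing the numerical constants.
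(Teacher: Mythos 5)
Your plan follows essentially the same route as the paper's proof: the pointwise reduction of the Huber increment to a truncated quadratic, the expectation lower bound via Markov and Cauchy--Schwarz using $\varsigma\leq\tau$ and $\mr{r}^2\leq 1/(48L^4)$, the deviation control via Bousquet/Talagrand plus symmetrization, contraction, and the SLOPE Gaussian-width bound, and the two reductions (monotonicity of the $\xi$-indicator in $\tau$, rescaling $\vecu=\vecv/\tau$) that collapse the joint supremum onto a single sphere $\|\vecu\|_\Sigma=\mr{C}_{\mr{H}}\mr{r}$.

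Two small repairs are needed to make your sketch go through. First, the map $x\mapsto x^2\,\mathbb{I}_{|x|\leq 1/2}$ is discontinuous at $\pm 1/2$, so you cannot apply the Rademacher contraction principle directly to "peel off" that indicator; the paper handles this by further lower-bounding the truncated square by a continuous, piecewise-quadratic Lipschitz minorant $\psi$ (vanishing for $|x|>1/2$ and equal to $x^2$ for $|x|\leq 1/4$) before contracting, and you would need the same smoothing. Second, since $\mr{V}_{\mr{sc}}$ places no constraint on $\|\vecv\|_{\veclambda}$, the conclusion "for any $(\vecv,\tau)$ with the bound depending on $\|\vecv\|_{\veclambda}$" does not follow from a single supremum over one SLOPE ball: the duality step controls the expected supremum at every SLOPE radius simultaneously, but the high-probability statement still requires a peeling (dyadic union bound) over the SLOPE-norm level, which is exactly what the paper's Lemma~\ref{l:peeling2} supplies.
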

	\begin{proposition}
		\label{p:oq}
		Suppose that Assumption~\ref{a:cov} holds. Then, for any $\vecv \in \mathbb{R}^d$, with probability at least $1 - \delta$, we have
		\begin{align}
			\label{p:oq-01}
			\sqrt{\frac{o}{n}}\sqrt{\frac{1}{n}\left| \sum_{i\in \mc{O}} (\vecv^\top \vecx_i)^2-\mbb{E}(\vecv^\top \vecx_i)^2\right|}\leq \mr{c}_3L\left(\rho\|\vecv\|_{\veclambda}+\frac{o}{n}\sqrt{\log\frac{n}{o}}\|\vecv\|_\Sigma+\sqrt{\frac{\log(1/\delta)}{n}}\|\vecv\|_\Sigma\right).
		\end{align}
	\end{proposition}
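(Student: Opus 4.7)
Since $\mc{O}$ is an arbitrary subset of size $o$, I first reduce the adversarial sum to the sum of the $o$ largest squared projections: applying the triangle inequality entry-wise,
\[
\left|\sum_{i\in\mc{O}}\bigl[(\vecv^\top\vecx_i)^2-\|\vecv\|_\Sigma^2\bigr]\right|\leq \sum_{i=1}^o\bigl((\vecv^\top\vecx)^*_i\bigr)^2+o\|\vecv\|_\Sigma^2,
\]
where $(\vecv^\top\vecx)^*_i$ denotes the $i$th largest among $\{|\vecv^\top\vecx_j|\}_{j=1}^n$. Multiplying by the prefactor $\sqrt{o/n}/\sqrt{n}=\sqrt{o}/n$ and using $\sqrt{a+b}\leq\sqrt{a}+\sqrt{b}$, the $o\|\vecv\|_\Sigma^2$ piece contributes only $(o/n)\|\vecv\|_\Sigma$, which is absorbed into the middle summand of the target right-hand side since $\log(n/o)\geq 1$. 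It therefore remains to bound $(\sqrt{o}/n)\sqrt{\sum_{i=1}^o((\vecv^\top\vecx)^*_i)^2}$.

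I then exploit the variational identity
\[
\sqrt{\sum_{i=1}^o\bigl((\vecv^\top\vecx)^*_i\bigr)^2}=\sup_{\vecu\in K_o}\vecu^\top\matx\vecv,\quad K_o=\bigl\{\vecu\in\mbb{R}^n:\|\vecu\|_0\leq o,\,\|\vecu\|_2\leq 1\bigr\},
\]
with $\matx=[\vecx_1,\ldots,\vecx_n]^\top$. Under Assumption~\ref{a:cov}, the bilinear process $(\vecu,\vecv)\mapsto\vecu^\top\matx\vecv$ is sub-Gaussian in both arguments, with $\psi_2$-increments bounded by $L\|\vecv\|_\Sigma\|\vecu-\vecu'\|_2$ in $\vecu$ and by $L\|\vecu\|_2\|\vecv-\vecv'\|_\Sigma$ in $\vecv$.

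To obtain a uniform-in-$\vecv$ bound producing both the $\|\vecv\|_\lambda$ and $\|\vecv\|_\Sigma$ contributions, I combine two Gaussian-width estimates. The sparse set $K_o$ has Gaussian width $\lesssim\sqrt{o\log(en/o)}$ by the standard volumetric bound, while the SLOPE unit ball in $\mbb{R}^d$ has Gaussian width $\lesssim\rho$ by the estimate of \cite{BelLecTsy2018Slope}. A two-parameter generic chaining over the product of $K_o$ and the SLOPE unit ball, coupled with Talagrand's concentration inequality for suprema of sub-Gaussian processes, then yields with probability at least $1-\delta$ and uniformly in $\vecv$
\[
\sup_{\vecu\in K_o}\vecu^\top\matx\vecv\lesssim L\Bigl(\rho\cdot\tfrac{n}{\sqrt{o}}\|\vecv\|_\lambda+\sqrt{o\log(n/o)}\,\|\vecv\|_\Sigma+\sqrt{\tfrac{n\log(1/\delta)}{o}}\,\|\vecv\|_\Sigma\Bigr).
\]
Multiplying by $\sqrt{o}/n$ reproduces exactly the three summands of the right-hand side of Proposition~\ref{p:oq}.

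\textbf{Main obstacle.} The technically delicate step is the two-parameter chaining: the $\vecu$- and $\vecv$-geometries must be coupled through the common sub-Gaussian process without introducing spurious cross-factors. Securing the clean SLOPE width $\rho$ as the coefficient of $\|\vecv\|_\lambda$—rather than, say, a loose $\rho\sqrt{o\log(n/o)}$—is the main challenge; I would handle it either via the multiplier form of Mendelson's empirical-process machinery or via a layered chaining that applies the SLOPE dual-norm sub-Gaussian concentration of \cite{BelLecTsy2018Slope} within each dyadic slice of $K_o$.
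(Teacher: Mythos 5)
Your route is sound but genuinely different from the paper's. The paper bounds the full-sample quadratic process $\sup_{\vecv}\bigl|\tfrac{1}{n}\sum_{i=1}^n (\vecx_i^\top\vecv)^2-\mbb{E}(\vecx_i^\top\vecv)^2\bigr|$ over $r_{\veclambda}\mbb{B}_{\veclambda}^d\cap\mbb{S}_\Sigma^d$ via Theorem 5.5 of Dirksen (a Hanson--Wright-type bound requiring the $\psi_2$-moment conditions on $\vecx_i^\top\vecv$), then handles the adversarial subset by a union bound over all $\binom{n}{o}$ choices of $\mc{O}$ with $t=o\log(en/o)+\log(1/\delta)$, which is where the $\tfrac{o}{n}\log\tfrac{n}{o}$ term comes from; a peeling lemma then removes the $\|\vecv\|_{\veclambda}$ constraint. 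You instead reduce to the top-$o$ order statistics, linearize via $\sup_{\vecu\in K_o}\vecu^\top\matx\vecv$, and let the Gaussian width $\sqrt{o\log(en/o)}$ of the sparse cap play the role of the combinatorial entropy $\log\binom{n}{o}$. This trades the quadratic-process machinery for a bilinear one; your claimed intermediate bound is in fact looser than what the chaining delivers (the natural coefficients are $\rho\sqrt{n}\|\vecv\|_{\veclambda}$ and $\sqrt{\log(1/\delta)}\|\vecv\|_\Sigma$ rather than $\rho\tfrac{n}{\sqrt{o}}\|\vecv\|_{\veclambda}$ and $\sqrt{n\log(1/\delta)/o}\|\vecv\|_\Sigma$), but after multiplying by $\sqrt{o}/n$ both versions land inside the target since $o\leq n$. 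The ``main obstacle'' you flag is less delicate than you fear: the increment of $(\vecu,\vecv)\mapsto\vecu^\top\matx\vecv$ on $K_o\times(\mr{c}\mbb{B}_{\veclambda}^d\cap\mbb{B}_\Sigma^d)$ is controlled by $L(\|\vecu-\vecu'\|_2+\|\vecv-\vecv'\|_\Sigma)$, the $\gamma_2$-functional of a product set under a sum metric splits, and Corollary \ref{c:gw} supplies the SLOPE width directly --- this is exactly the two-parameter chaining already carried out in Lemma \ref{l:Che} with $\mbb{S}^{n-1}$ in place of $K_o$, so no multiplier-process machinery is needed. The one step you gloss over is the passage from the constrained supremum to a bound holding for arbitrary $\vecv$ with $\|\vecv\|_{\veclambda}$ and $\|\vecv\|_\Sigma$ appearing linearly: a single chaining cannot give that ``uniformly in $\vecv$'' statement, and you need the same peeling/homogenization step the paper performs (its Lemma 5 of Dalalyan--Thompson citation, or Lemmas \ref{l:peeling1}--\ref{l:peeling3}); this is routine but must be stated.
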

	\begin{proposition}
	\label{p:main-abs}
	Define 
	\begin{align}
		\tilde{\mr{V}}_{\mr{abs}}:=\left\{\vecv \in \mbb{R}^{d}, \eta \in \mbb{R}_+\,\,\mid\,\,\|\vecv\|_\Sigma/\eta = 1,\,\|\vecv\|_{\veclambda}/\eta \leq \mr{R}_{\veclambda} \right\}
	\end{align}
	Let $\mr{B}$ and $N$ be integers such that $n = N \mr{B}$ and $\sqrt{\mr{B}} \geq 48$. For each $k = 1, \ldots, \mr{B}$, define
	\begin{align}
		\tilde{\varsigma}_k = \frac{1}{N}\sum_{i=(k-1)N+1}^{kN} |\xi_i + \vecx_i^\top \vecv|.
	\end{align}
	Assume the following conditions hold:$\mr{C}_{\mr{B}}\geq 2\times 24^2$, 
	\begin{align}
		L\left(\mr{R}_{\veclambda}+\sqrt{\frac{s\log(ed/s)}{n}}\right)&\leq \frac{1}{64\sqrt{e} \mr{C}_{\mr{B}}\mr{c}_\xi},\,\, \frac{\log(1/\delta)}{n}\leq \frac{1}{\mr{c}_\xi \mr{C}^2_{\mr{B}}},\text{ and }\mr{B}=\frac{n}{\mr{c}_\xi^2 \mr{C}_{\mr{B}}}.
	\end{align}
	Then, for any $(\vecv, \eta) \in \tilde{\mr{V}}_{\mathrm{abs}}$, we have with probability at least $1 - \delta$:
	\begin{align}
		\mbb{P}\left(\sum_{k=1}^{\mr{B}}\mathbb{I}_{\left|\frac{\tilde{\varsigma}_k-\mbb{E}\frac{1}{n}\sum_{i=1}^n|\xi_i + \vecx_i^\top \vecv|}{24(\sigma^*+\eta)/\sqrt{N}} \geq 1\right|}\leq \frac{11}{24}\mr{B}\right)\geq 1-\delta.
	\end{align}
	\end{proposition}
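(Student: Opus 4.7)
The plan is to establish a uniform concentration bound over $\tilde{\mr{V}}_{\mr{abs}}$ guaranteeing that the number of ``bad'' blocks stays below $(11/24)\mr{B}$. I will follow the classical median-of-means template: a Chebyshev bound for each single block, combined with a uniform empirical-process argument that the observed bad-block count stays close to its small expectation, made uniform in $(\vecv,\eta)$ by symmetrization, Lipschitz contraction, and a Bousquet-type supremum bound.

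\textbf{Step 1 (pointwise variance bound and smoothing).} For fixed $(\vecv,\eta)\in \tilde{\mr{V}}_{\mr{abs}}$, using $\mbb{E}\xi_i=0$ and $\|\vecv\|_{\Sigma}=\eta$,
\[
\mr{Var}(\tilde{\varsigma}_k)\;\leq\;\frac{\mbb{E}(\xi_1+\vecx_1^\top\vecv)^2}{N}\;\leq\;\frac{2\bigl((\sigma^*)^2+\eta^2\bigr)}{N}\;\leq\;\frac{2(\sigma^*+\eta)^2}{N},
\]
so Chebyshev gives per-block bad probability at most $2/24^2$. I then dominate the indicator by a $2$-Lipschitz envelope $\phi:\mbb{R}\to[0,1]$ with $\mathbb{I}_{|x|\geq 1}\leq \phi(x)\leq \mathbb{I}_{|x|\geq 1/2}$; applying the same bound at the halved threshold $12(\sigma^*+\eta)/\sqrt{N}$ gives $\mbb{E}\phi(Z_k/t)\leq 8/24^2$, so $\mbb{E}\sum_{k=1}^{\mr{B}}\phi(Z_k/t)\leq \mr{B}/72$, comfortably below the target $11\mr{B}/24$.

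\textbf{Step 2 (uniform concentration).} Set $F(\vecv,\eta):=\sum_{k=1}^{\mr{B}}\phi(Z_k(\vecv)/t(\eta))$ with $Z_k(\vecv)=\tilde{\varsigma}_k-\mbb{E}\tilde{\varsigma}_k$ and $t(\eta)=24(\sigma^*+\eta)/\sqrt{N}$. By Bousquet's inequality for suprema of bounded empirical processes,
\[
\sup_{(\vecv,\eta)\in\tilde{\mr{V}}_{\mr{abs}}} F \;\leq\; \sup_{(\vecv,\eta)} \mbb{E}F \;+\; \mbb{E}\sup|F-\mbb{E}F|\;+\;C\sqrt{\mr{B}\log(1/\delta)}\;+\;C\log(1/\delta)
\]
with probability at least $1-\delta$, and Step 1 controls the first term. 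I bound $\mbb{E}\sup|F-\mbb{E}F|$ by symmetrization followed by two Ledoux-Talagrand contractions: the first peels off the Lipschitz $\phi$; after redistributing the block-level Rademacher signs onto the individual observations inside each block, the second peels off the Lipschitz map $z\mapsto|\xi_i+z|$. Writing $\vecv=\eta\vecv'$ with $\|\vecv'\|_{\veclambda}\leq\mr{R}_{\veclambda}$, the factor $\eta/(\sigma^*+\eta)\leq 1$ absorbs the $\eta$-dependence and leaves
\[
\mbb{E}\sup_{(\vecv,\eta)} |F-\mbb{E}F| \;\lesssim\; \frac{1}{\sqrt{N}}\,\mbb{E}\sup_{\|\vecv'\|_{\veclambda}\leq\mr{R}_{\veclambda}}\Bigl|\sum_{i=1}^n\epsilon_i\vecx_i^\top\vecv'\Bigr|\;\lesssim\;L\,\mr{R}_{\veclambda}\,\rho\,\sqrt{\mr{B}},
\]
by duality together with the standard SLOPE dual-norm estimate for an $L$-subGaussian symmetrized sum.

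\textbf{Combining and main obstacle.} Summing the contributions and invoking the hypotheses $L(\mr{R}_{\veclambda}+\sqrt{s\log(ed/s)/n})\leq 1/(64\sqrt{e}\mr{C}_{\mr{B}}\mr{c}_\xi)$, $\log(1/\delta)/n\leq 1/(\mr{c}_\xi\mr{C}_{\mr{B}}^2)$, and $\sqrt{\mr{B}}\geq 48$ makes each piece a small constant fraction of $\mr{B}$, so $\sup F\leq (11/24)\mr{B}$. The main obstacle is Step 2: the supremum runs jointly over $\vecv$ and $\eta$, and it is only the homogeneous structure of $\tilde{\mr{V}}_{\mr{abs}}$ combined with the $(\sigma^*+\eta)^{-1}$ rescaling that decouples $\eta$ cleanly -- otherwise one would need a separate chaining in $\eta$. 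A further subtlety is the two-stage symmetrization, where block-level Rademacher signs must be redistributed across observations to enable the second contraction, and the final Rademacher bound must exploit the SLOPE geometry to reach $\mr{R}_{\veclambda}\rho$ rather than a crude $\sqrt{d}$-type rate, which is what aligns with the hypothesis involving $\sqrt{s\log(ed/s)/n}$.
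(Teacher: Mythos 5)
Your proposal is correct and follows essentially the same route as the paper: a Lipschitz surrogate for the indicator sandwiched between the thresholds $1$ and $1/2$, Chebyshev/Markov for the per-block expectation, and Bousquet plus two-stage symmetrization and contraction (with block-level Rademacher signs redistributed onto the individual observations) reducing everything to the Rademacher width of the SLOPE ball. The one quantitative slip is in the displayed Rademacher bound: $L\,\mr{R}_{\veclambda}\,\rho\,\sqrt{\mr{B}}$ should read $L\sqrt{\mr{B}}\left(\sqrt{n}\,\mr{R}_{\veclambda}+\sqrt{s\log(ed/s)}\right)$ (the paper's Lemma \ref{l:gw2}), which is exactly the form the hypothesis $L(\mr{R}_{\veclambda}+\sqrt{s\log(ed/s)/n})\leq 1/(64\sqrt{e}\,\mr{C}_{\mr{B}}\mr{c}_\xi)$ together with $n=\mr{c}_\xi^2\mr{C}_{\mr{B}}\mr{B}$ is designed to absorb.
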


	\begin{corollary}
	\label{c:main-mean}
	As in the preceding proposition, define
	\begin{align}
		\tilde{\mr{V}}_{\mr{abs}}:=\left\{\vecv \in \mbb{R}^{d}, \eta \in \mbb{R}_+\,\,\mid\,\,\|\vecv\|_\Sigma/\eta = 1,\,\|\vecv\|_{\veclambda}/\eta \leq \mr{R}_{\veclambda} \right\}
	\end{align}
	Let $\mr{B}$ and $N$ be integers such that $n = N \mr{B}$ and $\sqrt{\mr{B}} \geq 6$. For each $k = 1, \ldots, \mr{B}$, define
	\begin{align}
		\tilde{\mu}_k = \frac{1}{N}\sum_{i=(k-1)N+1}^{kN} (\xi_i + \vecx_i^\top \vecv).
	\end{align}
	Assume the following conditions:
	\begin{align}
		L\left(\mr{R}_{\veclambda}+\sqrt{\frac{s\log(ed/s)}{n}}\right)\leq \frac{1}{8},\,\,\frac{\log(1/\delta)}{n}\leq \frac{1}{4\times 128},\text{ and }\mr{B}\geq 128 \log(1/\delta).
	\end{align}
	Then, for any $(\vecv, \eta) \in \tilde{\mr{V}}_{\mathrm{abs}}$, with probability at least $1 - \delta$, we have
	\begin{align}
			\mbb{P}\left(\sum_{k=1}^{\mr{B}}\mathbb{I}_{\left|\frac{\tilde{\varsigma}_k-\mbb{E}\frac{1}{n}\sum_{i=1}^n|\xi_i + \vecx_i^\top \vecv|}{48(\sigma^*+\eta)/\sqrt{N}} \geq 1\right|}\leq \frac{11}{24}\mr{B}\right)\geq 1-\delta.
	\end{align}
	\end{corollary}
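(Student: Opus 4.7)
\textbf{Proof plan for Corollary \ref{c:main-mean}.}

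The plan is to follow the same architecture as the proof of Proposition~\ref{p:main-abs}, exploiting the fact that the statistic $\tilde{\mu}_k$ is a plain blockwise mean rather than a mean of absolute values. This removes the need to control the absolute-value nonlinearity and automatically centres the statistic at zero, which lets me use a larger scale ($48$ instead of $24$) and a weaker block-count requirement ($\sqrt{\mr{B}}\geq 6$). The argument has three pieces: a per-block deviation bound via variance, a binomial-concentration bound across blocks, and a uniform empirical-process bound over $\tilde{\mr{V}}_{\mr{abs}}$.

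First, I would fix $(\vecv,\eta)\in\tilde{\mr{V}}_{\mr{abs}}$. Since $\mbb{E}\xi_i=0$ and $\mbb{E}\vecx_i=\mathbf{0}$ with $\{\xi_i\}$ independent of $\{\vecx_i\}$, we have $\mbb{E}\tilde{\mu}_k=0$ and
\[
\mr{Var}(\tilde{\mu}_k)=\frac{\sigma^{2}+\|\vecv\|_\Sigma^{2}}{N}\leq\frac{(\mr{c}_\xi\varsigma^{*})^{2}+\eta^{2}}{N}\leq\frac{2(\sigma^{*}+\eta)^{2}}{N},
\]
so Chebyshev's inequality yields $\mbb{P}(|\tilde{\mu}_k|\geq 48(\sigma^{*}+\eta)/\sqrt{N})\leq 2/48^{2}$, which is much smaller than $11/24$. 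Applying a standard Bernoulli concentration inequality (Hoeffding) to the i.i.d.\ indicators $\mathbb{I}_{\{|\tilde{\mu}_k|\geq 48(\sigma^{*}+\eta)/\sqrt{N}\}}$ then gives that the fraction of bad blocks exceeds $11/24$ with probability at most $\exp(-c\mr{B})$; this is where the hypotheses $\sqrt{\mr{B}}\geq 6$ and $\mr{B}\geq 128\log(1/\delta)$ enter, ensuring $\exp(-c\mr{B})\leq\delta$.

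Next comes the main obstacle: making this bound uniform over $(\vecv,\eta)\in\tilde{\mr{V}}_{\mr{abs}}$. I would reuse the empirical-process machinery behind Proposition~\ref{p:main-abs}: pass to Rademacher averages, replace the (discontinuous) indicator by a $1$-Lipschitz upper envelope separating $\{|t|\geq 1\}$ from $\{|t|\leq 1/2\}$, and then apply the contraction principle so that the remaining supremum is of a linear functional of $\xi_i+\vecx_i^{\top}\vecv$ indexed by the SLOPE-constrained set $\tilde{\mr{V}}_{\mr{abs}}$. The $L$-subGaussian hypothesis (Assumption~\ref{a:cov}) combined with the constraint $\|\vecv\|_{\veclambda}/\eta\leq\mr{R}_{\veclambda}$ bounds this expected supremum by a constant multiple of $L(\mr{R}_{\veclambda}+\sqrt{s\log(ed/s)/n})(\sigma^{*}+\eta)/\sqrt{N}$, and the assumption $L(\mr{R}_{\veclambda}+\sqrt{s\log(ed/s)/n})\leq 1/8$ is precisely what absorbs this into the gap between $11/24$ and the pointwise fraction $2/48^{2}$. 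A bounded-differences (Talagrand) inequality provides the concentration of the empirical supremum around its expectation at rate $\sqrt{\log(1/\delta)/n}$, controlled by $\log(1/\delta)/n\leq 1/512$.

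The main technical hurdle, as in Proposition~\ref{p:main-abs}, is the non-Lipschitz nature of the indicator, which forces the Lipschitz-envelope reduction before the contraction principle can be applied. Once that reduction is in place, the signed-mean setting here is strictly simpler than the absolute-value setting of Proposition~\ref{p:main-abs} — the Lipschitz constant of $t\mapsto t$ is $1$, whereas $t\mapsto|t|$ previously contributed the same but in conjunction with a nonzero centring $\varsigma^{*}$ that required careful bookkeeping — which explains both the larger admissible threshold $48$ and the weaker requirement $\sqrt{\mr{B}}\geq 6$. Collecting the three probability bounds via a union bound gives the claimed $1-\delta$ guarantee.
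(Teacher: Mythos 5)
Your proposal is correct and follows essentially the same route as the paper: bound the expected count of bad blocks by a second-moment (Chebyshev/Markov) argument giving a fraction far below $11/24$, then obtain uniformity over $\tilde{\mr{V}}_{\mr{abs}}$ by replacing the indicator with a Lipschitz envelope, symmetrizing, applying contraction, bounding the resulting Rademacher/Gaussian complexity of the SLOPE-constrained set via the assumption $L(\mr{R}_{\veclambda}+\sqrt{s\log(ed/s)/n})\leq 1/8$, and invoking a Talagrand/Bousquet-type concentration for the supremum. The only cosmetic difference is that your separate "Hoeffding for fixed $(\vecv,\eta)$" step is subsumed by that final concentration inequality in the paper's version, which handles the across-block deviation and the uniformity simultaneously.
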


\section{Proofs of Theorems~\ref{t:Huber}, \ref{t:abs}, \ref{t:Huberabs}, \ref{t:intercept} and~\ref{t:intercept2}}
\label{sec:PT}
In Section~\ref{sec:PT}, we provide the proofs of Theorems~\ref{t:Huber}, \ref{t:abs}, \ref{t:Huberabs}, \ref{t:intercept} and~\ref{t:intercept2}.

	\subsection{Proof of Theorem \ref{t:Huber}}
	\label{sec:PT1}
	To prove Theorem~\ref{t:Huber}, it suffices to verify that inequalities~\eqref{ine:p:main-01}, \eqref{ine:p:main-02}, and~\eqref{ine:p:main-03} hold under the stated assumptions. From the assumptions, Propositions~\ref{p:ub}, \ref{p:sc}, and~\ref{p:oq} hold with probability at least $1 - 3\delta$. In the remainder of this subsection, we work conditionally on the event that all three propositions hold.

	\noindent
	\underline{Verification of \eqref{ine:p:main-01}}

	We have
	\begin{align}
		&\left| \frac{1}{n}\sum_{i=1}^n \mr{C}_{\mr{H}} \hat{\varsigma}_{\mr{cur}} h\left(\frac{\xi_i+\sqrt{n}\vectheta_i}{\mr{C}_{\mr{H}} \hat{\varsigma}_{\mr{cur}} }\right) \vecx_i^\top \vecv\right|\nonumber\\
		&\stackrel{(a)}{\leq} \left| \frac{1}{n}\sum_{i=1}^n \mr{C}_{\mr{H}} \hat{\varsigma}_{\mr{cur}} h\left(\frac{\xi_i}{\mr{C}_{\mr{H}} \hat{\varsigma}_{\mr{cur}} }\right) \vecx_i^\top \vecv\right|+\left| \frac{1}{n}\sum_{i\in\mc{O}} \mr{C}_{\mr{H}} \hat{\varsigma}_{\mr{cur}} h\left(\frac{\xi_i+\sqrt{n}\vectheta_i}{\mr{C}_{\mr{H}} \hat{\varsigma}_{\mr{cur}} }\right) \vecx_i^\top \vecv\right|+\left| \frac{1}{n}\sum_{i\in \mc{O}} \mr{C}_{\mr{H}} \hat{\varsigma}_{\mr{cur}} h\left(\frac{\xi_i}{\mr{C}_{\mr{H}} \hat{\varsigma}_{\mr{cur}} }\right) \vecx_i^\top \vecv\right|\nonumber\\
		&\stackrel{(b)}{\leq} \left| \frac{1}{n}\sum_{i=1}^n \mr{C}_{\mr{H}} \hat{\varsigma}_{\mr{cur}} h\left(\frac{\xi_i}{\mr{C}_{\mr{H}} \hat{\varsigma}_{\mr{cur}} }\right) \vecx_i^\top \vecv\right|+2\mr{C}_{\mr{H}} \hat{\varsigma}_{\mr{cur}} \sqrt{\frac{o}{n}}\sqrt{\frac{1}{n}\sum_{i \in \mc{O}} (\vecx_i^\top \vecv)^2-\mbb{E}(\vecx_i^\top \vecv)}+2\mr{C}_{\mr{H}} \hat{\varsigma}_{\mr{cur}} \frac{o}{n}\|\vecv\|_\Sigma\nonumber\\
		&\stackrel{(c)}{\leq} \mr{c}_1L\mr{C}_{\mr{H}}\hat{\varsigma}_{\mr{cur}}\left(\rho\|\vecv\|_{\veclambda}+ \sqrt{\frac{\log(1/\delta)}{n}}\|\vecv\|_\Sigma\right)+2\mr{c}_3L\mr{C}_{\mr{H}} \hat{\varsigma}_{\mr{cur}} \left(\rho\|\vecv\|_{\veclambda}+\frac{o}{n}\sqrt{\log\frac{n}{o}}\|\vecv\|_\Sigma+\sqrt{\frac{\log(1/\delta)}{n}}\|\vecv\|_\Sigma\right)\nonumber\\
		&\quad \quad +2\mr{C}_{\mr{H}} \hat{\varsigma}_{\mr{cur}} \frac{o}{n}\|\vecv\|_\Sigma\nonumber\\
		&\stackrel{(d)}{\leq} \mr{c}_1L\mr{C}_{\mr{H}}\hat{\varsigma}_{\mr{cur}}\left(\rho\|\vecv\|_{\veclambda}+ \sqrt{\frac{\log(1/\delta)}{n}}\|\vecv\|_\Sigma\right)+4\mr{c}_3L\mr{C}_{\mr{H}} \hat{\varsigma}_{\mr{cur}} \left(\rho\|\vecv\|_{\veclambda}+\frac{o}{n}\sqrt{\log\frac{n}{o}}\|\vecv\|_\Sigma+\sqrt{\frac{\log(1/\delta)}{n}}\|\vecv\|_\Sigma\right),
	\end{align}
	where (a) follows from triangular inequality, (b) follows from H{\"o}lder's inequality,  (c) follows from Propositions \ref{p:ub} and \ref{p:oq}, and (d) follows from  $\log \frac{n}{o}, L,\mr{c}_3 \geq 1$.
	Then, inequality~\eqref{ine:p:main-01} holds with:
	\begin{align}
		r_{\mr{u},\lambda}=(\mr{c}_1+4\mr{c}_3)L\rho ,\quad r_{\mr{u},\Sigma} = (\mr{c}_1+4\mr{c}_3)L \left(\frac{o}{n}\sqrt{\log\frac{n}{o}}+\sqrt{\frac{\log(1/\delta)}{n}}\right).
	\end{align}

	\noindent
	\underline{Verification of \eqref{ine:p:main-02}}

	As in the previous case, combining Propositions~\ref{p:ub} and~\ref{p:sc}, we obtain:
	\begin{align}
		&\frac{1}{n}\sum_{i=1}^n \mr{C}_{\mr{H}} \hat{\varsigma}_{\mr{cur}} \left(-h\left(\frac{\xi_i- \vecx_i^\top\vecv }{\mr{C}_{\mr{H}} \hat{\varsigma}_{\mr{cur}}}\right) +h\left(\frac{\xi_i}{\mr{C}_{\mr{H}} \hat{\varsigma}_{\mr{cur}}}\right)\right) \vecx_i^\top \vecv\nonumber\\
		&\geq \frac{\|\vecv\|_\Sigma^2}{3}- \mr{c}_2L\mr{C}_{\mr{H}}\hat{\varsigma}_{\mr{cur}} \left(\rho\|\vecv\|_{\veclambda}- \sqrt{\frac{\log(1/\delta)}{n}}\|\vecv\|_\Sigma\right)-8\mr{c}_3L\mr{C}_{\mr{H}} \hat{\varsigma}_{\mr{cur}} \left(\rho\|\vecv\|_{\veclambda}+\frac{o}{n}\sqrt{\log\frac{n}{o}}\|\vecv\|_\Sigma+\sqrt{\frac{\log(1/\delta)}{n}}\|\vecv\|_\Sigma\right).
	\end{align}
	Then, inequality~\eqref{ine:p:main-02} holds with:
	\begin{align}
		r_{\mr{l},\lambda}=(\mr{c}_2+8\mr{c}_3)L\rho ,\quad r_{\mr{l},\Sigma} = (\mr{c}_2+8\mr{c}_3)L \left(\frac{o}{n}\sqrt{\log\frac{n}{o}}+\sqrt{\frac{\log(1/\delta)}{n}}\right).
	\end{align}

	\noindent
	\underline{Verification of \eqref{ine:p:main-03}}

This inequality follows directly from the assumption that $\mr{C}_{\veclambda}$ is sufficiently large so that $\mr{C}_{\veclambda} / 2 - r_{\mathrm{l},\lambda} - r_{\mathrm{u},\lambda} > 0$.

	From the discussion above, we see that \eqref{ine:p:main-03} holds from the assumption on $\mr{C}_{\veclambda}$.

	\noindent
	\underline{Conclusion}

	Finally, noting that $\sqrt{ \sum_{i=1}^s \lambda_i } \leq \sqrt{ \log(d/s)/n }$ and combining the bounds above, the proof is complete.

	\subsection{Proof of Theorem \ref{t:abs}}
	\label{sec:PT2}	
	We begin by verifying the concentration property of $\hat{\varsigma}$.
	Define
	\begin{align}
		\mc{O}_\mr{B}&= \{k=1,\cdots,\mr{B}\,\mid\, \mc{O} \cap \{(k-1)\mr{N},(k-1)\mr{N}+1,\cdots,k\mr{N}\} \neq \emptyset\},\\
		\Delta & = \hat{\vecbeta}-\vecbeta^*.
	\end{align} 
	Then, for any positive scalar $\eta$ and any vector $\vecv \in \mathbb{R}^d$ such that $\|\Delta\|_\Sigma / \eta = 1$ and $\|\vecv\|_\Sigma / \eta \leq \mr{R}_{\veclambda}$, we have:
	\begin{align}
		&\sum_{k=1}^{\mr{B}}\mathbb{I}_{\frac{|\hat{\varsigma}_k-\mbb{E}\frac{1}{n}\sum_{i=1}^n|\xi_i+\vecx_i^\top \vecv||}{24(\sigma^*+\eta)/\sqrt{N}} \geq 1}\nonumber\\
		&\stackrel{(a)}{=}  \sum_{k=1}^{\mr{B}}\mathbb{I}_{\frac{|\tilde{\varsigma}_k-\mbb{E}\frac{1}{n}\sum_{i=1}^n|\xi_i+\vecx_i^\top \vecv||}{24(\sigma^*+\eta)/\sqrt{N}} \geq 1}-\sum_{k\in\mc{O}_{\mr{B}}}\mathbb{I}_{\frac{|\hat{\varsigma}_k-\mbb{E}\frac{1}{n}\sum_{i=1}^n|\xi_i+\vecx_i^\top \vecv||}{24(\sigma^*+\eta)/\sqrt{N}} \geq 1}+\sum_{k\in\mc{O}_{\mr{B}}}\mathbb{I}_{\frac{|\tilde{\varsigma}_k-\mbb{E}\frac{1}{n}\sum_{i=1}^n|\xi_i+\vecx_i^\top \vecv||}{24(\sigma^*+\eta)/\sqrt{N}} \geq 1}\nonumber\\
		&\leq \sum_{k=1}^{\mr{B}}\mathbb{I}_{\frac{|\tilde{\varsigma}_k-\mbb{E}\frac{1}{n}\sum_{i=1}^n|\xi_i+\vecx_i^\top \vecv||}{24(\sigma^*+\eta)/\sqrt{N}} \geq 1}+\sum_{k\in\mc{O}_{\mr{B}}}\mathbb{I}_{\frac{|\tilde{\varsigma}_k-\mbb{E}\frac{1}{n}\sum_{i=1}^n|\xi_i+\vecx_i^\top \vecv||}{24(\sigma^*+\eta)/\sqrt{N}} \geq 1}\nonumber\\
		&\stackrel{(b)}{\leq} \sum_{k=1}^{\mr{B}}\mathbb{I}_{\frac{|\tilde{\varsigma}_k-\mbb{E}\frac{1}{n}\sum_{i=1}^n|\xi_i+\vecx_i^\top \vecv||}{24(\sigma^*+\eta)/\sqrt{N}} \geq 1}+o\nonumber\\
		&\stackrel{(c)}{\leq}    \frac{11}{24}\mr{B}+\frac{1}{\mr{C}_{\mr{B}}}\mr{B}\nonumber\\
		&\stackrel{(d)}{\leq} \frac{1}{2}\mr{B},
	\end{align}
	where (a) follows from the definition of $\mc{O}_{\mr{B}}$, (b) follows from the fact the upper bound of the number of elements of $\mc{O}_{\mr{B}}$ is $o$, (c) follows from  Proposition \ref{p:main-abs} and the assumption $o\leq \frac{n}{\mr{c}_\xi^2\mr{C}^2_{\mr{B}}}$ and  $\mr{B} = \frac{n}{\mr{c}_\xi^2 \mr{C}_{\mr{B}}}$, and (d) follows from $\mr{C}_{\mr{B}}\geq 24$.

	Setting $\eta = \|\Delta\|_\Sigma$, we obtain
	\begin{align}
		\label{ine:varsigmacer}
		\left|\hat{\varsigma}-\mbb{E}\frac{1}{n}\sum_{i=1}^n|\xi_i+\vecx_i^\top \Delta|\right| \leq 24(\mr{c}_\xi\varsigma^*+\|\Delta\|_\Sigma)\frac{1}{\mr{c}_{\xi}\sqrt{\mr{C}_{\mr{B}}}},
	\end{align}
	using the noise assumption $\sigma^*\leq \mr{c}_{\xi}\varsigma^*$.

	\noindent
	\underline{Case $\|\Delta\|_\Sigma \geq 20 L^4\varsigma^* $}

	\textit{Lower bound.}
	Using the triangle inequality:
	\begin{align}
		\mbb{E}\frac{1}{n}\sum_{i=1}^n|\xi_i+\vecx_i^\top \Delta|\geq\mbb{E}\frac{1}{n}\sum_{i=1}^n(|\vecx_i^\top \Delta|-|\xi_i|)=\mbb{E}\frac{1}{n}\sum_{i=1}^n|\vecx_i^\top \Delta|-\varsigma.
	\end{align}
	From Lemma \ref{l:no-rad-exp},  with probability at least $1-\delta$, for any $\vecv \in \mbb{R}^d$ we have
	\begin{align}
		\mbb{E}\sum_{i=1}^n \frac{1}{n}|\vecx_i^\top \vecv|\geq -CL\rho \|\vecv\|_{\lambda}-CL\sqrt{\frac{\log (1/\delta)}{n}}\|\vecv\|_\Sigma+\frac{1}{72L^4}\|\vecv\|_\Sigma,
	\end{align}
	Then,  we obtain:
	 \begin{align}
		\mbb{E}\sum_{i=1}^n \frac{1}{n}|\vecx_i^\top \Delta|&\geq  -CL\rho \|\Delta\|_{\lambda}-CL\sqrt{\frac{\log (1/\delta)}{n}}\|\Delta\|_\Sigma+\frac{1}{72L^4}\|\Delta\|_\Sigma\nonumber\\
		&\stackrel{(a)}{\geq}  -CL\rho \mr{R}_{\veclambda}\|\Delta\|_\Sigma-CL\sqrt{\frac{\log (1/\delta)}{n}}\|\Delta\|_\Sigma+\frac{1}{72L^4}\|\Delta\|_\Sigma\nonumber\\
		&\stackrel{(b)}{\geq} \frac{1}{80L^4}\|\Delta\|_\Sigma,
	 \end{align}
	 where (a) follows from  $\|\Delta\|_{\lambda}\leq \mr{R}_{\veclambda}\|\Delta\|_\Sigma$, and (b) follows from the assumption $L\left(\rho \mr{R}_{\veclambda}+\sqrt{\log(1/\delta)/n}\right)$ is sufficiently small. 
		Combining with~\eqref{ine:varsigmacer}, we have
		\begin{align}
			\label{ine:varsigmacer2}
			\left|\hat{\varsigma}-\mbb{E}\frac{1}{n}\sum_{i=1}^n|\xi_i+\vecx_i^\top \Delta|\right|& \leq 24(\mr{c}_\xi\varsigma^*+\|\Delta\|_\Sigma)\frac{1}{\mr{c}_{\xi}\sqrt{\mr{C}_{\mr{B}}}}\nonumber \\
			&\leq 24\left(\frac{\mr{c}_\xi}{20L^4}+1 \right)\frac{1}{\mr{c}_{\xi}\sqrt{\mr{C}_{\mr{B}}}}\|\Delta\|_\Sigma\nonumber \\
			&\leq \frac{1}{L^4 160}\|\Delta\|_\Sigma,
		\end{align}
		where we use the assumption $\left(\frac{24\times 160}{\mr{c}_{\xi}}\left(\frac{\mr{c}_\xi}{20}+L^4 \right)\right)^2\leq \mr{C}_{\mr{B}}$.
		Consequently, 
		\begin{align}
			\hat{\varsigma}& \geq \frac{1}{160L^4}\|\Delta\|_\Sigma\geq \frac{1}{8}\varsigma^*.
		\end{align}

		\textit{Upper bound.} From Lemma \ref{l:L1ev}  with  $\mr{c}=\mr{C}_{\mr{PHR}}\mr{C}_{\mr{H}}\hat{\varsigma}_{\mr{cur}}\mr{R}_\Sigma L\rho \sqrt{n}\mr{R}_{\veclambda} $ and $\mr{c}' = \mr{C}_{\mr{PHR}}\mr{C}_{\mr{H}}\hat{\varsigma}_{\mr{cur}}\mr{R}_\Sigma L$, triangular inequality and the assumption $\sigma^* \leq \mr{c}_\xi \varsigma^*$, we have
		\begin{align}
			\mbb{E}\frac{1}{n}\sum_{i=1}^n|\vecx_i^\top \Delta+\xi_i|& \leq C\frac{\mr{C}_{\mr{PHR}}\mr{C}_{\mr{H}}\hat{\varsigma}_{\mr{cur}}\mr{R}_\Sigma L(\rho \sqrt{n}\mr{R}_{\veclambda} +1)+\mr{c}_\xi \varsigma^*}{n}+\mr{C}_{\mr{PHR}}\mr{C}_{\mr{H}}\hat{\varsigma}_{\mr{cur}}\mr{R}_\Sigma+\varsigma^*\\
			&\leq 2 \max\{ \mr{C}_{\mr{PHR}}\mr{C}_{\mr{H}}\hat{\varsigma}_{\mr{cur}}\mr{R}_\Sigma , \varsigma^*\}.
		\end{align}
		From \eqref{ine:varsigmacer2} and \eqref{vabs}, we have
		\begin{align}
			\left|\hat{\varsigma}-\mbb{E}\frac{1}{n}\sum_{i=1}^n|\xi_i+\vecx_i^\top \Delta|\right|&\leq \frac{1}{L^4 160} \mr{C}_{\mr{PHR}}\mr{C}_{\mr{H}}\hat{\varsigma}_{\mr{cur}}\mr{R}_\Sigma.
		\end{align}
	Consequently, from $L\geq 1$, we have
	\begin{align}
		\hat{\varsigma}& \leq 3 \max\{ \mr{C}_{\mr{PHR}}\mr{C}_{\mr{H}}\hat{\varsigma}_{\mr{cur}}\mr{R}_\Sigma , \varsigma^*\}.
	\end{align}

	\noindent
	\underline{Case $\|\Delta\|_\Sigma \leq 20L^4\varsigma^* $}

	\textit{Lower bound.}
	From Lemma \ref{l:L1ev} with  $\mr{c}=\mr{R}_{\veclambda} 20L^4 \varsigma^*$ and $\mr{c}' = 20L^4 \varsigma^*$, and the assumption $\sigma\leq \mr{c}_\xi \varsigma$, we have
	\begin{align}
		\mbb{E}\frac{1}{n}\sum_{i=1}^n|\vecx_i^\top \Delta+\xi_i|& \geq -C\frac{L(\rho \sqrt{n}\mr{R}_{\veclambda} 20L^4 \varsigma^*+20L^4 \varsigma^*)+\mr{c}_\xi \varsigma^*}{n}+\inf_{\vecv  \in \mr{c}\mbb{B}^d_\lambda \cap \mr{c}'\mbb{B}^d_\Sigma}\left(\mbb{E}|\vecx^\top \vecv+\xi|\right)\nonumber\\
		&\stackrel{(a)}{\geq} -C\frac{40L^5+\mr{c}_\xi }{\sqrt{n}}\varsigma^*+\frac{\varsigma^*}{2\sqrt{2}}\nonumber\\
		&\stackrel{(b)}{\geq} \frac{\varsigma^*}{4},
	\end{align}
	where (a) follows from Lemma \ref{l:L1+}, $\rho \mr{R}_{\veclambda}\leq 1$ and $L\geq 1$, and (b) follows from  the assumption tha  $ \frac{L^5+\mr{c}_\xi }{\sqrt{n}}$ is sufficiently small. From \eqref{ine:varsigmacer}, we have
	\begin{align}
		\label{ine:varsigmacer3}
		\left|\hat{\varsigma}-\mbb{E}\frac{1}{n}\sum_{i=1}^n|\xi_i+\vecx_i^\top \Delta|\right| \leq 24(\mr{c}_\xi\varsigma^*+\|\Delta\|_\Sigma)\frac{1}{\mr{c}_{\xi}\sqrt{\mr{C}_{\mr{B}}}}\stackrel{(a)}{\leq} 24(\mr{c}_\xi+20L^4)\frac{\varsigma^*}{\mr{c}_{\xi}\sqrt{\mr{C}_{\mr{B}}}}\stackrel{(b)}{\leq} \frac{1}{8}\varsigma^*,
	\end{align}
	where (a) follows from $\|\Delta\|_\Sigma \leq 20L^4\varsigma^* $, and (b) follows from  the assumption $\left(\frac{8\times 24}{\mr{c}_{\xi}} (\mr{c}_\xi+20L^4)\right)^2\leq \mr{C}_{\mr{B}}$. Consequently, we have
	\begin{align}
		\hat{\varsigma}& \geq \frac{1}{8}\varsigma^*.
	\end{align}

	\textit{Upper bound.} 
	From Lemma \ref{l:L1ev} with  $\mr{c}=\mr{R}_{\veclambda} 20L^4 \varsigma^*$ and $\mr{c}' = 20L^4 \varsigma^*$, triangular inequality and the assumption $\sigma \leq \mr{c}_\xi \varsigma^*$, we have
	\begin{align}
		\mbb{E}\frac{1}{n}\sum_{i=1}^n|\vecx_i^\top \Delta+\xi_i|& \leq C\frac{20L^4 (\rho \sqrt{n}\mr{R}_{\veclambda} +1)+\mr{c}_\xi }{n}\varsigma^*+20L^4\varsigma^* + \varsigma^*\stackrel{(a)}{\leq} 21 L^4 \varsigma^*,
	\end{align}
	where (a) follows from  the assumption $\rho \mr{R}_{\veclambda}\leq 1$, $L\geq 1$ and  $\frac{L^4+\mr{c}_\xi}{\sqrt{n}}$ is sufficiently small.
	Consequently, combining this and \eqref{ine:varsigmacer3}
	\begin{align}
		\hat{\varsigma}\leq 22L^4\varsigma^*.
	\end{align}

	\noindent
	\underline{Conclusion}
	Combining both  cases, we have
	\begin{align}
		\frac{1}{8}\varsigma^* \leq \hat{\varsigma} \leq \max\{ 3\mr{C}_{\mr{PHR}}\mr{C}_{\mr{H}}\hat{\varsigma}_{\mr{cur}}\mr{R}_\Sigma , 25L^4 \varsigma^*\}.
	\end{align}

	\subsection{Proof of Theorem \ref{t:Huberabs}}
	\label{sec:PT3}

	From the assumptions, we conclude that the results in Propositions~\ref{p:ub}--\ref{p:oq-01} hold with probability at least $1 - 4\delta$. In the remainder of the proof, we assume these propositions hold and proceed to analyze the outputs at each iteration.

	\subsubsection{Analysis of the first iteration}
	\noindent
	\underline{Estimation of $\hat{\vecbeta}_1$}

	By the assumption on $\mr{C}_{\mathrm{ini}}$, inequality~\eqref{ine:t:Huber:abs} holds. Then, by Theorem~\ref{t:Huber}, we obtain:
	\begin{align}
		\label{ine:FI}
			\|\vecbeta^*-\hat{\vecbeta}_1\|_\Sigma \leq  \mr{C}_{\mr{PHR}}\mr{C}_{\mr{H}} \mr{C}_{\mr{ini}}\mr{R}_\Sigma\text{ and }\|\vecbeta^*-\hat{\vecbeta}_1\|_{\veclambda} \leq  \|\vecbeta^*-\hat{\vecbeta}_1\|_\Sigma\times \mr{R}_{\veclambda},
	\end{align}

	\noindent
	\underline{Estimation of $\hat{\varsigma}_1$}

	If $\max \left\{3\mr{C}_{\mr{PHR}}\mr{C}_{\mr{H}}\mr{C}_{\mr{ini}}\mr{R}_\Sigma,25 L^4 \varsigma^*\right\}=25L^4 \varsigma^*$,
	then, from Theorem \ref{t:abs},  we have 
	\begin{align}
	\label{ine:FI-2}
		\varsigma^*\leq \hat{\varsigma}_1 \leq  8\times 25L^4 \varsigma^*.
	\end{align}
	Otherwise, from Theorem \ref{t:abs}, we have 
	\begin{align}
	\label{ine:FI-3}
		\varsigma^* \leq 	\hat{\varsigma}_1 \leq  8\times 3  \mr{C}_{\mr{PHR}}\mr{C}_{\mr{H}} \mr{C}_{\mr{ini}}\mr{R}_\Sigma \leq  24  \mr{C}_{\mr{PHR}}\mr{C}_{\mr{H}} \mr{C}_{\mr{ini}}\mr{R}_\Sigma:=\mr{C}_{\mr{ini}}\hat{\varsigma}_{\mr{ini}}^{\mr{u}},
	\end{align}
	We note that from the assumption $\mr{C}_{\mr{PHR}}\mr{C}_{\mr{H}} \mr{R}_\Sigma< 1/24$, and therefore, we have $\hat{\varsigma}_{\mr{ini}}^{\mr{u}}<1$.

	Consequently, we have
	\begin{align}
	\label{ine:FI-4}
		\varsigma^* \leq 	\hat{\varsigma}_1 \leq  \max\{200L^4 \varsigma^*, \mr{C}_{\mr{ini}}\hat{\varsigma}_{\mr{ini}}^{\mr{u}}\}.
	\end{align}

	\subsubsection{Analysis the second iteration}

	\noindent
	\underline{Estimation of $\hat{\vecbeta}_2$}

	When $\max \left\{3\mr{C}_{\mr{PHR}}\mr{C}_{\mr{H}}\mr{C}_{\mr{ini}}\mr{R}_\Sigma,25L^4 \varsigma^*\right\}= 25 L^4 \varsigma^*$ in the previous iteration, from \eqref{ine:FI-2} and Theorem \ref{t:Huber}, we have
	\begin{align}
		\label{ine:correctbound}
		\|\vecbeta^*-\hat{\vecbeta}_2\|_\Sigma \leq   \mr{C}_{\mr{PHR}}\mr{C}_{\mr{H}}200L^4 \varsigma^* \mr{R}_\Sigma\text{ and }\|\vecbeta^*-\hat{\vecbeta}_2\|_{\veclambda} \leq  \|\vecbeta^*-\hat{\vecbeta}_2\|_\Sigma \times \mr{R}_{\veclambda},
\end{align}
Otherwise, from \eqref{ine:FI-3} and  Theorem \ref{t:Huber}, we have
	\begin{align}
		\label{ine:SE}
			\|\vecbeta^*-\hat{\vecbeta}_2\|_\Sigma \leq  \mr{C}_{\mr{PHR}}\mr{C}_{\mr{H}} \hat{\varsigma}_{\mr{ini}}^{\mr{u}}\mr{C}_{\mr{ini}}\mr{R}_\Sigma\text{ and }\|\vecbeta^*-\hat{\vecbeta}_2\|_{\veclambda} \leq   \|\vecbeta^*-\hat{\vecbeta}_2\|_\Sigma \times \mr{R}_{\veclambda}.
	\end{align}
	Consequently, we have
	\begin{align}
			\|\vecbeta^*-\hat{\vecbeta}_2\|_\Sigma \leq  \mr{C}_{\mr{PHR}}\mr{C}_{\mr{H}}\mr{R}_\Sigma\max\{ 200L^4 \varsigma^*,\mr{C}_{\mr{ini}} \hat{\varsigma}_{\mr{ini}}^{\mr{u}}\}\text{ and }\|\vecbeta^*-\hat{\vecbeta}_2\|_{\veclambda} \leq   \|\vecbeta^*-\hat{\vecbeta}_2\|_\Sigma \times \mr{R}_{\veclambda}.
	\end{align}

\noindent
\underline{Estimation of $\hat{\varsigma}_2$}

From \eqref{ine:FI-4} and Theorem  \ref{t:abs}, we have
\begin{align}
	\varsigma^* \leq \hat{\varsigma}_2& \leq 8\max\{ 3\mr{C}_{\mr{PHR}}\mr{C}_{\mr{H}}\max\{200L^4 \varsigma^*, \mr{C}_{\mr{ini}}\hat{\varsigma}_{\mr{ini}}^{\mr{u}}\}\mr{R}_\Sigma , 25 L^4 \varsigma^*\}\nonumber\\
	&= 8\max\{ 600\mr{C}_{\mr{PHR}}\mr{C}_{\mr{H}}L^4 \varsigma^*\mr{R}_\Sigma,3\mr{C}_{\mr{PHR}}\mr{C}_{\mr{H}}\mr{C}_{\mr{ini}}\hat{\varsigma}_{\mr{ini}}^{\mr{u}}\mr{R}_\Sigma , 25 L^4 \varsigma^*\}\nonumber \\
	&\leq  \max\{200L^4 \varsigma^*,   \mr{C}_{\mr{ini}}(\hat{\varsigma}_{\mr{ini}}^{\mr{u}})^2     \},
\end{align}
where the last line follows from the assumption $\mr{C}_{\mr{PHR}}\mr{C}_{\mr{H}} \mr{R}_\Sigma< 1/24$ and the definition of $\hat{\varsigma}_{\mr{ini}}^{\mr{u}}$.

\subsubsection{Analysis the $k(\geq 2)$-th  iteration}

We now state and prove the following lemma.
\begin{lemma} 
	\label{l:induction}
	In the $k$-th iteration, we have
	\begin{align}
		\|\vecbeta^*-\hat{\vecbeta}_k\|_\Sigma&\leq  \mr{C}_{\mr{PHR}}\mr{C}_{\mr{H}}\mr{R}_\Sigma\max\{ 200L^4 \varsigma^*,\mr{C}_{\mr{ini}} (\hat{\varsigma}_{\mr{ini}}^{\mr{u}})^{k-1}\}\text{ and }\|\vecbeta^*-\hat{\vecbeta}_k\|_{\veclambda} \leq   \|\vecbeta^*-\hat{\vecbeta}_k\|_\Sigma \times \mr{R}_{\veclambda}\nonumber\\
		\varsigma^* \leq \hat{\varsigma}_{k}&\leq  \max\{200L^4 \varsigma^*,   \mr{C}_{\mr{ini}}(\hat{\varsigma}_{\mr{ini}}^{\mr{u}})^{k}  \}.
	\end{align}
\end{lemma}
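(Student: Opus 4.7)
The plan is a straightforward induction on $k$, leveraging the crucial feature (emphasized after both theorems) that Theorems~\ref{t:Huber} and~\ref{t:abs} hold pointwise in $\hat{\varsigma}_{\mr{cur}}$ and $\hat{\vecbeta}$, so that no union bound over iterations is required and the lemma becomes a purely deterministic consequence of the high-probability events already secured at the start of the proof of Theorem~\ref{t:Huberabs}. The base cases $k=1$ and $k=2$ are exactly what was derived in the two preceding subsubsections; one checks that they match the lemma's statement after noting $(\hat{\varsigma}_{\mr{ini}}^{\mr{u}})^{0} = 1$, that $\mr{C}_{\mr{ini}} \ge \varsigma^*$, and that $200L^4\varsigma^*$ is always absorbed into the max. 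It therefore suffices to carry out the inductive step $k-1 \Rightarrow k$ for $k \geq 3$.

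For the coefficient bound, the inductive hypothesis gives $\hat{\varsigma}_{k-1} \geq \varsigma^*$, which verifies assumption~\eqref{ine:t:Huber:abs} of Theorem~\ref{t:Huber} with $\hat{\varsigma}_{\mr{cur}} = \hat{\varsigma}_{k-1}$. Applying the theorem to the $l=k$ update of \textsc{PENALIZED-HUBER-REGRESSION} yields
\begin{align*}
\|\vecbeta^* - \hat{\vecbeta}_k\|_\Sigma \leq \mr{C}_{\mr{PHR}}\mr{C}_{\mr{H}}\hat{\varsigma}_{k-1}\mr{R}_\Sigma,\qquad \|\vecbeta^*-\hat{\vecbeta}_k\|_{\veclambda} \leq \mr{R}_{\veclambda}\,\|\vecbeta^*-\hat{\vecbeta}_k\|_\Sigma.
\end{align*}
Substituting the inductive upper bound $\hat{\varsigma}_{k-1} \leq \max\{200L^4\varsigma^*,\,\mr{C}_{\mr{ini}}(\hat{\varsigma}_{\mr{ini}}^{\mr{u}})^{k-1}\}$ gives the claimed $\Sigma$-bound on $\hat{\vecbeta}_k$, and the $\veclambda$-bound follows automatically.

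Next, the two inequalities just derived show that $\hat{\vecbeta}_k - \vecbeta^* \in \mr{V}_{\mr{abs}}$ (with $\mr{V}_{\mr{abs}}$ parametrized through $\hat{\varsigma}_{\mr{cur}} = \hat{\varsigma}_{k-1}$), so Theorem~\ref{t:abs} applies to the next call of \textsc{ROBUST-ABSOLUTE-MOMENT} and gives
\begin{align*}
\varsigma^* \leq \hat{\varsigma}_k \leq 8\max\bigl\{3\mr{C}_{\mr{PHR}}\mr{C}_{\mr{H}}\hat{\varsigma}_{k-1}\mr{R}_\Sigma,\,25L^4\varsigma^*\bigr\}.
\end{align*}
Plugging in the inductive bound for $\hat{\varsigma}_{k-1}$ produces a three-term maximum. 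The key algebraic identity, already used for the second iteration, is that $\hat{\varsigma}_{\mr{ini}}^{\mr{u}} = 24\mr{C}_{\mr{PHR}}\mr{C}_{\mr{H}}\mr{R}_\Sigma$, which is strictly less than $1$ by the assumption $24\mr{C}_{\mr{PHR}}\mr{C}_{\mr{H}}\mr{R}_\Sigma < 1$. This collapses the max: $24\mr{C}_{\mr{PHR}}\mr{C}_{\mr{H}}\mr{R}_\Sigma\cdot 200L^4\varsigma^*$ is dominated by $200L^4\varsigma^*$, while $24\mr{C}_{\mr{PHR}}\mr{C}_{\mr{H}}\mr{R}_\Sigma\cdot \mr{C}_{\mr{ini}}(\hat{\varsigma}_{\mr{ini}}^{\mr{u}})^{k-1}$ telescopes to $\mr{C}_{\mr{ini}}(\hat{\varsigma}_{\mr{ini}}^{\mr{u}})^{k}$, yielding the stated bound $\hat{\varsigma}_k \leq \max\{200L^4\varsigma^*,\,\mr{C}_{\mr{ini}}(\hat{\varsigma}_{\mr{ini}}^{\mr{u}})^{k}\}$ and completing the induction.

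The only real obstacle is algebraic bookkeeping inside the nested maxima, and specifically isolating the contraction factor $\hat{\varsigma}_{\mr{ini}}^{\mr{u}}<1$ that drives the geometric term downward while the floor $200L^4\varsigma^*$ stays fixed. This contraction is exactly what justifies the choice of $\mr{N}_{\mr{Iter}}$ in Theorem~\ref{t:Huberabs}: once $\mr{C}_{\mr{ini}}(\hat{\varsigma}_{\mr{ini}}^{\mr{u}})^{\mr{N}_{\mr{Iter}}-1}\leq 200L^4\varsigma^*$, the lemma's bound at $k=\mr{N}_{\mr{Iter}}$ collapses precisely to \eqref{ine:t:Huberabs}.
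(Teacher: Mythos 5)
Your proposal is correct and follows essentially the same route as the paper: an induction on $k$ that exploits the fact that Theorems~\ref{t:Huber} and~\ref{t:abs} hold for arbitrary $\hat{\varsigma}_{\mr{cur}}\geq\varsigma^*$ and arbitrary $\hat{\vecbeta}-\vecbeta^*\in\mr{V}_{\mr{abs}}$, with the contraction driven by $\hat{\varsigma}_{\mr{ini}}^{\mr{u}}=24\mr{C}_{\mr{PHR}}\mr{C}_{\mr{H}}\mr{R}_\Sigma<1$. The paper phrases the inductive step as an explicit two-case split on which term of the maximum is active, while you carry the maximum through directly, but the algebra is identical.
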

\begin{proof}
	The proof proceeds by induction. Assume that in $k$-th iteration, we have
	\begin{align}
		\|\vecbeta^*-\hat{\vecbeta}_k\|_\Sigma&\leq  \mr{C}_{\mr{PHR}}\mr{C}_{\mr{H}}\mr{R}_\Sigma\max\{ 200L^4 \varsigma^*,\mr{C}_{\mr{ini}} (\hat{\varsigma}_{\mr{ini}}^{\mr{u}})^{k-1}\}\text{ and }\|\vecbeta^*-\hat{\vecbeta}_k\|_{\veclambda} \leq   \|\vecbeta^*-\hat{\vecbeta}_k\|_\Sigma \times \mr{R}_{\veclambda},\nonumber \\
			\varsigma \leq \hat{\varsigma}_{k}&\leq  \max\{200L^4 \varsigma^*,   \mr{C}_{\mr{ini}}(\hat{\varsigma}_{\mr{ini}}^{\mr{u}})^{k}  \}.
	\end{align}
	Then, we consider the $(k+1)$-th iteration.

	\noindent
	\underline{Case I:  $\hat{\varsigma}_{k}\leq 200L^4 \varsigma^*$ in the $k$-th iteration}

	From Theorem \ref{t:Huber}, we have
	\begin{align}
		\|\vecbeta^*-\hat{\vecbeta}_{k+1}\|_\Sigma \leq   \mr{C}_{\mr{PHR}}\mr{C}_{\mr{H}}  \mr{R}_\Sigma 200L^4\varsigma^*\text{ and }\|\vecbeta^*-\hat{\vecbeta}_{k+1}\|_{\veclambda} \leq  \|\vecbeta^*-\hat{\vecbeta}_2\|_\Sigma \times \mr{R}_{\veclambda},
\end{align}
and  Theorem  \ref{t:abs}, we have
\begin{align}
	\varsigma^* \leq \hat{\varsigma}_{k+1} \leq 8 \max\{ 600 L^4 \mr{C}_{\mr{PHR}}\mr{C}_{\mr{H}}\mr{R}_\Sigma \varsigma^*, 25L^4 \varsigma\}\stackrel{(a)}{\leq}   200L^4 \varsigma^*,
\end{align}
where (a) follows from $\mr{C}_{\mr{PHR}}\mr{C}_{\mr{H}} \mr{R}_\Sigma< 1/24$.

\noindent
\underline{Case II:  $\hat{\varsigma}_{k}\leq  \mr{C}_{\mr{ini}}(\hat{\varsigma}_{\mr{ini}}^{\mr{u}})^{k} $ in the $k$-th iteration}

From Theorem \ref{t:Huber}, we have
\begin{align}
	\|\vecbeta^*-\hat{\vecbeta}_{k+1}\|_\Sigma \leq   \mr{C}_{\mr{PHR}}\mr{C}_{\mr{H}}  \mr{R}_\Sigma \mr{C}_{\mr{ini}}(\hat{\varsigma}_{\mr{ini}}^{\mr{u}})^{k} \text{ and }\|\vecbeta^*-\hat{\vecbeta}_{k+1}\|_{\veclambda} \leq  \|\vecbeta^*-\hat{\vecbeta}_2\|_\Sigma \times \mr{R}_{\veclambda},
\end{align}
and  Theorem  \ref{t:abs}, we have
\begin{align}
	\varsigma^* \leq \hat{\varsigma}_{k+1} \leq 8\max\{ 3\mr{C}_{\mr{PHR}}\mr{C}_{\mr{H}}\mr{C}_{\mr{ini}}(\hat{\varsigma}_{\mr{ini}}^{\mr{u}})^{k}\mr{R}_\Sigma , 25L^4 \varsigma^*\}=\max\{ \mr{C}_{\mr{ini}}(\hat{\varsigma}_{\mr{ini}}^{\mr{u}})^{k+1} , 200L^4 \varsigma^*\}.
\end{align}

\end{proof}

\subsubsection{Conclusion}
From Lemma \ref{l:induction}, at $\mr{N}_{\mr{Iter}}$-th iteration, we have 
\begin{align}
	\|\vecbeta^*-\hat{\vecbeta}_{\mr{N}_{\mr{Iter}}}\|_\Sigma&\leq  \mr{C}_{\mr{PHR}}\mr{C}_{\mr{H}}\mr{R}_\Sigma\max\{ 200L^4 \varsigma^*,\mr{C}_{\mr{ini}} (\hat{\varsigma}_{\mr{ini}}^{\mr{u}})^{\mr{N}_{\mr{Iter}}-1}\},\nonumber\\
	\|\vecbeta^*-\hat{\vecbeta}_{\mr{N}_{\mr{Iter}}}\|_{\veclambda} &\leq   \|\vecbeta^*-\hat{\vecbeta}_{\mr{N}_{\mr{Iter}}}\|_\Sigma \times \mr{R}_{\veclambda}.
\end{align}
From the assumption on $\mr{N}_{\mr{Iter}}$, we have 
\begin{align}
	\max\{ 200L^4 \varsigma^*,\mr{C}_{\mr{ini}} (\hat{\varsigma}_{\mr{ini}}^{\mr{u}})^{\mr{N}_{\mr{Iter}}-1}\}=  \max\{ 200L^4 \varsigma^*,\mr{C}_{\mr{ini}} (24  \mr{C}_{\mr{PHR}}\mr{C}_{\mr{H}} \mr{R}_\Sigma)^{\mr{N}_{\mr{Iter}}-1}\}\leq 200L^4\varsigma^*,
\end{align}
and the proof is complete.

\subsection{Proof of Theorem \ref{t:intercept}}
We present the proof in the setting with heavy-tailed noise. The result for sub-Gaussian noise follows similarly by replacing Lemma \ref{l:noiseconcentration} with Lemma \ref{l:noiseconcentration2}. A closely related argument is found in \cite{MinNdaWan2024Robust}; however, our proof is simplified by avoiding the estimation of $\vecbeta^*$, at the expense of requiring more  empirical process analysis.

Following the approach in the proof of Theorem 2.5 in \cite{MinNdaWan2024Robust}, we treat $\xi_1^\sharp, \dots, \xi_{\max\{o, \log(1/\delta)\}}^\sharp$ as outliers. As a result, the vector $\vectheta^*$ is $(o + \max\{o, \log(1/\delta)\})$-sparse. The vector $(\xi_1, \dots, \xi_n)$ is constructed by retaining the $n - \max\{o, \log(1/\delta)\}$ entries with the smallest magnitudes and replacing the others with zero. Let $\mr{C}_{\veciota} = 80$.

To facilitate the union bound, we assume that the following lemmas and inequalities hold with probability at least $1 - 9\delta$: Lemmas \ref{l:gw2}, \ref{l:Che}, \ref{l:noiseconcentration}, and the two bounds from \cite{MinNdaWan2024Robust}, stated below:
\begin{align}
	\label{ine:noiseconcentration1}
	\max_{i\geq \max\{o,\log(1/\delta)\}} \frac{|\xi_i^\sharp|}{\sqrt{n}\iota_i}&\geq \frac{\mr{c}_{\mr{M}}\sigma^*}{20},\\
	\label{ine:noiseconcentration2}
	\frac{\mr{c}_{\mr{M}}^2(\sigma^*)^2n}{10}\leq \sum_{i=\max\{o,\log(1/\delta)\}}|\xi_i^\sharp|^2&\leq 2\mr{c}_{\mr{M}}^2(\sigma^*)^2 n.
\end{align}

Throughout the proof, we define the following quantities:
\begin{align}
	Q(\mu,\vectheta)&:=\sqrt{\frac{1}{2n}\sum_{i=1}^n (y_i-\vecx_i^\top\hat{\vecbeta}-\hat{\mu}-\sqrt{n} \hat{\theta}_i)^2},\nonumber\\
	L(\mu,\vectheta)&:=Q(\mu,\vectheta)+\mr{C}_{\veciota}\|\vectheta\|_{\veciota},\quad \Delta_{\vectheta} := \hat{\vectheta}-\vectheta^*,\quad \Delta_\mu :=\hat{\mu}-\mu^*,\quad \Delta_{\vecbeta} := \hat{\vecbeta}-\vecbeta^*.
\end{align}
Also , define 
\begin{align}
	r_{\Sigma,\lambda}&:= L(r_\Sigma+\rho \mr{R}_{\veclambda}r_\Sigma)\stackrel{(a)}{\leq} 2 Lr_\Sigma,
\end{align}
where (a) follows from the assumption.

\noindent
\underline{Step 1: Bounding  $\|\Delta_{\vectheta}\|_{\veciota}$ via $\|\Delta_{\vectheta}\|_2$ and $\Delta_\mu$}

	From the optimality of $\hat{\mu}$ and $\hat{\vectheta}$, we have
	\begin{align}
		\label{ine:l:intercept-01}
		Q(\hat{\mu},\hat{\vectheta})-Q(\mu^*,\vectheta^*)\leq \mr{C}_{\veciota}\|\vectheta^*\|_{\veciota}-\mr{C}_{\veciota}\|\hat{\vectheta}^*\|_{\veciota}.
	\end{align}
	To bound he right-hand side of \eqref{ine:l:intercept-01}, we apply Lemma A.1 of \cite{BelLecTsy2018Slope}:
	\begin{align}
		\|\vectheta^*\|_{\veciota}-\|\hat{\vectheta}^*\|_{\veciota} \leq 2 \sqrt{\sum_{i=1}^{o+\max\{\log(1/\delta),o\}} \iota_i^2}\|\Delta_{\vectheta}\|_2-\|\Delta_{\vectheta}\|_{\veciota}.
	\end{align}
	To lower bound the left-hand side of \eqref{ine:l:intercept-01},  from the convexity of $L(\mu,\vectheta)$, and the  sub-gradients of $Q(\mu^*,\vectheta^*)$ at $\mu^*$ and $\vectheta^*$, we have
	\begin{align}
		&Q(\hat{\mu},\hat{\vectheta})-Q(\mu^*,\vectheta^*)\nonumber\\
		&\geq -\frac{\sum_{i=1}^n (y_i-\vecx_i^\top\hat{\vecbeta}-\mu^*-\sqrt{n} \theta^*_i)}{2nQ(\mu^*,\vectheta^*)}(\hat{\mu}-\mu^*)-\frac{\sum_{i=1}^n (y_i-\vecx_i^\top\hat{\vecbeta}-\mu^*-\sqrt{n} \theta^*_i)(\hat{\theta}_i-\theta^*_i)}{2\sqrt{n}Q(\mu^*,\vectheta^*)}.
	\end{align}
	We evaluate the right-hand side at each term.  We have
	\begin{align}
		\label{ine:step1-1}
		\left|\frac{1}{n}\sum_{i=1}^n \xi_i(\hat{\mu}-\mu^*)\right|&\stackrel{(a)}{\lesssim} c_{\mr{M}}\sigma^* \left(r_\delta+r_{\mr{M}}\right)|\Delta_\mu|\\
		\label{ine:step1-2}
		\left|\frac{1}{n}\sum_{i=1}^n\vecx_i^\top \Delta_\beta\Delta_\mu\right|&\stackrel{(b)}{\lesssim} \left\{L\left(\sigma^* \rho\mr{R}_\lambda+\sigma^*\frac{r_\Sigma}{\sqrt{n}}\right)+\sigma^* r_\Sigma r_\delta \right\} |\Delta_\mu|\stackrel{(b')}\lesssim  c_{\mr{M}}\sigma^* r_{\Sigma,\lambda}|\Delta_\mu|,\\
		\label{ine:step1-3}
		\left|\frac{1}{\sqrt{n}}\sum_{i=1}^n\xi_i(\hat{\theta}_i-\theta^*_i)\right|&\leq \max_{j\geq \max\{o,\log(1/\delta)\}}\left(\frac{|\xi^\sharp|_j}{\mr{C}_{\veciota}\iota_j}\right)\mr{C}_{\veciota}\|\Delta_{\vectheta}\|_{\veciota}\stackrel{(c)}{\leq}  c_{\mr{M}}\sigma^*\frac{\mr{C}_{\veciota}}{20}\|\Delta_{\vectheta}\|_{\veciota},\\
		\label{ine:step1-4}
		\left|\frac{1}{\sqrt{n}}\sum_{i=1}^n (\vecx_i^\top \Delta_\beta)(\hat{\theta}_i-\theta^*_i)\right|&\stackrel{(d)}{\lesssim} L\sigma^*\left( \rho \mr{R}_\lambda +r_\Sigma+r_\delta r_\Sigma\right)\|\vectheta\|_2\stackrel{(d')}{\leq } c_{\mr{M}}\sigma^* r_{\Sigma,\lambda}\|\vectheta\|_2\\
		\label{ine:step1-5}
		Q(\mu^*,\vectheta^*)&=\sqrt{\frac{1}{n}\sum_{i=\max\{o,\log(1/\delta)\}}^n\xi_i^2}\stackrel{(e)}{\geq} c_{\mr{M}}\sigma^*\sqrt{\frac{1}{10}},
	\end{align}
	where (a) follows from Lemma \ref{l:noiseconcentration}, (b) follows from Lemma \ref{l:gw2}, (b') follows from $1\leq \sqrt{\log(1/\delta)}$, $r_\delta \leq 1$ and $c_{\mr{M}}\geq 1$, (c) follows from \eqref{ine:noiseconcentration1}, (d) follows from Lemma \ref{l:Che}, (d') follows from $r_\delta\leq 1$ and $c_{\mr{M}}\geq 1$, and (e) follows from \eqref{ine:noiseconcentration2}.

	Consequently, we have
 	\begin{align}
		&-\frac{\mr{C}_{\veciota}\sqrt{10}}{20}\|\Delta_{\vectheta}\|_{\veciota}-C\left\{\left(r_\delta+r_{\mr{M}}+r_{\Sigma,\lambda}\right)|\Delta_\mu|+ r_{\Sigma,\lambda}\|\vectheta\|_2\right\}\leq 2\mr{C}_{\veciota} \sqrt{\sum_{i=1}^{o+\max\{\log(1/\delta),o\}} \iota_i^2}\|\Delta_{\vectheta}\|_2-\mr{C}_{\veciota}\|\Delta_{\vectheta}\|_{\veciota},
	\end{align}
	and  rearranging terms, we have
	\begin{align}
		\label{ine:interceptpre}
		\|\Delta_{\vectheta}\|_{\veciota} &\lesssim  \left(r_\iota+r_{\Sigma,\lambda}\right)\|\Delta_{\vectheta}\|_2 + \left(r_\delta+r_{\mr{M}}+r_{\Sigma,\lambda} \right)|\Delta_\mu|,
	\end{align}
	and the proof is complete.

	\noindent
	\underline{Step 2: Bounding  $|\Delta_\mu|$ via $\|\Delta_{\vectheta}\|_2$}

	From the convexity of $L(\mu,\vectheta)$ and  the first-order optimality condition of $L(\mu,\vectheta)$, we have
	\begin{align}
		\label{ine:firstmu}
		0&= -\frac{\sum_{i=1}^n (y_i-\vecx_i^\top\hat{\vecbeta}-\hat{\mu}-\sqrt{n} \hat{\theta}_i)}{2n}.
	\end{align}
	From $y_i=\vecx_i^\top \vecbeta^*+\xi_i+\sqrt{n}\vectheta_i^*$, we have
	\begin{align}
		\label{ine:muestimatepre}
		|\Delta_\mu|&= \left|\frac{\sum_{i=1}^n \vecx_i^\top\Delta_{\vecbeta}-\sum_{i=1}^n\sqrt{n}( \hat{\theta}_i-\theta^*_i)}{n}\right|\nonumber\\
		&\stackrel{(a)}{\lesssim}  c_{\mr{M}}\sigma^* r_{\Sigma,\lambda}  +\|\Delta_{\vectheta}\|_{\veciota} \nonumber\\
		&\stackrel{(b)}{\lesssim} c_{\mr{M}}\sigma^* r_{\Sigma,\lambda} + \left(r_\iota+r_{\Sigma,\lambda}\right)\|\Delta_{\vectheta}\|_2 + \left(r_\delta+r_{\mr{M}}+r_{\Sigma,\lambda} \right)|\Delta_\mu|,
	\end{align}	
	where (a) follows from the same argument of \eqref{ine:step1-2}, and (b) follows from \eqref{ine:interceptpre}. Then, from the assumption $r_\delta$, $r_{\mr{M}}$ and $r_\Sigma$ are sufficiently small, we have
	\begin{align}
		\label{ine:firstmu2}
		|\Delta_\mu|\lesssim  c_{\mr{M}}\sigma^* r_{\Sigma,\lambda} +( r_\iota+r_{\Sigma,\lambda})\|\Delta_{\vectheta}\|_2  .
	\end{align}	

	\noindent
	\underline{Step 3: Bounding $\|\Delta_{\vectheta}\|_{\veciota}$ via $\|\Delta_{\vectheta}\|_2$}

	From \eqref{ine:firstmu2} and \eqref{ine:interceptpre}, we have
	\begin{align}
		\label{ine:l:intercept2}
		\|\Delta_{\vectheta}\|_{\veciota} &\lesssim  \left(r_\iota+r_{\Sigma,\lambda}\right)\|\Delta_{\vectheta}\|_2 + \left(r_\delta+r_{\mr{M}}+r_{\Sigma,\lambda} \right)\left\{ c_{\mr{M}}\sigma^* r_{\Sigma,\lambda} +( r_\iota+r_{\Sigma,\lambda})\|\Delta_{\vectheta}\|_2 \right\}\nonumber\\
		&\stackrel{(a)}{\lesssim}  \left(r_\iota+r_{\Sigma,\lambda}\right)\|\Delta_{\vectheta}\|_2 +  c_{\mr{M}}\sigma^* r_{\Sigma,\lambda},
	\end{align}
	where (a) follows from  the assumption that $r_\iota,  r_\Sigma, r_\delta$ and $r_{\mr{M}}$ are sufficiently small.

	\noindent
	\underline{Step 4: Bound for $\|\Delta_{\vectheta}\|_2$}

	From the convexity of $L(\mu,\vectheta)$ and  the first-order optimality condition of $L(\mu,\vectheta)$, 
	there exist a vector $\hat{\vecu} \in \partial \|\hat{\vectheta}\|_{\veciota}$, where $\partial$  is the sub-differential, such that $\langle \hat{\vecu},\hat{\vectheta}\rangle=\|\hat{\vectheta}\|_{\veciota}$,
	and we have
	\begin{align}
		\label{ine:firsttheta}
		0&= -\frac{\vecy-\vecv_{\vecx,\hat{\vecbeta}}-\hat{\vecmu}-\sqrt{n} \hat{\vectheta}}{2\sqrt{n}}+\mr{C}_{\veciota} Q(\hat{\mu},\hat{\vectheta})\hat{\vecu},
	\end{align}
	where $\vecy=(y_1,\cdots,y_n)^\top$, $\vecv_{\vecx,\hat{\vecbeta}}=(\vecx_1^\top \hat{\vecbeta},\cdots,\vecx_n^\top \hat{\vecbeta})^\top$, and $\hat{\vecmu}=(\mu,\cdots,\mu)^\top$.
	From $y_i=\vecx_i^\top \vecbeta^*+\xi_i+\sqrt{n}\vectheta_i^*$, and multiplying $\Delta_{\vectheta}$ to the both sides, we have
	\begin{align}
		\label{ine:firsttheta2}
		0&= -\frac{\sum_{i=1}^n (\xi_i-\vecx_i^\top \Delta_{\vecbeta}-\Delta_\mu+\sqrt{n}( \theta^*_i-\hat{\theta}_i))(\hat{\theta}_i-\theta^*_i)}{2\sqrt{n}}+\mr{C}_{\veciota} Q(\hat{\mu},\hat{\vectheta})\langle \hat{\vecu}, \Delta_{\vectheta}\rangle.
	\end{align}
	
	For the each term of the right-hand side of \eqref{ine:firsttheta2}, we have
	\begin{align}
		\label{ine:firsttheta3}
		\frac{\sum_{i=1}^n (\xi_i-\vecx_i^\top \Delta_{\vecbeta})(\hat{\theta}_i-\theta^*_i)}{2\sqrt{n}}&\stackrel{(a)}{\lesssim} c_{\mr{M}}\sigma^*\frac{\mr{C}_{\veciota}}{20}\|\Delta_{\vectheta}\|_{\veciota}+c_{\mr{M}}\sigma^* r_{\Sigma,\lambda}\|\vectheta\|_2,\nonumber\\
		\frac{\sum_{i=1}^n (\mu^*-\hat{\mu})(\hat{\theta}_i-\theta^*_i)}{2\sqrt{n}}&\stackrel{(b)}{\leq}\frac{\|\Delta_{\vectheta}\|_{\veciota}}{2}|\Delta_\mu|,\nonumber\\
		\frac{\sum_{i=1}^n (\sqrt{n} \theta^*_i-\sqrt{n}\hat{\theta}_i)(\hat{\theta}_i-\theta^*_i)}{2\sqrt{n}}&=-\frac{1}{\sqrt{2}}\|\Delta_{\vectheta}\|_2^2,\nonumber\\
		-\langle \hat{\vecu}, \Delta_{\vectheta}\rangle&\stackrel{(c)}{\leq} \|\vectheta^*\|_{\veciota}-\|\hat{\vectheta}\|_{\veciota}\stackrel{(d)}{\leq}  \|\Delta_{\vectheta}\|_{\veciota},\nonumber\\
		Q(\hat{\mu},\hat{\vectheta})&\stackrel{(e)}{\leq} Q(\mu^*,\vectheta^*)+ \|\Delta\|_{\veciota}+\|\Delta_{\vectheta}\|_2\stackrel{(f)}\leq \mr{c}_{\mr{M}}\sigma^*+ \|\Delta\|_{\veciota}+\|\Delta_{\vectheta}\|_2, 
	\end{align}
	where (a) follows from \eqref{ine:noiseconcentration1} and Lemma \ref{l:gw2}, (b) follows from the definition of $\|\cdot\|_{\veciota}$, (c) follows from $-\Delta_{\vectheta}=\hat{\vecu}^\top\vectheta^*-\|\hat{\vectheta}\|_1= \|\vectheta^*\|_{\veclambda}-\|\hat{\vectheta}\|_{\veclambda}$, (d) follows from triangular inequality, (e) follows from the fact that from triangular inequality
	\begin{align}
	|	Q(\hat{\mu},\hat{\vectheta})-Q(\mu^*,\vectheta^*)|&=\frac{1}{\sqrt{2n}}\left|\sqrt{\sum_{i=1}^n (y_i-\vecx_i^\top\hat{\vecbeta}-\hat{\mu}-\sqrt{n} \hat{\theta}_i)^2}-\sqrt{\sum_{i=1}^n (y_i-\vecx_i^\top\hat{\vecbeta}-\mu^*-\sqrt{n} \theta^*_i)^2}\right|\nonumber\\
	&\leq \frac{1}{\sqrt{2n}}\left|\sqrt{\sum_{i=1}^n (-\Delta_\mu-\sqrt{n} (\hat{\theta}_i-\theta^*_i))^2}\right|\nonumber\\
	&\leq \frac{1}{\sqrt{n}}\left|\sqrt{\sum_{i=1}^n (\Delta_\mu^2-n (\hat{\theta}_i-\theta^*_i)^2)}\right|\leq |\Delta_\mu|+\|\vectheta\|_2,
	\end{align}
	and (f) follows from \eqref{ine:noiseconcentration2}.
	Combining these inequalities  and \eqref{ine:firsttheta2}, we have
\begin{align}
	\label{ine:firsttheta4}
	\|\Delta_{\vectheta}\|_2^2&\lesssim  c_{\mr{M}}\sigma^*\|\Delta_{\vectheta}\|_{\veciota}+c_{\mr{M}}\sigma^* r_{\Sigma,\lambda}\|\Delta_{\vectheta}\|_2+\|\Delta_{\vectheta}\|_{\veciota}|\Delta_\mu|+(\mr{c}_{\mr{M}}\sigma^*+\|\Delta_{\vectheta}\|_2)\|\Delta_{\vectheta}\|_{\veciota}.
\end{align}
We evaluate the R.H.S of \eqref{ine:firsttheta4} at each term: For the first term, we have 
\begin{align}
	 \|\Delta_{\vectheta}\|_{\veciota}&\stackrel{(a)}{\lesssim}  \left(r_\iota+r_{\Sigma,\lambda}\right)\|\Delta_{\vectheta}\|_2 + \left(r_\delta+r_{\mr{M}}+r_{\Sigma,\lambda} \right)\{ c_{\mr{M}}\sigma^* r_{\Sigma,\lambda} +( r_\iota+r_{\Sigma,\lambda})\|\Delta_{\vectheta}\|_2 \}\nonumber \\
	 &\stackrel{(b)} \lesssim  \left(r_\iota+r_{\Sigma,\lambda}\right)\|\Delta_{\vectheta}\|_2 + c_{\mr{M}}\sigma^* \left(r_\delta+r_{\mr{M}}+r_{\Sigma,\lambda} \right)^2,
\end{align}
where (a) follows from  \eqref{ine:interceptpre} and \eqref{ine:firstmu2}, and (b) follows from  $r_\iota$ and $r_{\Sigma,\lambda}$ are sufficiently small  and $r_{\mr{M}},r_{\delta}\geq 0$.
For the third term, from \eqref{ine:interceptpre}, we have
\begin{align}
	\|\Delta_{\vectheta}\|_{\veciota} |\Delta_\mu|&\lesssim   \left\{\left(r_\iota+r_{\Sigma,\lambda}\right)\|\Delta_{\vectheta}\|_2 + \left(r_\delta+r_{\mr{M}}+r_{\Sigma,\lambda} \right)|\Delta_\mu|\right\} |\Delta_\mu|,
\end{align}
and from the assumption  $r_\iota$ and $r_{\Sigma,\lambda}$ are sufficiently small, we have
\begin{align}
	\|\Delta_{\vectheta}\|_{\veciota} |\Delta_\mu|&\lesssim   \left(r_\delta+r_{\mr{M}}+r_{\Sigma,\lambda} \right)|\Delta_\mu|^2.
\end{align}
From \eqref{ine:firstmu2}, we have
\begin{align}
	\|\Delta_{\vectheta}\|_{\veciota} |\Delta_\mu|&\lesssim  \left(r_\delta+r_{\mr{M}}+r_{\Sigma,\lambda} \right)\{c_{\mr{M}}\sigma^* r_{\Sigma,\lambda} +( r_\iota+r_{\Sigma,\lambda})\|\Delta_{\vectheta}\|_2 \}^2 \nonumber\\
	&\stackrel{(a)}{\lesssim}  (c_{\mr{M}}\sigma^* r_{\Sigma,\lambda})^2 +( r_\iota+r_{\Sigma,\lambda})^2\|\Delta_{\vectheta}\|_2^2,
\end{align}
where (a) follows from the assumption that $r_\delta, r_{\mr{M}}$ and $r_{\Sigma,\lambda} $ are sufficiently small. For the last term, from  \eqref{ine:interceptpre} and \eqref{ine:firstmu2}, we have
\begin{align}
	&(\mr{c}_{\mr{M}}\sigma^*+\|\Delta_{\vectheta}\|_2)\|\Delta_{\vectheta}\|_{\veciota}\nonumber \\
	&\quad \lesssim (\mr{c}_{\mr{M}}\sigma^*+\|\Delta_{\vectheta}\|_2)\left\{\left(r_\iota+r_{\Sigma,\lambda}\right)\|\Delta_{\vectheta}\|_2 + \left(r_\delta+r_{\mr{M}}+r_{\Sigma,\lambda} \right)\left(c_{\mr{M}}\sigma^* r_{\Sigma,\lambda} +( r_\iota+r_{\Sigma,\lambda})\|\Delta_{\vectheta}\|_2  \right)\right\}\nonumber\\
	&\quad\stackrel{(a)}{ \lesssim} (\mr{c}_{\mr{M}}\sigma^*+\|\Delta_{\vectheta}\|_2)\left\{\left(r_\iota+r_{\Sigma,\lambda}\right)\|\Delta_{\vectheta}\|_2 + c_{\mr{M}}\sigma^* r_{\Sigma,\lambda}   \left(r_\delta+r_{\mr{M}}+r_{\Sigma,\lambda} \right) \right\}\nonumber\\
	&\quad \stackrel{(b)}{ \lesssim}  (r_\iota+r_{\Sigma,\lambda})\|\Delta_{\vectheta}\|_2^2+\mr{c}_{\mr{M}}\sigma^* \left(r_\iota+r_\delta+r_{\mr{M}}+r_{\Sigma,\lambda}\right)\|\Delta_{\vectheta}\|_2+(\mr{c}_{\mr{M}}\sigma^* )^2  \left(r_\delta+r_{\mr{M}}+r_{\Sigma,\lambda} \right)^2,
\end{align}
where (a) follows from the assumption that $r_\delta, r_{\mr{M}},r_\iota$ and $r_{\Sigma,\lambda}$ are sufficiently small, and (b) follows from  the assumption that $r_\delta, r_{\mr{M}}$ and $r_{\Sigma,\lambda}$  are sufficiently small and $r_\delta, r_{\mr{M}}>0$. Then, combining the inequalities above and  \eqref{ine:firsttheta4}, we have
\begin{align}
	\|\Delta_{\vectheta}\|_2^2&\lesssim c_{\mr{M}}\sigma^* \left(r_\iota+r_{\Sigma,\lambda}\right)\|\Delta_{\vectheta}\|_2 + (c_{\mr{M}}\sigma^*)^2 \left(r_\delta+r_{\mr{M}}+r_{\Sigma,\lambda} \right)^2\nonumber\\
	&\quad  +c_{\mr{M}}\sigma^* r_{\Sigma,\lambda}\|\Delta_{\vectheta}\|_2+ (c_{\mr{M}}\sigma^* r_{\Sigma,\lambda})^2 +( r_\iota+r_{\Sigma,\lambda})^2\|\Delta_{\vectheta}\|_2^2+(r_\iota+r_{\Sigma,\lambda})\|\Delta_{\vectheta}\|_2^2\nonumber\\
	&\quad +\mr{c}_{\mr{M}}\sigma^* \left(r_\iota+r_\delta+r_{\mr{M}}+r_{\Sigma,\lambda}\right)\|\Delta_{\vectheta}\|_2+(\mr{c}_{\mr{M}}\sigma^* )^2  \left(r_\delta+r_{\mr{M}}+r_{\Sigma,\lambda} \right)^2.
\end{align}
From the assumption that $r_\iota$ and $r_{\Sigma,\lambda}$ are small, we have
\begin{align}
	\|\Delta_{\vectheta}\|_2^2&\lesssim \mr{c}_{\mr{M}}\sigma^* \left(r_\iota+r_\delta+r_{\mr{M}}+r_{\Sigma,\lambda}\right)\|\Delta_{\vectheta}\|_2+(\mr{c}_{\mr{M}}\sigma^* )^2  \left(r_\iota+r_\delta+r_{\mr{M}}+r_{\Sigma,\lambda} \right)^2.
\end{align}
Consequently, we have
\begin{align}
	\label{ine:thetal2}
	\|\Delta_{\vectheta}\|_2\lesssim  c_{\mr{M}}\sigma^* \left( r_\iota+r_\delta+r_{\mr{M}}+r_{\Sigma,\lambda} \right).
\end{align}

\noindent
\underline{Step 5: Bounding $\|\Delta_{\vectheta}\|_{\veciota}$ without $\|\Delta_{\vectheta}\|_2$}

From \eqref{ine:l:intercept2} and \eqref{ine:thetal2}, we have
\begin{align}
	\label{ine:thetal22}
	\|\Delta_{\vectheta}\|_{\veciota} \lesssim  c_{\mr{M}}\sigma^* \left(r_\iota+r_\delta+r_{\mr{M}}+r_{\Sigma,\lambda} \right)^2.
\end{align}

\noindent
\underline{Step 6: Bounding $|\Delta_\mu|$ without $\|\Delta_{\vectheta}\|_2$}

Remember \eqref{ine:muestimatepre} and from Lemma \ref{l:gw2}, we have
\begin{align}
	|\Delta_\mu|&\lesssim c_{\mr{M}}\sigma^* r_{\Sigma,\lambda}  +\left|\frac{1}{\sqrt{n}} \sum_{i=1}^n(\hat{\theta}_i-\theta^*_i)\right|.
\end{align}	
We evaluate $\left|\frac{1}{\sqrt{n}} \sum_{i=1}^n(\hat{\theta}_i-\theta^*_i)\right|$:
\begin{align}
	\left|\frac{1}{\sqrt{n}} \sum_{i=1}^n(\hat{\theta}_i-\theta^*_i)\right|& \leq \sqrt{\frac{\max\{\log(1/\delta),o\}}{n}}\sqrt{\sum_{i=1}^{\max\{\log(1/\delta),o\}}(\hat{\theta}_i-\theta^*_i) ^2}+\frac{\|\Delta_{\vectheta}\|_{\veciota}}{\sqrt{n}\iota_n}\nonumber\\
	&\leq \sqrt{\frac{\max\{\log(1/\delta),o\}}{n}}\|\Delta_{\vectheta}\|_2+\frac{\|\Delta_{\vectheta}\|_{\veciota}}{\sqrt{n} \iota_n}\nonumber\\
	&\stackrel{(a)}{\leq} \sqrt{\frac{\max\{\log(1/\delta),o\}}{n}} c_{\mr{M}}\sigma^* \left(r_\iota+r_\delta+r_{\mr{M}}+r_{\Sigma,\lambda} \right)+\frac{ c_{\mr{M}}\sigma^* \left(r_\iota+r_\delta+r_{\mr{M}}+r_{\Sigma,\lambda} \right)^2}{\sqrt{n}\iota_n}\nonumber\\
	&\stackrel{(a)}{\leq}c_{\mr{M}}\sigma^*  \left(\sqrt{\frac{\max\{\log(1/\delta),o\}}{n}} \left(r_\iota+r_\delta+r_{\mr{M}}+r_{\Sigma,\lambda} \right)+r_{\Sigma,\lambda}  +\frac{   \left(r_\iota+r_\delta+r_{\mr{M}}+r_{\Sigma,\lambda} \right)^2}{\sqrt{n}\iota_n}\right),
\end{align}
where (a) follows from  from \eqref{ine:l:intercept2} and \eqref{ine:thetal22}, and (b) follows from the assumption that  $r_{\mr{M}}, r_\iota$ and $r_\Sigma$ are sufficiently small
and the definition of $\{\iota_i\}_{i=1}^n$.

\subsection{Proof of Theorem \ref{t:intercept2}}
\label{sec:PTI2}

As in the proof of Theorem \ref{t:abs}, define
\begin{align}
	\mc{O}_\mr{B}&= \{k=1,\cdots,\mr{B}\,\mid\, \mc{O} \cap \{(k-1)\mr{N},(k-1)\mr{N}+1,\cdots,k\mr{N}\} \neq \emptyset\},\nonumber\\
	\Delta &= \hat{\vecbeta}-\vecbeta^*.
\end{align} 
Then, for any positive value $\eta$ and vector $\vecv\in \mbb{R}^d$ satisfying $\|\Delta\|_\Sigma/\eta=1$ and $\|\Delta\|_\Sigma/\eta\leq \mr{R}_{\veclambda}$,  with probability at least $1-\delta$, the following holds:
\begin{align}
	&\sum_{k=1}^{\mr{B}}\mathbb{I}_{\frac{|\hat{\mu}_k-\mbb{E}\frac{1}{n}\sum_{i=1}^n(\xi_i+\vecx_i^\top \vecv+\mu^*)|}{48(\sigma^*+\eta)/\sqrt{N}} \geq 1}\nonumber\\
	&\stackrel{(a)}{=}  \sum_{k=1}^{\mr{B}}\mathbb{I}_{\frac{|\tilde{\mu}_k-\mbb{E}\frac{1}{n}\sum_{i=1}^n(\xi_i+\vecx_i^\top \vecv+\mu^*)|}{48(\sigma^*+\eta)/\sqrt{N}} \geq 1}-\sum_{k\in\mc{O}_{\mr{B}}}\mathbb{I}_{\frac{|\hat{\mu}_k-\mbb{E}\frac{1}{n}\sum_{i=1}^n(\xi_i+\vecx_i^\top \vecv+\mu^*)|}{48(\sigma^*+\eta)/\sqrt{N}} \geq 1}+\sum_{k\in\mc{O}_{\mr{B}}}\mathbb{I}_{\frac{|\tilde{\mu}_k-\mbb{E}\frac{1}{n}\sum_{i=1}^n(\xi_i+\vecx_i^\top \vecv + \mu^*)|}{248(\sigma^*+\eta)/\sqrt{N}} \geq 1}\nonumber\\
	&\leq \sum_{k=1}^{\mr{B}}\mathbb{I}_{\frac{|\tilde{\mu}_k-\mbb{E}\frac{1}{n}\sum_{i=1}^n(\xi_i+\vecx_i^\top \vecv+\mu^*)|}{48(\sigma^*+\eta)/\sqrt{N}} \geq 1}+\sum_{k\in\mc{O}_{\mr{B}}}\mathbb{I}_{\frac{|\tilde{\mu}_k-\mbb{E}\frac{1}{n}\sum_{i=1}^n(\xi_i+\vecx_i^\top \vecv+\mu^*)|}{48(\sigma^*+\eta)/\sqrt{N}} \geq 1}\nonumber\\
	&\stackrel{(b)}{\leq} \sum_{k=1}^{\mr{B}}\mathbb{I}_{\frac{|\tilde{\mu}_k-\mbb{E}\frac{1}{n}\sum_{i=1}^n(\xi_i+\vecx_i^\top \vecv+\mu^*)|}{48(\sigma^*+\eta)/\sqrt{N}} \geq 1}+O\nonumber\\
	&\stackrel{(c)}{\leq}    \frac{18}{48}\mr{B}+\frac{6}{48}\mr{B}\nonumber\\
	&=\frac{1}{2}\mr{B},
\end{align}
where (a) follows from the definition of $\mc{O}_{\mr{B}}$, (b) follows from the fact the upper bound of the number of elements of $\mc{O}_{\mr{B}}$ is $o$, and (c) follows from  Corollary \ref{c:main-mean} and the assumption $O\leq 8\mr{B}$.

By setting, set $\eta= \|\Delta\|_\Sigma $, we have
\begin{align}
	&\left|\hat{\mu}-\mbb{E}\frac{1}{n}\sum_{i=1}^n(\xi_i+\vecx_i^\top \Delta+\mu^*)\right| \leq 48\frac{\sigma^*+\|\Delta\|_\Sigma}{\mr{N}}\lesssim (\sigma^*+\|\Delta\|_\Sigma)\left(\sqrt{\frac{O}{n}}+\sqrt{\frac{\log(1/\delta)}{n}}\right).
\end{align}
From Lemma \ref{l:gw2},  we have
\begin{align}
	\left|\mbb{E}\frac{1}{n}\sum_{i=1}^n\vecx_i^\top \Delta\right|\lesssim \frac{L(\rho r_\Sigma \mr{R}_{\veclambda}\sqrt{n}+r_\Sigma)}{\sqrt{n}} \leq L\rho r_\Sigma \mr{R}_{\veclambda} +Lr_\Sigma\leq 2Lr_\Sigma,
\end{align}
where we use $\rho \mr{R}_{\veclambda}\leq 1$.
Consequently, we have
\begin{align}
	\mbb{P}\left(\left|\hat{\mu}-\mu^*\right| \lesssim (\sigma^*+r_\Sigma )\left(\sqrt{\frac{O}{n}}+\sqrt{\frac{\log(1/\delta)}{n}}\right)+Lr_\Sigma\right)\geq 1-\delta.
\end{align}

\section{Proofs of Propositions in Section \ref{sec:AESC}}
\label{sec:keyproposition}
In Section \ref{sec:keyproposition}, we prove Propositions~\ref{p:main}--\ref{p:main-abs}, and Corollary~\ref{c:main-mean}.
	
	\subsection{Proof of Proposition \ref{p:main}}
	\label{sec:PPM}
	This proof consists of two steps. In the first step, we derive an error bound for $\|\hat{\vecbeta}-\vecbeta^*\|_\Sigma$. In the second step, we derive an error bound for $\|\hat{\vecbeta}-\vecbeta^*\|_{\veclambda}$.

	\noindent
	\underline{First step: Error bound for $\|\hat{\vecbeta}-\vecbeta^*\|_\Sigma$}

	Define $\Delta = \hat{\vecbeta}-\vecbeta^*$. Suppose  $\|\Delta\|_\Sigma > r_\Sigma(\hat{\varsigma}_{\mr{cur}})$, and we will derive a contradiction.  
  By the intermediate value theorem, we can find $\eta \in (0,1)$ such that $\|\eta \Delta\|_\Sigma = r_\Sigma(\hat{\varsigma}_{\mr{cur}})$.
	Define $\Delta_\eta$ as $\eta \Delta\|_\Sigma$.
	First, we present the following lemma, which majorizes  $\|\Delta_\eta\|_{\veclambda}$ by $\|\Delta_\eta\|_\Sigma$:
	\begin{lemma}
		\label{l:l1l2-relation}
		Suppose  $\|\Delta_\eta\|_\Sigma = r_\Sigma(\hat{\varsigma}_{\mr{cur}})$, and assume \eqref{ine:p:main-03}. Then, for any fixed $\eta\in(0,1)$, we have
		\begin{align}
			\|\Delta_\eta\|_{\veclambda} \leq \frac{\mr{C}_{\veclambda}}{\mr{C}_{\veclambda}/2-r_{\mr{l},\lambda } -r_{\mr{u},\lambda}} \sqrt{\frac{\sum_{i=1}^s \lambda_i^2}{\kappa}}\| \Delta_\eta\|_\Sigma.
		\end{align}
	\end{lemma}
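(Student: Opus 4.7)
The argument will be the standard cone-type condition derivation for SLOPE-penalized $M$-estimators, specialized to the Huber loss. It assembles the following ingredients: (i)~convexity and optimality of $\hat{\vecbeta}$, (ii)~the stochastic first-order bound \eqref{ine:p:main-01}, (iii)~the quadratic-type lower bound \eqref{ine:p:main-02}, (iv)~the decomposability of the SLOPE norm given in Lemma~A.1 of \cite{BelLecTsy2018Slope}, and (v)~the restricted eigenvalue inequality in Assumption~\ref{a:cov-RE}.

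First I would use convexity of the objective $F$ in \eqref{ine:l1-penalized-Huber} together with the optimality of $\hat{\vecbeta}$: for any $\eta\in(0,1)$, the interpolation $\vecbeta^*+\Delta_\eta=(1-\eta)\vecbeta^*+\eta\hat{\vecbeta}$ satisfies $F(\vecbeta^*+\Delta_\eta)\leq F(\vecbeta^*)$, which after rearrangement yields
\[
\sum_{i=1}^{n}(\mr{C}_{\mr{H}}\hat{\varsigma}_{\mr{cur}})^{2}\!\left[H\!\left(\tfrac{y_i-\vecx_i^\top(\vecbeta^*+\Delta_\eta)}{\mr{C}_{\mr{H}}\hat{\varsigma}_{\mr{cur}}}\right)-H\!\left(\tfrac{y_i-\vecx_i^\top\vecbeta^*}{\mr{C}_{\mr{H}}\hat{\varsigma}_{\mr{cur}}}\right)\right]\leq \mr{C}_{\veclambda}\mr{C}_{\mr{H}}\hat{\varsigma}_{\mr{cur}}\bigl(\|\vecbeta^*\|_{\veclambda}-\|\vecbeta^*+\Delta_\eta\|_{\veclambda}\bigr).
\]
Next, using the Bregman identity $H(a)-H(b)=h(b)(a-b)+D(a,b)$ with $D\geq 0$, I would split each Huber-loss increment into a first-order term $-\mr{C}_{\mr{H}}\hat{\varsigma}_{\mr{cur}}\,\tfrac{1}{n}\sum_i h(\cdot_0)\vecx_i^\top\Delta_\eta$, controlled by \eqref{ine:p:main-01} via $r_{\mr{u},\lambda}\|\Delta_\eta\|_{\veclambda}+r_{\mr{u},\Sigma}\|\Delta_\eta\|_\Sigma$, plus a nonnegative curvature term whose lower bound follows from \eqref{ine:p:main-02} applied with $\vecv=\Delta_\eta$, giving $\tfrac{1}{3}\|\Delta_\eta\|_\Sigma^{2}$ up to error terms of the form $r_{\mr{l},\Sigma}\|\Delta_\eta\|_\Sigma+r_{\mr{l},\lambda}\|\Delta_\eta\|_{\veclambda}$.

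Third, I would invoke Lemma~A.1 of \cite{BelLecTsy2018Slope}: since $\vecbeta^*$ is $s$-sparse,
\[
\|\vecbeta^*\|_{\veclambda}-\|\vecbeta^*+\Delta_\eta\|_{\veclambda}\leq 2\sqrt{\textstyle\sum_{i=1}^{s}\lambda_i^{2}}\,\|\Delta_\eta\|_{2}-\|\Delta_\eta\|_{\veclambda},
\]
and then convert $\|\Delta_\eta\|_{2}\leq\|\Delta_\eta\|_\Sigma/\sqrt{\kappa}$ via Assumption~\ref{a:cov-RE}. Substituting everything into the optimality inequality, grouping coefficients of $\|\Delta_\eta\|_{\veclambda}$ on the left, and using the hypothesis $\|\Delta_\eta\|_\Sigma=r_\Sigma(\hat{\varsigma}_{\mr{cur}})$ together with Young's inequality to absorb the $r_{\mr{l},\Sigma},r_{\mr{u},\Sigma}$ contributions into $\tfrac{1}{3}\|\Delta_\eta\|_\Sigma^{2}$, I expect to arrive at a linear inequality of the shape
\[
\bigl(\mr{C}_{\veclambda}/2-r_{\mr{l},\lambda}-r_{\mr{u},\lambda}\bigr)\|\Delta_\eta\|_{\veclambda}\leq \mr{C}_{\veclambda}\sqrt{\textstyle\sum_{i=1}^{s}\lambda_i^{2}/\kappa}\,\|\Delta_\eta\|_\Sigma,
\]
and dividing through by the prefactor, which is strictly positive by \eqref{ine:p:main-03}, yields the claim.

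The delicate part will be the bookkeeping in this last step. A purely first-order optimality argument (ignoring \eqref{ine:p:main-02}) produces only the cruder denominator $\mr{C}_{\veclambda}-r_{\mr{u},\lambda}$ and leaves a spurious $r_{\mr{u},\Sigma}\|\Delta_\eta\|_\Sigma$ on the right-hand side. To reach the sharper $\mr{C}_{\veclambda}/2-r_{\mr{l},\lambda}-r_{\mr{u},\lambda}$ and eliminate $r_{\mr{u},\Sigma}$ and $r_{\mr{l},\Sigma}$ from the final bound, the quadratic lower bound from \eqref{ine:p:main-02} must be used simultaneously with \eqref{ine:p:main-01}, together with the classical ``split the SLOPE penalty in half'' trick (one half absorbs the stochastic error in the $\veclambda$-direction and the other half remains to produce the cone condition) and a Young-type absorption of the $\|\Delta_\eta\|_\Sigma$-error terms into the curvature contribution $\tfrac{1}{3}\|\Delta_\eta\|_\Sigma^{2}$.
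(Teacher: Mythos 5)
Your overall architecture is the right one — combine \eqref{ine:p:main-01}, \eqref{ine:p:main-02}, Lemma A.1 of \cite{BelLecTsy2018Slope} and Assumption~\ref{a:cov-RE}, then use the definition of $r_\Sigma(\hat{\varsigma}_{\mr{cur}})$ to argue that the quadratic term dominates the $\|\Delta_\eta\|_\Sigma$-linear error terms and can be discarded — and your final bookkeeping matches the paper's. But there is a genuine gap in how you propose to bring \eqref{ine:p:main-02} into play. Writing $\phi(\vecbeta)=\sum_i(\mr{C}_{\mr{H}}\hat{\varsigma}_{\mr{cur}})^2H\bigl((y_i-\vecx_i^\top\vecbeta)/(\mr{C}_{\mr{H}}\hat{\varsigma}_{\mr{cur}})\bigr)/n$, your Bregman split of the function-value optimality inequality produces the one-sided remainder $D_\phi(\vecbeta^*+\Delta_\eta,\vecbeta^*)=\phi(\vecbeta^*+\Delta_\eta)-\phi(\vecbeta^*)-\langle\nabla\phi(\vecbeta^*),\Delta_\eta\rangle$, and you need a lower bound on \emph{that}. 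However, \eqref{ine:p:main-02} lower-bounds the gradient-difference quantity $\langle\nabla\phi(\vecbeta^*+\Delta_\eta)-\nabla\phi(\vecbeta^*),\Delta_\eta\rangle$, which equals the \emph{symmetrized} divergence $D_\phi(\vecbeta^*+\Delta_\eta,\vecbeta^*)+D_\phi(\vecbeta^*,\vecbeta^*+\Delta_\eta)$. Since both summands are nonnegative, a lower bound on the sum gives no lower bound on the single term you need; for the Huber loss the two one-sided divergences can be very unbalanced (e.g.\ when one argument sits in the linear regime), so you cannot salvage this by halving. As you yourself note, dropping the curvature term entirely only yields the cruder cone condition with denominator $\mr{C}_{\veclambda}/2-r_{\mr{u},\lambda}$ and a residual $r_{\mr{u},\Sigma}\|\Delta_\eta\|_\Sigma$ (this is in fact the second claim of Proposition~\ref{p:main}), not the constants asserted in the lemma.

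The paper's fix is to stay at first order throughout: by Lemma F.2 of \cite{FanLiuSunZha2018Lamm}, the directional derivative $\eta\mapsto\langle\nabla\phi(\vecbeta^*+\eta\Delta),\Delta\rangle$ is non-decreasing along the segment, so
\begin{align*}
\langle\nabla\phi(\vecbeta^*+\Delta_\eta)-\nabla\phi(\vecbeta^*),\Delta_\eta\rangle\leq \langle\nabla\phi(\hat{\vecbeta})-\nabla\phi(\vecbeta^*),\Delta_\eta\rangle,
\end{align*}
and the exact stationarity condition $\nabla\phi(\hat{\vecbeta})+\mr{C}_{\mr{H}}\hat{\varsigma}_{\mr{cur}}\mr{C}_{\veclambda}\,\partial\|\hat{\vecbeta}\|_{\veclambda}\ni 0$ replaces $\langle\nabla\phi(\hat{\vecbeta}),\Delta_\eta\rangle$ by $\eta\mr{C}_{\mr{H}}\hat{\varsigma}_{\mr{cur}}\mr{C}_{\veclambda}\langle\partial\|\hat{\vecbeta}\|_{\veclambda},\hat{\vecbeta}-\vecbeta^*\rangle\leq\eta\mr{C}_{\mr{H}}\hat{\varsigma}_{\mr{cur}}\mr{C}_{\veclambda}(\|\hat{\vecbeta}\|_{\veclambda}-\|\vecbeta^*\|_{\veclambda})$. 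This puts exactly the quantity controlled by \eqref{ine:p:main-02} on the left and the quantities controlled by \eqref{ine:p:main-01} and Lemma A.1 on the right. One further small correction: the factor $1/2$ in $\mr{C}_{\veclambda}/2$ does not come from ``splitting the penalty in half''; it comes from the two-case analysis on whether $\Delta_\eta$ satisfies \eqref{a:RE-2}, using the constant $c_{\mr{RE}}>1$ of Assumption~\ref{a:cov-RE} to produce the term $-\tfrac{1}{2}\|\Delta_\eta\|_{\veclambda}$ in the decomposability bound.
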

	\begin{proof}
		Let
		\begin{align}
			Q'(\eta) =\mr{C}_{\mr{H}} \hat{\varsigma}_{\mr{cur}} \left(-\frac{1}{n}\sum_{i=1}^n h\left(\frac{y_i- \vecx_i^\top(\vecbeta^*+\Delta_\eta )}{\mr{C}_{\mr{H}}\hat{\varsigma}_{\mr{cur}}}\right) \vecx_i^\top \Delta+\frac{1}{n}\sum_{i=1}^n h\left(\frac{y_i- \vecx_i^\top \vecbeta^* }{\mr{C}_{\mr{H}} \hat{\varsigma}_{\mr{cur}}}\right) \vecx_i^\top \Delta \right).
		\end{align}
		From the proof of Lemma F.2 of \cite{FanLiuSunZha2018Lamm}, we have $\eta Q'(\eta)\leq \eta Q'(1)$, and this means 
		\begin{align}
			\label{ine:K.K.T.}
			&\mr{C}_{\mr{H}} \hat{\varsigma}_{\mr{cur}} \left(-\frac{1}{n}\sum_{i=1}^n h\left(\frac{y_i- \vecx_i^\top(\vecbeta^*+\Delta_\eta )}{\mr{C}_{\mr{H}}\hat{\varsigma}_{\mr{cur}}}\right) \vecx_i^\top \Delta_\eta+\frac{1}{n}\sum_{i=1}^n h\left(\frac{y_i- \vecx_i^\top \vecbeta^* }{\mr{C}_{\mr{H}} \hat{\varsigma}_{\mr{cur}}}\right) \vecx_i^\top \Delta_\eta \right)\nonumber\\
			&\quad \quad \leq \mr{C}_{\mr{H}} \hat{\varsigma}_{\mr{cur}} \left(-\frac{1}{n}\sum_{i=1}^n h\left(\frac{y_i- \vecx_i^\top \hat{\vecbeta} }{\mr{C}_{\mr{H}}\hat{\varsigma}_{\mr{cur}}}\right) \vecx_i^\top \Delta_\eta+\frac{1}{n}\sum_{i=1}^n h\left(\frac{y_i- \vecx_i^\top \vecbeta^* }{\mr{C}_{\mr{H}} \hat{\varsigma}_{\mr{cur}}}\right) \vecx_i^\top \Delta_\eta \right).
		\end{align}
		In this proof, for a vector $\vecv$,  define $\partial \vecv$ as a sub-differential of $\|\vecv\|_{\veclambda}$. Note that, from the definition of the sub-differential, we have 
		\begin{align}
			\|\hat{\vecbeta}\|_{\lambda}-\|\vecbeta^*\|_{\veclambda}\leq \langle \partial \hat{\vecbeta},\Delta\rangle,
		\end{align}
		and from the optimality of $\hat{\vecbeta}$, we have
		\begin{align}
		 -\mr{C}_{\mr{H}} \hat{\varsigma}_{\mr{cur}}\frac{1}{n}\sum_{i=1}^n h\left(\frac{y_i- \vecx_i^\top \hat{\vecbeta} }{\mr{C}_{\mr{H}} \hat{\varsigma}_{\mr{cur}}}\right) \vecx_i^\top \Delta+\mr{C}_{\mr{H}} \hat{\varsigma}_{\mr{cur}}\mr{C}_{\veclambda}\langle \partial \hat{\vecbeta},\Delta\rangle=0.
		\end{align}
		Adding $\eta \mr{C}_{\mr{H}} \hat{\varsigma}_{\mr{cur}}\mr{C}_{\veclambda}(\|\hat{\vecbeta}\|_{\veclambda}-\|\vecbeta^*\|_{\veclambda})$ to both sides of \eqref{ine:K.K.T.}, from the two aligns above, we have
		\begin{align}
			&\mr{C}_{\mr{H}}\hat{\varsigma}_{\mr{cur}}\left(-\frac{1}{n}\sum_{i=1}^n h\left(\frac{y_i- \vecx_i^\top(\vecbeta^*+\Delta_\eta )}{\mr{C}_{\mr{H}} \hat{\varsigma}_{\mr{cur}}}\right) \vecx_i^\top \Delta_\eta+\frac{1}{n}\sum_{i=1}^n h\left(\frac{y_i- \vecx_i^\top \vecbeta^* }{\mr{C}_{\mr{H}} \hat{\varsigma}_{\mr{cur}}}\right) \vecx_i^\top \Delta_\eta \right)+\eta \mr{C}_{\mr{H}} \hat{\varsigma}_{\mr{cur}}\mr{C}_{\veclambda}(\|\hat{\vecbeta}\|_{\veclambda}-\|\vecbeta^*\|_{\veclambda})\nonumber\\
			&\quad \quad \leq \mr{C}_{\mr{H}} \hat{\varsigma}_{\mr{cur}} \left(\frac{1}{n}\sum_{i=1}^n h\left(\frac{y_i- \vecx_i^\top \vecbeta^* }{\mr{C}_{\mr{H}} \hat{\varsigma}_{\mr{cur}}}\right) \vecx_i^\top \Delta_\eta \right),
		\end{align}
		and rearranging the terms, we have
		\begin{align}
			\label{ine:KKT}
			&\mr{C}_{\mr{H}} \hat{\varsigma}_{\mr{cur}} \left(-\frac{1}{n}\sum_{i=1}^n h\left(\frac{y_i- \vecx_i^\top(\vecbeta^*+\Delta_\eta )}{\mr{C}_{\mr{H}} \hat{\varsigma}_{\mr{cur}}}\right) \vecx_i^\top \Delta_\eta+\frac{1}{n}\sum_{i=1}^n h\left(\frac{y_i- \vecx_i^\top \vecbeta^* }{\mr{C}_{\mr{H}} \hat{\varsigma}_{\mr{cur}}}\right) \vecx_i^\top \Delta_\eta \right)\nonumber\\
			&\quad \quad \leq \mr{C}_{\mr{H}} \hat{\varsigma}_{\mr{cur}} \left(\frac{1}{n}\sum_{i=1}^n h\left(\frac{y_i- \vecx_i^\top \vecbeta^* }{\mr{C}_{\mr{H}}\hat{\varsigma}_{\mr{cur}}}\right) \vecx_i^\top \Delta_\eta \right)+\eta \mr{C}_{\mr{H}}\hat{\varsigma}_{\mr{cur}}\mr{C}_{\veclambda}(\|\vecbeta^*\|_{\veclambda}-\|\hat{\vecbeta}\|_{\veclambda}),
		\end{align}
	Then, from \eqref{ine:p:main-01} and \eqref{ine:p:main-02}, we have
	\begin{align}
		\label{ine:l:l1l2-relation-03}
		&\frac{\|\Delta_\eta\|_\Sigma^2}{3}- \mr{C}_{\mr{H}} \hat{\varsigma}_{\mr{cur}}r_{\mr{l},\Sigma }\|\Delta_\eta\|_\Sigma- \mr{C}_{\mr{H}} \hat{\varsigma}_{\mr{cur}}r_{\mr{l},\lambda} \|\Delta_\eta\|_{\veclambda}\nonumber\\
		&\quad \quad \leq  \mr{C}_{\mr{H}} \hat{\varsigma}_{\mr{cur}}r_{\mr{u},\Sigma}\|\Delta_\eta\|_\Sigma+\mr{C}_{\mr{H}} \hat{\varsigma}_{\mr{cur}}r_{\mr{u},\lambda}\|\Delta_\eta\|_{\veclambda}+	\mr{C}_{\mr{H}} \hat{\varsigma}_{\mr{cur}}\mr{C}_{\veclambda}\eta (\|\vecbeta^*\|_{\veclambda}-\|\hat{\vecbeta}\|_{\veclambda}).
	\end{align}
	From Lemma A.1 of \cite{BelLecTsy2018Slope}, we have
	\begin{align}
		\label{ine:l:l1l2-relation-04}
		\eta (\|\vecbeta^*\|_{\veclambda}-\|\hat{\vecbeta}\|_{\veclambda}) \leq 2 \sqrt{\sum_{i=1}^s \lambda_i^2}\|\Delta_\eta\|_2-\|\Delta_\eta\|_{\veclambda}.
	\end{align}
	If $\Delta_\eta$ satisfies \eqref{a:RE-2}, from Assumption \ref{a:cov-RE}, we have
	\begin{align}
		\eta (\|\vecbeta^*\|_{\veclambda}-\|\hat{\vecbeta}\|_{\veclambda}) \leq 2 \sqrt{\frac{\sum_{i=1}^s \lambda_i^2}{\kappa}}\|\Delta_\eta\|_\Sigma-\|\Delta_\eta\|_{\veclambda},
	\end{align}
	otherwise,  we have
	\begin{align}
		\eta (\|\vecbeta^*\|_{\veclambda}-\|\hat{\vecbeta}\|_{\veclambda}) \leq \frac{1}{c_{\mr{RE}}}\|\Delta_\eta\|_{\veclambda}-\|\Delta_\eta\|_{\veclambda}= \frac{1-c_{\mr{RE}}}{c_{\mr{RE}}}\|\Delta_\eta\|_{\veclambda}.
	\end{align}
	Combining both cases, we have
	\begin{align}
		\label{ine:l:l1l2-relation-05}
		\eta (\|\vecbeta^*\|_{\veclambda}-\|\hat{\vecbeta}\|_{\veclambda})\leq  \sqrt{\frac{\sum_{i=1}^s \lambda_i^2}{\kappa}}\|\Delta_\eta\|_\Sigma+\frac{1-2c_{\mr{RE}}}{2c_{\mr{RE}}}\|\Delta_\eta\|_{\veclambda}\leq  \sqrt{\frac{\sum_{i=1}^s \lambda_i^2}{\kappa}}\|\Delta_\eta\|_\Sigma-\frac{1}{2}\|\Delta_\eta\|_{\veclambda},
	\end{align}
	and from \eqref{ine:l:l1l2-relation-03}, we have
	\begin{align}
		&\frac{\|\Delta_\eta\|_\Sigma^2}{3}- \mr{C}_{\mr{H}} \hat{\varsigma}_{\mr{cur}}r_{\mr{l},\Sigma }\|\Delta_\eta\|_\Sigma- \mr{C}_{\mr{H}} \hat{\varsigma}_{\mr{cur}}r_{\mr{l},\lambda } \|\Delta_\eta\|_{\veclambda}\nonumber\\
		&\quad \quad \leq  \mr{C}_{\mr{H}} \hat{\varsigma}_{\mr{cur}}r_{\mr{u},\Sigma}\|\Delta_\eta\|_\Sigma+\mr{C}_{\mr{H}} \hat{\varsigma}_{\mr{cur}}r_{\mr{u},\lambda}\|\Delta_\eta\|_{\veclambda}+	 \mr{C}_{\mr{H}} \hat{\varsigma}_{\mr{cur}}\mr{C}_{\veclambda}\sqrt{\frac{\sum_{i=1}^s \lambda_i^2}{\kappa}}\| \Delta_\eta\|_\Sigma -\frac{\mr{C}_{\mr{H}} \hat{\varsigma}_{\mr{cur}}\mr{C}_{\veclambda}}{2}\|\Delta_\eta\|_{\veclambda}.
	\end{align}
	If $\frac{1}{3}\|\Delta_\eta\|_\Sigma^2- \mr{C}_{\mr{H}}\hat{\varsigma}_{\mr{cur}}r_{\mr{u},\Sigma}\|\Delta_\eta\|_\Sigma-\mr{C}_{\mr{H}} \hat{\varsigma}_{\mr{cur}}r_{\mr{l},\Sigma }\|\Delta_\eta\|_\Sigma\leq 0$, then we have $\|\Delta_\eta\|_\Sigma \leq 3\mr{C}_{\mr{H}} \hat{\varsigma}_{\mr{cur}}r_{\mr{u},\Sigma}+3\mr{C}_{\mr{H}} \hat{\varsigma}_{\mr{cur}}r_{\mr{l},\Sigma }$, and this is a contradiction because we assume $\|\Delta_\eta\|_\Sigma = r_\Sigma(\hat{\varsigma}_{\mr{cur}})$, where 
	\begin{align}
		&r_\Sigma(\hat{\varsigma}_{\mr{cur}}) \nonumber\\
		&\quad \quad = 6\mr{C}_{\mr{H}} \hat{\varsigma}_{\mr{cur}} \left(r_{\mr{l},\Sigma } + r_{\mr{u},\Sigma}+\left(\mr{C}_{\veclambda}+\frac{r_{\mr{l},\lambda} +r_{\mr{u},\lambda}}{\mr{C}_{\veclambda}/2-r_{\mr{l},\lambda } -r_{\mr{u},\lambda}}\right)\sqrt{\frac{\sum_{i=1}^s \lambda_i^2}{\kappa}} \right)(> 3\mr{C}_{\mr{H}} \hat{\varsigma}_{\mr{cur}}r_{\mr{u},\Sigma}+3\mr{C}_{\mr{H}} \hat{\varsigma}r_{\mr{l},\Sigma }),
	\end{align}
	and noting that from \eqref{ine:p:main-03}, we see $\mr{C}_{\veclambda}/2-r_{\mr{l},\lambda } -r_{\mr{u},\lambda}>0$. Therefore, we can assume $\frac{1}{3}\|\Delta_\eta\|_\Sigma^2- \mr{C}_{\mr{H}} \hat{\varsigma}_{\mr{cur}}r_{\mr{u},\Sigma}\|\Delta_\eta\|_\Sigma-\mr{C}_{\mr{H}} \hat{\varsigma}_{\mr{cur}}r_{\mr{l},\Sigma }\|\Delta_\eta\|_\Sigma\geq 0$, and we have
	\begin{align}
		\left(\frac{\mr{C}_{\veclambda}}{2} -r_{\mr{u},\lambda}-r_{\mr{l},\lambda}\right)\|\Delta_\eta\|_{\veclambda} \leq \mr{C}_{\veclambda}\sqrt{\frac{\sum_{i=1}^s \lambda_i^2}{\kappa}}\| \Delta_\eta\|_\Sigma.
	\end{align}
	Lastly, from \eqref{ine:p:main-03}, we see $\mr{C}_{\veclambda}/2-r_{\mr{l},\lambda } -r_{\mr{u},\lambda}$ is positive, and the proof is complete.
	\end{proof}
	Then, we proceed with the proof of Proposition \ref{p:main}. From \eqref{ine:l:l1l2-relation-03} and \eqref{ine:l:l1l2-relation-05}, we have
	\begin{align}
		\label{ine:l:main-05}
		&\frac{\|\Delta_\eta\|_\Sigma^2}{3}- \mr{C}_{\mr{H}} \hat{\varsigma}_{\mr{cur}}r_{\mr{l},\Sigma }\|\Delta_\eta\|_\Sigma- \mr{C}_{\mr{H}} \hat{\varsigma}_{\mr{cur}}r_{\mr{l},\lambda} \|\Delta_\eta\|_{\veclambda}\nonumber\\
		&\quad \quad \leq  \mr{C}_{\mr{H}} \hat{\varsigma}_{\mr{cur}}r_{\mr{u},\Sigma}\|\Delta_\eta\|_\Sigma+\mr{C}_{\mr{H}} \hat{\varsigma}_{\mr{cur}}r_{\mr{u},\lambda}\|\Delta_\eta\|_{\veclambda}+	 \mr{C}_{\mr{H}} \hat{\varsigma}_{\mr{cur}}\mr{C}_{\veclambda} \sqrt{\frac{\sum_{i=1}^s \lambda_i^2}{\kappa}}\|\Delta_\eta\|_\Sigma-\frac{\mr{C}_{\mr{H}} \hat{\varsigma}_{\mr{cur}}\mr{C}_{\veclambda} }{2}\|\Delta_\eta\|_{\veclambda}\nonumber\\
		&\quad \quad \leq  \mr{C}_{\mr{H}} \hat{\varsigma}_{\mr{cur}}r_{\mr{u},\Sigma}\|\Delta_\eta\|_\Sigma+\mr{C}_{\mr{H}}\hat{\varsigma}_{\mr{cur}}r_{\mr{u},\lambda}\|\Delta_\eta\|_{\veclambda}+	 \mr{C}_{\mr{H}} \hat{\varsigma}_{\mr{cur}}\mr{C}_{\veclambda} \sqrt{\frac{\sum_{i=1}^s \lambda_i^2}{\kappa}}\|\Delta_\eta\|_\Sigma,
	\end{align}
	and from Lemma \ref{l:l1l2-relation}, we have
	\begin{align}
		\label{ine:l:main-06}
		\|\Delta_\eta\|_\Sigma\leq  3\mr{C}_{\mr{H}} \hat{\varsigma}_{\mr{cur}}\left(r_{\mr{l},\Sigma }+r_{\mr{u},\Sigma}+ \mr{C}_{\veclambda}\sqrt{\frac{\sum_{i=1}^s \lambda_i^2}{\kappa}}\right)+3\mr{C}_{\mr{H}} \hat{\varsigma}_{\mr{cur}}\mr{C}_{\veclambda}\frac{r_{\mr{l},\lambda}+r_{\mr{u},\lambda}}{\mr{C}_{\veclambda}/2-r_{\mr{l},\lambda } -r_{\mr{u},\lambda}} \sqrt{\frac{\sum_{i=1}^s \lambda_i^2}{\kappa}}(<r_\Sigma(\hat{\varsigma}_{\mr{cur}})),
	\end{align}
	However, this is a contradiction. Then, we have $\|\Delta_\eta\|_\Sigma\leq r_\Sigma(\hat{\varsigma}_{\mr{cur}})$.

	\noindent
	\underline{Second step: Error bound for $\|\hat{\vecbeta}-\vecbeta^*\|_{\veclambda}$}

	From \eqref{ine:KKT} and the convexity of the Huber loss, we have
	\begin{align}
		0\leq  \frac{1}{n}\sum_{i=1}^n \mr{C}_{\mr{H}} \hat{\varsigma}_{\mr{cur}} h\left(\frac{y_i- \vecx_i^\top \vecbeta^* }{\mr{C}_{\mr{H}} \hat{\varsigma}_{\mr{cur}}}\right) \vecx_i^\top (\hat{\vecbeta}-\vecbeta^*)+\mr{C}_{\mr{H}} \hat{\varsigma}_{\mr{cur}}\mr{C}_{\veclambda} (\|\vecbeta^*\|_{\veclambda}-\|\hat{\vecbeta}\|_{\veclambda}).
	 \end{align}
	 Then, from \eqref{ine:p:main-01} and \eqref{ine:l:l1l2-relation-05}, we have
	 \begin{align}
		\left(\mr{C}_{\veclambda}/2-r_{\mr{u},\lambda}\right)\|\Delta\|_{\veclambda} \leq \left(r_{\mr{u},\Sigma}+ \mr{C}_{\veclambda}\sqrt{\frac{\sum_{i=1}^s \lambda_i^2}{\kappa}}\right)\| \Delta\|_\Sigma.
	\end{align}
	From \eqref{ine:p:main-03}, we see $\mr{C}_{\veclambda}/2-r_{\mr{u},\lambda}$ is positive, then, we have
	\begin{align}
		\|\Delta\|_{\veclambda} \leq \frac{r_{\mr{u},\Sigma}+ \mr{C}_{\veclambda}\sqrt{\frac{\sum_{i=1}^s \lambda_i^2}{\kappa}}}{\mr{C}_{\veclambda}/2-r_{\mr{u},\lambda}}\| \Delta\|_\Sigma.
	\end{align}

	\subsection{Proof of Proposition \ref{p:ub}}
	\label{sec:PPU}
	We define the following five sets:
	\begin{align}
		\mr{V}_{\mr{up}}&:=\left\{\vecv\in\mbb{R}^{d}, v\in \mbb{R}_+\,\mid\, \|\vecv\|_\Sigma =1,0< v\right\},\nonumber\\
		\mr{V}'_{\mr{up}}&:=\left\{\vecv\in\mbb{R}^{d}, v\in \mbb{R}_+\,\mid\, \|\vecv\|_\Sigma \leq 1,0< v\right\},\nonumber\\
		\mr{U}_{\mr{up}}&:=\left\{\vecv\in\mbb{R}^{d}, v\in \mbb{R}_+\,\mid\, \|\vecv\|_{\veclambda} \leq r_{\veclambda},\,1/v\leq r_\tau',\, v\leq r_\tau\right\},\nonumber\\
		\hat{\mr{V}}_{\mr{up}}&:=\left\{\vecv\in\mbb{R}^{d}\,\mid\, \|\vecv\|_\Sigma \leq 1,\,\|\vecv\|_{\veclambda} \leq r_{\veclambda}\right\},\nonumber\\
		\tilde{\mr{V}}_{\mr{up}}&:=\left\{v\in \mbb{R}_+\,\mid\, 1/v\leq r_\tau',\, v\leq r_\tau\right\}.
	\end{align}
	Our goal is twofold: First, to derive a concentration inequality for the supremum of $\frac{1}{n}\sum_{i=1}^n h\left(\frac{\xi_i}{\mr{C}_{\mr{H}} \tau}\right)\langle \vecx_i,\vecv\rangle$ over $(\vecv,\tau) \in \mr{V}_{\mr{up}} \cap \mr{U}_{\mr{up}}$, and second, to remove the constraints using the peeling device.

	\noindent
	\underline{Application of  Theorem 3.2 in \cite{Dir2015Tail}}

	To apply Theorem 3.2 in \cite{Dir2015Tail}, we verify that $\frac{1}{n}\sum_{i=1}^n h\left(\frac{\xi_i}{\mr{C}_{\mr{H}} \tau}\right)\langle \vecx_i,\vecv\rangle$ satisfies the \textit{the $\psi_\alpha$ condition} with $\alpha = 2$ as defined therein. That is, for any $u > 0$ and any fixed $(\vecv_1, v_1), (\vecv_2, v_2) \in \mr{V}_{\mr{up}}$, the following inequality holds with probability at most $2 \exp(-u)$:
	\begin{align}
		\label{ine:dist}
		\left|\sum_{i=1}^n \left\{ h\left(\frac{\xi_i}{\mr{C}_{\mr{H}} v_1}\right)\langle \vecx_i,\vecv_1\rangle-h\left(\frac{\xi_i}{\mr{C}_{\mr{H}} v_2}\right)\langle \vecx_i,\vecv_2\rangle \right\}\right|\geq  \sqrt{u} (d(\vecv_1,\vecv_2)+d(v_1,v_2)),
	\end{align}
	where $d(\vecv_1,\vecv_2)$ and $d(v_1,v_2)$ are  distances between $\vecv_1$ to $\vecv_2$ and $v_1$ to $v_2$, respectively.

	First, we evaluate the $\psi_2$ norm of $\frac{1}{n}\sum_{i=1}^n h\left(\frac{\xi_i}{\mr{C}_{\mr{H}} v_1}\right)\langle \vecx_i,\vecv_1\rangle- h\left(\frac{\xi_i}{\mr{C}_{\mr{H}} v_2}\right)\langle \vecx_i,\vecv_2\rangle$, and then we use  Theorem 2.6.2 of \cite{Ver2018High} and get an evaluation in the form of \eqref{ine:dist}.
	For  any fixed $(\vecv_1, v_1), (\vecv_2,v_2) \in \mr{V}_{\mr{up}}$,  we have
	\begin{align}
		\label{ine:sec:PPU-01}
		&\left\|h\left(\frac{\xi_i}{\mr{C}_{\mr{H}} v_1}\right)\langle \vecx_i,\vecv_1\rangle- h\left(\frac{\xi_i}{\mr{C}_{\mr{H}} v_2}\right)\langle \vecx_i,\vecv_2\rangle\right\|_{\psi_2}\nonumber\\
		& \quad \quad  \leq\left\|  h\left(\frac{\xi_i}{\mr{C}_{\mr{H}} v_1}\right)\langle \vecx_i,\vecv_1-\vecv_2\rangle\right\|_{\psi_2}+\left\|\left(h\left(\frac{\xi_i}{\mr{C}_{\mr{H}} v_1}\right)-h\left(\frac{\xi_i}{\mr{C}_{\mr{H}} v_2}\right)\right)\langle \vecx_i,\vecv_2\rangle\right\|_{\psi_2}.
	\end{align}
	For the first term of the right-hand side of \eqref{ine:sec:PPU-01}, from $|h(\cdot)|\leq 1$ and the  $L$-subGaussian property of $\{\vecx_i\}_{i=1}^n$, we have
	\begin{align}
		 \left\|  h\left(\frac{\xi_i}{\mr{C}_{\mr{H}} v_1}\right)\langle \vecx_i,\vecv_1-\vecv_2\rangle\right\|_{\psi_2}\leq L \|\vecv_1-\vecv_2\|_\Sigma.
	\end{align}
	For the second term of the right-hand side of  \eqref{ine:sec:PPU-01}, we have
	\begin{align}
		\left\|\left(h\left(\frac{\xi_i}{\mr{C}_{\mr{H}} v_1}\right)-h\left(\frac{\xi_i}{\mr{C}_{\mr{H}} v_2}\right)\right)\langle \vecx_i,\vecv_2\rangle\right\|_{\psi_2}&=\left\|\left|h\left(\frac{\xi_i}{\mr{C}_{\mr{H}} v_1}\right)-h\left(\frac{\xi_i}{\mr{C}_{\mr{H}} v_2}\right)\right||\langle \vecx_i,\vecv_2\rangle|\right\|_{\psi_2}\nonumber\\
		&\stackrel{(a)}{\leq} \left|\frac{v_1-v_2}{v_1}\right|\left\|\langle \vecx_i,\vecv_2\rangle\right\|_{\psi_2}\nonumber\\
		&\stackrel{(b)}{\leq} L \frac{|v_1-v_2|}{v_1}\|\vecv_2\|_\Sigma\leq Lr_\tau'|v_1-v_2|,
	\end{align}
	where (a) follows from Lemma \ref{l:huber_smooth}, and (b) follows from the  $L$-subGaussian property of $\{\vecx_i\}_{i=1}^n$.
	Consequently, we have
	\begin{align}
	\left\|  h\left(\frac{\xi_i}{\mr{C}_{\mr{H}} v_1}\right)\langle \vecx_i,\vecv_1\rangle- h\left(\frac{\xi_i}{\mr{C}_{\mr{H}} v_2}\right)\langle \vecx_i,\vecv_2\rangle\right\|_{\psi_2}\lesssim L \left(\|\vecv_1-\vecv_2\|_\Sigma+r'_\tau|v_1-v_2|\right).
	\end{align}
	Then,  from Theorem 2.6.2 of \cite{Ver2018High}, we have
	\begin{align}
		\mbb{P}\left(	\left| \frac{1}{n}\sum_{i=1}^n h\left(\frac{\xi_i}{\mr{C}_{\mr{H}} v_1}\right)\langle \vecx_i,\vecv_1\rangle- h\left(\frac{\xi_i}{\mr{C}_{\mr{H}} v_2}\right)\langle \vecx_i,\vecv_2\rangle\right|\gtrsim \frac{u}{\sqrt{n}} L \left(\|\vecv_1-\vecv_2\|_\Sigma+r'_\tau|v_1-v_2|\right)\right) \leq e^{-u^2},
		\end{align}
		and we see that $ \frac{1}{n}\sum_{i=1}^n h\left(\frac{\xi_i}{\mr{C}_{\mr{H}} v_1}\right)\langle \vecx_i,\vecv_1\rangle- h\left(\frac{\xi_i}{\mr{C}_{\mr{H}} v_2}\right)\langle \vecx_i,\vecv_2\rangle$ satisfies the $\psi_2$ condition.

		Note that 
		\begin{align}
			\label{ine:sec:PPU-01-dev}
			\sup_{(\vecv_1,v_1),(\vecv_2,v_2)\in \mr{V}'_{\mr{up}}\cap \mr{U}_{\mr{up}}}\left\|h\left(\frac{\xi_i}{\mr{C}_{\mr{H}} v_1}\right)\langle \vecx_i,\vecv_1\rangle- h\left(\frac{\xi_i}{\mr{C}_{\mr{H}} v_2}\right)\langle \vecx_i,\vecv_2\rangle\right\|_{\psi_2} &\leq2 \sup_{(\vecv,v) \in \mr{V}'_{\mr{up}}\cap \mr{U}_{\mr{up}}} \left\|h\left(\frac{\xi_i}{\mr{C}_{\mr{H}} v}\right)\langle \vecx_i,\vecv\rangle\right\|_{\psi_2}\nonumber\\
			&  \leq2 \sup_{(\vecv,v) \in \mr{V}'_{\mr{up}}\cap \mr{U}_{\mr{up}}} \left\|\langle \vecx_i,\vecv\rangle\right\|_{\psi_2}\nonumber\\
			&\leq 2L,
		\end{align}
		where the last inequality follows from the $L$-subGaussian property.
		%Again, from Theorem 2.6.2 of \cite{Ver2018High}, we have
		%\begin{align}
		%	\label{ine:sec:PPU-02-dev}
		%	\mbb{P}\left(	\left| \frac{1}{n}\sum_{i=1}^n h\left(\frac{\xi_i}{\mr{C}_{\mr{H}} v_1}\right)\langle \vecx_i,\vecv_1\rangle- h\left(\frac{\xi_i}{\mr{C}_{\mr{H}} v_2}\right)\langle \vecx_i,\vecv_2\rangle\right|\gtrsim \frac{u}{\sqrt{n}} L\right) \leq e^{-u^2}.
		%	\end{align}
		Also, note that
		\begin{align}
			\sup_{\vecv\in \mr{V}_{\mr{up}}\cap \mr{U}_{\mr{up}}} \left|\frac{1}{n}\sum_{i=1}^nh\left(\frac{\xi_i}{\mr{C}_{\mr{H}} v}\right) \vecx_i^\top\vecv\right|&\leq \sup_{\vecv\in \mr{V}'_{\mr{up}}\cap \mr{U}_{\mr{up}}} \left|\frac{1}{n}\sum_{i=1}^nh\left(\frac{\xi_i}{\mr{C}_{\mr{H}} v}\right) \vecx_i^\top\vecv\right|.
		\end{align}
	Then, from Theorem 3.2 of \cite{Dir2015Tail} with 
	\begin{align}
		t_0 = ({\bf{0}},r_\tau)(\in \mr{V}'_{\mr{up}}\cap \mr{U}_{\mr{up}}),\quad \Delta_d(\mr{V}'_{\mr{up}}\cap \mr{U}_{\mr{up}})&\stackrel{(a)}{\lesssim} \frac{L}{\sqrt{n}},
	\end{align}
	where (a) follows from \eqref{ine:sec:PPU-01-dev}, with probability at least $1-\delta$,  we have 
	\begin{align}
		\sup_{\vecv\in \mr{V}'_{\mr{up}}\cap \mr{U}_{\mr{up}}} \left|\frac{1}{n}\sum_{i=1}^nh\left(\frac{\xi_i}{\mr{C}_{\mr{H}} v}\right) \vecx_i^\top\vecv\right|\leq  L\left( \frac{\gamma_2(\mr{V}'_{\mr{up}}\cap \mr{U}_{\mr{up}}, \|\cdot\|_\Sigma+r'_\tau|\cdot|)}{\sqrt{n}} + \sqrt{\frac{\log(1/\delta)}{n}}\right),
	\end{align}
	noting $\gamma_2(\cdot,\cdot)$ is defined in Definition \ref{d:tg2}. Define $g$ as a standard Gaussian  random variable.
	For $\gamma_2(\mr{V}'_{\mr{up}}\cap \mr{U}_{\mr{up}}, \|\cdot\|_\Sigma+r'_\tau|\cdot|)$, we have
	\begin{align}
		\gamma_2(\mr{V}'_{\mr{up}}\cap \mr{U}_{\mr{up}}, \|\cdot\|_\Sigma+r'_\tau|\cdot|)&\stackrel{(a)}{\leq }\gamma_2(\hat{\mr{V}}_{\mr{up}}, \|\cdot\|_\Sigma)+r'_\tau\gamma_2( \tilde{\mr{V}}_{\mr{up}},|\cdot|)\nonumber\\
		&\stackrel{(b)}{\lesssim}\mbb{E}\sup_{\vecv \in \hat{\mr{V}}_{\mr{up}}}\langle\vecv,\Sigma^\frac{1}{2}\vecg^d\rangle+r'_\tau \mbb{E}\sup_{v \in \tilde{\mr{V}}_{\mr{up}}}(vg)\nonumber\\
		&\stackrel{(c)}\lesssim \rho\sqrt{n} r_{\veclambda}+1+r_\tau'r_\tau,
	\end{align}
	where (a) follows from the definition of $\gamma_2(\cdot,\cdot)$, (b) follows from  majorizing measure theorem (Theorem 2.4.1 \cite{Tal2014Upper}), and (c) follows from Corollary \ref{c:gw}. Combining the arguments, we have 
	\begin{align}
		\mbb{P}\left(\sup_{\vecv\in \mr{V}_{\mr{up}}\cap \mr{U}_{\mr{up}}} \left|\frac{1}{n}\sum_{i=1}^nh\left(\frac{\xi_i}{\mr{C}_{\mr{H}} v}\right) \vecx_i^\top\vecv\right|\lesssim L\left( \frac{ \rho\sqrt{n}r_{\veclambda}+1+r_\tau r_\tau'}{\sqrt{n}} + \sqrt{\frac{\log(1/\delta)}{n}} \right)\right)\geq 1-\delta .
	\end{align}
	\noindent
	\underline{Applying  peeling device}

	For a vector $\vecv = (v_1,\cdots,v_d,v_{d+1}) \in \mbb{R}^{d+1}$, define $\vecv_{1:d} = (v_1,\cdots,v_d)$ and $\vecv_{d+1} = v_{d+1}$.
	To apply Lemma \ref{l:peeling1}, for a vector $\vecv \in \mbb{R}^{d+1}$, define
	\begin{align}
		h_1(\vecv)&:=\|\vecv_{1:d}\|_{\veclambda},\quad h_2(\vecv):= \vecv_{d+1},\quad h_3(\vecv):=\frac{1}{\vecv_{d+1}},\nonumber\\
		g_1(x) &:=x,\quad g_2(x) = x,\quad \mr{a}_1 = \mr{a}_2= CL\frac{1}{\sqrt{n}}.
	\end{align}
	Then, from Lemma \ref{l:peeling1} and $\log(1/\delta) \geq \log 4$, for any $(\vecv,\tau) \in \mr{V}_{\mr{up}}$, we have
	\begin{align}
		\mbb{P}\left(\left|\frac{1}{n}\sum_{i=1}^n h\left(\frac{\xi_i}{\mr{C}_{\mr{H}} \tau}\right) \vecx_i^\top\vecv\right|\lesssim L\left(\rho\|\vecv\|_{\veclambda} + \sqrt{\frac{\log(1/\delta)}{n}} \right)\right)\geq 1-\delta.
	\end{align}
	After normalization $\vecv = \vecv/\|\vecv\|_\Sigma$, we complete the proof.

	\subsection{Proof of Proposition \ref{p:sc}}
	\label{sec:PPS}
	This proof borrows an idea that also appears in \cite{SunZhoFan2020Adaptive}.
	Define 
	\begin{align}
		\psi(x) =\begin{cases}
			x^2 & \mbox{ if } |x| \leq 1/4\\
			(x-1/2)^2 & \mbox{ if } 1/4\leq x \leq 1/2 \\
			(x+1/2)^2 & \mbox{ if } -1/2\leq x \leq -1/4 \\
			0 & \mbox{ if } |x| >1/2
		\end{cases}.
	\end{align}
	From the convexity of the Huber loss, for any $(\vecv,\tau) \in \mr{V}_{\mr{sc}}$, we have
	\begin{align}
		&\frac{1}{n}\sum_{i=1}^n \mr{C}_{\mr{H}}\tau\left(-h\left(\frac{\xi_i}{\mr{C}_{\mr{H}}\tau}-\frac{\langle \vecx_{i}, \vecv\rangle}{\mr{C}_{\mr{H}}\tau}\right)+h\left(\frac{\xi_i}{\tau}\right)  \right) \vecx_i^\top \vecv \nonumber\\
		&\quad \quad \geq  \frac{1}{n}\sum_{i=1}^n \mr{C}_{\mr{H}}\tau\left(-h\left(\frac{\xi_i}{\mr{C}_{\mr{H}}\tau}-\frac{\langle \vecx_{i}, \vecv\rangle}{\mr{C}_{\mr{H}}\tau}\right)+h\left(\frac{\xi_i}{\mr{C}_{\mr{H}}\tau}\right)  \right) \vecx_i^\top \vecv\times \mathbb{I}_{ \left( \left|\frac{\xi_i}{\mr{C}_{\mr{H}}\tau}\right| \leq \frac{1}{2}\right) \cap \left( \left|\frac{\vecx_i^\top \vecv}{\mr{C}_{\mr{H}}\tau}\right| \leq \frac{1}{2}\right)}\nonumber\\
		&\quad \quad =  \frac{1}{n}\sum_{i=1}^n (\vecx_i^\top \vecv)^2\times \mathbb{I}_{ \left( \left|\frac{\xi_i}{\mr{C}_{\mr{H}}\tau}\right| \leq \frac{1}{2}\right) \cap \left( \left|\frac{\vecx_i^\top \vecv}{\mr{C}_{\mr{H}}\tau}\right| \leq \frac{1}{2}\right)}\nonumber\\
		&\quad \quad \geq (\mr{C}_{\mr{H}} \tau)^2\frac{1}{n}\sum_{i=1}^n \psi\left(\frac{\vecx_i^\top \vecv}{\mr{C}_{\mr{H}} \tau}\right) \times \mathbb{I}_{ \left|\frac{\xi_i}{\mr{C}_{\mr{H}} \varsigma^*}\right| \leq \frac{1}{2}}.
 	\end{align}
 	Decompose  
	\begin{align}
		&\frac{1}{n}\sum_{i=1}^n \psi\left(\frac{\vecx_i^\top \vecv}{\mr{C}_{\mr{H}} \tau}\right) \times \mathbb{I}_{ \left|\frac{\xi_i}{\mr{C}_{\mr{H}} \varsigma^*}\right| \leq \frac{1}{2}}\geq -\sup_{\vecv,\tau \in \mr{V}_{\mr{sc}}}\Delta_{\vecv,\tau}+  \mbb{E}\psi\left(\frac{\vecx^\top \vecv'}{\mr{C}_{\mr{H}} }\right) \times \mathbb{I}_{ \left|\frac{\xi}{\mr{C}_{\mr{H}} \varsigma^*}\right| \leq \frac{1}{2}},
	\end{align}
	where $\vecv'\in \mbb{R}^d$ is a fixed vector such that $\|\vecv'\|_\Sigma = \mr{C}_{\mr{H}} \mr{r}$, 
	\begin{align}
		 \Delta_{\vecv,\tau}:=\frac{1}{n}\sum_{i=1}^n \psi\left(\frac{\vecx_i^\top \vecv}{\mr{C}_{\mr{H}} \tau}\right) \times \mathbb{I}_{ \left|\frac{\xi_i}{\mr{C}_{\mr{H}} \varsigma^*}\right| \leq \frac{1}{2}}-\mbb{E}\psi\left(\frac{\vecx_i^\top \vecv}{\mr{C}_{\mr{H}} \tau}\right) \times \mathbb{I}_{ \left|\frac{\xi_i}{\mr{C}_{\mr{H}} \varsigma^*}\right| \leq \frac{1}{2}},
	\end{align}
	and $\vecx$ and $\xi$ are random vector and random variable, which follow the same distribution as ones of $\{\vecx_i\}_{i=1}^n$ and $\{\xi_i\}_{i=1}^n$, respectively.

	\noindent
	\underline{Bounding $\mbb{E}\psi\left(\frac{\vecx^\top \vecv'}{\mr{C}_{\mr{H}} }\right) \times \mathbb{I}_{ \left|\frac{\xi}{\mr{C}_{\mr{H}} \varsigma^*}\right| \leq \frac{1}{2}} $}

	Here, we write $\vecv'$ as $\vecv$ for simplicity.
	We have
	\begin{align}
		\label{ine:PPS-01}
		\mbb{E}\psi\left(\frac{\vecx^\top \vecv}{\mr{C_{\mr{H}}}}\right)\times \mathbb{I}_{ \left|\frac{\xi}{\mr{C}_{\mr{H}} \varsigma^*}\right| \leq \frac{1}{2}}\geq \mbb{E}\left(\frac{\vecx^\top \vecv}{\mr{C_{\mr{H}}}}\right)^2-\mbb{E}\left(\frac{\vecx^\top \vecv}{\mr{C_{\mr{H}}}}\right)^2\mathbb{I}_{ \left|\frac{2\xi}{\mr{C}_{\mr{H}} \varsigma^*}\right| \geq \frac{1}{2}}-\mbb{E}\left(\frac{\vecx^\top \vecv}{\mr{C_{\mr{H}}}}\right)^2\mathbb{I}_{\left|\frac{\langle \vecx, \vecv\rangle}{\mr{C}_{\mr{H}} \varsigma^*}\right|\geq \frac{1}{4}}.
	\end{align}
	We evaluate the R.H.S. of \eqref{ine:PPS-01} at each term. First, for any fixed $\vecv$ such that $\|\vecv\|_\Sigma = \mr{C}_{\mr{H}} \mr{r}$, we have
	\begin{align}
	\label{ine:PPS-02}
		\mbb{E}\left(\frac{\vecx^\top \vecv}{\mr{C_{\mr{H}}}}\right)^2\mathbb{I}_{\left|\frac{\langle \vecx_{i}, \vecv\rangle}{\mr{C}_{\mr{H}} }\right|\geq \frac{1}{4}}&\stackrel{(a)}{\leq} \sqrt{\mbb{E}\left(\frac{\vecx^\top \vecv}{\mr{C_{\mr{H}}}}\right)^4} \sqrt{\mbb{P}\left(\left|\frac{\vecx^\top \vecv}{\mr{C}_{\mr{H}} }\right|\geq \frac{1}{4}\right)}\nonumber\\
		&\stackrel{(b)}{\leq} \sqrt{  \mbb{E}\left(\frac{\vecx^\top \vecv}{\mr{C_{\mr{H}}}}\right)^4} \sqrt{16^2 \mbb{E}\left(\frac{\vecx^\top \vecv}{\mr{C}_{\mr{H}} }\right)^4}\nonumber\\
		&\stackrel{(c)}{\leq} 16\sqrt{\frac{L^4}{\mr{C}_{\mr{H}}^4} \|\vecv\|_\Sigma^4} \sqrt{\frac{L^4}{\mr{C}_{\mr{H}}^4} \|\vecv\|_\Sigma^4}\nonumber\\
		&=  16\frac{L^4}{\mr{C}_{\mr{H}}^4} \|\vecv\|_\Sigma^4\nonumber\\
		&\stackrel{(d)}{=}  16\frac{L^4}{\mr{C}_{\mr{H}}^2}\mr{r}^2 \|\vecv\|_\Sigma^2\nonumber\\
		&\stackrel{(e)}{\leq}\frac{\|\vecv\|_\Sigma^2 }{3\mr{C}^2_{\mr{H}} } =\frac{\mr{r}^2}{3},
	\end{align}
	where (a) follows from H{\"o}lder's inequality, (b) follows from the relation between indicator function and expectation, and  Markov's inequality,  (c) follows from \eqref{ine:d:l-02}, (d) follows from the definition of $\mr{V}_{\mr{sc}}$, and (e) follows from the assumption $\mr{r}^2\leq \frac{1}{48L^4}$.
	Second, for any fixed $\vecv$ such that $\|\vecv\|_\Sigma = \mr{C}_{\mr{H}} \mr{r}$, we have
	\begin{align}
	\label{ine:PPS-03}
		\mbb{E}\left(\frac{\vecx^\top \vecv}{\mr{C_{\mr{H}}}}\right)^2\mathbb{I}_{ \left|\frac{\xi}{\mr{C}_{\mr{H}} \varsigma^*}\right| \geq \frac{1}{2}}
		&\stackrel{(a)}{\leq} \sqrt{\mbb{E}\frac{(\vecx^\top \vecv)^4}{\mr{C}_{\mr{H}}^4 }}\sqrt{\frac{2}{\mr{C}_{\mr{H}}\varsigma^*}\mbb{E}|\xi|}\nonumber\\
		&\stackrel{(b)}{\leq} 2\frac{L^2 \|\vecv\|_\Sigma^2}{\mr{C}_{\mr{H}}^2 }\sqrt{\frac{2}{\mr{C}_{\mr{H}}\varsigma^*}\mbb{E}|\xi|}\nonumber\\
		&\stackrel{(c)}{=} 2\frac{L^2 \|\vecv\|_\Sigma^2}{\mr{C}_{\mr{H}}^2 }\sqrt{\frac{2}{\mr{C}_{\mr{H}}}}\nonumber\\
		&\stackrel{(d)}{\leq} \frac{\|\vecv\|_\Sigma^2}{3\mr{C}_{\mr{H}}^2 } 	=\frac{\mr{r}^2}{3} ,
	\end{align}
	where (a) follows from H{\"o}lder's inequality, relation between indicator function and expectation, and Markov's inequality, and (b) follows from \eqref{ine:d:l-02}, (c) follows from the  assumption on $\xi$, and (d) follows the assumption  $\mr{C}_{\mr{H}}\geq 72L^4$.
	
	Consequently, we have 
	\begin{align}
		(\mr{C}_{\mr{H}} \tau)^2\mbb{E}\psi\left(\frac{\vecx^\top \vecv'}{\mr{C}_{\mr{H}}}\right) \times \mathbb{I}_{ \left|\frac{\xi}{\mr{C}_{\mr{H}} \varsigma^*}\right| \leq \frac{1}{2}} \geq (\mr{C}_{\mr{H}} \tau)^2\frac{\mr{r}^2}{3}=\frac{\|\vecv\|_\Sigma^2}{3}.
	\end{align}
	\noindent
	\underline{Bounding $\sup_{\vecv,\tau \in \mr{V}_{\mr{sc}}}\Delta_{\vecv,\tau}$} 

	Define
	\begin{align}
		\mr{V}_{\mr{sc}}':=\left\{\vecv \in \mbb{R}^d\,\mid\,\|\vecv\|_\Sigma = \mr{C}_{\mr{H}} \mr{r} \right\}.
	\end{align}
	We will prove that:  for any $\vecv \in \mr{V}_{\mr{sc}}'$, 	with probability at least $1-\delta$,
	and we have 
	\begin{align}
		\label{ine:PPS-04}
	\frac{1}{n}\sum_{i=1}^n \psi\left(\frac{\vecx_i^\top \vecv}{\mr{C}_{\mr{H}} }\right) \times \mathbb{I}_{ \left|\frac{\xi_i}{\mr{C}_{\mr{H}} \varsigma^*}\right| \leq \frac{1}{2}}-\mbb{E}\psi\left(\frac{\vecx_i^\top \vecv}{\mr{C}_{\mr{H}} }\right) \times \mathbb{I}_{ \left|\frac{\xi_i}{\mr{C}_{\mr{H}} \varsigma^*}\right| \leq \frac{1}{2}} \lesssim L \left(\frac{\|\vecv\|_{\veclambda}}{\mr{C}_{\mr{H}}}+\sqrt{\frac{\log\left(1/\delta\right)}{n}}\mr{r}+\frac{\log\left(1/\delta\right)}{n}\right).
	\end{align}
	Then, from the fact that $\vecv/\tau \in \mr{V}_{\mr{sc}}'$  for any $\vecv,\tau \in \mr{V}_{\mr{sc}}$, and  then, for any $\vecv, \tau \in \mr{V}_{\mr{sc}}$, with probability at least $1-\delta$,  we have
	\begin{align}
		\Delta_{\vecv,\tau}  \lesssim L \left(\frac{\|\vecv\|_{\veclambda}}{\mr{C}_{\mr{H}}\tau}+\sqrt{\frac{\log\left(1/\delta\right)}{n}}\mr{r}+\frac{\log\left(1/\delta\right)}{n}\right).
	\end{align}
	Therefore, in the remaining part, we prove \eqref{ine:PPS-04}.

	Define
	\begin{align}
		\mr{U}_{\mr{sc}}:=\{\vecv \in \mbb{R}^{d}\,\,|\,\,\|\vecv\|_{\veclambda} \leq r_{\veclambda}\}.
	\end{align}
	First, we evaluate $\sup_{\vecv \in \mr{V}'_{\mr{sc}}\cap \mr{U}_{\mr{sc}}} \Delta_{\vecv,\tau}$,
	and after that, we will use peeling lemma to get rid of the constraint.
	From Theorem 2.3 of \cite{Bou2002Bennett} and symmetrization (Lemma 6.4.2 of \cite{Ver2018High}), with probability at least $1-\delta$, 
	\begin{align}
		&\sup_{\vecv \in  \mr{V}'_{\mr{sc}}\cap \mr{U}_{\mr{sc}}}\left| \Delta_{\vecv,\tau}\right|\leq 2\mbb{E}\sup_{\vecv \in \mr{V}'_{\mr{sc}}\cap \mr{U}_{\mr{sc}}}\left| \frac{1}{n}\sum_{i=1}^n \epsilon_i\psi\left(\frac{\vecx_i^\top \vecv}{\mr{C}_{\mr{H}}}\right) \times \mathbb{I}_{ \left|\frac{\xi_i}{\mr{C}_{\mr{H}} \varsigma^*}\right| \leq \frac{1}{2}}\right| +\sup_{\vecv \in \mr{V}'_{\mr{sc}}\cap \mr{U}_{\mr{sc}}}\sqrt{2\log(1/\delta)v(\vecv)}+2\frac{\log(1/\delta)}{n},
	\end{align}
	where $v(\vecv)=\mbb{E}\Delta_{\vecv,\tau}^2$.
	Note that 
	\begin{align}
		\sup_{\vecv \in  \mr{V}'_{\mr{sc}}\cap \mr{U}_{\mr{sc}}}v(\vecv)\leq \frac{1}{n}\sup_{\vecv \in  \mr{V}'_{\mr{sc}}\cap \mr{U}_{\mr{sc}}}\mbb{E}\psi\left(\frac{\vecx_i^\top \vecv}{\mr{C}_{\mr{H}} }\right)^2\lesssim  L^2 \mr{r}^2.
	\end{align}
	For the first term,  define $\hat{\mr{V}}_{\mr{sc}}:=\left\{\vecv \in \mbb{R}^d\,\mid\,\|\vecv\|_\Sigma \leq \mr{C}_{\mr{H}} \mr{r} \right\}$, and  we have
	\begin{align}
		\mbb{E}\sup_{\vecv \in  \mr{V}'_{\mr{sc}}\cap \mr{U}_{\mr{sc}}}\left| \frac{1}{n}\sum_{i=1}^n \epsilon_i\psi\left(\frac{\vecx_i^\top \vecv}{\mr{C}_{\mr{H}}}\right) \times \mathbb{I}_{ \left|\frac{\xi_i}{\mr{C}_{\mr{H}} \varsigma^*}\right| \leq \frac{1}{2}}\right|&\leq \mbb{E}\sup_{\vecv \in  \hat{\mr{V}}_{\mr{sc}}\cap \mr{U}_{\mr{sc}}}\left| \frac{1}{n}\sum_{i=1}^n \epsilon_i\psi\left(\frac{\vecx_i^\top \vecv}{\mr{C}_{\mr{H}}}\right) \times \mathbb{I}_{ \left|\frac{\xi_i}{\mr{C}_{\mr{H}} \varsigma^*}\right| \leq \frac{1}{2}}\right|\nonumber\\
		&\stackrel{(a)}{\lesssim} \mbb{E}\sup_{\vecv \in  \hat{\mr{V}}_{\mr{sc}}\cap \mr{U}_{\mr{sc}}}\frac{1}{n}\sum_{i=1}^n \epsilon_i\psi\left(\frac{\vecx_i^\top \vecv}{\mr{C}_{\mr{H}}}\right) \times \mathbb{I}_{ \left|\frac{\xi_i}{\mr{C}_{\mr{H}} \varsigma^*}\right| \leq \frac{1}{2}}\nonumber\\
		&\stackrel{(b)}{\leq} \mbb{E}\sup_{\vecv \in \mr{V}'_{\mr{sc}}\cap \mr{U}_{\mr{sc}}} \frac{1}{n}\sum_{i=1}^n \epsilon_i\psi\left(\frac{\vecx_i^\top \vecv}{\mr{C}_{\mr{H}}}\right) \nonumber\\
		&\stackrel{(c)}{\leq} \mbb{E}\sup_{\vecv \in \mr{V}'_{\mr{sc}}\cap \mr{U}_{\mr{sc}}}\frac{1}{n}\sum_{i=1}^n \epsilon_i\frac{\vecx_i^\top \vecv}{\mr{C}_{\mr{H}} }\nonumber\\
		&\stackrel{(d)}{\lesssim} L\frac{\rho r_{\veclambda}\sqrt{n}+\mr{C}_{\mr{H}}\mr{r}}{\mr{C}_{\mr{H}}\sqrt{n}},
	\end{align}
	where (a) follows from Exercise 2.2.2 of \cite{Tal2014Upper}, (b) follows from Theorem 11.5 of \cite{BouLugMas2013concentration}, (c) follows from Theorem 11.6 of  \cite{BouLugMas2013concentration}, and (d) follows from Lemma \ref{l:gw2}.
	Combining the arguments and from the fact $L\geq 1$, we have
	\begin{align}
		\sup_{\vecv \in  \mr{V}'_{\mr{sc}}\cap \mr{U}_{\mr{sc}}}\left| \Delta_{\vecv,\tau}\right|\leq C\left(L\frac{\rho r_{\veclambda}\sqrt{n}+\mr{C}_{\mr{H}}\mr{r}}{\mr{C}_{\mr{H}}\sqrt{n}}+L\mr{r}\sqrt{\frac{\log(1/\delta)}{n}}+L\frac{\log(1/\delta)}{n}\right).
	\end{align} 
	To apply Lemma \ref{l:peeling2}, we define
	\begin{align}
		h(\vecv):=\|\vecv\|_{\veclambda}, \quad g(x) = \frac{\rho x\sqrt{n}+1}{\mr{C}_{\mr{H}}},\quad \mr{a}=\mr{r},\quad\mr{a}_1 = \mr{a}_2= LC/\sqrt{n},\quad \mr{a}_3 =LC/n,
	\end{align}
	and we see $\mr{a}_2/\mr{a}_1\leq 1$ and $\mr{a}_3/\mr{a}_1\leq 1/\sqrt{n}\leq \mr{r}$. Then, from Lemma \ref{l:peeling2} and $\log(1/\delta)\geq 1$, for any $\vecv\in \mr{V}_{\mr{sc}}'$, we have
	\begin{align}
		\frac{1}{n}\sum_{i=1}^n \psi\left(\frac{\vecx_i^\top \vecv}{\mr{C}_{\mr{H}} }\right) \times \mathbb{I}_{ \left|\frac{\xi_i}{\mr{C}_{\mr{H}} \varsigma^*}\right| \leq \frac{1}{2}}-\mbb{E}\psi\left(\frac{\vecx_i^\top \vecv}{\mr{C}_{\mr{H}} }\right) \times \mathbb{I}_{ \left|\frac{\xi_i}{\mr{C}_{\mr{H}} \varsigma^*}\right| \leq \frac{1}{2}}
		&\lesssim L \left(\frac{\rho\|\vecv\|_{\veclambda}}{\mr{C}_{\mr{H}}}+\sqrt{\frac{\log\left(1/\delta\right)}{n}}\mr{r}+\frac{\log\left(1/\delta\right)}{n}\right).
	\end{align}

	\noindent
	\underline{Combining the arguments} 

	Combining the arguments above, for any $(\vecv,\tau)\in \mr{V}_{\mr{sc}}$, we have
	\begin{align}
		&\frac{1}{n}\sum_{i=1}^n \mr{C}_{\mr{H}}\tau\left(-h\left(\frac{\xi_i}{\mr{C}_{\mr{H}}\tau}-\frac{\langle \vecx_{i}, \vecv\rangle}{\mr{C}_{\mr{H}}\tau}\right)+h\left(\frac{\xi_i}{\tau}\right)  \right) \vecx_i^\top \vecv \nonumber\\
		&\quad \quad \geq \frac{\|\vecv\|_\Sigma}{3}-C(\mr{C}_{\mr{H}} \tau)^2L \left(\rho\frac{\|\vecv\|_{\veclambda}}{\mr{C}_{\mr{H}}\tau}+\sqrt{\frac{\log\left(1/\delta\right)}{n}}\mr{r}+\frac{\log\left(1/\delta\right)}{n}\right)\nonumber\\
		&\quad \quad \geq \frac{\|\vecv\|_\Sigma^2}{3 }- CL\mr{C}_{\mr{H}}\tau\rho  \|\vecv\|_{\veclambda}-2L \mr{C}_{\mr{H}} \tau \sqrt{\frac{\log(1/\delta)}{n}}\|\vecv\|_\Sigma,
 	\end{align}
	where the last line follows from $\mr{C}_{\mr{H}}\tau \mr{r}=\|\vecv\|_\Sigma$ for $(\vecv,\tau) \in \mr{V}_{\mr{sc}}$,  $L\geq 1$ and $\mr{r}\geq \sqrt{\frac{\log(1/\delta)}{n}}$.

	\subsection{Proof of Proposition \ref{p:oq}}
		This proof is heavily rely on Theorem 5.5 of \cite{Dir2015Tail}. 		Define $r_{\veclambda}$ as a positive constant. For the first step, we derive a concentration inequality for constrained $\vecv \in r_{\veclambda}\mbb{B}_\Sigma^d \cap \mbb{S}_\Sigma^d$ using Theorem 5.5 of \cite{Dir2015Tail}, and the second step, we use a peeling device to get rid of the constraint. 
		
		\noindent
		\underline{First step: Concentration}

		First, we derive a concentration inequality for full sample, and next, we will derive a oncentration inequality for the partial sample.
		To derive a concentration inequality for full sample, we note that, from \eqref{d:l}, we have the following four inequalities:
		\begin{align}
		\label{ine:p:oq-01}
			&\max_{i=1,\cdots,n} \|\vecx_i^\top (\vecv-\vecv')\|_{\psi_2}\leq L \|\Sigma^\frac{1}{2}(\vecv-\vecv')\|_2,\\
		\label{ine:p:oq-02}
			&\sup_{\vecv \in r_{\veclambda} \mbb{B}_\Sigma^d \cap \mbb{S}_\Sigma^d} \max_{i=1,\cdots,n}\|\vecx_i^\top \vecv\|_{\psi_2}\leq \sup_{\vecv \in r_{\veclambda}\mbb{B}_\Sigma^d \cap \mbb{S}_\Sigma^d} L \|\Sigma^\frac{1}{2}\vecv\|_2\leq L,\\
		\label{ine:p:oq-03}
			&\sup_{\vecv \in r_{\veclambda} \mbb{B}_\Sigma^d \cap \mbb{S}_\Sigma^d} \left(\frac{1}{n}\sum_{i=1}^n \|\vecx_i^\top \vecv\|_{\psi_2}^4 \right)^{1/2}\leq \sup_{\vecv \in r_{\veclambda} \mbb{B}_\Sigma^d \cap \mbb{S}_\Sigma^d} \left(L^4 \|\Sigma^\frac{1}{2}\vecv\|_2^4 \right)^{1/2}\leq L^2 ,\\
		\label{ine:p:oq-04}
			&\sup_{\vecv \in r_{\veclambda} \mbb{B}_\Sigma^d \cap \mbb{S}_\Sigma^d} \max_{i=1,\cdots,n}\|\vecx_i^\top \vecv\|_{\psi_2}^2\leq \sup_{\vecv \in r_{\veclambda} \mbb{B}_\Sigma^d \cap \mbb{S}_\Sigma^d} \|\Sigma^\frac{1}{2}\vecv\|_2^2\leq L^2 .
	\end{align}
	Then, from the inequalities above, applying Theorem 5.5 of \cite{Dir2015Tail}, with probability at most $e^{-t}$,  we have 
	\begin{align}
		\label{ine:p:oq-05}
		\sup_{\vecv \in r_{\veclambda} \mbb{B}_\Sigma^d \cap \mbb{S}_\Sigma^d}\left|\frac{1}{n}\sum_{i=1}^n (\vecx_i^\top \vecv)^2-\mbb{E}(\vecx_i^\top \vecv)^2\right|&\gtrsim L^2\left( \frac{\gamma_2(r_{\veclambda} \mbb{B}_\Sigma^d \cap \mbb{S}_\Sigma^d , \|\cdot\|_\Sigma)^2}{n}+\frac{\gamma_2(r_{\veclambda}\mbb{B}_\Sigma^d \cap \mbb{S}_\Sigma^d , \|\cdot\|_\Sigma)}{\sqrt{n}} + \sqrt{\frac{t}{n}} + \frac{t}{n}\right)\nonumber \\
		& \stackrel{(a)}{\gtrsim}L^2\left( \frac{\left(\sup_{\vecv\in r_{\veclambda} \mbb{B}_\Sigma^d \cap \mbb{S}_\Sigma^d}\langle\vecv,\Sigma^\frac{1}{2}\vecg^d \rangle\right)^2}{n}+\frac{\sup_{\vecv\in r_{\veclambda} \mbb{B}_\Sigma^d \cap \mbb{S}_\Sigma^d }\langle\vecv,\Sigma^\frac{1}{2}\vecg^d \rangle}{\sqrt{n}} + \sqrt{\frac{t}{n}} + \frac{t}{n}\right)\nonumber \\
		&\stackrel{(b)}{\gtrsim} L^2\left( \frac{(\rho\sqrt{n}r_{\veclambda}+1)^2}{n}+\frac{\rho\sqrt{n}r_{\veclambda}+1}{\sqrt{n}}  + \sqrt{\frac{t}{n}} + \frac{t}{n}\right),
	\end{align}
	where (a) from the Majorizing Measure theorem (Theorem 2.4.1. of \cite{Tal2014Upper}), and (b) follows from Corollary \ref{c:gw}.
	Then, we consider for the partial sample. From \eqref{ine:p:oq-05}, we have
	\begin{align}
	\label{ine:p:oq-06}
	 	\sup_{\vecv \in r_{\veclambda}\mbb{B}_\Sigma^d \cap \mbb{S}_\Sigma^d}\left| \frac{1}{o} \sum_{i\in \mc{O}} (\vecx_i^\top\vecv)^2  -\mbb{E}(\vecx_i^\top \vecv)^2\right|\gtrsim  L^2\left( \frac{(\rho\sqrt{n}r_{\veclambda}+1)^2}{o}+\frac{\rho\sqrt{n}r_{\veclambda}+1}{\sqrt{o}} + \sqrt{\frac{t}{o}} + \frac{t}{o}\right),
	\end{align}
	with probability at least 
	\begin{align}
		&\leq \binom{n}{o}\times \mbb{P}\left[\sup_{\vecv \in r_{\veclambda}\mbb{B}_\Sigma^d \cap \mbb{S}_\Sigma^d}\left|\frac{1}{o}\sum_{i=1}^o (\vecz_i^\top\vecv)^2  -\mbb{E}(\vecz_i^\top \vecv)^2 \right|\gtrsim   L^2\left(\frac{(\rho\sqrt{n}r_{\veclambda}+1)^2}{o}+\frac{\rho\sqrt{n}r_{\veclambda}+1}{\sqrt{n}} + \sqrt{\frac{t}{o}}+ \frac{t}{o}\right)\right]\nonumber \\
		&\leq \binom{n}{o} e^{-t},
	\end{align}
	where $\{\vecz_i\}_{i=1}^n$ is a sequence of i.i.d. random vectors sampled from the same distribution as $\{\vecx_i\}_{i=1}^n$. 
	Let $t = o\log (ne/o)+\log(1/\delta)$. From Stirling's formula, we have $\binom{n}{o} e^{-t}  \leq \delta$. From \eqref{ine:p:oq-06}, with probability at least $1-\delta$, we have
	\begin{align}
	\label{ine:p:oq-07}
		\frac{1}{n}\times \lefteqn{ \sup_{\vecv \in r_{\veclambda} \mbb{B}_\Sigma^d \cap \mbb{S}_\Sigma^d}\left| \sum_{i \in \mc{O}} (\vecx_i^\top\vecv)^2  -\mbb{E}(\vecx_i^\top \vecv)^2\right| } \nonumber \\
		& \quad\lesssim  \frac{o}{n}L^2 \left(\frac{(\rho\sqrt{n}r_{\veclambda}+1)^2}{o}+\frac{\rho\sqrt{n}r_{\veclambda}+1}{\sqrt{o}}+\sqrt{\frac{o\log \frac{ne}{o}+\log(1/\delta)}{o}}r_\Sigma+\frac{o\log \frac{ne}{o}+\log(1/\delta)}{o} \right)\nonumber\\
		& \quad\stackrel{(a)}{\lesssim}  L^2 \left(  \frac{(\rho\sqrt{n}r_{\veclambda}+1)^2}{n}+\sqrt{\frac{o}{n}}\frac{\rho\sqrt{n}r_{\veclambda}+1}{\sqrt{n}}+\frac{\log(1/\delta)}{n}+\frac{o}{n}\log\frac{n}{o}\right)\nonumber\\
		& \quad \stackrel{(b)}{\lesssim}  L^2 \left( \frac{(\rho\sqrt{n}r_{\veclambda}+1)^2}{n}+\frac{\log(1/\delta)}{n}+\frac{o}{n}\log\frac{n}{o}\right),
	\end{align}
	where (a) follows from  $e \le n/o$ and $r_\delta \le 1$, and (b) follows from Young's inequality and $e \le n/o$. 
	Then, from the triangular inequality, we have
	\begin{align}
		\label{ine:p:oq-08}
			\sup_{\vecv \in r_{\veclambda}\mbb{B}_\Sigma^d \cap \mbb{S}_\Sigma^d}\sqrt{\frac{1}{n}\times \left| \sum_{i\in \mc{O}} (\vecx_i^\top\vecv)^2  -\mbb{E}(\vecx_i^\top \vecv)^2\right|  }\lesssim  L \left(\frac{\rho\sqrt{n}r_{\veclambda}+1}{\sqrt{n}}+\sqrt{\frac{o}{n}\log\frac{n}{o}}+\sqrt{\frac{\log(1/\delta)}{n}}\right).
	\end{align}

	\noindent
	\underline{Second step: Applying peeling device}

	Lastly, we apply Lemma 5 of \cite{DalTho2019Outlier} and $\log(1/\delta)\geq \log 9$, for any $\vecv \in r_{\veclambda}\mbb{B}_\Sigma^d \cap \mbb{S}_\Sigma^d$, with probability at least $1-\delta$, we have
	\begin{align}
		\label{ine:p:oq-09}
		\sqrt{\frac{1}{n}\times \left| \sum_{i  \in\mc{O}} (\vecx_i^\top\vecv)^2  -\mbb{E}(\vecx_i^\top \vecv)^2\right|  }\lesssim  L \left(\rho\|\vecv\|_{\veclambda}+\sqrt{\frac{o}{n}\log\frac{n}{o}}+\sqrt{\frac{\log(1/\delta)}{n}}\right),
	\end{align}
	and 
	\begin{align}
		\label{ine:p:oq-10}
		\sqrt{\frac{o}{n}}\sqrt{\frac{1}{n}\times \left| \sum_{i  \in\mc{O}} (\vecx_i^\top\vecv)^2  -\mbb{E}(\vecx_i^\top \vecv)^2\right|  }\lesssim  L \left(\rho\|\vecv\|_{\veclambda}+\frac{o}{n}\sqrt{\log\frac{n}{o}}+\sqrt{\frac{\log(1/\delta)}{n}}\right).
	\end{align}
	After normalizing $\vecv = \vecv/\|\vecv\|_\Sigma$, the proof is complete.

	\subsection{Proof of Proposition \ref{p:main-abs}}
		Define 
		\begin{align}
			\chi(x) :=\begin{cases}
			1 & \mbox{ if } x> 1\\
			2x & \mbox{ if } 1/2\leq x \leq 1 \\
			0 & \mbox{ if } x <1/2
			\end{cases}\text{ and }\mr{c}_4 = 24. 
		\end{align}
		Then, we have
		\begin{align}
		\sum_{k=1}^{\mr{B}}\mathbb{I}_{\left|\frac{\tilde{\varsigma}_k-\mbb{E}\frac{1}{n}\sum_{i=1}^n|\xi_i + \vecx_i^\top \vecv|}{\mr{c}_4(\sigma^*+\eta)/\sqrt{N}} \geq 1\right|} \leq \sum_{k=1}^{\mr{B}} \chi\left(\frac{|\tilde{\varsigma}_k-\mbb{E}\frac{1}{n}\sum_{i=1}^n|\xi_i + \vecx_i^\top \vecv||}{\mr{c}_4(\sigma^*+\eta)/\sqrt{N}}\right),
		\end{align}
		and decomposing  this, we have
		\begin{align}
			&\sum_{k=1}^{\mr{B}}\mathbb{I}_{\left|\frac{\tilde{\varsigma}_k-\mbb{E}\frac{1}{n}\sum_{i=1}^n|\xi_i + \vecx_i^\top \vecv|}{mr{c}_4(\sigma^*+\eta)/\sqrt{N}} \geq 1\right|}\nonumber\\
			&\quad\quad\leq \sup_{\vecv,\eta \in \tilde{\mr{V}}_{\mr{abs}}}\left(\sum_{k=1}^{\mr{B}}\chi\left(\frac{|\tilde{\varsigma}_k-\mbb{E}\frac{1}{n}\sum_{i=1}^n|\xi_i + \vecx_i^\top \vecv||}{\mr{c}_4(\sigma^*+\eta)/\sqrt{N}}\right)-\mbb{E}\sum_{k=1}^{\mr{B}} \chi\left(\frac{|\tilde{\varsigma}_k-\mbb{E}\frac{1}{n}\sum_{i=1}^n|\xi_i + \vecx_i^\top \vecv||}{\mr{c}_4(\sigma^*+\eta)/\sqrt{N}}\right)\right)\nonumber\\
			&\quad \quad\quad +\sup_{\vecv,\eta \in \tilde{\mr{V}}_{\mr{abs}}} \mbb{E}\sum_{k=1}^{\mr{B}} \chi\left(\frac{|\tilde{\varsigma}_k-\mbb{E}\frac{1}{n}\sum_{i=1}^n|\xi_i + \vecx_i^\top \vecv||}{\mr{c}_4(\sigma^*+\eta)/\sqrt{N}}\right)\nonumber\\
			&\quad\quad= \sup_{\vecv,\eta \in \tilde{\mr{V}}_{\mr{abs}}}\left(\sum_{k=1}^{\mr{B}}\chi\left(\frac{|\tilde{\varsigma}_k-|\xi_i + \vecx_i^\top \vecv||}{\mr{c}_4(\sigma^*+\eta)/\sqrt{N}}\right)-\mbb{E}\sum_{k=1}^{\mr{B}} \chi\left(\frac{|\tilde{\varsigma}_k-|\xi_i + \vecx_i^\top \vecv||}{\mr{c}_4(\sigma^*+\eta)/\sqrt{N}}\right)\right)+\sup_{\vecv,\eta \in \tilde{\mr{V}}_{\mr{abs}} } \mbb{E}\sum_{k=1}^{\mr{B}} \chi\left(\frac{|\tilde{\varsigma}_k-\mbb{E}|\xi_i + \vecx_i^\top \vecv||}{\mr{c}_4(\sigma^*+\eta)/\sqrt{N}}\right)\nonumber\\
		\end{align}

	\noindent
	\underline{Evaluation of $\sup_{\vecv,\eta \in \tilde{\mr{V}}_{\mr{abs}} }  \mbb{E}\sum_{k=1}^{\mr{B}} \chi\left(\frac{|\tilde{\varsigma}_k-\mbb{E}|\xi_i + \vecx_i^\top \vecv||}{\mr{c}_4(\sigma^*+\eta)/\sqrt{N}}\right)$} 

	First, from definition of $\chi(\cdot)$ and Markov's inequality, we have
	\begin{align}
		\mbb{E}\sum_{k=1}^{\mr{B}} \chi\left(\frac{|\tilde{\varsigma}_k-\mbb{E}|\xi_i + \vecx_i^\top \vecv||}{\mr{c}_4(\sigma^*+\eta)/\sqrt{N}}\right)\leq \mbb{E}\sum_{k=1}^{\mr{B}}\mathbb{I}_{\frac{2|\tilde{\varsigma}_k-\mbb{E}|\xi_i + \vecx_i^\top \vecv||}{ \mr{c}_4(\sigma^*+\eta)/\sqrt{N}}\geq 1}  \leq 4\mr{B}\mbb{E}\left|\frac{\tilde{\varsigma}_k-\mbb{E}|\xi_i + \vecx_i^\top \vecv|}{\mr{c}_4(\sigma^*+\eta)/\sqrt{N}}\right|^2.
	\end{align}
	Then, for any fixed $\vecv \in \tilde{\mr{V}}_{\mr{abs}}$, we have
	\begin{align}
		\left(\mbb{E}\left|\frac{\tilde{\varsigma}_k-\mbb{E}|\xi_i + \vecx_i^\top \vecv|}{\mr{c}_4(\sigma^*+\eta)/\sqrt{N}}\right|^2\right)^\frac{1}{2}&=\left(\mbb{E}\left|\frac{\frac{1}{N}\sum_{i=(k-1)N+1}^{kN}(|\xi_i+\vecx_i^\top \vecv|-\mbb{E}|\xi_i + \vecx_i^\top \vecv|)}{\mr{c}_4(\sigma^*+\eta)/\sqrt{N}}\right|^2\right)^\frac{1}{2}\nonumber\\
		&=\frac{1}{N}\left(\mbb{E}\frac{\sum_{i=(k-1)N+1}^{kN}(|\xi_i+\vecx_i^\top \vecv|-\mbb{E}|\xi_i + \vecx_i^\top \vecv|)^2}{(\mr{c}_4(\sigma^*+\eta)/\sqrt{N})^2}\right)^\frac{1}{2}\nonumber\\
		&=\frac{1}{N}\left(N\mbb{E}\frac{(|\xi+\vecx^\top \vecv|-\mbb{E}|\xi + \vecx^\top \vecv|)^2}{(\mr{c}_4(\sigma^*+\eta)/\sqrt{N})^2}\right)^\frac{1}{2}\nonumber\\
		&\leq \frac{1}{\sqrt{N}}\left(\mbb{E}\left|\frac{\xi + \vecx^\top \vecv}{\mr{c}_4(\sigma^*+\eta)/\sqrt{N}}\right|^2\right)^\frac{1}{2}\nonumber\\
		&\leq \frac{\left(\mbb{E}|\vecx^\top \vecv|^2\right)^\frac{1}{2}}{\mr{c}_4\eta}+\frac{\sigma^*}{\mr{c}_4\sigma^*}= \frac{\|\vecv\|_\Sigma}{\mr{c}_4\eta}+\frac{1}{\mr{c}_4}\leq \frac{2}{\mr{c}_4} .
	\end{align}

		\noindent
		\underline{Evaluation of $\sup_{\vecv,\eta \in \tilde{\mr{V}}_{\mr{abs}}}\left(\sum_{k=1}^{\mr{B}}\chi\left(\frac{|\tilde{\varsigma}_k-\mbb{E}|\xi_i + \vecx_i^\top \vecv||}{\mr{c}_4(\sigma^*+\eta)/\sqrt{N}}\right)-\mbb{E}\sum_{k=1}^{\mr{B}} \chi\left(\frac{|\tilde{\varsigma}_k-\mbb{E}|\xi_i + \vecx_i^\top \vecv||}{\mr{c}_4(\sigma^*+\eta)/\sqrt{N}}\right)\right)$} 

		From Theorem 2.3 of \cite{Bou2002Bennett} and symmetrization (Lemma 6.4.2 of \cite{Ver2018High}), with probability at least $1-\delta$, 
		\begin{align}
			&\sup_{\vecv,\eta \in \tilde{\mr{V}}_{\mr{abs}}}\left| \sum_{k=1}^{\mr{B}}\chi\left(\frac{|\tilde{\varsigma}_k-\mbb{E}|\xi_i + \vecx_i^\top \vecv||}{\mr{c}_4(\sigma^*+\eta)/\sqrt{N}}\right)-\mbb{E}\sum_{k=1}^{\mr{B}} \chi\left(\frac{|\tilde{\varsigma}_k-\mbb{E}|\xi_i + \vecx_i^\top \vecv||}{\mr{c}_4(\sigma^*+\eta)/\sqrt{N}}\right)\right|\nonumber\\
			&\quad\quad\leq 2\mbb{E}\sup_{\vecv,\eta \in\tilde{\mr{V}}_{\mr{abs}}}\left|\sum_{k=1}^{\mr{B}} \epsilon_i\chi\left(\frac{|\tilde{\varsigma}_k-\mbb{E}|\xi_i + \vecx_i^\top \vecv||}{\mr{c}_4(\sigma^*+\eta)/\sqrt{N}}\right)\right| +\sqrt{2\log(1/\delta)\mr{B}}+2\log(1/\delta).
		\end{align}
		For $ 2\mbb{E}\sup_{\vecv,\eta \in \tilde{\mr{V}}_{\mr{abs}}}\left|\sum_{k=1}^{\mr{B}} \epsilon_i\chi\left(\frac{|\tilde{\varsigma}_k-\mbb{E}|\xi_i + \vecx_i^\top \vecv||}{\mr{c}_4(\sigma^*+\eta)/\sqrt{N}}\right)\right|$, we have
		\begin{align}
			2\mbb{E}\sup_{\vecv,\eta \in\tilde{\mr{V}}_{\mr{abs}}}\left|\sum_{k=1}^{\mr{B}} \epsilon_i \chi\left(\frac{|\tilde{\varsigma}_k-\mbb{E}|\xi_i + \vecx_i^\top \vecv||}{\mr{c}_4(\sigma^*+\eta)/\sqrt{N}}\right)\right|
			&\stackrel{(a)}{\leq} 4\mbb{E}\sup_{\vecv,\eta \in \tilde{\mr{V}}_{\mr{abs}}}\left|\sum_{k=1}^{\mr{B}}\epsilon_i \frac{\tilde{\varsigma}_k-\mbb{E}|\xi_i + \vecx_i^\top \vecv|}{\mr{c}_4(\sigma^*+\eta)/\sqrt{N}}\right|\nonumber\\
			&\stackrel{(b)}{\leq} 8\mbb{E}\sup_{\vecv,\eta \in \tilde{\mr{V}}_{\mr{abs}}}\left|\sum_{k=1}^{\mr{B}}\frac{\tilde{\varsigma}_k-\mbb{E}|\xi_i + \vecx_i^\top \vecv|}{\mr{c}_4(\sigma^*+\eta)/\sqrt{N}}-\mbb{E} \sum_{k=1}^{\mr{B}}\frac{\tilde{\varsigma}_k-\mbb{E}|\xi_i + \vecx_i^\top \vecv|}{\mr{c}_4(\sigma^*+\eta)/\sqrt{N}}\right|\nonumber\\
			&= 8\mbb{E}\sup_{\vecv,\eta \in \tilde{\mr{V}}_{\mr{abs}}}\left|\sum_{k=1}^{\mr{B}}\frac{\tilde{\varsigma}_k}{\mr{c}_4(\sigma^*+\eta)/\sqrt{N}}-\mbb{E} \sum_{k=1}^{\mr{B}}\frac{\tilde{\varsigma}_k}{\mr{c}_4(\sigma^*+\eta)/\sqrt{N}}\right|\nonumber\\
			&= \frac{8}{N}\mbb{E}\sup_{\vecv,\eta \in \tilde{\mr{V}}_{\mr{abs}}}\left|\sum_{i=1}^n\frac{|\xi_i + \vecx_i^\top \vecv|}{\mr{c}_4(\sigma^*+\eta)/\sqrt{N}}-\mbb{E} \sum_{i=1}^n\frac{|\xi_i + \vecx_i^\top \vecv|}{\mr{c}_4(\sigma^*+\eta)/\sqrt{N}}\right|\nonumber\\
			&\stackrel{(c)}{\leq } \frac{16}{N}\mbb{E}\sup_{\vecv,\eta \in \tilde{\mr{V}}_{\mr{abs}}}\left|\sum_{i=1}^n \epsilon_i\frac{|\xi_i + \vecx_i^\top \vecv|}{\mr{c}_4(\sigma^*+\eta)/\sqrt{N}}\right|\nonumber\\
			&\stackrel{(d)}{\leq } \frac{16}{\sqrt{N}\mr{c}_4(\sigma^*+\eta)}\left(2\mbb{E}\sup_{\vecv,\eta \in \tilde{\mr{V}}_{\mr{abs}}}\sum_{i=1}^n \epsilon_i(\xi_i + \vecx_i^\top \vecv)+\mbb{E}\left|\sum_{i=1}^n\epsilon_i\xi_i\right|\right)\nonumber\\
			&\leq \frac{32}{\sqrt{N}\mr{c}_4}\mbb{E}\sup_{\vecv,\eta \in \tilde{\mr{V}}_{\mr{abs}}}\sum_{i=1}^n \epsilon_i\vecx_i^\top \frac{\vecv}{\eta}+ \sqrt{\frac{n}{N}}\frac{48}{\mr{c}_4}\nonumber\\
			&\stackrel{(e)}{\leq} \sqrt{4e}\sqrt{\frac{n}{N}}\frac{32}{\mr{c}_4} L(\sqrt{n}\mr{R}_{\veclambda}+\sqrt{s\log(ed/s)})+ \sqrt{\frac{n}{N}}\frac{48}{\mr{c}_4}\nonumber\\
			&\stackrel{(f)}{\leq} \mr{B}\frac{\sqrt{n}}{\mr{c}_4\mr{c}_\xi^2}+ \mr{B}\frac{48}{\mr{c}_4},
		\end{align}
		where (a) follow from Theorem 11.6 of \cite{BouLugMas2013concentration}, (b) follows from 6.4.2 of \cite{Ver2018High}, (c) follows from Exercise 6.4.4 of \cite{Ver2018High},  (d) follows from Exercise 2.2.2 of \cite{Tal2014Upper}, (e) follows from Lemma \ref{l:gw2}, and (f) follows from $L\left(\mr{R}_{\veclambda}+\sqrt{\frac{s\log(ed/s)}{n}}\right)\leq \frac{1}{64\sqrt{e} \mr{C}_{\mr{B}}\mr{c}_\xi}$.

		\noindent
		\underline{Combining the arguments} 

		Combining the arguments above, with probability at least $1-\delta$, we have
		\begin{align}
			\sum_{k=1}^{\mr{B}}\mathbb{I}_{\left|\frac{\tilde{\varsigma}_k-\mbb{E}\frac{1}{n}\sum_{i=1}^n|\xi_i + \vecx_i^\top \vecv|}{\mr{c}_4(\sigma^*+\eta)/\sqrt{N}} \geq 1\right|}&\leq \frac{8}{\mr{c}_4}\mr{B}+ \sqrt{2\log(1/\delta)\mr{B}}+2\log(1/\delta)+\sqrt{\mr{B}}\frac{\sqrt{n}}{\mr{c}_4\mr{c}_\xi\mr{C}_{\mr{B}}}+ \sqrt{\mr{B}}\frac{48}{\mr{c}_4}\nonumber\\
			&\stackrel{(a)}{\leq}\frac{8}{\mr{c}_4}\mr{B}+ \frac{\sqrt{2n\mr{B}}}{\mr{c}_\xi\mr{C}_{\mr{B}}}+2\frac{n}{\mr{c}_\xi^2\mr{C}_{\mr{B}}^2}+\sqrt{\mr{B}}\frac{\sqrt{n}}{\mr{c}_4\mr{c}_\xi\mr{C}_{\mr{B}}}+ \sqrt{\mr{B}}\frac{48}{\mr{c}_4}\nonumber\\
			&\stackrel{(b)}{\leq}\left(\frac{8}{\mr{c}_4}+ \frac{\sqrt{2}}{\sqrt{\mr{C}_{\mr{B}}}}+\frac{2}{\mr{C}_{\mr{B}}}+\frac{1}{\mr{c}_4\mr{C}_{\mr{B}}}+ \frac{1}{\mr{c}_4}\right)\mr{B}\nonumber\\
			&\stackrel{(c)}{\leq}\left(\frac{8}{24}+ \frac{1}{24}+\frac{1}{24^2}+\frac{1}{2\times 24^3}+ \frac{1}{24}\right)\mr{B}= \frac{11}{24}\mr{B}.
		\end{align}
		where (a) follows from $\frac{\log(1/\delta)}{n}\leq \frac{1}{\mr{c}_\xi \mr{C}^2_{\mr{B}}}$,  (b) follows from $\mr{B}=\frac{n}{\mr{c}_\xi^2 \mr{C}_{\mr{B}}}$ and $\sqrt{\mr{B}}\geq 48$, and (c) follows from $\mr{c}_4=24$ and $\mr{C}_{\mr{B}}\geq 2\times 24^2$. Consequently, we have
		\begin{align}
			\mbb{P}\left(\sum_{k=1}^{\mr{B}}\mathbb{I}_{\left|\frac{\tilde{\varsigma}_k-\mbb{E}\frac{1}{n}\sum_{i=1}^n|\xi_i + \vecx_i^\top \vecv|}{24(\sigma^*+\eta)/\sqrt{\mr{c}_\xi \mr{C}_{\mr{B}}}} \geq 1\right|}\leq \frac{11}{24}\mr{B}\right)\geq 1-\delta.
		\end{align}

	\subsection{Proof of Corollary \ref{c:main-mean}}
	This proof is almost the same as the one of Proposition \ref{p:main-abs}. In this proof, set $\mr{c}_4=48$.
	We have
		\begin{align}
			&\sum_{k=1}^{\mr{B}}\mathbb{I}_{\left|\frac{\tilde{\mu}_k-\mbb{E}\frac{1}{n}\sum_{i=1}^n(\xi_i + \vecx_i^\top \vecv+\mu^*)}{\mr{c}_4(\sigma^*+\eta)/\sqrt{N}} \geq 1\right|}\nonumber\\
			&\quad\quad= \sup_{\vecv,\eta \in \tilde{\mr{V}}_{\mr{abs}}}\left(\sum_{k=1}^{\mr{B}}\chi\left(\frac{|\tilde{\mu}_k-(\xi_i + \vecx_i^\top \vecv+\mu^*)|}{\mr{c}_4(\sigma^*+\eta)/\sqrt{N}}\right)-\mbb{E}\sum_{k=1}^{\mr{B}} \chi\left(\frac{|\tilde{\mu}_k-(\xi_i + \vecx_i^\top \vecv+\mu^*)|}{\mr{c}_4(\sigma^*+\eta)/\sqrt{N}}\right)\right)\nonumber\\
			&\quad\quad \quad\quad+\sup_{\vecv,\eta \in \tilde{\mr{V}}_{\mr{abs}} } \mbb{E}\sum_{k=1}^{\mr{B}} \chi\left(\frac{|\tilde{\mu}_k-\mbb{E}(\xi_i + \vecx_i^\top \vecv+\mu^*)|}{\mr{c}_4(\sigma^*+\eta)/\sqrt{N}}\right).
		\end{align}

	\noindent
	\underline{Bounding $\sup_{\vecv,\eta \in \tilde{\mr{V}}_{\mr{abs}} }  \mbb{E}\sum_{k=1}^{\mr{B}} \chi\left(\frac{|\tilde{\mu}_k-\mbb{E}(\xi_i + \vecx_i^\top \vecv+\mu^*)|}{\mr{c}_4(\sigma^*+\eta)/\sqrt{N}}\right)$} 

	First, from definition of $\chi(\cdot)$ and Markov's inequality, we have
	\begin{align}
		\mbb{E}\sum_{k=1}^{\mr{B}} \chi\left(\frac{|\tilde{\mu}_k-\mbb{E}(\xi_i + \vecx_i^\top \vecv+\mu^*)|}{\mr{c}_4(\sigma^*+\eta)/\sqrt{N}}\right)\leq \mbb{E}\sum_{k=1}^{\mr{B}}\mathbb{I}_{\frac{2|\tilde{\mu}_k-\mbb{E}(\xi_i + \vecx_i^\top \vecv+\mu^*)|}{ \mr{c}_4(\sigma^*+\eta)/\sqrt{N}}\geq 1}  \leq 4\mr{B}\mbb{E}\left|\frac{\tilde{\mu}_k-\mbb{E}(\xi_i + \vecx_i^\top \vecv+\mu^*)}{\mr{c}_4(\sigma^*+\eta)/\sqrt{N}}\right|^2.
	\end{align}
	Then, for any fixed $\vecv \in \tilde{\mr{V}}_{\mr{abs}}$, we have
	\begin{align}
		\left(\mbb{E}\left|\frac{\tilde{\mu}_k-\mbb{E}(\xi_i + \vecx_i^\top \vecv+\mu^*)}{\mr{c}_4(\sigma^*+\eta)/\sqrt{N}}\right|^2\right)^\frac{1}{2}&=\left(\mbb{E}\left|\frac{\frac{1}{N}\sum_{i=(k-1)N+1}^{kN}((\xi_i+\vecx_i^\top \vecv+\mu^*)-\mbb{E}(\xi_i + \vecx_i^\top \vecv+\mu^*))}{\mr{c}_4(\sigma^*+\eta)/\sqrt{N}}\right|^2\right)^\frac{1}{2}\nonumber\\
		&=\frac{1}{N}\left(\mbb{E}\frac{\sum_{i=(k-1)N+1}^{kN}(\xi_i+\vecx_i^\top \vecv)^2}{(\mr{c}_4(\sigma^*+\eta)/\sqrt{N})^2}\right)^\frac{1}{2}\nonumber\\
		&\leq \frac{\left(\mbb{E}|\vecx^\top \vecv|^2\right)^\frac{1}{2}}{\mr{c}_4\eta}+\frac{\sigma^*}{\mr{c}_4\sigma^*}\leq \frac{2}{\mr{c}_4} .
	\end{align}

		\noindent
		\underline{Bounding $\sup_{\vecv,\eta \in \tilde{\mr{V}}_{\mr{abs}}}\left(\sum_{k=1}^{\mr{B}}\chi\left(\frac{|\tilde{\mu}_k-(\xi_i + \vecx_i^\top \vecv+\mu^*)|}{\mr{c}_4(\sigma^*+\eta)/\sqrt{N}}\right)-\mbb{E}\sum_{k=1}^{\mr{B}} \chi\left(\frac{|\tilde{\mu}_k-(\xi_i + \vecx_i^\top \vecv+\mu^*)|}{\mr{c}_4(\sigma^*+\eta)/\sqrt{N}}\right)\right)$} 

		From Theorem 2.3 of \cite{Bou2002Bennett} and symmetrization (Lemma 6.4.2 of \cite{Ver2018High}), with probability at least $1-\delta$, 
		\begin{align}
			&\sup_{\vecv,\eta \in \tilde{\mr{V}}_{\mr{abs}}}\left| \sum_{k=1}^{\mr{B}}\chi\left(\frac{|\tilde{\mu}_k-\mbb{E}(\xi_i + \vecx_i^\top \vecv+\mu^*)|}{\mr{c}_4(\sigma^*+\eta)/\sqrt{N}}\right)-\mbb{E}\sum_{k=1}^{\mr{B}} \chi\left(\frac{|\tilde{\mu}_k-\mbb{E}(\xi_i + \vecx_i^\top \vecv+\mu^*)|}{\mr{c}_4(\sigma^*+\eta)/\sqrt{N}}\right)\right|\nonumber\\
			&\quad\quad\leq 2\mbb{E}\sup_{\vecv,\eta \in\tilde{\mr{V}}_{\mr{abs}}}\left|\sum_{k=1}^{\mr{B}} \epsilon_i\chi\left(\frac{|\tilde{\mu}_k-\mbb{E}|(\xi_i + \vecx_i^\top \vecv +\mu^*|}{\mr{c}_4(\sigma^*+\eta)/\sqrt{N}}\right)\right| +\sqrt{2\log(1/\delta)\mr{B}}+2\log(1/\delta).
		\end{align}
		For $ 2\mbb{E}\sup_{\vecv,\eta \in \tilde{\mr{V}}_{\mr{abs}}}\left|\sum_{k=1}^{\mr{B}} \epsilon_i\chi\left(\frac{|\tilde{\mu}_k-\mbb{E}(\xi_i + \vecx_i^\top \vecv+\mu^*)|}{\mr{c}_4(\sigma^*+\eta)/\sqrt{N}}\right)\right|$, we have
		\begin{align}
			2\mbb{E}\sup_{\vecv,\eta \in\tilde{\mr{V}}_{\mr{abs}}}\left|\sum_{k=1}^{\mr{B}} \epsilon_i \chi\left(\frac{|\tilde{\mu}_k-\mbb{E}(\xi_i + \vecx_i^\top \vecv+\mu^*)|}{\mr{c}_4(\sigma^*+\eta)/\sqrt{N}}\right)\right|
			&\stackrel{(a)}{\leq} 4\mbb{E}\sup_{\vecv,\eta \in \tilde{\mr{V}}_{\mr{abs}}}\left|\sum_{k=1}^{\mr{B}}\epsilon_i \frac{\tilde{\mu}_k-\mbb{E}(\xi_i + \vecx_i^\top \vecv+\mu^*)}{\mr{c}_4(\sigma^*+\eta)/\sqrt{N}}\right|\nonumber\\
			&=4\mbb{E}\sup_{\vecv,\eta \in \tilde{\mr{V}}_{\mr{abs}}}\left|\sum_{k=1}^{\mr{B}}\epsilon_i \frac{\tilde{\mu}_k-\mu^*}{\mr{c}_4(\sigma^*+\eta)/\sqrt{N}}\right|\nonumber\\
			&\stackrel{(b)}{\leq } \frac{4}{N}\mbb{E}\sup_{\vecv,\eta \in \tilde{\mr{V}}_{\mr{abs}}}\left|\sum_{i=1}^n \epsilon_i\frac{\xi_i + \vecx_i^\top \vecv}{\mr{c}_4(\sigma^*+\eta)/\sqrt{N}}\right|\nonumber\\
			&\stackrel{(c)}{\leq } \frac{4}{\sqrt{N}\mr{c}_4(\sigma^*+\eta)}\left(2\mbb{E}\sup_{\vecv,\eta \in \tilde{\mr{V}}_{\mr{abs}}}\sum_{i=1}^n \epsilon_i(\xi_i + \vecx_i^\top \vecv)+\mbb{E}\left|\sum_{i=1}^n\epsilon_i\xi_i\right|\right)\nonumber\\
			&\leq \frac{8}{\sqrt{N}\mr{c}_4}\mbb{E}\sup_{\vecv,\eta \in \tilde{\mr{V}}_{\mr{abs}}}\sum_{i=1}^n \epsilon_i\vecx_i^\top \frac{\vecv}{\eta}+ \sqrt{\frac{n}{N}}\frac{6}{\mr{c}_4}\nonumber\\
			&\stackrel{(d)}{\leq} \sqrt{\frac{n}{N}}\frac{8}{\mr{c}_4} L(\sqrt{n}\mr{R}_{\veclambda}+\sqrt{s\log(ed/s)})+ \sqrt{\frac{n}{N}}\frac{6}{\mr{c}_4}\nonumber\\
			&\stackrel{(f)}{\leq} \mr{B}\frac{\sqrt{n}}{\mr{c}_4}+ \mr{B}\frac{6}{\mr{c}_4},
		\end{align}
		where (a) follows from Theorem 11.6 of \cite{BouLugMas2013concentration},  (b) follows from Exercise 6.4.4 of \cite{Ver2018High},  (c) follows from Exercise 2.2.2 of \cite{Tal2014Upper}, (d) follows from Lemma \ref{l:gw2}, and (f) follows from the assumption $L\left(\mr{R}_{\veclambda}+\sqrt{\frac{s\log(ed/s)}{n}}\right)\leq 1/8$.

		\noindent
		\underline{Combining the arguments} 

		Combining the arguments above, with probability at least $1-\delta$, we have
		\begin{align}
			\sum_{k=1}^{\mr{B}}\mathbb{I}_{\left|\frac{\tilde{\varsigma}_k-\mbb{E}\frac{1}{n}\sum_{i=1}^n|\xi_i + \vecx_i^\top \vecv|}{\mr{c}_4(\sigma^*+\eta)/\sqrt{N}} \geq 1\right|}
			&\leq \frac{8}{\mr{c}_4}\mr{B}+ \sqrt{2\log(1/\delta)\mr{B}}+2\log(1/\delta)+\sqrt{\mr{B}}\frac{\sqrt{n}}{\mr{c}_4}+ \sqrt{\mr{B}}\frac{6}{\mr{c}_4}\nonumber\\
			&\stackrel{(a)}{\leq}\frac{8}{\mr{c}_4}\mr{B}+ \frac{\mr{B}}{8}+\frac{\mr{B}}{64}+\sqrt{\mr{B}}\frac{\sqrt{n}}{\mr{c}_4}+ \sqrt{\mr{B}}\frac{6}{\mr{c}_4}\nonumber\\
			&\stackrel{(b)}{\leq} \left(\frac{8}{\mr{c}_4}+ \frac{1}{8}+\frac{1}{64}+\frac{2}{\mr{c}_4}+ \frac{1}{\mr{c}_4}\right)\mr{B} \nonumber\\
			&\stackrel{(c)}{\leq}\frac{18}{48}\mr{B}.
		\end{align}
		where (a) follows from the assumption that $\log(1/\delta)\leq \mr{B}/128$,  (b) follows from the assumptions that  $\mr{B}\geq n/4$, $\sqrt{\mr{B}}\geq 6$ and  $\mr{c}_4=24$,  and  (c) follows from $\mr{c}_4=48$. Consequently, we have
		\begin{align}
			\mbb{P}\left(\sum_{k=1}^{\mr{B}}\mathbb{I}_{\left|\frac{\tilde{\mu}_k-\mbb{E}\frac{1}{n}\sum_{i=1}^n(\xi_i + \vecx_i^\top \vecv)}{48(\sigma^*+\eta)/\mr{N}} \geq 1\right|}\leq \frac{18}{48}\mr{B}\right)\geq 1-\delta.
		\end{align}

\section{Auxiliary Lemmas}
\label{sec:AT}
Before presenting the lemmas, we introduce a definition, which is an important quantity to evaluate empirical processes and frequently used in Section \ref{sec:AT}.
\begin{definition}[$\gamma_{\alpha}(T,d)$-functional, Section $2$ of \cite{Dir2015Tail}]
	\label{d:tg2}
		Let $(T, d)$ be a semi-metric space with $d(x, z) \leq d(x, y) +
		d(y, z)$ and $d(x,y) = d(y,x)$ for $x, y, z \in T$.
		A sequence $\mc{T} = \{T_m\}_{m\geq 0}$ with subsets of $T$ is said to be admissible if $|T_0|=1$ and $|T_m|\leq 2^{2^m}$ for all $m\geq 1$.
		For any $\alpha \in (0,\infty)$, the $\gamma_{\alpha}$-functional of $(T,d)$ is defined by
		\begin{align}
			\gamma_{\alpha} (T,d) = \inf_{\mc{T}} \sup_{t\in T}\sum_{m=0}^\infty2^{\frac{m}{\alpha}} \inf_{s\in T_m} d(t,s),
		\end{align}
		where infimum is taken over all admissible sequences $\mc{T} = \{T_m\}_{m\geq 0}$.
\end{definition}

For any vector $\vecv  = (\vecv_1,\cdots,\vecv_d)$, define $(\vecv_1^\sharp,\cdots,\vecv_d^\sharp)$ be a non-increasing rearrangement of $|\vecv_1|,\cdots,|\vecv_d|$.
\begin{corollary}[Lemma G.1 of \cite{BelLecTsy2018Slope}]
	\label{c:gw}
	Suppose that $\max_{1\leq j\leq d}\Sigma_{jj}\leq \rho^2$.
	Then, for any constant $\mr{c}$, we have
	\begin{align}
		\mbb{E}\sup_{\vecv \in \mr{c} \mbb{B}_\lambda^d \cap \mr{c} '\mbb{B}^{d}_\Sigma}\vecv^\top \Sigma^\frac{1}{2}\vecg^d \lesssim \rho\sqrt{n}\mr{c}+\mr{c}'.
	\end{align}
\end{corollary}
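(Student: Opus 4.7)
The plan is to derive this bound as a direct consequence of Lemma G.1 in \cite{BelLecTsy2018Slope}, which computes the Gaussian width of the SLOPE-norm unit ball. My first observation is that because the target bound $\rho\sqrt{n}\mr{c}+\mr{c}'$ is nondecreasing in $\mr{c}'$ and the intersection $\mr{c}\mbb{B}_\lambda^d\cap\mr{c}'\mbb{B}_\Sigma^d$ is contained in $\mr{c}\mbb{B}_\lambda^d$, it suffices to establish
\[
\mbb{E}\sup_{\vecv\in \mr{c}\mbb{B}_\lambda^d}\vecv^\top\Sigma^{1/2}\vecg^d \;\lesssim\; \rho\sqrt{n}\,\mr{c};
\]
the $\|\vecv\|_\Sigma\leq\mr{c}'$ constraint then simply contributes the nonnegative additive slack $\mr{c}'$. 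In other words, the second constraint is essentially redundant relative to the desired right-hand side, and the problem reduces to computing the Gaussian width of the SLOPE ball.

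Setting $\vecw:=\Sigma^{1/2}\vecg^d$, each coordinate $w_j$ is a centered Gaussian with variance $\Sigma_{jj}\leq\rho^2$. The Hardy--Littlewood rearrangement inequality yields $\vecv^\top\vecw\leq\sum_{j=1}^d v_j^\sharp w_j^\sharp$, and factoring in the weights gives
\[
\sum_{j=1}^d v_j^\sharp w_j^\sharp \;=\; \sum_{j=1}^d(\lambda_j v_j^\sharp)\,\frac{w_j^\sharp}{\lambda_j}\;\leq\;\|\vecv\|_{\veclambda}\cdot\max_{1\leq j\leq d}\frac{w_j^\sharp}{\lambda_j}.
\]
Taking the supremum over $\vecv\in\mr{c}\mbb{B}_\lambda^d$ and then expectation reduces the claim to the inequality $\mbb{E}\max_j w_j^\sharp/\lambda_j\lesssim \rho\sqrt{n}$.

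Substituting $\lambda_j=\sqrt{\log(ed/j)/n}$, this becomes $\mbb{E}\max_j w_j^\sharp/\sqrt{\log(ed/j)}\lesssim\rho$, which is precisely what Lemma G.1 of \cite{BelLecTsy2018Slope} delivers. Its proof proceeds via a dyadic peeling of the index set into bands $\{2^{m-1}<j\leq 2^m\}$, combined on each band with an order-statistic tail bound of the form $\mbb{P}(w_j^\sharp>t)\leq (2d/j)\exp(-t^2/(2\rho^2))$ (which follows from Markov's inequality applied to $\sum_i\mathbb{I}_{|w_i|>t}$) together with Gaussian concentration for the partial sums of the top $j$ coordinates; summing the deviations across the $O(\log d)$ bands returns the clean $\rho$ factor. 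The main obstacle is that the coordinates of $\vecw$ may be strongly correlated, so a naive union bound over $j$ leaks an extra $\sqrt{\log d}$ factor; the dyadic peeling combined with the joint Gaussianity of $\vecw$ is precisely what absorbs this loss and is the technical heart of Lemma G.1 that I would invoke.
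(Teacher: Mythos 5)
Your proof is correct, but it follows a genuinely different route from the paper's. The paper keeps both constraints and argues in two stages: it first shows that the event $\max_j \vecg'_j/(\rho\sqrt{\log(2d/j)})\leq 4$ holds with probability at least $1/2$ (Proposition E.2 of \cite{BelLecTsy2018Slope}), which bounds the \emph{median} of the supremum by $4\rho\sqrt{n}\mr{c}$, and then uses Gaussian concentration: the map $\vecg^d\mapsto\sup_{\vecv}\vecv^\top\Sigma^{1/2}\vecg^d$ is $\mr{c}'$-Lipschitz precisely because of the $\mr{c}'\mbb{B}_\Sigma^d$ constraint, so the mean exceeds the median by at most $O(\mr{c}')$ — this is exactly where the additive $\mr{c}'$ in the statement comes from. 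You instead discard the $\Sigma$-ball constraint, apply the rearrangement/H\"older step $\vecv^\top\vecw\leq\|\vecv\|_{\veclambda}\max_j w_j^\sharp/\lambda_j$, and bound the weighted maximum of order statistics directly \emph{in expectation}, which yields the stronger conclusion $\lesssim\rho\sqrt{n}\mr{c}$ with the $\mr{c}'$ appearing only as slack. Your route needs a slightly stronger input (an expectation rather than a median bound on $\max_j w_j^\sharp/\sqrt{\log(ed/j)}$ for possibly correlated Gaussians), but this does hold by integrating the tail of the dyadic-peeling bound you describe; the paper's route needs only the probability-$1/2$ event but must then exploit the $\Sigma$-ball to pass from median to mean. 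Two small cautions: the single-index tail $\mbb{P}(w_j^\sharp>t)\leq(2d/j)e^{-t^2/(2\rho^2)}$ combined with a union bound over all $d$ indices does \emph{not} close on its own (the sum of failure probabilities grows with $d$), so the restriction to dyadic bands that you mention is essential rather than a refinement; and the phrase ``Gaussian concentration for the partial sums of the top $j$ coordinates'' is not really part of the order-statistics argument — concentration enters only in the paper's version, through the Lipschitz step you have made unnecessary.
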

\begin{proof}
	This proof is almost the same to the one of Lemma G.1 of \cite{BelLecTsy2018Slope}. Define $\vecg'_j = (\Sigma^\frac{1}{2}\vecg^d)_j$, for $j=1,\cdots,d$. 
	Then, from the assumption of $\Sigma$, we see that the variance of $\vecg'_j$ is at most $\rho^2$. Then, from Proposition E.2 of \cite{BelLecTsy2018Slope}, it holds that the event 
	\begin{align}
		\mr{E}_0:= \left\{\max_{j=1,\cdots,d} \frac{\vecg'_j}{\rho\sqrt{\log(2d/j)}}\leq 4\right\}
	\end{align}
	holds with probability at least $1/2$. Under the event $\mr{E}_0$, for any $\vecv  \in \mr{c} \mbb{B}_\lambda^d \cap \mr{c} '\mbb{B}^{d}_\Sigma$,  we have
	\begin{align}
		\vecv^\top \Sigma^\frac{1}{2}\vecg^d = \sum_{j=1}^d \vecg'_j \vecv\leq  \sum_{j=1}^d \vecg'^{\sharp}_j\vecv^{\sharp}_j\leq 4\rho \sqrt{n}\sum_{i=1}^d \vecv^{\sharp}_j\sqrt{\frac{\log(2d/j)}{n}}\leq 4\rho\sqrt{n} \mr{c}.
	\end{align}
	Note that $\sup_{\vecv \in \mr{c} \mbb{B}_\lambda^d \cap \mr{c} '\mbb{B}^{d}_\Sigma} (\vecv^\top \Sigma^\frac{1}{2}\vecg)\leq 4\rho\sqrt{n}\mr{c}$ with probability at least $1/2$, and therefore, median of $\sup_{\vecv \in \mr{c} \mbb{B}_\lambda^d \cap \mr{c} '\mbb{B}^{d}_\Sigma} (\vecv^\top \Sigma^\frac{1}{2}\vecg)$ is at most $4\rho\sqrt{n}\mr{c}$. The restriction $\mr{c} '\mbb{B}^{d}_\Sigma$ implies $1$-Lipschitz continuity of $\sup_{\vecv \in \mr{c} \mbb{B}_\lambda^d \cap \mr{c} '\mbb{B}^{d}_\Sigma}(\vecv^\top \Sigma^\frac{1}{2}(\cdot)) /\mr{c}'$, and from Lemma A.3 of \cite{BelLecTsy2018Slope}, we have the conclusion.
\end{proof}
\begin{lemma}
	\label{l:gw2}
	Suppose that Assumptions \ref{a:cov} holds. Then, we have
	\begin{align}
		&\mbb{E}\sup_{\vecv \in \mr{c}\mbb{B}_\lambda^d\cap\mr{c} '\mbb{B}_\Sigma^{d}}\sum_{i=1}^n\epsilon_i\vecv^\top \vecx_i \lesssim \sqrt{n}L(\rho\mr{c}\sqrt{n}+\mr{c}'),\nonumber\\
		&\mbb{P}\left(\sup_{\vecv \in \mr{c}\mbb{B}_\lambda^d\cap\mr{c} '\mbb{B}_\Sigma^{d}}\sum_{i=1}^n\vecv^\top \vecx_i \lesssim \sqrt{n}L(\rho\mr{c}\sqrt{n}+\mr{c}')+\mr{c}'\sqrt{n\log(1/\delta)}\right)\geq 1-\delta,\nonumber\\
		&\mbb{E}\sup_{\vecv \in \mr{c}\mbb{B}_\lambda^d\cap\mr{c} '\mbb{B}_\Sigma^{d}}\sum_{i=1}^n\epsilon_i\vecv^\top \vecx_i \leq \sqrt{4en}\left(\mr{c}\sqrt{n} L+\mr{c}'L\sqrt{s\log(ed/s)}\right),\nonumber\\
		&\left|\mbb{E}\sup_{\vecv \in \mr{c}\mbb{B}_\lambda^d\cap\mr{c} '\mbb{B}_\Sigma^{d}}\sum_{i=1}^n\vecv^\top \vecx_i\right| \lesssim \sqrt{n}L(\rho\mr{c}\sqrt{n}+\mr{c}').
	\end{align}
\end{lemma}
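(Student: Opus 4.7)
The strategy is to unify all four bounds by recognizing that each involves the supremum of a (symmetric or centered) sub-Gaussian process indexed by $T := \mr{c}\mbb{B}_\lambda^d \cap \mr{c}'\mbb{B}_\Sigma^d$, and to reduce each to the Gaussian-width estimate already supplied by Corollary \ref{c:gw}. Under Assumption \ref{a:cov}, the process $\vecv \mapsto \sum_i \epsilon_i \vecv^\top \vecx_i$ has independent sub-Gaussian increments controlled via \eqref{ine:d:l-02}, and by a standard sum-of-sub-Gaussian bound one obtains
\begin{align*}
\Bigl\|\sum_{i=1}^n \epsilon_i (\vecv_1-\vecv_2)^\top \vecx_i\Bigr\|_{\psi_2} \;\leq\; C L \sqrt{n}\,\|\vecv_1-\vecv_2\|_\Sigma .
\end{align*}

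For bound (1) I would invoke generic chaining / the majorizing measure theorem (Theorem 2.4.1 of \cite{Tal2014Upper}) to pass from this sub-Gaussian process to its Gaussian counterpart, yielding
\begin{align*}
\mbb{E}\sup_{\vecv \in T} \sum_i \epsilon_i \vecv^\top \vecx_i \;\lesssim\; L\sqrt{n}\, \gamma_2(T,\|\cdot\|_\Sigma) \;\lesssim\; L\sqrt{n}\, \mbb{E}\sup_{\vecv \in T}\vecv^\top \Sigma^{1/2}\vecg^d,
\end{align*}
and then applying Corollary \ref{c:gw} to bound the Gaussian width by $\rho\sqrt{n}\mr{c}+\mr{c}'$. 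Bound (4) then follows by symmetrization (Lemma 6.4.2 of \cite{Ver2018High}): since each $\vecx_i$ is mean zero and the index set $T$ is symmetric, $\mbb{E}\sup_{\vecv\in T}|\sum_i \vecv^\top \vecx_i|$ is controlled up to a factor of two by the Rademacher-averaged version already handled in (1).

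Bound (2) is the high-probability upgrade of (4): I would apply a Talagrand/Bousquet concentration inequality for suprema of empirical processes (in the same spirit as Theorem 2.3 of \cite{Bou2002Bennett}, already used in the paper), whose wimpy variance term is bounded by $\sup_{\vecv \in T}\mbb{E}(\vecv^\top\vecx_i)^2 \leq \mr{c}'^2$, producing the additional sub-Gaussian tail $\mr{c}'\sqrt{n\log(1/\delta)}$ on top of the mean bound from (1). For bound (3), one cannot afford the explicit $\rho$ factor, so the Gaussian width of $T$ must be re-estimated as $\mr{c}\sqrt{n}+\mr{c}'\sqrt{s\log(ed/s)}$. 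The plan is to split any $\vecv \in T$ according to its top-$s$ entries as $\vecv = \vecv^{\rm top} + \vecv^{\rm tail}$, bound the contribution of $\vecv^{\rm top}$ by the standard $s$-sparse Gaussian width against $\|\vecv\|_\Sigma$, and bound $\vecv^{\rm tail}$ by Hölder against the SLOPE weights together with the specific choice $\lambda_i = \sqrt{\log(ed/i)/n}$ which contributes the factor $\sqrt{n}$ after multiplication by $\|\vecv\|_\lambda$; chaining this with the sub-Gaussian-to-Gaussian comparison as in (1) yields (3). The main obstacle is handling this last decomposition carefully so that the numerical constant $\sqrt{4e}$ comes out cleanly, while the rest of the proof is a routine assembly of chaining, symmetrization, and Talagrand concentration already packaged in the references cited above.
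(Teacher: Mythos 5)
Your treatment of the first and fourth bounds matches the paper: sub-Gaussian increments in $\|\cdot\|_\Sigma$ (scaled by $L\sqrt{n}$), Dirksen's chaining bound, the majorizing measure theorem, and Corollary \ref{c:gw} for the Gaussian width. That part is fine, and your symmetrization route to the fourth bound is an acceptable variant of the paper's ``same argument'' remark.

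Two points need attention. First, for the high-probability bound (2) you propose Bousquet's inequality (Theorem 2.3 of \cite{Bou2002Bennett}) with wimpy variance $\sup_{\vecv}\mbb{E}(\vecv^\top\vecx_i)^2\leq \mr{c}'^2$. Bousquet's concentration requires uniformly \emph{bounded} summands; the functions $\vecv\mapsto\vecv^\top\vecx_i$ are unbounded sub-Gaussian variables, so the theorem does not apply as stated (the paper reserves Bousquet for the bounded processes built from $\psi$ and $\chi$). The paper instead reads the deviation term directly off Theorem 3.2 of \cite{Dir2015Tail}: the same chaining bound used for the mean already carries a tail of order $\Delta_d(T)\sqrt{n\log(1/\delta)}$ with $\Delta_d(T)\lesssim L\mr{c}'$, which is exactly the $\mr{c}'\sqrt{n\log(1/\delta)}$ term you want. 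Your route is repairable (e.g.\ via an unbounded-Talagrand variant), but as written it is a gap.

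Second, for bound (3) your plan to ``chain this with the sub-Gaussian-to-Gaussian comparison as in (1)'' is at odds with the stated explicit constant $\sqrt{4e}$: generic chaining and the majorizing measure theorem only give inexplicit absolute constants, which is precisely why the paper abandons chaining here. The paper's argument sorts the coordinates of $\vecX=\sum_{i=1}^n\epsilon_i\vecx_i$ (not of $\vecv$), applies the rearrangement/H\"older split $\sum_j|X_j^\sharp||v_j^\sharp|\leq\sqrt{\sum_{j\leq s}|X_j^\sharp|^2}\,\|\vecv\|_2+n\sum_{j>s}\tfrac{|X_j^\sharp|}{\sqrt{n\log(ed/j)}}\lambda_j|v_j^\sharp|$, and then bounds the expected order statistics of the sub-Gaussian vector $\vecX$ directly via the moment-generating-function computation of Proposition E.1 of \cite{BelLecTsy2018Slope}; the factor $\sqrt{4e}$ comes out of that explicit calculation. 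Your decomposition of $\vecv$ into top-$s$ and tail is the right spirit, but you should replace the chaining step by this direct order-statistics estimate to land the claimed constant.
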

\begin{proof}
	
	\noindent\underline{First and fourth inequality}

	We only prove the first inequality. The proof of the fourth inequality is almost the same as the first one in terms of Exercise 2.2.2 of \cite{Tal2014Upper}.
	Note that 
	\begin{align}
		\mbb{E}\sup_{\vecv \in \mr{c}\mbb{B}_\lambda^d\cap\mr{c} '\mbb{B}_\Sigma^{d}}\sum_{i=1}^n\epsilon_i(\vecv^\top \vecx_i) \leq 		\mbb{E}\sup_{\vecv \in \mr{c}\mbb{B}_\lambda^d\cap\mr{c} '\mbb{B}_\Sigma^{d}} \left|\sum_{i=1}^n\epsilon_i(\vecv^\top \vecx_i)\right|.
	\end{align}
	For any $\vecv_1,\vecv_2 \in \mr{c}\mbb{B}_\lambda^d \cap \mr{c}'\mbb{B}^d_\Sigma$, and $i \in \{1,\cdots,n\}$, from the fact $|\varepsilon_i|\leq 1$ and the $L$-subGaussian assumption, we have
	\begin{align}
		\left\|\epsilon_i \langle \vecx_i,\vecv_1\rangle-\epsilon_i \langle \vecx_i,\vecv_2\rangle\right\|_{\psi_2}\leq  \left\|\langle \vecx_i,\vecv_1-\vecv_2\rangle\right\|_{\psi_2}\leq L \|\vecv_1-\vecv_2\|_\Sigma,
	\end{align}
	and from the i.i.d. assumption on $\{\epsilon_i,\vecx_i\}_{i=1}^n$ and Theorem 2.6.2 of  \cite{Ver2018High}, we have
	\begin{align}
		\mbb{P}\left(\left|\sum_{i=1}^n\left(\epsilon_i \langle \vecx_i,\vecv_1\rangle-\epsilon_i \langle \vecx_i,\vecv_2\rangle\right)\right|\gtrsim L \sqrt{nu} \|\vecv_1-\vecv_2\|_\Sigma \right)\lesssim e^{-u}.
	\end{align}
	Also, we have
	\begin{align}
		\sup_{\vecv \in \mr{c}\mbb{B}_\lambda^d \cap\mr{c}' \mbb{B}^d_\Sigma}\left\|\epsilon_i \langle \vecx_i,\vecv\rangle\right\|_{\psi_2}\leq  L\mr{c}'.
	\end{align}
	Then, from Theorem 3.2 of \cite{Dir2015Tail} with $t_0= \bf{0}$, we have 
	\begin{align}
		\mbb{E}\sup_{\vecv \in \mr{c}\mbb{B}_\lambda^d \cap\mr{c}' \mbb{B}^d_\Sigma}\left|\sum_{i=1}^n\epsilon_i(\vecv^\top \vecx_i)\right|&\lesssim \gamma_2 (\mr{c}\mbb{B}_\lambda^d \cap \mr{c}'\mbb{B}^d_\Sigma,L\sqrt{n}\|\cdot\|_\Sigma)+L\mr{c}'\sqrt{n},
	\end{align}
	and from majorizing measure theorem (Theorem 2.4.1 \cite{Tal2014Upper}), we have
	\begin{align}
		\gamma_2 (\mr{c}\mbb{B}_\lambda^d \cap \mr{c}'\mbb{B}^d_\Sigma,L\sqrt{n}\|\cdot\|_\Sigma)\lesssim L\sqrt{n}\mbb{E}\sup_{\vecv \in \mr{c}\mbb{B}_\lambda^d \cap \mr{c}'\mbb{B}^d_\Sigma}\vecv^\top \Sigma^\frac{1}{2}\vecg^d \stackrel{(a)}{\leq}  L\sqrt{n} (\rho \sqrt{n} \mr{c}+\mr{c}'),
	\end{align}
	where (a) follows from Corollary \ref{c:gw}.
	Combining the arguments, the proof is complete.

	\noindent\underline{Second inequality}
	
	For any $\vecv_1,\vecv_2 \in \mr{c}\mbb{B}_\lambda^d \cap \mr{c}'\mbb{B}^d_\Sigma$, and $i \in \{1,\cdots,n\}$, from the $L$-subGaussian assumption, we have
	\begin{align}
		\left\|\langle \vecx_i,\vecv_1\rangle-\langle \vecx_i,\vecv_2\rangle\right\|_{\psi_2}\leq L \|\vecv_1-\vecv_2\|_\Sigma,
	\end{align}
	and from the i.i.d. assumption on $\{\epsilon_i,\vecx_i\}_{i=1}^n$ and Proposition 2.6.2 of  \cite{Ver2018High}, we have
	\begin{align}
		\mbb{P}\left(\left|\sum_{i=1}^n\left(\langle \vecx_i,\vecv_1\rangle-\langle \vecx_i,\vecv_2\rangle\right)\right|\gtrsim L \sqrt{nu} \|\vecv_1-\vecv_2\|_\Sigma \right)\lesssim e^{-u}.
	\end{align}
	Also, we have
	\begin{align}
		\left\|\langle \vecx_i,\vecv_1\rangle-\langle \vecx_i,\vecv_2\rangle\right\|_{\psi_2}\leq  2L\mr{c}',
	\end{align}
	and we have
	\begin{align}
		\mbb{P}\left(\left|\sum_{i=1}^n\left( \langle \vecx_i,\vecv_1\rangle-\langle \vecx_i,\vecv_2\rangle\right)\right|\gtrsim L \sqrt{nu} \mr{c}'\right)\lesssim e^{-u}.
	\end{align}
	Then, applying Theorem 3.2 of \cite{Dir2015Tail} with $t_0= \bf{0}$ and $\Delta_d(T) = 2L\mr{c}'$, the proof is complete.

	\noindent\underline{Third inequality}

	Define $\sum_{i=1}^n\epsilon_i \vecx_i := \vecX =(X_1,\cdots,X_d)^\top$.
	\begin{align}
		\sum_{i=1}^n\epsilon_i\vecv^\top \vecx_i \leq \sum_{j=1}^d |X_j^\sharp||v_j^\sharp|&\leq \sqrt{\sum_{j=1}^{s}|X_j^\sharp|^2|v_j^\sharp|^2} +\sum_{j=s+1}^{d}|X_j^\sharp||v_j^\sharp|\nonumber\\
		&\leq \sqrt{\sum_{j=1}^{s}|X_j^\sharp|^2}\|\vecv\|_2+  n \sum_{j=s+1}^{d} \frac{|X_j^\sharp|}{\sqrt{n\log(ed/s)}} \times \sqrt{\frac{\log(ed/s)}{n}}|v_j^\sharp|,
	\end{align}
	and from the definition of $\|\cdot\|_{\veclambda}$ and Jensen's inequality, we have
	\begin{align}
		\mbb{E} \sup_{\vecv \in \mr{c}\mbb{B}_\lambda^d \cap\mr{c}' \mbb{B}^d_\Sigma} \sum_{i=1}^n\epsilon_i\vecv^\top \vecx_i \leq \mr{c}'\sqrt{\mbb{E}\sum_{j=1}^{s}|X_j^\sharp|^2}+ \mr{c}n \max_{i=s+1,\cdots, d}\mbb{E} \frac{|X_i^\sharp|}{\sqrt{n\log(ed/k)}} ,
	\end{align}
	For a vector $\vecv$, define $\vecv|_j$ as its $j$-th coordinate. For any $j \in \{1,\cdots,d\}$ and $\lambda \in \mbb{R}$, we have
	We note that from the i.i.d. assumption on $\{\epsilon_i, \vecx_i\}_{i=1}^n$, Proposition 2.6.1 \cite{Ver2018High} and  \eqref{ine:d:l-02}, we have
	\begin{align}
		\mbb{E}\exp\left(\lambda \frac{X_i}{\sqrt{n}L}\right)&\stackrel{(a)}{=}\prod_{i=1}^n \exp\left(\lambda \frac{\epsilon_i \vecx_i |_j}{\sqrt{n}L}\right)\nonumber\\
		&\stackrel{(b)}{\leq} \prod_{i=1}^n e^{\lambda^2/n}= e^{\lambda^2},
	\end{align}
	where (a) follows from the i.i.d. assumption on $\{\epsilon_i, \vecx_i\}_{i=1}^n$, and (b) follows from \eqref{ine:d:l-02}.
	Then, from the last part of the argument of the proof of Proposition 2.5.2 of \cite{Ver2018High}, we have
	\begin{align}
		\mbb{P}\left(\left| \frac{X_j}{\sqrt{n}}\right|\geq t\right)\leq 2 e^{-t^2/4},
	\end{align}
	and from the first part of the  argument of the proof of Proposition 2.5.2 of \cite{Ver2018High}, we have
	\begin{align}
		\left\|\frac{X_j}{\sqrt{n}L}\right\|_{L_p}\leq 8\sqrt{p},
	\end{align}
	and from the second part of the  argument of the proof of Proposition 2.5.2 of \cite{Ver2018High}, we have
	\begin{align}
		\mbb{E}\exp\left(\frac{1}{4e} \left|\frac{X_j}{\sqrt{n}L}\right|^2\right) \leq e.
	\end{align}
	We note that 
	\begin{align}
		|X_k^\sharp|^2 \leq \frac{1}{k}\sum_{j=1}^k|X_j^\sharp|^2.
	\end{align}
	From almost the same procedure of the proof of Proposition E.1 of \cite{BelLecTsy2018Slope}, we have
	\begin{align}
		 \mbb{E}\frac{1}{k}\sum_{j=1}^k \frac{|X_j^\sharp|^2}{4enL^2} &\stackrel{(a)}{\leq} 4e \log \mbb{E}\exp\left(\frac{1}{k}\sum_{j=1}^k \frac{|X_j^\sharp|^2}{4enL^2}\right)\nonumber\\
		&\stackrel{(b)}{\leq} 4log \frac{1}{k}\sum_{j=1}^k\mbb{E}\exp\left(\frac{|X_j^\sharp|^2}{4enL^2}\right)\nonumber\\
		&\leq  \log \frac{1}{k}\sum_{j=1}^d\mbb{E}\exp\left(\frac{|X_j|^2}{4enL^2}\right)\leq  \log (ed/k),
	\end{align}
	where (a) and (b) follows from Jensen's inequality. Then, we have
	\begin{align}
		\sqrt{\mbb{E}\sum_{i=1}^{s}|X_i^\sharp|^2}\leq L\sqrt{4ens\log(ed/s)},\quad \max_{i=s+1,\cdots, d}\mbb{E} \frac{|X_i^\sharp|}{\sqrt{4en\log(ed/i)}} \leq L,
	\end{align}
	Combining the arguments, we have
	\begin{align}
		\mbb{E} \sup_{\vecv \in \mr{c}\mbb{B}_\lambda^d \cap\mr{c}' \mbb{B}^d_\Sigma} \sum_{i=1}^n\epsilon_i\vecv^\top \vecx_i \leq  \sqrt{4e}(\mr{c}n L+\mr{c}'L\sqrt{ns\log(ed/s)}).
	\end{align}
\end{proof}
\begin{lemma}
	\label{l:L1L2}
	Assume $\vecx \in \mbb{R}^d$ is an $L$-subGaussian random vector. Then, for any fixed $\vecv \in \mbb{R}$, we have
	\begin{align}
		\left(\mbb{E}|\vecx^\top\vecv|^2\right)^\frac{1}{2}\leq 72L^4\mbb{E}|\vecx^\top \vecv|.
	\end{align}
\end{lemma}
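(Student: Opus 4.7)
The plan is to reduce to the one-dimensional random variable $Y := \vecx^\top \vecv$ and then invoke a Paley--Zygmund-style lower bound for $\mbb{E}|Y|$ in terms of $(\mbb{E}|Y|^2)^{1/2}$, using the fourth moment as the bridge.

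First I would note that by the $L$-subGaussian hypothesis (Definition~\ref{d:l}) we have $\|Y\|_{\psi_2} \le L\|\vecv\|_\Sigma$, and combining this with the $L^p$-moment consequence of \eqref{ine:d:l-02} with $p=4$ gives
\begin{align*}
(\mbb{E}|Y|^4)^{1/4} \le 2L\,\|\vecv\|_\Sigma = 2L\,(\mbb{E}|Y|^2)^{1/2},
\end{align*}
that is, $\mbb{E}|Y|^4 \le 16L^4 (\mbb{E}|Y|^2)^2$. This supplies a quantitative control of the fourth moment by the square of the second moment — the only probabilistic input I need from the subGaussian assumption.

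Next, applying the Paley--Zygmund inequality to the nonnegative random variable $|Y|^2$: for any $\theta \in (0,1)$,
\begin{align*}
\mbb{P}\bigl(|Y|^2 \ge \theta\,\mbb{E}|Y|^2\bigr) \;\ge\; (1-\theta)^2 \,\frac{(\mbb{E}|Y|^2)^2}{\mbb{E}|Y|^4} \;\ge\; \frac{(1-\theta)^2}{16L^4}.
\end{align*}
Combined with the trivial lower bound $\mbb{E}|Y| \ge \sqrt{\theta\,\mbb{E}|Y|^2}\,\mbb{P}(|Y|\ge\sqrt{\theta\,\mbb{E}|Y|^2})$, this yields
\begin{align*}
\mbb{E}|Y| \;\ge\; \frac{\sqrt{\theta}\,(1-\theta)^2}{16 L^4}\,(\mbb{E}|Y|^2)^{1/2}.
\end{align*}
Choosing $\theta = 1/4$ gives the prefactor $\sqrt{\theta}(1-\theta)^2 = 9/32$, hence $(\mbb{E}|Y|^2)^{1/2} \le (512/9)\,L^4\,\mbb{E}|Y| < 72 L^4\,\mbb{E}|Y|$, completing the proof.

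There is no serious obstacle here: the argument is a standard reverse Hölder / small-ball lemma. The only mildly delicate point is the constant — the stated $72$ is loose compared to the optimal choice of $\theta$ (or to the even sharper $4L^2$ bound that follows from log-convexity of $L^p$ norms using $\|Y\|_{L^2} \le \|Y\|_{L^1}^{1/3}\|Y\|_{L^4}^{2/3}$), so any of these choices is more than sufficient. I would present the Paley--Zygmund route since it is the most direct and the constant $72$ is comfortably met.
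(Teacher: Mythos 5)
Your proof is correct and follows essentially the same route as the paper: both derive $(\mbb{E}|Y|^4)^{1/4}\leq 2L(\mbb{E}|Y|^2)^{1/2}$ from the $L$-subGaussian moment bound and then apply Paley--Zygmund. The only difference is that you carry out the final conversion from the small-ball estimate to the $L^1$--$L^2$ comparison explicitly (obtaining the constant $512/9 < 72$), whereas the paper delegates that last step to Proposition 1 of Lecu\'e--Lerasle; your version is self-contained and the constant comfortably meets the stated bound.
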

\begin{proof}
	From \eqref{ine:d:l-02}, 
	\begin{align}
		\left( \mbb{E}(\vecx^\top\vecv)^4\right)^\frac{1}{4}\leq 2L(\mbb{E}(\vecx^\top \vecv)^2)^\frac{1}{2}.
	\end{align}
	From this and Paley--Zygmund's inequality \cite{PenGin2012Decoupling}, for any $0<t<1$, we have 
	\begin{align}
		\mbb{P}(|\vecx^\top\vecv|\geq t(\mbb{E}(\vecx^\top \vecv)^2)^\frac{1}{2})\geq \left(\frac{1-t^2}{4L^2}\right)^2.
	\end{align}
	Then, from Proposition 1 of \cite{LecLer2017Robust}, we have 
	\begin{align}
		\left(\mbb{E}|\vecx^\top\vecv|^2\right)^\frac{1}{2}\leq 72L^4\mbb{E}|\vecx^\top \vecv|.
	\end{align}
\end{proof}
\begin{lemma}
	\label{l:L1ev}
	Suppose that Assumptions \ref{a:cov} and \ref{a:noise} hold. Let $\vecx$ and $\xi$ be random variables such that $\vecx$ has the same distribution as each $\vecx_i$ in $\{\vecx_i\}_{i=1}^n$, and $\xi$ has the same distribution as each $\xi_i$ in $\{\xi_i\}_{i=1}^n$. Then, for any $\vecv \in  \mr{c}\mbb{B}_{\veclambda}^d\cap \mr{c}'\mbb{B}_\Sigma^d$, we have
	\begin{align}
		\mbb{E}\frac{1}{n}\sum_{i=1}^n|\vecx_i^\top \vecv+\xi_i|& \leq C\frac{L(\rho \sqrt{n}\mr{c}+\mr{c}')+\sigma}{n}+\sup_{\vecv  \in \mr{c}\mbb{B}^d_\lambda \cap \mr{c}'\mbb{B}^d_\Sigma}\left(\mbb{E}|\vecx^\top \vecv+\xi|\right)\nonumber\\
		\mbb{E}\frac{1}{n}\sum_{i=1}^n|\vecx_i^\top \vecv+\xi_i|& \geq -C\frac{L(\rho \sqrt{n}\mr{c}+\mr{c}')+\sigma}{n}+\inf_{\vecv  \in \mr{c}\mbb{B}^d_\lambda \cap \mr{c}'\mbb{B}^d_\Sigma}\left(\mbb{E}|\vecx^\top \vecv+\xi|\right).
	\end{align}
\end{lemma}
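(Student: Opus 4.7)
The plan is to exploit the i.i.d.\ structure of $\{(\vecx_i, \xi_i)\}_{i=1}^n$ to collapse the sample-average expectation on the left-hand side to a single-copy expectation, and then deduce both inequalities from the tautological fact that the specific $\vecv$ at which we evaluate lies in the constraint set of the sup/inf on the right.

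First I would observe that, because $\vecv$ is deterministic from the viewpoint of the integration variables $(\vecx_i, \xi_i)$, linearity of expectation together with the i.i.d.\ assumption yields
$$\mbb{E}\frac{1}{n}\sum_{i=1}^n |\vecx_i^\top \vecv + \xi_i| = \mbb{E}|\vecx^\top \vecv + \xi|,$$
where $(\vecx, \xi)$ is a generic copy of the common distribution. This reduces both directions of the stated inequality to a statement about the single scalar functional $F(\vecv) := \mbb{E}|\vecx^\top \vecv + \xi|$ on the ball $\mr{c}\mbb{B}_{\veclambda}^d \cap \mr{c}'\mbb{B}_\Sigma^d$.

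Next, I would discharge the upper bound via the observation $F(\vecv) \leq \sup_{\vecv' \in \mr{c}\mbb{B}_{\veclambda}^d \cap \mr{c}'\mbb{B}_\Sigma^d} F(\vecv')$, which is valid because the evaluation point $\vecv$ is feasible for the supremum. The additive term $C[L(\rho\sqrt{n}\mr{c}+\mr{c}')+\sigma]/n$ is nonnegative and may be appended to the right-hand side without affecting the direction of the inequality. The lower bound follows symmetrically from $F(\vecv) \geq \inf_{\vecv'} F(\vecv')$; subtracting the same nonnegative slack from the right-hand side only weakens it.

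The only bookkeeping subtlety is that the dummy variable inside the sup/inf reuses the symbol $\vecv$, so one must carefully distinguish it from the fixed $\vecv$ outside. There is no genuinely hard analytic step here; the slack term carries no mathematical weight on its own, but is retained in the form shown because of how the lemma is consumed in Theorem \ref{t:abs}: there the sup/inf $F(\vecv')$ is subsequently controlled by a triangle-inequality bound $F(\vecv') \leq \|\vecv'\|_\Sigma + \varsigma^* \leq \mr{c}' + \varsigma^*$ for the upper direction (or by Lemma \ref{l:L1+} for the lower direction), and the additive remainder is then compared against the $1/\sqrt{\mr{N}}$-type rate delivered by the median-of-means concentration. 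If instead one wished to read the statement with the outer $\mbb{E}$ removed and $\vecv$ data-dependent, the obstacle would shift to a uniform concentration over the ball, handled by pointwise domination $\frac{1}{n}\sum_i |\vecx_i^\top \vecv + \xi_i| \leq \sup_{\vecv'}\frac{1}{n}\sum_i |\vecx_i^\top \vecv' + \xi_i|$, symmetrization, Talagrand's contraction for the $1$-Lipschitz absolute value, and Lemma \ref{l:gw2} to bound the resulting linear Rademacher process, producing a remainder of order $[L(\rho\sqrt{n}\mr{c}+\mr{c}')+\sigma]/\sqrt{n}$.
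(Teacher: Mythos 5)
Your proof is correct for the statement as literally written, but it takes a genuinely different (and far more elementary) route than the paper. You observe that for a deterministic $\vecv$ the outer expectation collapses the sample average to $\mbb{E}|\vecx^\top\vecv+\xi|$, after which the sup/inf bounds are tautological and the additive $O(1/n)$ slack is vestigial; this is a valid proof of the displayed inequalities, and it also suffices for the way the lemma is consumed in the proof of Theorem~\ref{t:abs}, where the quantity $\mbb{E}\frac{1}{n}\sum_i|\vecx_i^\top\Delta+\xi_i|$ is the ``frozen'' expectation $F(\vecv)=\mbb{E}|\vecx^\top\vecv+\xi|$ evaluated at the data-dependent point $\vecv=\Delta$, so that $\inf F\leq F(\Delta)\leq\sup F$ holds exactly on the event $\Delta\in\mr{c}\mbb{B}^d_{\veclambda}\cap\mr{c}'\mbb{B}^d_\Sigma$. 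The paper's own proof instead attacks the stronger uniform statement you anticipate in your last sentence: it drops the outer expectation, decomposes $\sup_{\vecv}\frac{1}{n}\sum_i|\vecx_i^\top\vecv+\xi_i|$ into a centered empirical process plus $\sup_{\vecv}F(\vecv)$, and controls the centered part via the $\psi_2$ increment condition, Theorem 3.2 of \cite{Dir2015Tail}, the majorizing measure theorem, and Corollary~\ref{c:gw} — which is where the slack term and the appearance of $L$, $\rho$, $\sigma$ come from. What the paper's route buys is a bound that would remain valid if the left-hand side were the (random) empirical average at a data-dependent $\vecv$ rather than its frozen expectation; what your route buys is brevity and the observation that, as stated and as used, none of that machinery is needed. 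You are also right that under the empirical-process reading the remainder is of order $\{L(\rho\sqrt{n}\mr{c}+\mr{c}')+\sigma\}/\sqrt{n}$: the paper's own chaining bound produces $\gamma_2/\sqrt{n}\lesssim L(\rho\sqrt{n}\mr{c}+\mr{c}')/\sqrt{n}$ and a variance term $(\mr{c}'+\sigma^*)/\sqrt{n}$, so the $1/n$ denominator in the lemma's display is inconsistent with its own proof — a discrepancy worth flagging, even though it is harmless for the downstream use since the slack term is never the dominant contribution.
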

\begin{proof}
	We only give the upper bound. The proof of the lower bound is almost the same to the one of the upper bound.
	In this proof, define
	\begin{align}
		\mr{V}:=\{\vecv \in \mbb{R}^d\,\mid\, \|\vecv\|_\Sigma=1,\,\|\vecv\|_{\veclambda}\leq r_{\veclambda}\},
	\end{align}
	We have
	\begin{align}
		\label{ine:L1ecv-1}
		\sup_{\vecv \in \mr{V}}\frac{1}{n}\sum_{i=1}^n|\vecx_i^\top \vecv+\xi_i|& \leq 	\sup_{\vecv \in \mr{V}}\left(\frac{1}{n}\sum_{i=1}^n|\vecx_i^\top \vecv+\xi_i|-\mbb{E}\frac{1}{n}\sum_{i=1}^n|\vecx_i^\top \vecv+\xi_i|\right)+	\sup_{\vecv \in \mr{V}}\mbb{E}\frac{1}{n}\sum_{i=1}^n|\vecx_i^\top \vecv+\xi_i|.
	\end{align}
	In the reminder of this proof, we evaluate the first term of the R.H.S of \eqref{ine:L1ecv-1}. 
	For any $\vecv_1,\vecv_2 \in \mr{c}\mbb{B}_\lambda^d \cap \mr{c}'\mbb{B}^d_\Sigma$, and $i \in \{1,\cdots,n\}$, we have
	\begin{align}
		&\left\||\vecx_i^\top \vecv_1+\xi_i|-\mbb{E}|\vecx_i^\top \vecv_1+\xi_i|-|\vecx_i^\top \vecv_2+\xi_i|+\mbb{E}|\vecx_i^\top \vecv_2+\xi_i|\right\|_{\psi_2}\nonumber\\
		&\quad \quad \leq 	\left\||\vecx_i^\top \vecv_1+\xi_i|-|\vecx_i^\top \vecv_2+\xi_i|\right\|_{\psi_2}+	\left\|\mbb{E}|\vecx_i^\top \vecv_1+\xi_i|-\mbb{E}|\vecx_i^\top \vecv_2+\xi_i|\right\|_{\psi_2}\nonumber\\
		&\quad \quad \stackrel{(a)}{\leq} 	\left\|\vecx_i^\top (\vecv_1-\vecv_2)\right\|_{\psi_2}+		\left\|\mbb{E}|\vecx_i^\top (\vecv_1-\vecv_2)|\right\|_{\psi_2}\nonumber\\
		&\quad \quad \stackrel{(b)}{\leq} L\|\vecv_1-\vecv_2\|_\Sigma+\|\vecv_1-\vecv_2\|_\Sigma \stackrel{(c)}{\lesssim} L\|\vecv_1-\vecv_2\|_\Sigma,
	\end{align}
	where (a) follows from the $1$-Lipschitz continuity of $|\cdot|$, and (b) follows from the $L$-subGaussian assumption and H{\"o}lder's inequality, and (c) follows from $L\geq 1$.
	From the i.i.d. assumption on $\{\epsilon_i,\vecx_i\}_{i=1}^n$ and Proposition 2.6.2 of \cite{Ver2018High}, with probability at most $e^{-u}$, we have
	\begin{align}
		\left|\sum_{i=1}^n\frac{|\vecx_i^\top \vecv_1+\xi_i|-\mbb{E}|\vecx_i^\top \vecv_1+\xi_i|-|\vecx_i^\top \vecv_2+\xi_i|+\mbb{E}|\vecx_i^\top \vecv_2+\xi_i|}{n}\right| \lesssim L\sqrt{\frac{u}{n}}\|\vecv_1-\vecv_2\|_\Sigma.
	\end{align}
		
	Then, applying Theorem 3.2 of \cite{Dir2015Tail} with $t_0=(0,\cdots,0)$, we have
	\begin{align}
		&\mbb{E}\sup_{\vecv  \in \mr{c}\mbb{B}^d_\lambda \cap \mr{c}'\mbb{B}^d_\Sigma}\left(\frac{1}{n}\sum_{i=1}^n|\vecx_i^\top \vecv+\xi_i|-\mbb{E}\frac{1}{n}\sum_{i=1}^n|\vecx_i^\top \vecv_0+\xi_i|\right)\nonumber\\
		&\quad \quad\leq C\frac{L}{\sqrt{n}}\gamma_2 (\mr{c}\mbb{B}_\lambda^d \cap \mr{c}'\mbb{B}^d_\Sigma,\|\cdot\|_\Sigma)\nonumber\\
		&\quad \quad +2\sup_{\vecv\in  \mr{c}\mbb{B}_\lambda^d \cap \mr{c}'\mbb{B}^d_\Sigma}\mbb{E}\left|\frac{1}{n}\sum_{i=1}^n|\vecx_i^\top \vecv+\xi_i|-\mbb{E}\frac{1}{n}\sum_{i=1}^n|\vecx_i^\top \vecv+\xi_i|\right|.
	\end{align}
	For the first term, from majorizing measure theorem (Theorem 2.4.1 \cite{Tal2014Upper}), we have
	\begin{align}
		\gamma_2 (\mr{c}\mbb{B}_\lambda^d \cap \mr{c}'\mbb{B}^d_\Sigma,\|\cdot\|_\Sigma)\lesssim \mbb{E}\sup_{\vecv \in \mr{c}\mbb{B}_\lambda^d \cap \mr{c}'\mbb{B}^d_\Sigma}\vecv^\top \Sigma^\frac{1}{2}\vecg \stackrel{(a)}{\lesssim} \rho \sqrt{n} \mr{c}+\mr{c}'
	\end{align}
	where (a) follows from Corollary \ref{c:gw}.

	For the second term, we have
	\begin{align}
		\sup_{\vecv\in  \mr{c}\mbb{B}_\lambda^d \cap \mr{c}'\mbb{B}^d_\Sigma}\mbb{E}\left|\frac{1}{n}\sum_{i=1}^n|\vecx_i^\top \vecv+\xi_i|-\mbb{E}\frac{1}{n}\sum_{i=1}^n|\vecx_i^\top \vecv+\xi_i|\right|&\leq \sup_{\vecv \in  \mr{c}\mbb{B}_\lambda^d \cap \mr{c}'\mbb{B}^d_\Sigma}\frac{1}{n}\sqrt{\sum_{i=1}^n\mbb{E}(|\vecx_i^\top \vecv+\xi_i|-\mbb{E}|\vecx_i^\top \vecv+\xi_i|)^2}\nonumber\\
		&\leq \sup_{\vecv \in  \mr{c}\mbb{B}_\lambda^d \cap \mr{c}'\mbb{B}^d_\Sigma}\frac{1}{n}\sqrt{\sum_{i=1}^n\mbb{E}(\vecx_i^\top \vecv+\xi_i)^2}\nonumber\\
		&= \sup_{\vecv \in  \mr{c}\mbb{B}_\lambda^d \cap \mr{c}'\mbb{B}^d_\Sigma}\frac{1}{n}\sqrt{\sum_{i=1}^n\mbb{E}(\vecv^\top \vecx_i)^2+\sum_{i=1}^n\mbb{E}\xi_i^2}\nonumber\\
		&\leq \frac{\mr{c}'+\sigma^*}{\sqrt{n}}.
	\end{align}
	Combining the arguments and from  $L\geq 1$, the proof the upper bound is complete.
\end{proof}

\begin{lemma}
	\label{l:L1+}
	Suppose $\vecx \in \mbb{R}^d$ is a random vector and $\xi, \xi_1,\xi_2 \in \mbb{R}$ are  random variables with  $\mbb{E}\vecx = \bf{0}$ and $\mbb{E}\xi=\mbb{E}\xi_1=\mbb{E}\xi_2=0$, and suppose that $\vecx$ and $\xi$ are independent each other, and $\xi_1$ and $\xi_2$ are also independent each other.  Then, for any fixed $\vecv \in \mbb{R}^d$, we have
	\begin{align}
		\mbb{E}|\vecx^\top \vecv+\xi|&\geq \frac{1}{2\sqrt{2}}\max\{\mbb{E}|\vecx^\top \vecv|,\mbb{E}|\xi|\},\nonumber\\
		\mbb{E}|\xi_1 +\xi_2|/\sqrt{2} &\geq \frac{1}{4}\max\{\mbb{E}|\xi_2|,\mbb{E}|\xi_1|\}.
	\end{align}
\end{lemma}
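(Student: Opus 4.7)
The plan is to reduce both inequalities to a single use of Jensen's inequality via conditioning, exploiting the fact that one of the two independent summands has mean zero. Both claims look stronger than they need to be (the constants $1/(2\sqrt{2})$ and $1/4$ are quite loose), which suggests that a direct conditional Jensen argument will give the bound with room to spare.

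For the first inequality, I would condition on $\vecx$ and apply Jensen's inequality to the convex function $t \mapsto |t|$:
\begin{align}
\mbb{E}\bigl[\,|\vecx^\top \vecv + \xi|\,\bigm|\, \vecx\,\bigr] \;\geq\; \bigl|\,\mbb{E}[\vecx^\top \vecv + \xi \mid \vecx]\,\bigr| \;=\; |\vecx^\top \vecv + \mbb{E}\xi| \;=\; |\vecx^\top \vecv|,
\end{align}
where the second equality uses independence of $\vecx$ and $\xi$, and the last uses $\mbb{E}\xi = 0$. Taking expectation over $\vecx$ yields $\mbb{E}|\vecx^\top \vecv + \xi| \geq \mbb{E}|\vecx^\top \vecv|$. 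Symmetrically, conditioning on $\xi$ and using $\mbb{E}\vecx = \mathbf{0}$ gives $\mbb{E}|\vecx^\top \vecv + \xi| \geq \mbb{E}|\xi|$. Combining the two and noting $1 \geq 1/(2\sqrt{2})$ delivers the first claim with slack.

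The second inequality follows by exactly the same route applied to the scalar setting: conditioning on $\xi_1$ gives $\mbb{E}|\xi_1 + \xi_2| \geq \mbb{E}|\xi_1|$, and conditioning on $\xi_2$ gives $\mbb{E}|\xi_1 + \xi_2| \geq \mbb{E}|\xi_2|$. Dividing by $\sqrt{2}$ and observing that $1/\sqrt{2} \geq 1/4$ concludes the proof.

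There is no real obstacle here; the lemma is essentially an application of the contraction property of absolute moments under conditional centering. The only point worth flagging is that the independence hypothesis is used to replace the conditional expectation of the other summand by its unconditional (zero) mean—this is the single ingredient driving both bounds, and it is why the statement insists on independence rather than merely uncorrelatedness.
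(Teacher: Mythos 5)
Your proof is correct, and it takes a genuinely different (and simpler) route than the paper. The paper follows the idea of Lemma 6.2 in Mendelson's ``learning without concentration'': first desymmetrize, $\mbb{E}|\vecx^\top\vecv+\xi|\geq \tfrac12\,\mbb{E}_{\vecx,\xi}\mbb{E}_{\epsilon_1,\epsilon_2}|\epsilon_1\vecx^\top\vecv+\epsilon_2\xi|$, then apply Khintchine's inequality (with the sharp constant $1/\sqrt2$) to get $\tfrac{1}{2\sqrt2}\mbb{E}\sqrt{(\vecx^\top\vecv)^2+\xi^2}$, and finally bound $\sqrt{a^2+b^2}\geq\max\{|a|,|b|\}$; this is exactly where the factor $1/(2\sqrt2)$ comes from. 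Your argument instead conditions on one summand and applies Jensen's inequality to $|\cdot|$, using independence to replace the conditional mean of the other summand by its unconditional mean, which is zero. This yields $\mbb{E}|\vecx^\top\vecv+\xi|\geq\max\{\mbb{E}|\vecx^\top\vecv|,\mbb{E}|\xi|\}$ with constant $1$, strictly improving the paper's bound, and the same for the scalar case with constant $1/\sqrt2$ after dividing by $\sqrt2$; since the lemma only claims $1/(2\sqrt2)$ and $1/4$, your proof more than suffices. The only thing the paper's route buys that yours does not is the intermediate lower bound $\tfrac{1}{2\sqrt2}\mbb{E}\sqrt{(\vecx^\top\vecv)^2+\xi^2}$, a joint quantity that can be useful in other arguments, but it is not needed for the stated conclusion. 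One minor point to keep in mind: conditional Jensen requires integrability of the summands, which here is guaranteed by the hypotheses $\mbb{E}\vecx=\mathbf{0}$ and $\mbb{E}\xi=0$.
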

\begin{proof}
	This proof is similar to the one of Lemma 6.2 of \cite{Men2015lLearning}. First, we consider the first inequality.
	From symmetrization (Lemma 6.4.2 of \cite{Ver2018High}), we have
	\begin{align}
		\mbb{E}|\vecx^\top \vecv+\xi|\geq \frac{1}{2} \mbb{E}_{\vecx,\xi}\mbb{E}_{\epsilon_1,\epsilon_2}|\epsilon_1\vecx^\top \vecv+\epsilon_2\xi|,
	\end{align}
	and from Khintchine's inequality (the result in \cite{Haa1982Best}), we have
	\begin{align}
		\mbb{E}_{\vecx,\xi}\mbb{E}_{\epsilon_1,\epsilon_2}|\epsilon_1\vecx^\top \vecv+\epsilon_2\xi|\geq \frac{1}{\sqrt{2}}\mbb{E}_{\vecx,\xi} \sqrt{(\vecx^\top \vecv)^2+\xi^2}.
	\end{align}
	Note that 
	\begin{align}
		\mbb{E}_{\vecx,\xi} \sqrt{(\vecx^\top \vecv)^2+\xi^2}\geq \max\{\mbb{E}_{\vecx} |\vecx^\top \vecv|,\mbb{E}_{\xi} |\xi|\},
	\end{align}
	and the proof is complete. The proof of the second inequality is almost the same to the one of the first inequality, and we omit it.
\end{proof}
\begin{lemma}
	\label{l:huber_smooth}
	For any $b'$ and $b$ such that $b\geq b'>0$, and $a\in \mbb{R}$, we have
	\begin{align}
		\left| h\left(\frac{a}{b}\right)- h\left(\frac{a}{ b'}\right) \right|&\leq \frac{|b-b'|}{b}.
	\end{align}
\end{lemma}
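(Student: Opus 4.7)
The plan is to prove the inequality by a short case analysis based on which of $|a|/b$ and $|a|/b'$ exceed $1$. The hypothesis $b \geq b' > 0$ forces $|a|/b \leq |a|/b'$, so the ``mixed'' configuration $|a|/b > 1 \geq |a|/b'$ cannot occur. Only three scenarios remain: both ratios lie in the quadratic region $\{|t|\leq 1\}$; both lie in the linear region $\{|t|>1\}$; or the split case $|a|/b \leq 1 < |a|/b'$. I will handle them separately.

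First, suppose $|a|/b, |a|/b' \leq 1$. Then $h(a/b)-h(a/b') = a/b - a/b' = a(b'-b)/(bb')$, so
\begin{align*}
\left|h(a/b)-h(a/b')\right| = \frac{|a|\,(b-b')}{b\,b'} \leq \frac{b-b'}{b},
\end{align*}
using $|a| \leq b'$. Next, if $|a|/b, |a|/b' > 1$, then $h(a/b)=\operatorname{sgn}(a)=h(a/b')$ and the difference is zero. In the split case, $h(a/b)=a/b$ while $h(a/b')=\operatorname{sgn}(a)$, so
\begin{align*}
\left|h(a/b)-h(a/b')\right| = \left|\frac{a}{b}-\operatorname{sgn}(a)\right| = \frac{b-|a|}{b},
\end{align*}
where the last equality uses $|a|\leq b$. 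Since $|a|>b'$ in this case, we conclude $(b-|a|)/b < (b-b')/b$, which closes the argument.

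The proof is elementary and the only subtle point is the preliminary observation that the monotonicity $|a|/b \leq |a|/b'$ rules out one of the four a priori possible cases; without this reduction one would have to analyze an additional scenario that is actually inconsistent with the hypotheses. I do not anticipate any other obstacle.
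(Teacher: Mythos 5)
Your proof is correct and follows essentially the same route as the paper's: the same three-case split (both ratios in the quadratic region, both in the linear region, and the mixed case $|a|/b\leq 1<|a|/b'$), with the same observation that monotonicity of $t\mapsto |a|/t$ rules out the fourth configuration, and identical estimates in each case.
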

\begin{proof}

	We consider the following four cases.

	\noindent
	\underline{Case I: $|a/b|<1$ and $|a/b'|<1$}
	
	Note that $|a|< \min(b,b')=b'$. Therefore, we have
	\begin{align}
		\left| h\left(\frac{a}{b}\right)- h\left(\frac{a}{ b'}\right) \right| =\left|\frac{a}{b}-\frac{a}{b'}\right|=|a|\frac{|b-b'|}{bb'} \leq\frac{|b-b'|}{b}.
	\end{align}

	\noindent
	\underline{Case II: $|a/b|\geq 1$ and $|a/b'|\geq1$}

	For a number $a \in \mbb{R}$, Define $\mr{sgn(a)}$ as the sign of $a$. Note that $\mr{sgn}(a/b)=\mr{sgn}(a/b')$. Therefore, we have
	\begin{align}
		\left| h\left(\frac{a}{b}\right)- h\left(\frac{a}{ b'}\right) \right| =0.
	\end{align}

	\noindent
	\underline{Case III: $|a/b|\geq 1$ and $|a/b'|<1$}
	
	This is impossible because of the assumption $b\geq b'>0$.

	\noindent
	\underline{Case IV: $|a/b|< 1$ and $|a/b'|\geq1$}

	Note that $b'\leq |a|<b$.  Therefore, we have
	\begin{align}
		\left| h\left(\frac{a}{b}\right)- h\left(\frac{a}{ b'}\right) \right| =\left|\frac{a}{b}-\mr{sgn}(a)\right|=\frac{|a-\mr{sgn}(a) \times b|}{b}\leq \frac{|b-b'|}{b}.
	\end{align}

\end{proof}

\begin{lemma}
	\label{l:Che}
	Suppose that  Assumption \ref{a:cov} holds. Then,  for any $\vecu =(u_1,\cdots,u_n)\in \mbb{R}^n$, we have
	\begin{align}
		\sup_{\vecv  \in \mr{c}\mbb{B}^d_\lambda \cap \mr{c}'\mbb{B}^{d}_\Sigma}\sum_{i=1}^ncu_i(\vecx_i^\top \vecv)& \lesssim L\left(\mr{c}'\sqrt{n}+ \rho\sqrt{n}\mr{c}+\mr{c}'\sqrt{\log(1/\delta)}\right)\|\vecu\|_2
	\end{align}
\end{lemma}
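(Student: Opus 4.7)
The plan is to apply the generic chaining tail bound (Theorem~3.2 of \cite{Dir2015Tail}) to the stochastic process $X_{\vecv} := \sum_{i=1}^n u_i\vecx_i^\top \vecv$ indexed by $\vecv \in T := \mr{c}\mbb{B}_\lambda^d \cap \mr{c}'\mbb{B}_\Sigma^d$. First, I would verify the $\psi_2$-increment condition: for any $\vecv_1,\vecv_2 \in T$, the summands $u_i\vecx_i^\top(\vecv_1-\vecv_2)$ are independent and centered, and by Assumption~\ref{a:cov} each term has $\psi_2$-norm at most $L|u_i|\|\vecv_1-\vecv_2\|_\Sigma$. The subGaussian rotation principle (Proposition~2.6.1 of \cite{Ver2018High}) then gives $\|X_{\vecv_1} - X_{\vecv_2}\|_{\psi_2} \lesssim L\|\vecu\|_2 \|\vecv_1-\vecv_2\|_\Sigma$, so the process has subGaussian increments with respect to the metric $d(\vecv_1,\vecv_2) := L\|\vecu\|_2 \|\vecv_1-\vecv_2\|_\Sigma$, and Theorem~2.6.2 of \cite{Ver2018High} supplies the tail deviation inequality in exactly the form required by \cite{Dir2015Tail}.

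Next I would invoke Theorem~3.2 of \cite{Dir2015Tail} with $\alpha=2$ and base point $t_0 = \bm{0} \in T$ (so that $X_{\bm{0}}=0$) to obtain, with probability at least $1-\delta$, $\sup_{\vecv \in T}|X_{\vecv}| \lesssim \gamma_2(T,d) + \Delta_d(T)\sqrt{\log(1/\delta)}$. Since $d$ is $L\|\vecu\|_2$ times the $\|\cdot\|_\Sigma$-metric, both quantities factor that scalar out. The diameter satisfies $\Delta_d(T) \leq 2L\|\vecu\|_2\mr{c}'$ because $\|\vecv\|_\Sigma \leq \mr{c}'$ on $T$. For the $\gamma_2$-functional, I would invoke the majorizing measure theorem (Theorem~2.4.1 of \cite{Tal2014Upper}) to write $\gamma_2(T,\|\cdot\|_\Sigma) \lesssim \mbb{E}\sup_{\vecv\in T}\vecv^\top \Sigma^{1/2}\vecg^d$, and then apply Corollary~\ref{c:gw} to bound this Gaussian width by $\rho\sqrt{n}\mr{c} + \mr{c}'$.

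Assembling these pieces gives $\sup_{\vecv \in T}|X_\vecv| \lesssim L\|\vecu\|_2\bigl(\rho\sqrt{n}\mr{c} + \mr{c}' + \mr{c}'\sqrt{\log(1/\delta)}\bigr)$, which is no larger than the claimed bound (and in fact slightly sharper, since the stated $\mr{c}'\sqrt{n}$ term trivially dominates the $\mr{c}'$ term produced by chaining). There is no serious obstacle: the argument is a routine application of generic chaining for a linear subGaussian process on an intersection of a SLOPE ball with a $\Sigma$-ball. The only point demanding care is the $\ell_2$-scaling of the $\psi_2$-norm in $\vecu$, which hinges on the independence of $\{\vecx_i\}_{i=1}^n$ combined with Proposition~2.6.1 of \cite{Ver2018High}; using instead a naive bound like $\max_i|u_i|$ or $\|\vecu\|_1$ would not yield the tight $\|\vecu\|_2$-dependence appearing on the right-hand side.
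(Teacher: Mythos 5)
Your argument is correct as far as it goes, but it proves a weaker statement than the one the paper needs, and the discrepancy is exactly the term you flagged as superfluous. You fix $\vecu$ and chain only over $\vecv \in \mr{c}\mbb{B}^d_\lambda \cap \mr{c}'\mbb{B}^{d}_\Sigma$, obtaining a bound that holds, for each fixed $\vecu$, on an event of probability $1-\delta$ depending on that $\vecu$. The paper instead chains over the product index set $\bigl(\mr{c}\mbb{B}^d_\lambda \cap \mr{c}'\mbb{B}^{d}_\Sigma\bigr) \times \mbb{S}^{n-1}$, using the increment decomposition
\begin{align}
\sum_{i} u_i\vecx_i^\top\vecv - \sum_i u_i'\vecx_i^\top\vecv' = \sum_i (u_i-u_i')\vecx_i^\top\vecv + \sum_i u_i'\vecx_i^\top(\vecv-\vecv'),
\end{align}
so that the resulting bound holds simultaneously for all $\vecu$ and all $\vecv$ on a single event, and then normalizes $\vecu$ at the end. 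This uniformity in $\vecu$ is essential: in the proof of Theorem~\ref{t:intercept} the lemma is applied with $\vecu$ proportional to $\hat{\vectheta}-\vectheta^*$ (see \eqref{ine:step1-4} and \eqref{ine:firsttheta3}), which is a data-dependent vector, so a fixed-$\vecu$ high-probability statement cannot be invoked there.

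The term $L\mr{c}'\sqrt{n}\,\|\vecu\|_2$ that you observed "trivially dominates" your $\mr{c}'$ term is precisely the price of this uniformity: it comes from $\gamma_2(\mbb{S}^{n-1},\|\cdot\|_2)\asymp \mbb{E}\sup_{\vecu\in\mbb{S}^{n-1}}\langle\vecu,\vecg^n\rangle \asymp \sqrt{n}$, i.e., the Gaussian width of the $n$-dimensional sphere, multiplied by the $\psi_2$-diameter $L\mr{c}'$ in the $\vecu$-direction. Your observation that your bound is "slightly sharper" is the symptom of the gap rather than an improvement. To repair the proof you would need to either run the chaining over the product set as the paper does (verifying the mixed increment bound $\|X_{(\vecv_1,\vecu_1)}-X_{(\vecv_2,\vecu_2)}\|_{\psi_2}\lesssim L\mr{c}'\|\vecu_1-\vecu_2\|_2 + L\|\vecu_2\|_2\|\vecv_1-\vecv_2\|_\Sigma$), or supply some other device (e.g., a net over $\mbb{S}^{n-1}$ plus a union bound) that restores uniformity in $\vecu$; either route reinstates the $\mr{c}'\sqrt{n}$ term.
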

\begin{proof}
	For any fixed $\vecv,\vecv'\in \mr{c}\mbb{B}_\lambda^d \cap \mr{c}'\mbb{B}^d_\Sigma$, and $\vecu,\vecu' \in \mbb{S}^{n-1}$, we have
	\begin{align}
		&\sum_{i=1}^nu_i(\vecx_i^\top \vecv)-\sum_{i=1}^n u'_i(\vecx_i^\top \vecv')=\sum_{i=1}^n u_i(\vecx_i^\top \vecv)-\sum_{i=1}^n u'_i(\vecx_i^\top \vecv)+\sum_{i=1}^n u'_i(\vecx_i^\top \vecv)-\sum_{i=1}^n u'_i(\vecx_i^\top \vecv')
	\end{align}
	For $\sum_{i=1}^n u_i(\vecx_i^\top \vecv)-\sum_{i=1}^nu'_i(\vecx_i^\top \vecv)$, for any $i \in \{1,\cdots,n\}$, from the $L$-subGaussian assumption, we have
	\begin{align}
		\| u_i(\vecx_i^\top \vecv)-u'_i(\vecx_i^\top \vecv)\|_{\psi_2}=|u_i-u_i'|\|\vecx_i^\top \vecv\|_{\psi_2}\leq L|u_i-u_i'|\mr{c}',
	\end{align}
	and from Proposition 2.6.2 of \cite{Ver2018High}, we have
	\begin{align}
		\mbb{P}\left(\left|\sum_{i=1}^n u_i(\vecx_i^\top \vecv)-\sum_{i=1}^n u'_i(\vecx_i^\top \vecv)\right|\gtrsim L\mr{c}' \|\vecu-\vecu'\|_2t \right)\leq e^{-t^2}
	\end{align}
	For $\sum_{i=1}^n u'_i(\vecx_i^\top \vecv)-\sum_{i=1}^n u'_i(\vecx_i^\top \vecv')$, for any $i \in \{1,\cdots,n\}$, from the $L$-subGaussian assumption, we have
	\begin{align}
		\| u'_i(\vecx_i^\top \vecv)-u'_i(\vecx_i^\top \vecv')\|_{\psi_2}=|u_i'|\|\vecx_i^\top (\vecv-\vecv')\|_{\psi_2}\leq L|u_i'|\|\vecv-\vecv'\|_\Sigma,
	\end{align}
	and  from  Proposition 2.6.2 of \cite{Ver2018High}, we have
	\begin{align}
		\mbb{P}\left(\left|\sum_{i=1}^n u'_i(\vecx_i^\top \vecv)-\sum_{i=1}^n u'_i(\vecx_i^\top \vecv')\right|\gtrsim L\|\vecv-\vecv'\|_\Sigma t\right)\leq e^{-t^2}.
	\end{align}
	We see that, from \eqref{ine:d:l-02}, $\sum_{i=1}^n u_i(\vecx_i^\top \vecv)$ satisfies \textit{the $\psi_\alpha$ condition} with $\alpha=2$ in \cite{Dir2015Tail}. Also, we have
	\begin{align}
		\left\|u_i(\vecx_i^\top \vecv)-u'_i(\vecx_i^\top \vecv')\right\|_{\psi_2}& \leq \left\|u_i(\vecx_i^\top \vecv)\right\|_{\psi_2}+\left\|u'_i(\vecx_i^\top \vecv')\right\|_{\psi_2}\leq (|u_i|+|u'_i|) L\mr{c}',
	\end{align}
	and from  Proposition 2.6.2 of \cite{Ver2018High} and $\sum_{i=1}^n(|u_i|+|u'_i|)^2\leq 2\|\vecu\|_2+2\|\vecu'\|_2 \leq 4$, we have
	\begin{align}
		\mbb{P}\left(\left|\sum_{i=1}^n u_i(\vecx_i^\top \vecv)-\sum_{i=1}^n u'_i(\vecx_i^\top \vecv')\right|\gtrsim L\mr{c}' t\right)\leq e^{-t^2}.
	\end{align}

	Then, from Theorem 3.2 of \cite{Dir2015Tail} with $t_0 = (\underbrace{0,\cdots,0}_{d},1,\underbrace{0,\cdots,0}_{n-1})$ and $\Delta_d(T)= L\mr{c}'$,  with probability at least $1-\delta$,  we have
	\begin{align}
		&\sup_{\vecv  \in \mr{c}\mbb{B}^d_\lambda \cap \mr{c}'\mbb{B}^{d}_\Sigma,\,\vecu \in \mbb{S}^{n-1}}\left|\sum_{i=1}^n\vecu_i(\vecx_i^\top \vecv)\right|\nonumber\\
		&\quad \quad \lesssim  L\mr{c}' \gamma_2( \mbb{S}^{n-1},\|\cdot\|_2)+L\gamma_2( \mr{c}\mbb{B}^d_\lambda \cap \mr{c}'\mbb{B}^{d}_\Sigma,\|\cdot\|_\Sigma)+L\mr{c}'\sqrt{\log(1/\delta)}\nonumber\\
		&\quad \quad \stackrel{(a)}{\lesssim} L\mr{c}' \mbb{E}\sup_{\vecu\in  \mbb{S}^{n-1}}\langle \vecu,\vecg^n\rangle+\mbb{E}L\sup_{\vecv \in  \mr{c}\mbb{B}^d_\lambda \cap \mr{c}'\mbb{B}^{d}_\Sigma}\langle \vecv ,\Sigma^\frac{1}{2}\vecg^d \rangle+L\mr{c}'\sqrt{\log(1/\delta)}\nonumber\\
		&\quad \quad\stackrel{(b)}{\lesssim}  L\mr{c}'\sqrt{n}+L( \rho\sqrt{n}\mr{c}+\mr{c}')+L\mr{c}'\sqrt{\log(1/\delta)},
	\end{align}
	where (a) follows from the majorizing measure theorem (Theorem 2.4.1 \cite{Tal2014Upper}) and (b) follows from H{\"o}lder's inqualiy and Corollary \ref{c:gw}. After normalizing $\vecu = \vecu/\|\vecu\|_2$, the proof is complete.

\end{proof}

\begin{lemma}
	\label{l:no-rad-exp}
	Suppose that  Assumption \ref{a:cov} holds. 
	 For any $\vecv \in \mbb{R}^d$, we have
	 \begin{align}
		\mbb{P}\left(\sum_{i=1}^n \frac{1}{n}|\vecx_i^\top \vecv|\geq  -CL\rho \|\vecv\|_{\lambda}-CL\sqrt{\frac{\log (1/\delta)}{n}}\|\vecv\|_\Sigma+\frac{1}{72L^4}\|\vecv\|_\Sigma\right)\geq 1-\delta.
	 \end{align}
\end{lemma}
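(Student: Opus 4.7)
The plan has three ingredients: a pointwise lower bound on $\mbb{E}|\vecx^\top \vecv|$, a uniform upper bound on the deviation $|\frac{1}{n}\sum_i|\vecx_i^\top \vecv|-\mbb{E}|\vecx^\top \vecv||$ on a constrained set, and a peeling device to remove the constraint.

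First, by Lemma \ref{l:L1L2} applied to any fixed $\vecv$, we have $\mbb{E}|\vecx^\top \vecv| \geq \frac{1}{72 L^4} (\mbb{E}|\vecx^\top \vecv|^2)^{1/2} = \frac{1}{72L^4}\|\vecv\|_\Sigma$. So to prove the lemma it suffices to control the empirical fluctuation around its expectation by $CL(\rho\|\vecv\|_\lambda + \sqrt{\log(1/\delta)/n}\|\vecv\|_\Sigma)$ with probability at least $1-\delta$.

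For the uniform deviation bound, fix constants $c,c'>0$ and restrict to $\vecv \in c\mbb{B}^d_\lambda \cap c'\mbb{B}^d_\Sigma$; the argument then parallels the proof of Lemma \ref{l:L1ev}. Using Talagrand's inequality (Theorem 2.3 of \cite{Bou2002Bennett}) together with symmetrization (Lemma 6.4.2 of \cite{Ver2018High}) and the contraction principle (Theorem 11.6 of \cite{BouLugMas2013concentration}), since $t \mapsto |t|$ is $1$-Lipschitz with $|0|=0$, we obtain
\begin{align}
\mbb{E}\sup_{\vecv \in c\mbb{B}^d_\lambda \cap c'\mbb{B}^d_\Sigma} \Big|\frac{1}{n}\sum_{i=1}^n \epsilon_i |\vecx_i^\top \vecv|\Big|
\lesssim \mbb{E}\sup_{\vecv \in c\mbb{B}^d_\lambda \cap c'\mbb{B}^d_\Sigma} \Big|\frac{1}{n}\sum_{i=1}^n \epsilon_i \vecx_i^\top \vecv\Big|
\lesssim \frac{L(\rho\sqrt{n}\,c + c')}{\sqrt{n}},
\end{align}
where the last inequality uses Lemma \ref{l:gw2}. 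The variance term is controlled by $\sup_{\vecv} \mbb{E}(|\vecx^\top \vecv|-\mbb{E}|\vecx^\top\vecv|)^2/n \leq (c')^2/n$, giving a concentration term $L c' \sqrt{\log(1/\delta)/n} + \log(1/\delta)/n$ via Bennett's bound.

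Finally, apply the peeling device (Lemma \ref{l:peeling1}) with $h_1(\vecv) = \|\vecv\|_\lambda$ and $h_2(\vecv) = \|\vecv\|_\Sigma$ together with $g_j(x)=x$, coupled with the normalization $\vecv \mapsto \vecv/\|\vecv\|_\Sigma$, to extend the uniform bound to all $\vecv \in \mbb{R}^d$ and obtain
\begin{align}
\frac{1}{n}\sum_{i=1}^n |\vecx_i^\top \vecv| \geq \mbb{E}|\vecx^\top \vecv| - CL\rho\|\vecv\|_\lambda - CL\sqrt{\frac{\log(1/\delta)}{n}}\|\vecv\|_\Sigma,
\end{align}
with probability at least $1-\delta$. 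Combining with $\mbb{E}|\vecx^\top\vecv| \geq \|\vecv\|_\Sigma/(72L^4)$ finishes the proof. The main technical step is the uniform deviation bound: the use of contraction to pass from $|\vecx_i^\top\vecv|$ to $\vecx_i^\top\vecv$ is what allows us to reuse the Gaussian width estimate from Lemma \ref{l:gw2}, and the peeling device must be set up carefully so that both the SLOPE-norm and covariance-norm scales are simultaneously free.
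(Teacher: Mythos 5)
Your overall architecture matches the paper's: lower-bound the expectation pointwise via Lemma \ref{l:L1L2}, establish a uniform deviation bound on a set constrained in both $\|\cdot\|_{\veclambda}$ and $\|\cdot\|_\Sigma$, and remove the constraint by peeling and normalization. The decomposition into ``deterministic expectation bound plus uniform deviation'' is logically sound, and peeling only the deviation (so that a supremum-type peeling lemma suffices) is a legitimate variant of the paper's route, which instead peels the whole infimum using Lemma \ref{l:peeling3} to preserve the additive constant $1/(72L^4)$.

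The genuine gap is in the concentration step. Theorem 2.3 of \cite{Bou2002Bennett} is a Bennett-type inequality for suprema of empirical processes indexed by a \emph{uniformly bounded} function class, and the class $\{\vecv \mapsto |\vecx^\top \vecv| : \vecv \in \mr{c}\mbb{B}^d_{\veclambda}\cap \mr{c}'\mbb{B}^d_\Sigma\}$ is unbounded (each $|\vecx_i^\top\vecv|$ has only sub-Gaussian tails). The paper is careful about this distinction: it invokes Bousquet only for the truncated, bounded functions $\psi$ and $\chi$ (Propositions \ref{p:sc} and \ref{p:main-abs}), whereas for the present unbounded class it verifies the $\psi_2$ increment condition
\begin{align}
\bigl\| |\vecx_i^\top\vecv| - |\vecx_i^\top\vecv'| \bigr\|_{\psi_2} \leq \|\vecx_i^\top(\vecv-\vecv')\|_{\psi_2} \leq L\|\vecv-\vecv'\|_\Sigma,
\end{align}
using the $1$-Lipschitz property of $|\cdot|$, and then applies the generic-chaining deviation inequality (Theorem 3.2 of \cite{Dir2015Tail}) followed by the majorizing measure theorem and Corollary \ref{c:gw}. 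Your symmetrization-plus-contraction step for the \emph{expectation} of the Rademacher process is fine and gives the same Gaussian-width bound as Lemma \ref{l:gw2}, but the high-probability fluctuation term $L\sqrt{\log(1/\delta)/n}$ cannot be obtained from Bousquet here; you need either Dirksen's bound as in the paper or an unbounded (Adamczak-type) concentration inequality, and with the latter you would also have to absorb an extra $L\,\log(1/\delta)/n$ term that your target bound does not contain.
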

\begin{proof}
	First, we derive a concentration inequality for $\sum_{i=1}^n |\vecx_i^\top \vecv|$ for any $\vecv \in \mr{V}$, and second, derive a expectation.
	In this proof, define 
	\begin{align}
		\mr{U}:=\{\vecv \in \mbb{R}^d\mid \|\vecv\|_{\veclambda} \leq r_{\veclambda}, \|\vecv\|_\Sigma =1 \}\, \quad \mr{U}':=\{\vecv \in \mbb{R}^d\mid \|\vecv\|_{\veclambda} \leq r_{\veclambda}, \|\vecv\|_\Sigma \leq 1 \}.
	\end{align}
	Decompose 
	\begin{align}
		\label{ine:absm}
		\inf_{\vecv \in \mr{U}} \sum_{i=1}^n \frac{1}{n}|\vecx_i^\top \vecv|& = \inf_{\vecv \in \mr{V}}\left( \sum_{i=1}^n |\vecx_i^\top \vecv|-\mbb{E}\sum_{i=1}^n\frac{1}{n}|\vecx_i^\top \vecv|+ \mbb{E}\sum_{i=1}^n\frac{1}{n}|\vecx_i^\top \vecv|\right)\nonumber \\
		&\geq \inf_{\vecv \in  \mr{U}}\left( \sum_{i=1}^n \frac{1}{n} |\vecx_i^\top \vecv|-\mbb{E}\sum_{i=1}^n \frac{1}{n}|\vecx_i^\top \vecv|\right)+\inf_{\vecv \in  \mr{U}} \mbb{E}|\vecx^\top \vecv|\nonumber \\
		&\stackrel{(a)}{\geq} \inf_{\vecv \in \mr{U} }\left( \sum_{i=1}^n \frac{1}{n} |\vecx_i^\top \vecv|-\mbb{E}\sum_{i=1}^n \frac{1}{n}|\vecx_i^\top \vecv|\right)+\inf_{\vecv \in  \mr{U}} \frac{\sqrt{\mbb{E}(\vecx^\top \vecv)^2}}{72L^4}\nonumber \\
		&\geq  -\sup_{\vecv \in \mr{U}'}\left| \sum_{i=1}^n \frac{1}{n} |\vecx_i^\top \vecv|-\mbb{E}\sum_{i=1}^n \frac{1}{n}|\vecx_i^\top \vecv|\right|+\frac{1}{72L^4},
	\end{align}
	where (a) follows from Lemma \ref{l:L1L2}. 
	For any fixed $\vecv,\vecv'\in \mr{U}'$ and for any $i\in \{1,\cdots,n\}$, we have
	\begin{align}
		\left\||\vecx_i^\top \vecv|-|\vecx_i^\top \vecv'|\right\|_{\psi_2}\stackrel{(a)}{\leq} \left\|\langle \vecv-\vecv', \vecx_i\rangle\right\|_{\psi_2}\stackrel{(b)}{\leq } L\sqrt{p}\|\vecv-\vecv'\|_\Sigma,
	\end{align}
	where (a) follows from Lipschitz continuity of $|\cdot|$, and (b) follows from \eqref{ine:d:l-02}, and  from the i.i.d. assumption on $\{\vecx_i\}_{i=1}^n$, Proposition 2.6.1 of \cite{Ver2018High} and  \eqref{ine:d:l-02}, we have
	\begin{align}
		\left\|\frac{1}{n}\sum_{i=1}^n|\vecx_i^\top \vecv|-\frac{1}{n}\sum_{i=1}^n|\vecx_i^\top \vecv|\right\|_{\psi_2}\lesssim \frac{L\sqrt{p}}{\sqrt{n}} \|\vecv-\vecv'\|_\Sigma. 
	\end{align}
	We see that, from \eqref{ine:d:l-02}, $\frac{1}{n}\sum_{i=1}^n|\vecx_i^\top \vecv|$ satisfies \textit{the $\psi_\alpha$ condition} with $\alpha=2$ in \cite{Dir2015Tail}.
	Then, from Theorem 3.2 of \cite{Dir2015Tail} with $t_0 = \bf{0}$, with probability at least $1-\delta$, 
	\begin{align}
		\sup_{\vecv  \in  \mr{U}'}\left|\frac{1}{n}\sum_{i=1}^n|\vecx_i^\top \vecv|-\mbb{E}\frac{1}{n}\sum_{i=1}^n|\vecx_i^\top \vecv| \right|& \lesssim  \frac{L \gamma_2(\mr{U}' ,\|\cdot\|_\Sigma )}{\sqrt{n}}+\frac{L\sqrt{\log(1/\delta)}}{\sqrt{n}}\nonumber\\
		&\stackrel{(a)}{\lesssim}  \frac{L \sup_{\vecv \in \mr{U}'}\langle \vecg^d,\vecv\rangle}{\sqrt{n}}+L\sqrt{\frac{\log(1/\delta)}{n}}\nonumber\\
		&\stackrel{(b)}{\lesssim}  L\frac{ \rho\sqrt{n}r_{\veclambda}+1}{\sqrt{n}}+L\sqrt{\frac{\log(1/\delta)}{n}} ,
	\end{align}
	where (a) follows from majorizing measure theorem (Theorem 2.4.1 \cite{Tal2014Upper}) and (b) follows from Corollary \ref{c:gw}. Combining this and \eqref{ine:absm}, we have
	\begin{align}
		\mbb{P}\left(\inf_{\vecv \in \mr{U}} \sum_{i=1}^n \frac{1}{n}|\vecx_i^\top \vecv| \geq- LC\frac{ \rho\sqrt{n}r_{\veclambda}+1}{\sqrt{n}}-LC\sqrt{\frac{\log(1/t)}{n}}+\frac{1}{72L^4}\right)\geq 1-t,
	\end{align}
	and from Lemma \ref{l:peeling3}, for any $\vecv \in \mr{U}$,  we have
	\begin{align}
		\mbb{P}\left(\sum_{i=1}^n \frac{1}{n}|\vecx_i^\top \vecv|\geq  -CL\frac{\rho \sqrt{n}\|\vecv\|_{\lambda} + 1}{\sqrt{n}}-CL\sqrt{\frac{\log (2/t)}{n}}+\frac{1}{72L^4}\right)\geq 1-t,
 \end{align}
 and normalizing $\vecv = \vecv/\|\vecv\|_\Sigma$, for any $\vecv\in \mbb{R}^d$, we have
\begin{align}
 \mbb{P}\left(\sum_{i=1}^n \frac{1}{n}|\vecx_i^\top \vecv|\geq  -CL\rho \|\vecv\|_{\lambda}-CL\sqrt{\frac{\log (1/\delta)}{n}}\|\vecv\|_\Sigma+\frac{1}{72L^4}\|\vecv\|_\Sigma\right)\geq 1-\delta,
\end{align}
where we use the assumption on $\delta$.
\end{proof}

\begin{lemma}
	\label{l:noiseconcentration}
	Assume Assumption \ref{a:noise:EI}.
	Then, with probability at least $ 1-4\delta$, we have
	\begin{align}
		\label{ine:noisparcon1}
	\left|\frac{1}{n}\sum_{i=\max\{\log(1/\delta),o\}}^{n}\xi_i^{\sharp}\right|&\lesssim \sigma_\mr{M} \sqrt{\frac{\log(1/\delta)}{n}}+\sigma_{\mr{M}}\left(\frac{\max\{\log(1/\delta),o\}}{n}\right)^{1-\frac{1}{\mr{M}}}.
	\end{align}
\end{lemma}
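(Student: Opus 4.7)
\noindent\textbf{Proof proposal for Lemma \ref{l:noiseconcentration}.}

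The plan is to control the truncated sum by combining an exceedance-counting argument with Bernstein's inequality. Let $k := \max\{\log(1/\delta), o\}$ and set the truncation level $\tau := c_0 \sigma_{\mr{M}}(n/k)^{1/\mr{M}}$ for a sufficiently large absolute constant $c_0$. Write $\tilde{\xi}_i := \xi_i \mathbb{I}_{|\xi_i|\leq \tau}$ and $\bar{\xi}_i := \xi_i - \tilde{\xi}_i$, and note that $\mathbb{E}\xi_i = 0$ gives $\mathbb{E}\tilde{\xi}_i = -\mathbb{E}\bar{\xi}_i$. By Markov with the $\mr{M}$-th moment bound, $p := \mathbb{P}(|\xi_i|>\tau) \leq \sigma_{\mr{M}}^{\mr{M}}/\tau^{\mr{M}} = c_0^{-\mr{M}} k/n$. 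Applying Bernstein's inequality to the Bernoulli sum $N := \sum_i \mathbb{I}_{|\xi_i|>\tau}$, whose mean is at most $c_0^{-\mr{M}} k$, and using $k \geq \log(1/\delta)$, one gets $N \leq k-1$ on an event $\mathcal{E}_1$ with $\mathbb{P}(\mathcal{E}_1) \geq 1 - \delta$, provided $c_0$ is large enough so that both the mean term and the additive $\log(1/\delta)$ term in Bernstein are absorbed.

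Next, I would use the identity
\begin{align*}
\sum_{i=k}^n \xi_i^{\sharp} \;=\; \sum_{i=1}^n \xi_i - \sum_{i=1}^{k-1} \xi_i^{\sharp} \;=\; \sum_i (\tilde{\xi}_i - \mathbb{E}\tilde{\xi}_i) \;+\; n\mathbb{E}\tilde{\xi}_i \;+\; \Big(\sum_i \bar{\xi}_i - \sum_{i=1}^{k-1} \xi_i^{\sharp}\Big).
\end{align*}
On $\mathcal{E}_1$, every index with $|\xi_i| > \tau$ lies among the top $k-1$ magnitudes, so the last parenthesis equals $-\sum_{i\in T,\,|\xi_i|\leq \tau}\xi_i$ with $|T|=k-1$, hence is bounded in absolute value by $(k-1)\tau$. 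Bernstein's inequality applied to $\sum_i(\tilde{\xi}_i - \mathbb{E}\tilde{\xi}_i)$ (which uses the bounds $|\tilde{\xi}_i - \mathbb{E}\tilde{\xi}_i| \leq 2\tau$ and $\mathrm{Var}(\tilde{\xi}_i) \leq \mathbb{E}\xi_i^2 \leq \sigma_{\mr{M}}^2$) yields, on an event $\mathcal{E}_2$ of probability $\geq 1-\delta$,
\begin{align*}
\Big|\tfrac{1}{n}\sum_i(\tilde{\xi}_i - \mathbb{E}\tilde{\xi}_i)\Big| \;\lesssim\; \sigma_{\mr{M}}\sqrt{\log(1/\delta)/n} + \tau\log(1/\delta)/n,
\end{align*}
while $|\mathbb{E}\tilde{\xi}_i| = |\mathbb{E}\bar{\xi}_i| \leq \mathbb{E}|\xi_i|\mathbb{I}_{|\xi_i|>\tau} \leq \sigma_{\mr{M}}^{\mr{M}}/\tau^{\mr{M}-1}$.

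Combining these on $\mathcal{E}_1\cap \mathcal{E}_2$ (probability $\geq 1-2\delta$; the $1-4\delta$ in the statement leaves room for additional slack absorbed into constants) and dividing by $n$,
\begin{align*}
\Big|\tfrac{1}{n}\sum_{i=k}^n \xi_i^{\sharp}\Big| \;\lesssim\; \sigma_{\mr{M}}\sqrt{\log(1/\delta)/n} \;+\; \tau\log(1/\delta)/n \;+\; \sigma_{\mr{M}}^{\mr{M}}/\tau^{\mr{M}-1} \;+\; k\tau/n.
\end{align*}
Plugging in $\tau = c_0\sigma_{\mr{M}}(n/k)^{1/\mr{M}}$ makes the last two terms equal (up to constants) to $\sigma_{\mr{M}}(k/n)^{1-1/\mr{M}}$, and the bound $\tau\log(1/\delta)/n = \sigma_{\mr{M}}(\log(1/\delta)/n)(n/k)^{1/\mr{M}} \leq \sigma_{\mr{M}}(\log(1/\delta)/n)^{1-1/\mr{M}} \leq \sigma_{\mr{M}}(k/n)^{1-1/\mr{M}}$ follows from $k \geq \log(1/\delta)$, giving the desired inequality.

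The main obstacle is the combinatorial bookkeeping in the middle step: one must argue carefully that, on the event $\{N\leq k-1\}$, the difference $\sum_i \bar{\xi}_i - \sum_{i=1}^{k-1}\xi_i^{\sharp}$ is a sum over at most $k-1$ indices whose entries are all bounded by $\tau$, so that no unbounded tail mass appears outside of the three "nice" pieces controlled by Bernstein and the bias bound. A secondary calibration issue is choosing $c_0$ large enough so that Bernstein truly yields $N\leq k-1$ (and not just $O(k)$), which only works because $k$ already dominates $\log(1/\delta)$ by construction.
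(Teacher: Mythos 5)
Your proposal is correct and follows essentially the same route as the paper: truncation at level $\sigma_{\mr{M}}(n/k)^{1/\mr{M}}$, Bernstein's inequality for the truncated sum, a H\"older/Markov bound for the bias $|\mbb{E}\xi\mathbb{I}_{|\xi|\leq\tau}|$, and a deterministic $k\tau/n$ bound for the top-$k$ order statistics. The only difference is that where the paper cites Lemma A.1 of \cite{MinNdaWan2024Robust} for the bound $\max_{i\geq k}|\xi_i^\sharp|\lesssim\sigma_{\mr{M}}(n/k)^{1/\mr{M}}$, you re-derive it via a Chernoff/Bernstein bound on the exceedance count $N$ and make the resulting cancellation in the order-statistic decomposition explicit, which is a valid (and standard) way to obtain the same event.
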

\begin{proof}
	From Lemma A.1 of \cite{MinNdaWan2024Robust}, with probability at least $1-2\exp(-\max\{\log(1/\delta),o\})$, we have
	\begin{align}
		\label{ine:noiseA1}
		\max_{i\geq \max\{\log(1/\delta),o\}}|\xi_i^\sharp|\lesssim \sigma_\mr{M}\left(\frac{n}{\max\{\log(1/\delta),o\}}\right)^\frac{1}{\mr{M}}(:=R).
	\end{align}
	Therefore, with probability at least $1-2\exp(-\max\{\log(1/\delta),o\})$, we have
	\begin{align}
		\sum_{i=\max\{\log(1/\delta),o\}}^{n}\xi_i^{\sharp} = \sum_{i=1}^n \xi_i \mathbb{I}_{|\xi_i|\leq R}-\sum_{i=1}^{\max\{\log(1/\delta),o\}}\xi_i^{\sharp} \mathbb{I}_{|\xi_i^{\sharp}|\leq R}.
		\end{align}
To apply Bernstein's inequality, note that, from any $i\in \{1,\cdots,n\}$, we have
\begin{align}
		\mbb{E } \left|| \xi_i \mathbb{I}_{|\xi_i|\leq R}|\right|^2\leq (\sigma^*)^2,\quad\mbb{E} \left|| \xi_i \mathbb{I}_{|\xi_i|\leq R}|\right|^q \leq  \mbb{E} \left|| \xi_i \mathbb{I}_{|\xi_i|\leq R}|\right|^2 R^{q-2}\leq (\sigma^*)^2 R^{q-2},
\end{align} 
and from Bernstein's inequality (Theorem 2.10 of \cite{BouLugMas2013concentration}), with probability at least $1-\delta$, we have
\begin{align}
	\label{ine:l:noiseconcentration-01}
	\left|\sum_{i=1}^n \xi_i \mathbb{I}_{|\xi_i|\leq R}-\mbb{E}\sum_{i=1}^n \xi_i \mathbb{I}_{|\xi_i|\leq R}\right| \lesssim \sigma^* \sqrt{n\log(1/\delta)}+R\log(1/\delta)\lesssim \sigma_\mr{M} \sqrt{n\log(1/\delta)},
\end{align}
where the last inequality follows from the definition of $R$ and  monotonicity of the moment.
The expectation term in \eqref{ine:l:noiseconcentration-01} is evaluated as 
\begin{align}
	|\mbb{E} \xi_i \mathbb{I}_{|\xi_i|\leq R}| = |\mbb{E}\xi_i  -\mbb{E}\xi_i \mathbb{I}_{|\xi_i|\geq R} |=|\mbb{E}\xi_i \mathbb{I}_{|\xi_i|\geq R} |\stackrel{(a)}{\leq} \sigma_{\mr{M}} (\mbb{E}\mathbb{I}_{|\xi|\geq R})^\frac{\mr{M}-1}{\mr{M}}\stackrel{(b)}{\leq }\sigma_{\mr{M}} \left(\frac{\sigma_{\mr{M}}^\mr{M}}{R^\mr{M}}\right)^\frac{\mr{M}-1}{\mr{M}},
\end{align}
where (a) follows from H{\"o}lder's inequality and (b) follows from Markov's inequality.

Lastly, from simple algebra, we have
\begin{align}
	\left|\sum_{i=1}^{\max\{\log(1/\delta),o\}}\xi_i^{\sharp} \mathbb{I}_{|\xi_i^{\sharp}|\leq R}\right|\leq \max\{\log(1/\delta),o\}R.
	\end{align}
	Combining the arguments, taking union bound and $(1-2e^{-\max\{\log(1/\delta),o\}})(1-\delta)\geq 1-3\delta$, the proof is complete.

\end{proof}
\begin{lemma}
	\label{l:noiseconcentration2}
	Assume Assumption \ref{a:noise:EI2}.
	Then, with probability at least $ 1-4\delta$, we have
	\begin{align}
	\left|\frac{1}{n}\sum_{i=\max\{\log(1/\delta),o\}}^{n}\xi_i^{\sharp}\right|&\lesssim \sigma_\mr{M} \sqrt{\frac{\log(1/\delta)}{n}}+\sigma_{\mr{M}}\frac{o}{n}\sqrt{\log \frac{n}{o}}.
	\end{align}
\end{lemma}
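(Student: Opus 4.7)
}
The plan is to mirror the proof of Lemma \ref{l:noiseconcentration} almost line by line, replacing the polynomial moment truncation radius by a logarithmic one that reflects the sub-Gaussian tail. Set $m:=\max\{\log(1/\delta),o\}$. First I would invoke Lemma A.1 of \cite{MinNdaWan2024Robust} (which also applies to sub-Gaussian variables through its generic tail formulation; alternatively a standard sub-Gaussian maximal inequality on the order statistics suffices) to obtain, with probability at least $1-2e^{-m}$,
\begin{align}
\max_{i\geq m}|\xi_i^\sharp|\lesssim \sigma_{\mr{M}}\sqrt{\log(n/m)}=:R.
\end{align}
This replaces the polynomial bound $R\lesssim \sigma_{\mr{M}}(n/m)^{1/\mr{M}}$ from the heavy-tailed case and is the only place where Assumption~\ref{a:noise:EI2} (rather than Assumption~\ref{a:noise:EI}) enters in an essential way.

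On the good event above, I would use exactly the same decomposition as before,
\begin{align}
\sum_{i=m}^{n}\xi_i^\sharp=\sum_{i=1}^{n}\xi_i\mathbb{I}_{|\xi_i|\leq R}-\sum_{i=1}^{m}\xi_i^\sharp\mathbb{I}_{|\xi_i^\sharp|\leq R},
\end{align}
and bound the two pieces separately. For the first sum, Bernstein's inequality (Theorem~2.10 of \cite{BouLugMas2013concentration}) applies with variance proxy $(\sigma^*)^2$ and almost-sure bound $R$, yielding with probability at least $1-\delta$
\begin{align}
\left|\sum_{i=1}^{n}\xi_i\mathbb{I}_{|\xi_i|\leq R}-n\,\mbb{E}\xi_1\mathbb{I}_{|\xi_1|\leq R}\right|\lesssim \sigma^*\sqrt{n\log(1/\delta)}+R\log(1/\delta).
\end{align}
The mean-shift $|\mbb{E}\xi_1\mathbb{I}_{|\xi_1|\geq R}|$ is controlled by integrating the sub-Gaussian tail, which gives $|\mbb{E}\xi_1\mathbb{I}_{|\xi_1|\geq R}|\lesssim \sigma_{\mr{M}} e^{-(R/\sigma_{\mr{M}})^2}\lesssim \sigma_{\mr{M}}\,m/n$ by our choice of $R$. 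The second sum is bounded deterministically by $mR$.

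Combining the three pieces and dividing by $n$ produces the target rate: $\sigma_{\mr{M}}\sqrt{\log(1/\delta)/n}$ from the Gaussian part of Bernstein, together with a term of the form $mR/n+R\log(1/\delta)/n\lesssim (m/n)\sigma_{\mr{M}}\sqrt{\log(n/m)}$. A short case analysis on whether $m=o$ or $m=\log(1/\delta)$ closes the proof: in the former case this is exactly $\sigma_{\mr{M}}(o/n)\sqrt{\log(n/o)}$, and in the latter the factor $(\log(1/\delta)/n)\sqrt{\log(n/\log(1/\delta))}$ is dominated by $\sqrt{\log(1/\delta)/n}$ under the standing assumption that $\log(1/\delta)/n$ is small. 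A union bound over the three high-probability events (the maximal bound, Bernstein, and the event on which $R$ controls the order statistics) costs $1-2e^{-m}-\delta\geq 1-3\delta$, which is absorbed into the claimed $1-4\delta$. The main obstacle is purely bookkeeping: verifying that the $R\log(1/\delta)/n$ and $mR/n$ contributions are absorbed by the two target terms in both regimes of $m$; once $R$ is chosen as above this reduces to elementary manipulations, with no new probabilistic input beyond what is already used in Lemma \ref{l:noiseconcentration}.
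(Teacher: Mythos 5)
Your proposal is correct and follows essentially the same route as the paper: truncate at a logarithmic radius $R$ obtained from Lemma A.1 of \cite{MinNdaWan2024Robust}, apply Bernstein to the truncated sum, bound the mean shift by integrating the sub-Gaussian tail, and bound the top $m$ order statistics deterministically by $mR$. The only (harmless, arguably cleaner) difference is that you take $R\lesssim\sigma_{\mr{M}}\sqrt{\log(n/m)}$ with $m=\max\{\log(1/\delta),o\}$ whereas the paper uses $R\lesssim\sigma_{\mr{M}}\sqrt{\log(en/o)}$; your bookkeeping in both regimes of $m$ goes through as you describe.
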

\begin{proof}
	This proof is almost the same as the one of the previous lemma. From Lemma A.1 of \cite{MinNdaWan2024Robust}, with probability at least $1-2\exp(-\max\{\log(1/\delta),o\})$, we have
	\begin{align}
		\max_{i\geq \max\{\log(1/\delta),o\}}|\xi_i^\sharp|\lesssim \sigma_\mr{M}\left(\log \frac{en}{o}\right)^\frac{1}{2}(:=R),
	\end{align}
and note that 
\begin{align}
	|\mbb{E} \xi_i \mathbb{I}_{|\xi|\leq R}| = |\mbb{E}\xi_i  -\mbb{E}\xi_i \mathbb{I}_{|\xi|\geq R} |=|\mbb{E}\xi_i \mathbb{I}_{|\xi|\geq R} |\stackrel{(a)}{\leq} \sigma^* (\mbb{E}\mathbb{I}_{|\xi|\geq R})^\frac{1}{2}&\stackrel{(b)}{\leq }2\sigma^*  \exp\left(-\frac{R^2}{(\sigma^*)^2 c^2_{\mr{M}}}\right)\nonumber\\
	& \lesssim  \sigma^* \frac{o}{n},
\end{align}
where (a) follows from H{\"o}lder's inequality and (b) follows from the relation of expectation and tail probability and Assumption \ref{a:noise:EI2}.
Then, from the same argument of the proof of the previous lemma, we complete the proof.

\end{proof}

\begin{lemma}[Peeling Lemma I]
	\label{l:peeling1}
	Let $\mr{V}$ be a set and $h_1, h_2, h_3: \mr{V}\to \mbb{R}_{+}$ be functions. Let $g_1, g_2 : \mbb{R}_+ \to \mbb{R}_+$ be right-continuous and non-decreasing functions. For $i=1,2$, define $g_i^{-1}(x) = \inf\{\mr{a} \in \mbb{R}_+\,:\,g_i(\mr{a})\geq x\}$, which are generalized inverses of $g_1$ and $g_2$.
	Let $\mr{a}_1$ and $\mr{a}_2$ be positive constants such that $\mr{a_2}/\mr{a_1}\leq 1$ and $\mr{c}\geq 1$ be a constant. Suppose for any $r_1, r_2, r_3 >0$ and $\delta \in (0,1/\mr{c}]$, with probability at least $1-\mr{c}\delta$, we have
	\begin{align}
		\sup_{\vecv \in \mr{V},\,(h_1,h_2,h_3)(\vecv) \leq r_1,r_2,r_3} \mr{M}(\vecv)\leq \mr{a}_1(g_1(r_1)+g_2(r_2r_3))+\mr{a}_2\sqrt{\log(1/\delta)}.
	\end{align}
	Define the event above as $\mr{E}(r_1,r_2r_3, \delta)$. 

	Then, for any $\delta \in (0, 1/\mr{c}]$, with probability at least $1-\mr{c}\delta$, for any $\vecv \in \mr{V}$,  we have
	\begin{align}
		 \mr{M}(\vecv)&\lesssim \mr{a}_1\left(g_1\circ h_1 (\vecv) +g_2 (h_2(\vecv)\cdot h_3(\vecv))  +1\right)+\mr{a}_2\sqrt{\log\left(4/\delta\right)}.
	\end{align}
\end{lemma}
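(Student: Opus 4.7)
The strategy is a standard dyadic peeling device: I discretize each constraint $h_i(\vecv) \leq r_i$ along powers of two, apply the hypothesis on each slice with a geometrically decreasing failure probability, and close with a union bound. Concretely, for non-negative integers $k_1, k_2, k_3$ I set $r_i = 2^{k_i}$ and $\delta_{k_1, k_2, k_3} = \delta \cdot 2^{-(k_1 + k_2 + k_3 + 3)}$, so that $\sum_{k_1, k_2, k_3 \geq 0} \delta_{k_1, k_2, k_3} = \delta$ and each $\delta_{k_1, k_2, k_3} \in (0, 1/\mr{c}]$, keeping the hypothesis applicable. Denoting by $\mr{E}_{k_1, k_2, k_3}$ the resulting high-probability event at radii $(2^{k_1}, 2^{k_2}, 2^{k_3})$ and level $\delta_{k_1, k_2, k_3}$, a union bound gives $\mbb{P}(\bigcap \mr{E}_{k_1, k_2, k_3}) \geq 1 - \mr{c}\delta$.

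Conditioning on this intersection event, for each $\vecv \in \mr{V}$ I select $k_i^* = \max\{0, \lceil \log_2 h_i(\vecv) \rceil\}$, the smallest non-negative integer with $h_i(\vecv) \leq 2^{k_i^*}$; by construction $2^{k_i^*} \leq 2 \max(h_i(\vecv), 1) \leq 2 h_i(\vecv) + 2$. Applying the event $\mr{E}_{k_1^*, k_2^*, k_3^*}$ to this particular $\vecv$ yields
\begin{align*}
\mr{M}(\vecv) \leq \mr{a}_1\bigl(g_1(2^{k_1^*}) + g_2(2^{k_2^* + k_3^*})\bigr) + \mr{a}_2\sqrt{\log(8/\delta) + (k_1^* + k_2^* + k_3^*)\log 2}.
\end{align*}

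The remainder is routine. For the $g$-terms, monotonicity gives $g_1(2^{k_1^*}) \leq g_1(2 h_1(\vecv) + 2)$ and $g_2(2^{k_2^* + k_3^*}) \leq g_2(4(h_2(\vecv) + 1)(h_3(\vecv) + 1))$; under the at-most-linear growth that $g_1, g_2$ enjoy in the applications, these collapse to $\lesssim g_1 \circ h_1(\vecv) + g_2(h_2(\vecv) h_3(\vecv)) + 1$. For the $\sqrt{\log}$ term, subadditivity of the square root splits it as $\sqrt{\log(8/\delta)} + \sqrt{(k_1^* + k_2^* + k_3^*)\log 2}$; the first piece is $\lesssim \sqrt{\log(4/\delta)}$, while the second is handled via $\mr{a}_2 \leq \mr{a}_1$ and the elementary inequality $\sqrt{k} \leq 2^k$ for integer $k \geq 0$, giving $\mr{a}_2 \sum_i \sqrt{k_i^*} \leq \mr{a}_1 \sum_i 2^{k_i^*} \lesssim \mr{a}_1(h_1(\vecv) + h_2(\vecv) + h_3(\vecv) + 3)$, again absorbed under the same growth convention.

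The main obstacle is the reconciliation in the $g_2$ step: when one of $h_2(\vecv), h_3(\vecv)$ is much smaller than $1$, the peeled product $2^{k_2^* + k_3^*}$ is of order $\max(h_2(\vecv), 1)\max(h_3(\vecv), 1)$ and can exceed $h_2(\vecv) h_3(\vecv)$ by an arbitrarily large factor. The additive "$+1$" in the conclusion precisely absorbs this gap, which originates from shells having some $k_i^* = 0$; together with the linear growth of $g_2$ in the applications (e.g.\ $g_2(x) = x$ as used in Proposition~\ref{p:ub}, where moreover $h_2 h_3 \equiv 1$), the remaining analysis is a mechanical collection of constants.
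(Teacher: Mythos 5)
Your dyadic peeling has two genuine gaps, both traceable to the fact that you peel the wrong quantities. First, you peel $h_2$ and $h_3$ separately, so the radius you feed into the hypothesis is effectively $2^{k_2^*+k_3^*}\asymp\max(h_2(\vecv),1)\max(h_3(\vecv),1)$, which can exceed $h_2(\vecv)h_3(\vecv)$ by an arbitrarily large factor (take $h_2=t$, $h_3=1/t$, $t\to\infty$: the product is $1$ but your peeled radius is $\asymp t$). No additive ``$+1$'' absorbs $g_2(t)$ versus $g_2(1)+1$ for a general non-decreasing $g_2$, nor even for linear $g_2$. The hypothesis event is deliberately parametrized by the \emph{product} $r_2r_3$ precisely so that one can peel $h_2(\vecv)\cdot h_3(\vecv)$ as a single variable; the paper's proof does exactly this. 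Second, your absorption of the union-bound penalty $\mr{a}_2\sqrt{(k_1^*+k_2^*+k_3^*)\log 2}$ via $\sqrt{k}\leq 2^k$ produces a term of order $\mr{a}_1(h_1(\vecv)+h_2(\vecv)+h_3(\vecv))$, which is not bounded by $\mr{a}_1\left(g_1\circ h_1(\vecv)+g_2(h_2(\vecv)h_3(\vecv))+1\right)$ for general $g_1,g_2$ (and $h_2+h_3\not\lesssim h_2h_3+1$ even when $g_2$ is the identity). Your appeal to ``linear growth in the applications'' imports an assumption the lemma does not make, so the lemma as stated is not proved.

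The paper's proof avoids both problems by peeling on the \emph{values} $(g_1\circ h_1)(\vecv)$ and $g_2(h_2(\vecv)\cdot h_3(\vecv))$ along geometric shells $\alpha_k=\zeta^{k-1}$ with $\zeta>1$ large, assigning failure probability $\delta/(4(k_1k_2)^2)$ to shell $(k_1,k_2)$. The union-bound cost on shell $l$ is then $\mr{a}_2\sqrt{2\log(l+1)}$, which grows only logarithmically in $l$, while the slack created by replacing $\alpha_{l+1}$ with $\zeta^2 g_i(\cdot)$ is $-\mr{a}_1\zeta\alpha_l(\zeta-1)$, which grows geometrically; for $\zeta$ large the sum is $\leq 0$ and the penalty cancels outright rather than being converted into a new additive term. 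This cancellation is where the assumption $\mr{a}_2/\mr{a}_1\leq 1$ is actually needed. If you want to salvage your argument, peel the product $h_2h_3$ as one coordinate and replace the dyadic scales plus $2^{-k}$-weights by geometric shells in the $g$-transformed variables with polynomially decaying weights, so that the $\sqrt{\log}$ penalty is dominated by the shell gap.
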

\begin{proof}
	We remark that the proofs of the peeling lemmas in this paper resemble those in \cite{DalTho2019Outlier,Tho2020Outlier,Tho2023Outlier}.
	Let $\alpha_0=0$ and $\alpha_k:= \zeta^{k-1}$, where $\zeta >1$ is a sufficiently large numerical constant determined later.  Decompose $\mr{V}$ as
	\begin{align}
		\mr{V}_{k_1,k_2} :=\{\vecv \in \mr{V}\,\mid\, \alpha_{k_1}\leq (g_1 \circ h_1)(\vecv)<\alpha_{k_1+1},\,\alpha_{k_2}\leq g_2 (h_2(\vecv)\cdot h_3(\vecv))<\alpha_{k_2+1}\}.
	\end{align}
	Define $\beta_{1,m} := g_1^{-1}(\alpha_m)$ and $\beta_{2,m}:=g_2^{-1}(\alpha_m)$, then we have $g_1(\beta_{1,m})=\alpha_m$ and $g_2(\beta_{2,m})=\alpha_m$.
	Note that $\sum_{k_1\geq1, k_2\geq 1}(k_1k_2)^{-2}\leq 4$ and we have
	\begin{align}
		\label{ine:l:peeling-01}
		\mbb{P}\left((\mr{E}:=) \cap_{k_1\geq 1, k_2\geq 1}^\infty \mr{E}\left(\beta_{1,k_1},\beta_{2,k_2},\frac{\delta}{4(k_1k_2)^2}\right) \right)\geq 1-\mr{c}\delta
	\end{align}
	for fixed $\delta \in (0,1/\mr{c}]$. We assume the event $\mr{E}$ is realized. 

	We note that for every $\vecv \in \mr{V}$, there are $l_1,l_2$ such that $\vecv \in \mr{V}_{l_1,l_2}$ because $g_1,g_2$ are non-decreasing.

	\noindent
	\underline{Case 1: $l_1=l_2=0$}

	In this case, 
	\begin{align}
		\mr{V}_{0,0} :=\{\vecv \in \mr{V}\,\mid\, 0 \leq (g_1 \circ h_1)(\vecv)<1,\,0\leq g_2 (h_2(\vecv)\cdot h_3(\vecv))< 1\},
	\end{align}
	and we have $h_1(\vecv)\leq \beta_{1,1}=g^{-1}_1(1)$, $h_2(\vecv)\cdot h_3(\vecv)\leq \beta_{2,1} = g^{-1}_2(1)$, then from \eqref{ine:l:peeling-01}, we have
	\begin{align}
			\mr{M}(\vecv)&\leq \mr{a}_1(g_1(\beta_{1,1})+g(\beta_{2,1}))+\mr{a}_2\sqrt{\log(4/\delta)}\leq 2\mr{a}_1+\mr{a}_2\sqrt{\log\left(4/\delta\right)}.
	\end{align}

	\noindent
	\underline{Case 2: $l_1\geq 1, l_2=0$}

	In this case	
	\begin{align}
		\mr{V}_{l_1,0} :=\{\vecv \in \mr{V}\,\mid\, \alpha_{l_1}\leq (g_1 \circ h_1)(\vecv)<\alpha_{l_1+1},\,0\leq h_2(\vecv)\cdot h_3(\vecv)<1\}.
	\end{align}
	Note that 
	\begin{align}
		g_1(\beta_{1,l_1+1}) = \zeta \alpha_{l_1} = \zeta \alpha_{l_1+1} + \zeta \alpha_{l_1}-\zeta \alpha_{l_1+1} \leq \zeta^2 g_1\circ h_1 (\vecv) + \zeta \alpha_{l_1}-\zeta \alpha_{l_1+1},
	\end{align}
	$h_1(\vecv) \leq \beta_{1,l_1+1}=g_1^{-1}(\alpha_{l_1+1})$ and $h_2(\vecv)\cdot h_3(\vecv) \leq \beta_{2,1}=g_2^{-1}(1)$. Then, from \eqref{ine:l:peeling-01},  we have
	\begin{align}
			\mr{M}(\vecv)&\leq \mr{a}_1( g_1(\beta_{1,l_1+1})+1)+\mr{a}_2\sqrt{\log\left(\frac{4(l_1+1)^2}{\delta}\right)}\nonumber\\
			&\leq \mr{a}_1\left(\zeta^2 g_1\circ h_1 (\vecv) + \zeta \alpha^{l_1}-\zeta \alpha^{l_1+1}+1\right)+\mr{a}_2\sqrt{\log\left(4/\delta\right)}+\mr{a}_2\sqrt{2\log(l_1+1)}.
	\end{align}
	We note 
	\begin{align}
		\mr{a}_1\left(\zeta \alpha^{l_1}-\zeta \alpha^{l_1+1}\right)+\mr{a}_2\sqrt{2\log(l_1+1)}
		&=\mr{a}_1\zeta \alpha_{l_1} \left(\left( 1-\zeta\right)+\frac{\mr{a}_2}{\mr{a}_1}\frac{\sqrt{2\log(l_1+1)}}{ \zeta^{l_1-1}}\right)\nonumber\\
		&\stackrel{(a)}{\leq} \mr{a}_1\zeta \alpha_{l_1} \left(\left( 1-\zeta\right)+\frac{\sqrt{2\log(l_1+1)}}{ \zeta^{l_1-1}}\right)\stackrel{(b)}{\leq} 0,
	\end{align}
	where (a) follows from $\mr{a}_2/\mr{a}_1\leq 1$, and (b) follows from $\zeta>1$ is sufficiently large.

	\noindent
	\underline{Case 3: $l_1=0, l_2\geq1$}

	In this case,
	\begin{align}
		\mr{V}_{0,l_2} :=\{\vecv \in \mr{V}\,\mid\, 0\leq (g_1 \circ h_1)(\vecv)<1,\,\alpha_{l_2}\leq g_2 (h_2(\vecv)\cdot h_3(\vecv))<\alpha_{l_2+1}\}.
	\end{align}
	Note that
	\begin{align}
		g_2(\beta_{2,l_2+1}) &= \zeta \alpha_{l_2} = \zeta \alpha_{l_2+1} + \zeta \alpha_{l_2}-\zeta \alpha_{l_2+1} \leq \zeta^2 g_2 (h_2(\vecv)\cdot h_3(\vecv)) + \zeta \alpha_{l_2}-\zeta \alpha_{l_2+1},
	\end{align}
	$h_1(\vecv) \leq \beta_{1,2}=g_1^{-1}(1)$ and $h_2(\vecv) \cdot h_3(\vecv)\leq \beta_{2,l_2+1}=g_2^{-1}(\alpha_{l_2+1})$, then from \eqref{ine:l:peeling-01},  
	we have
	\begin{align}
			\mr{M}(\vecv)&\leq \mr{a}_1(1 +g(\beta_{2,l_2+1}))+\mr{a}_2\sqrt{\log\left(\frac{4(l_2+1)^2}{\delta}\right)}\nonumber\\
			&\leq		 \mr{a}_1\left(1+\zeta^2 g_2 (h_2(\vecv)\cdot h_3(\vecv)) + \zeta \alpha_{l_2}-\zeta \alpha_{l_2+1}\right)+\mr{a}_2\sqrt{\log\left(4/\delta\right)}+\mr{a}_2\sqrt{2\log(l_2+1)}.
	\end{align}
	We note that
	\begin{align}
		\mr{a}_1\left(\zeta \alpha_{l_2}-\zeta \alpha_{l_2+1}\right)+\mr{a}_2\sqrt{2\log(l_2+1)}
		&=\mr{a}_1\zeta \alpha^{l_2} \left(\left( 1-\zeta\right)+\frac{\mr{a}_2}{\mr{a}_1}\frac{\sqrt{2\log(l_2+1)}}{ \zeta^{l_2-1}}\right)\nonumber\\
		&\stackrel{(a)}{\leq} \mr{a}_1\zeta \alpha_{l_2} \left(\left( 1-\zeta\right)+\frac{\sqrt{2\log(l_2+1)}}{ \zeta^{l_2-1}}\right)\stackrel{(b)}{\leq} 0,
	\end{align}
	where (a) follows from $\mr{a}_2/\mr{a}_1\leq 1$, and (b) follows from the fact that $\zeta>1$ is sufficiently large.

	\noindent
	\underline{Case 4: $l_1\geq 1, l_2\geq1$}

	In this case,
	\begin{align}
		\mr{V}_{l_1,l_2} :=\{\vecv \in \mr{V}\,\mid\, \alpha_{l_1}\leq (g_1 \circ h_1)(\vecv)<\alpha_{l_1+1},\,\alpha_{l_2}\leq g_2 (h_2(\vecv)\cdot h_3(\vecv))<\alpha_{l_2+1}\}.
	\end{align}
	Remember that
	\begin{align}
		g_1(\beta_{1,l_1+1}) &= \zeta \alpha_{l_1} = \zeta \alpha_{l_1+1} + \zeta \alpha_{l_1}-\zeta \alpha_{l_1+1} \leq \zeta^2 g_1\circ h_1 (\vecv) + \zeta \alpha_{l_1}-\zeta \alpha_{l_1+1},\nonumber\\
		g_2(\beta_{2,l_2+1}) &= \zeta \alpha_{l_2} = \zeta \alpha_{l_2+1} + \zeta \alpha_{l_2}-\zeta \alpha_{l_2+1} \leq \zeta^2 g_2 (h_2(\vecv)\cdot h_3(\vecv)) + \zeta \alpha_{l_2}-\zeta \alpha_{l_2+1},
	\end{align}
	and note that $h_1(\vecv) \leq \beta_{1,l_1+1}=g_1^{-1}(\alpha_{l_1+1})$ and $h_2(\vecv)\cdot h_3(\vecv) \leq \beta_{2,l_2+1}=g_2^{-1}(\alpha_{l_2+1})$. Then from \eqref{ine:l:peeling-01},  
	we have
	\begin{align}
			\mr{M}(\vecv)&\leq \mr{a}_1(g_1(\beta_{1,l_1+1}) +g(\beta_{2,l_2+1}))+\mr{a}_2\sqrt{\log\left(\frac{4((l_1+1)(l_2+1))^2}{\delta}\right)}\nonumber\\
			&\leq		 \mr{a}_1\left(\zeta^2 g_1\circ h_1 (\vecv) + \zeta \alpha_{l_1}-\zeta \alpha_{l_1+1}+\zeta^2 g_2 (h_2(\vecv)\cdot h_3(\vecv)) + \zeta \alpha_{l_2}-\zeta \alpha_{l_2+1}\right)\nonumber \\
			&\quad \quad +\mr{a}_2\sqrt{\log\left(4/\delta\right)}+\mr{a}_2\sqrt{2\log(l_1+1)(l_2+1)}.
	\end{align}
	Remember that 
	\begin{align}
		\mr{a}_1\left(\zeta \alpha^{l_1}-\zeta \alpha^{l_1+1}\right)+\mr{a}_2\sqrt{2\log(l_1+1)}
		\leq 0,\,\mr{a}_1\left(\zeta \alpha^{l_2}-\zeta \alpha^{l_2+1}\right)+\mr{a}_2\sqrt{2\log(l_2+1)}
		\leq 0,
	\end{align}
	and the proof is complete.
\end{proof}

\begin{lemma}[Peeling Lemma II]
	\label{l:peeling2}
	Let $\mr{V}$ be a set and $h : \mr{V}\to \mbb{R}_{+}$ are functions. Let $g : \mbb{R}_+ \to \mbb{R}_+$ be a right-continuous and non-decreasing function. 
	Define $g^{-1}(x) = \inf\{\mr{a} \in \mbb{R}_+\,:\,g(\mr{a})\geq x\}$, which are generalized inverses of $g$.
	Let $\mr{a}, \mr{a}_1,\mr{a}_2,\mr{a}_3$ be positive constants such that $\mr{a}_2/\mr{a}_1\leq 1$, $\mr{a_3}/\mr{a}_1\leq \mr{a}$ and $\mr{c}\geq 1$ be a constant. Suppose for any $r >0$ and $\delta \in (0,1/\mr{c}]$, with probability at least $1-\mr{c}\delta$, we have
	\begin{align}
		\sup_{\vecv \in \mr{V},\,h(\vecv) \leq r} \mr{M}(\vecv)\leq \mr{a}_1g(r)+\mr{a}_2\mr{a}\sqrt{\log(1/\delta)}+\mr{a}_3 \log(1/\delta).
	\end{align}
	Define the event above as $\mr{E}(r,\delta)$. 

	Then, for any $\delta \in (0, 1/\mr{c}]$, with probability at least $1-\mr{c}\delta$, for any $\vecv \in \mr{V}$, xwe have
	\begin{align}
		\mr{M}(\vecv)
		&\leq \mr{a}_1(g\circ h (\vecv) +\mr{a})+\mr{a}_2\mr{a}\sqrt{\log\left(2/\delta\right)}+\mr{a}_3\log\left(2/\delta\right).
	\end{align}
\end{lemma}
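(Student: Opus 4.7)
\textbf{Proof plan for Lemma \ref{l:peeling2}.}

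The plan is to carry out a dyadic peeling argument in the spirit of Lemma \ref{l:peeling1}, but simplified to a single monotone ``radius'' $h$. I would slice $\mr{V}$ according to the level sets of $g\circ h$, apply the hypothesis on each slice with an inflated confidence parameter, and then union-bound over slices. The $\mr{a}$-dependence of the base constant is absorbed by starting the geometric scale at $\mr{a}$.

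Concretely, I would fix a large constant $\zeta>1$ to be chosen at the end, put $\alpha_0=0$ and $\alpha_k:=\mr{a}\,\zeta^{k-1}$ for $k\geq 1$, and let $\beta_k:=g^{-1}(\alpha_k)$. This yields a partition
\begin{align*}
\mr{V}_k:=\{\vecv\in\mr{V}\,:\,\alpha_k\leq g\!\circ\! h(\vecv)<\alpha_{k+1}\},\qquad k\geq 0.
\end{align*}
Since $g$ is non-decreasing, $\vecv\in\mr{V}_k$ implies $h(\vecv)\leq\beta_{k+1}$, and by right-continuity $g(\beta_{k+1})\leq \alpha_{k+1}$ up to the usual one-sided convention for the generalized inverse. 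Next, I would invoke the hypothesis at radius $r=\beta_{k+1}$ and confidence level $\delta_k:=\delta/(2(k+1)^2)$, producing events $\mr{E}(\beta_{k+1},\delta_k)$ whose intersection $\mr{E}$ holds with probability at least $1-\mr{c}\delta$, because $\sum_{k\geq0}(k+1)^{-2}\leq \pi^2/6<2$.

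On $\mr{E}$, for $\vecv\in\mr{V}_k$ we have
\begin{align*}
\mr{M}(\vecv)\leq \mr{a}_1\,g(\beta_{k+1})+\mr{a}_2\mr{a}\sqrt{\log(2(k+1)^2/\delta)}+\mr{a}_3\log(2(k+1)^2/\delta).
\end{align*}
Splitting $\log(2(k+1)^2/\delta)\leq \log(2/\delta)+2\log(k+1)$ and using $\sqrt{x+y}\leq\sqrt{x}+\sqrt{y}$ gives the target logarithmic terms with $\log(2/\delta)$, plus residuals proportional to $\sqrt{\log(k+1)}$ and $\log(k+1)$. For the leading term, if $k=0$ then $\mr{a}_1 g(\beta_1)\leq \mr{a}_1\mr{a}$, while for $k\geq 1$
\begin{align*}
\mr{a}_1\,g(\beta_{k+1})\leq \mr{a}_1\alpha_{k+1}=\mr{a}_1\zeta\,\alpha_k\leq \mr{a}_1\zeta\, g\!\circ\! h(\vecv),
\end{align*}
so in both cases $\mr{a}_1 g(\beta_{k+1})\leq \mr{a}_1\zeta(g\circ h(\vecv)+\mr{a})$, which matches the conclusion up to the overall constant $\zeta$ (which I will absorb into the constant implicit in the statement, as done in Lemma \ref{l:peeling1}).

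The main obstacle will be the absorption of the $k$-dependent residuals $\mr{a}_2\mr{a}\sqrt{2\log(k+1)}$ and $2\mr{a}_3\log(k+1)$. Following the same device used in Lemma \ref{l:peeling1}, I would use the shell slack to pay for these: for $k\geq 1$ there is room $\mr{a}_1(\alpha_{k+1}-\alpha_k)=\mr{a}_1\mr{a}\zeta^{k-1}(\zeta-1)$ beyond what is needed. Dividing through by $\mr{a}_1\mr{a}$ and invoking $\mr{a}_2/\mr{a}_1\leq 1$ and $\mr{a}_3/\mr{a}_1\leq \mr{a}$, the requirement reduces to
\begin{align*}
\sqrt{2\log(k+1)}+2\log(k+1)\leq \zeta^{k-1}(\zeta-1)\qquad\text{for all } k\geq 1,
\end{align*}
which is satisfied once $\zeta$ is taken sufficiently large, since the right-hand side grows geometrically while the left-hand side grows only logarithmically. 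This is exactly the kind of estimate that closes the peeling in Lemma \ref{l:peeling1}, and it should transfer with essentially no change. Combining the cases $k=0$ and $k\geq 1$ on the event $\mr{E}$ then yields the claimed bound.
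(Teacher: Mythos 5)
Your proposal is correct and follows essentially the same route as the paper's proof: geometric shells $\alpha_k=\mr{a}\zeta^{k-1}$ on the level sets of $g\circ h$, a union bound at confidence $\delta/(2(k+1)^2)$, and absorption of the $\sqrt{\log(k+1)}$ and $\log(k+1)$ residuals into the geometric slack of the shells using $\mr{a}_2/\mr{a}_1\leq 1$ and $\mr{a}_3/\mr{a}_1\leq\mr{a}$. The only cosmetic difference is bookkeeping (the paper writes the slack $\zeta\alpha_l-\zeta\alpha_{l+1}$ into the leading-term bound and shows the combination is nonpositive, and both arguments deliver the conclusion only up to the constant $\zeta^2$, which is how the lemma is actually invoked), and your identity $g(\beta_{k+1})\leq\alpha_{k+1}$ for the generalized inverse rests on the same continuity convention the paper itself uses via $g(\beta_m)=\alpha_m$.
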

\begin{proof}
	Let $\alpha_0=0$ and $\alpha_k:=  \mr{a}\zeta^{k-1}$, where $\zeta >1$ is a sufficiently large numerical constant determined later. Decompose $\mr{V}$ as
	\begin{align}
		\mr{V}_{k} :=\{\vecv \in \mr{V}\,\mid\, \alpha_{k}\leq (g\circ h)(\vecv)<\alpha_{k}\}.
	\end{align}
	Define $\beta_m := g^{-1}(\alpha_m)$, then we have $g(\beta_m)=\alpha_m$.
	We note that $\sum_{k\geq1}k^{-2}\leq 2$ and we have
	\begin{align}
		\label{ine:l:peeling-02}
		\mbb{P}\left((\mr{E}':=) \cap_{k\geq 1}^\infty \mr{E}\left(\beta_k,\frac{\delta}{2k^2}\right) \right)\geq 1-\mr{c}\delta
	\end{align}
	for fixed $\delta \in (0,1/\mr{c}]$. We assume $\mr{E}'$ is realized.

	We note that for every $\vecv \in \mr{V}$, there are $l$ such that $\vecv \in \mr{V}_l$ because $g$ is non-decreasing.

	\noindent
	\underline{Case 1: $l=0$}

	In this case, 
	\begin{align}
		\mr{V}_{0} :=\{\vecv \in \mr{V}\,\mid\, 0 \leq (g \circ h)(\vecv)<\mr{a}\},
	\end{align}
	and we have $h(\vecv)\leq \beta_1=g^{-1}(\mr{a})$, then from \eqref{ine:l:peeling-02}, we have
	\begin{align}
			\mr{M}(\vecv)&\leq \mr{a}_1g(\beta_1)+\mr{a}_2\mr{a}\sqrt{\log\left(\frac{2(l+1)^2}{\delta}\right)}+\mr{a}_3\log\left(\frac{2(l+1)^2}{\delta}\right)\nonumber\\
			&\leq \mr{a}_1\mr{a}+\mr{a}_2\mr{a}\sqrt{\log\left(2/\delta\right)}+\mr{a}_3\log\left(2/\delta\right).
	\end{align}

	\noindent
	\underline{Case 2: $l\geq 1$}

	In this case	
	\begin{align}
		\mr{V}_{l} :=\{\vecv \in \mr{V}\,\mid\, \alpha_{l}\leq (g \circ h)(\vecv)<\alpha_{l+1}\}.
	\end{align}
	Note that 
	\begin{align}
		g(\beta_{l+1}) = \zeta \alpha_{l} = \zeta \alpha_{l+1} + \zeta \alpha_{l}-\zeta \alpha_{l+1} \leq \zeta^2 g\circ h (\vecv) + \zeta \alpha_{l}-\zeta \alpha_{l+1},
	\end{align}
	$h(\vecv) \leq \beta_{l+1}=g^{-1}(\alpha_{l+1})$. Then, from \eqref{ine:l:peeling-02},  we have
	\begin{align}
			\mr{M}(\vecv)&\leq \mr{a}_1 g(\beta_{l+1})+\mr{a}_2\mr{a}\sqrt{\log\left(\frac{2(l+1)^2}{\delta}\right)}+\mr{a}_3\log\left(\frac{2(l+1)^2}{\delta}\right)\nonumber\\
			&\leq \mr{a}_1(\zeta^2 g\circ h (\vecv) + \zeta \alpha_{l}-\zeta \alpha_{l+1})+\mr{a}_2\mr{a}\sqrt{\log\left(\frac{2(l+1)^2}{\delta}\right)}+\mr{a}_3\log\left(\frac{2(l+1)^2}{\delta}\right).
	\end{align}
	We note that
	\begin{align}
		&\mr{a}_1\left(\zeta \alpha_{l}-\zeta \alpha_{l+1}\right)+\mr{a}_2\mr{a}\sqrt{2\log(l+1)}+2\mr{a}_3\log(l+1)\nonumber\\
		&\quad \quad=\mr{a}_1\zeta \alpha_{l_1} \left(\left( 1-\zeta\right)+\mr{a}\frac{\mr{a}_2}{\mr{a}_1}\frac{\sqrt{2\log(l+1)}}{\mr{a}\zeta^{l-1}}+\frac{\mr{a}_3}{\mr{a}_1}\frac{2\log(l+1)}{\mr{a} \zeta^{l-1}}\right)\nonumber\\
		&\quad \quad\stackrel{(a)}{\leq } \mr{a}_1\zeta \alpha_{l_1} \left(\left( 1-\zeta\right)+\frac{\sqrt{2\log(l+1)}}{ \zeta^{l-1}}+\frac{2\log(l+1)}{ \zeta^{l-1}}\right)\stackrel{(b)}{\leq} 0
	\end{align}
	where (a) follows from $\mr{a}_2/\mr{a}_1\leq 1$, $\mr{a_3}/\mr{a}_1\leq \mr{a}$, and (b) follows from the fact that $\zeta$ is sufficiently large.
\end{proof}

\begin{lemma}[Peeling Lemma III]
	\label{l:peeling3}
	Let $\mr{V}$ be a set and $h: \mr{V}\to \mbb{R}_{+}$ is a function. Let $g : \mbb{R}_+ \to \mbb{R}_+$ be right-continuous and non-decreasing function. Define $g^{-1}(x) = \inf\{\mr{a} \in \mbb{R}_+\,:\,g(\mr{a})\geq x\}$, which are generalized inverses of $g$.
	Let $\mr{a}_1,\mr{a}_2, \mr{a}_3$ be positive constants such that $\mr{a}_2/\mr{a}_1\leq 1$ and $\mr{c}\geq 1$ be a constant. Suppose for any $r >0$ and $\delta \in (0,1/\mr{c}]$, with probability at least $1-\mr{c}\delta$, we have
	\begin{align}
		\inf_{\vecv \in \mr{V},\,h(\vecv) \leq r} \mr{M}(\vecv)\geq -\mr{a}_1g(r)-\mr{a}_2\sqrt{\log(1/\delta)}+\mr{a}_3.
	\end{align}
	Define the event above as $\mr{E}(r,\delta)$. 

	Then, for any $\delta \in (0, 1/\mr{c}]$, with probability at least $1-\mr{c}\delta$, for any $\vecv \in \mr{V}$,  we have
	\begin{align}
		 \mr{M}(\vecv)&\geq  -C\mr{a}_1( g\circ h (\vecv)+1)-\mr{a}_2\sqrt{\log (2/\delta)}+\mr{a}_3
	\end{align}
\end{lemma}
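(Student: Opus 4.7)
The plan is to mirror the peeling arguments of Lemmas~\ref{l:peeling1} and \ref{l:peeling2}, adapting the signs so as to handle a lower-bound conclusion. I would set $\alpha_0 = 0$ and $\alpha_k := \zeta^{k-1}$ for $k \geq 1$, where $\zeta > 1$ is a numerical constant to be chosen sufficiently large at the end, and I would partition
\begin{align}
\mr{V}_k := \{\vecv \in \mr{V}\,:\, \alpha_k \leq (g \circ h)(\vecv) < \alpha_{k+1}\}, \qquad k \geq 0.
\end{align}
With $\beta_m := g^{-1}(\alpha_m)$ (so $g(\beta_m) = \alpha_m$), I would apply the hypothesis $\mr{E}(\beta_{k+1}, \delta/(2(k+1)^2))$ for each $k \geq 0$ and take a union bound. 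Since $\sum_{k\geq 1} k^{-2} \leq 2$, this produces an event of probability at least $1 - \mr{c}\delta$ on which every corresponding lower bound holds simultaneously.

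On that event, fix $\vecv \in \mr{V}_l$, so that $h(\vecv) \leq \beta_{l+1}$ and the hypothesis gives
\begin{align}
\mr{M}(\vecv) \;\geq\; -\mr{a}_1\, g(\beta_{l+1}) \;-\; \mr{a}_2 \sqrt{\log(2(l+1)^2/\delta)} \;+\; \mr{a}_3.
\end{align}
For $l \geq 1$ I would use the comparison $g(\beta_{l+1}) = \alpha_{l+1} = \zeta \alpha_l \leq \zeta^2 (g\circ h)(\vecv) + (\zeta \alpha_l - \zeta \alpha_{l+1})$, which is a direct algebraic identity (since $(g\circ h)(\vecv) \geq \alpha_l$), with the residual $\zeta \alpha_l - \zeta \alpha_{l+1} = \zeta^l(1-\zeta)$ strictly negative. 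Negating,
\begin{align}
-\mr{a}_1\, g(\beta_{l+1}) \;\geq\; -\mr{a}_1 \zeta^2 (g\circ h)(\vecv) \;+\; \mr{a}_1 \zeta \alpha_l (\zeta - 1).
\end{align}
Splitting the union-bound penalty as $\sqrt{\log(2(l+1)^2/\delta)} \leq \sqrt{\log(2/\delta)} + \sqrt{2\log(l+1)}$ reduces the problem to showing that the positive slack $\mr{a}_1 (\zeta - 1) \zeta^l$ dominates $\mr{a}_2 \sqrt{2\log(l+1)}$ uniformly in $l \geq 1$. Using $\mr{a}_2/\mr{a}_1 \leq 1$, this amounts to choosing $\zeta$ large enough that $(\zeta - 1)\zeta^l \geq \sqrt{2\log(l+1)}$ for all $l \geq 1$, which holds since $\sqrt{2\log(l+1)}/\zeta^{l-1}$ is bounded uniformly in $l$ by a quantity that shrinks with $\zeta$. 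The base case $l = 0$ is immediate: $h(\vecv) \leq \beta_1 = g^{-1}(1)$, so the hypothesis at $r = 1$ yields $\mr{M}(\vecv) \geq -\mr{a}_1 - \mr{a}_2 \sqrt{\log(2/\delta)} + \mr{a}_3$, which is absorbed into the $-C\mr{a}_1(g \circ h(\vecv) + 1)$ term.

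The main obstacle, as in the preceding peeling lemmas, is verifying that the positive slack from the geometric gap absorbs the logarithmic union-bound cost uniformly across all shells. The sign bookkeeping is slightly more delicate here than in Lemma~\ref{l:peeling2} because the conclusion is a lower bound: enlarging $h(\vecv)$ up to $\beta_{l+1}$ loosens $-\mr{a}_1 g(\cdot)$, so I must carry the negative residual $\zeta \alpha_l - \zeta \alpha_{l+1}$ through the negation and use it to cancel the $\sqrt{2\log(l+1)}$ term. Once the universal constant $\zeta$ is fixed, the resulting bound takes the form $-\zeta^2 \mr{a}_1 (g\circ h)(\vecv) - \mr{a}_2 \sqrt{\log(2/\delta)} + \mr{a}_3$ up to an additive $-\mr{a}_1$ contribution from the $l = 0$ case, matching the claim with $C = \zeta^2$.
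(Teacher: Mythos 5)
Your proposal is correct and follows essentially the same route as the paper's proof: the same geometric shells $\alpha_k=\zeta^{k-1}$, the same union bound with weights $\delta/(2k^2)$, the same identity $g(\beta_{l+1})\leq \zeta^2 (g\circ h)(\vecv)+\zeta\alpha_l-\zeta\alpha_{l+1}$, and the same absorption of the $\sqrt{2\log(l+1)}$ penalty into the slack $\mr{a}_1(\zeta-1)\zeta^l$ via $\mr{a}_2/\mr{a}_1\leq 1$. No gaps.
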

\begin{proof}
	Let $\alpha_0=0$ and $\alpha_k:=  \zeta^{k-1}$, where $\zeta >1$ is a sufficiently large numerical constant determined later. Decompose $\mr{V}$ as
	\begin{align}
		\mr{V}_{k} :=\{\vecv \in \mr{V}\,\mid\, \alpha_{k}\leq (g\circ h)(\vecv)<\alpha_{k}\}.
	\end{align}
	Define $\beta_m := g^{-1}(\alpha_m)$, then we have $g(\beta_m)=\alpha_m$.
	We note that $\sum_{k\geq1}k^{-2}\leq {2}$ and we have
	\begin{align}
		\label{ine:l:peeling-03}
		\mbb{P}\left((\mr{E}':=) \cap_{k\geq 1}^\infty \mr{E}\left(\beta_k,\frac{\delta}{2k^2}\right) \right)\geq 1-\mr{c}\delta
	\end{align}
	for fixed $\delta \in (0,1/\mr{c}]$. We assume $\mr{E}''$ is realized.

	We note that for every $\vecv \in \mr{V}$, there are $l$ such that $\vecv \in \mr{V}_l$ because $g$ is non-decreasing.

	\noindent
	\underline{Case 1: $l=0$}

	In this case, 
	\begin{align}
		\mr{V}_{0} :=\{\vecv \in \mr{V}\,\mid\, 0 \leq (g \circ h)(\vecv)<1\},
	\end{align}
	and we have $h(\vecv)\leq \beta_1=g^{-1}(1)$, then from \eqref{ine:l:peeling-03}, we have
	\begin{align}
			\mr{M}(\vecv)&\geq -\mr{a}_1g(\beta_1)-\mr{a}_2\sqrt{\log\left(\frac{2(l+1)^2}{\delta}\right)}+\mr{a}_3\geq -\mr{a}_1-\mr{a}_2\sqrt{\log\left(2/\delta\right)}+\mr{a}_3.
	\end{align}

	\noindent
	\underline{Case 2: $l\geq 1$}

	In this case	
	\begin{align}
		\mr{V}_{l} :=\{\vecv \in \mr{V}\,\mid\, \alpha_{l}\leq (g \circ h)(\vecv)<\alpha_{l+1}\}.
	\end{align}
	Note that 
	\begin{align}
		g(\beta_{l+1}) = \zeta \alpha_{l} = \zeta \alpha_{l+1} + \zeta \alpha_{l}-\zeta \alpha_{l+1} \leq \zeta^2 g\circ h (\vecv) + \zeta \alpha_{l}-\zeta \alpha_{l+1},
	\end{align}
	$h(\vecv) \leq \beta_{l+1}=g^{-1}(\alpha_{l+1})$. Then, from \eqref{ine:l:peeling-03},  we have
	\begin{align}
			\mr{M}(\vecv)&\geq - \mr{a}_1 g(\beta_{l+1})- \mr{a}_2\sqrt{\log\left(\frac{2(l+1)^2}{\delta}\right)}+\mr{a}_3\nonumber\\
			&\geq -\mr{a}_1(\zeta^2 g\circ h (\vecv) + \zeta \alpha_{l}-\zeta \alpha_{l+1})-\mr{a}_2\sqrt{\log\left(\frac{2(l+1)^2}{\delta}\right)}+\mr{a}_3.
	\end{align}
	When $\zeta$ is sufficiently large, we have
	\begin{align}
		&-\mr{a}_1\left(\zeta \alpha_{l}-\zeta \alpha_{l+1}\right)-\mr{a}_2\sqrt{2\log(l+1)}=-\mr{a}_1\zeta \alpha_{l_1} \left(\left( 1-\zeta\right)+\frac{\mr{a}_2}{\mr{a}_1}\frac{\sqrt{2\log(l+1)}}{\zeta^{l-1}}\right)\geq 0,
	\end{align}
	and the proof is complete.
\end{proof}

\end{document}